\newtheorem{theorem}{Theorem}[section]
\newtheorem{proposition}[theorem]{Proposition}
\newtheorem{lemma}[theorem]{Lemma}
\newtheorem{corollary}[theorem]{Corollary}
\theoremstyle{remark}
\newtheorem{remark}[theorem]{Remark}
\newtheorem{definition}{Definition}
\numberwithin{equation}{section}
\newcommand{\R}{{\mathbb{R}}}
\newcommand{\vep}{\varepsilon}
\newcommand{\C}{{\mathbb{C}}}
\newcommand{\Z}{{\mathbb{Z}}}
\newcommand{\N}{{\mathbb{N}}}
\newcommand{\var}{\operatorname{Var}}
\newcommand{\bv}{\operatorname{BV}}
\newcommand{\SL}{\operatorname{SL}(\psi_\R)}
\newcommand{\Arg}{\operatorname{Arg}}
\newcommand{\norm}[1]{\left\|#1\right\|} % Norm
\newcommand{\M}{\mathcal{M}}
\newcommand{\pa}{\operatorname{P_a}}
\newcommand{\pag}{\operatorname{P_aG}}
\newcommand{\cpang}{C^{n+\pag}}
\newcommand{\Int}{\operatorname{Int}}
\newcommand{\customlabel}[2]{\protected@write\@auxout{}{\string\newlabel{#1}{{#2}{\thepage}{#2}{#1}{}}}\hypertarget{#1}{#2}}
\begin{document}

\title[Solving cohomological equation - part II. global obstructions]
{Solving the cohomological equation for locally hamiltonian flows, part II - global obstructions}
\author[K.\ Fr\k{a}czek]{Krzysztof Fr\k{a}czek}
\address{Faculty of Mathematics and Computer Science, Nicolaus
Copernicus University, ul. Chopina 12/18, 87-100 Toru\'n, Poland}
\email{fraczek@mat.umk.pl}

\author[M. Kim]{Minsung Kim}
%\address{Faculty of Mathematics and Computer Science, Nicolaus
%Copernicus University, ul. Chopina 12/18, 87-100 Toru\'n, Poland}

%\address{Centro di Ricerca Matematica Ennio De Giorgi, Scuola Normale Superiore, Piazza dei Cavalieri 3, 56126 Pisa, Italy}
%\email{minsung.kim@sns.it}

\address{Department of Mathematics,  Kungliga Tekniska H\"ogskolan, Lindstedtsv\"agen 25, SE-100 44 Stockholm, Sweden}
\email{minsung@kth.se}

\address{}
\email{}
\date{\today}

\subjclass[2020]{37E35, 37A10, 37C40, 37C83, 37J12}
\keywords{locally Hamiltonian flows, cohomological equation, invariant distributions}
\thanks{}
\maketitle

\begin{abstract}
Continuing the research initiated in  \cite{Fr-Ki2}, we provide a solution to the problem of existence and regularity of solutions to the cohomological equation $X u=f$ for locally Hamiltonian flows (determined by a vector field $X$) on a compact surface $M$ of genus $g\geq 1$, when the flow is restricted to its minimal component.
We go beyond the case studied so far by Forni in \cite{Fo1,Fo3}, when the flow is minimal over the entire surface and the function $f$ satisfies some Sobolev regularity conditions. We deal with the flow restricted to any of its minimal components and any smooth function $f$ whenever the restricted flow  satisfies the Full Filtration Diophantine Condition (FFDC),  {a full measure condition}.

The main goal of this article is to quantify the optimal regularity of solutions. For this purpose, we construct a family of invariant distributions $\mathfrak{F}_{\bar t}$, $\bar{t}\in\mathscr{TF}^*$, which play a role analogous to Forni's invariant distributions constructed in \cite{Fo1,Fo3} by using the language of translation surfaces.
Unlike the locally defined distributions $\mathfrak{d}^k_{\sigma,j}$,  $(\sigma,k,j)\in\mathscr{TD}$, and $\mathfrak{C}^k_{\sigma,l}$, $(\sigma,k,l)\in\mathscr{TC}$, introduced in \cite{Fr-Ki2}, the distributions $\mathfrak{F}_{\bar t}$ are global in nature, as emphasized in the title of this article.
%The distributions $\mathfrak{F}_{\bar t}$ are global in nature (as emphasized in the article's title), unlike the distributions $\mathfrak{d}^k_{\sigma,j}$,  $(\sigma,k,j)\in\mathscr{TD}$ and $\mathfrak{C}^k_{\sigma,l}$, $(\sigma,k,l)\in\mathscr{TC}$ introduced in \cite{Fr-Ki2}, which are defined locally.
All three families of distributions are used to determine the optimal regularity  of solutions to the cohomological equation, see Theorem~\ref{thm:main1}~and~\ref{thm:main2}.
As a by-product, we also {obtain} an intrinsically interesting spectral result (Theorem~\ref{thm:spect}) for the Kontsevich-Zorich cocycle acting on functional spaces that arise naturally at the transition to a first-return map.
%
%s in dynamical point of view: associate with the interval exchange transformations (IETs). Our first main theorem states that spectral results of renormalization cocycle on a Banach space whose higher derivatives have polynomial singularities. These results show the relations with an appearance of new family of invariant distributions. This further constructs generalized Forni's invariant distributions.
%
%As an application, our results improve the result of Marmi-Yoccoz \cite{Ma-Yo}, by having optimal H\"older regularity of solutions to cohomological equations of a.e IETs.
%We introduce new method of constructing extended correction operator with a new class of new invariant distributions. Lastly, we present precise descriptions for optimal regularity of solutions of cohomological equations of locally Hamiltonian flows. This is obtained by combining all the families of invariant distributions for the solutions to the following Theorem 2 in \cite{Fr-Ki2}.
\end{abstract}

%Let $M$ be a smooth compact connected orientable surface of genus $g\geq 1$ and fix
%a smooth area form $\mu$ on $M$ (which in local coordinates is given by $V(x,y) d x \wedge d y$ for some smooth positive real valued function  $V$).  Let $X:M\to TM$ be a smooth tangent vector field with finitely many fixed points
%and such that the corresponding flow $(\psi_t)_{t\in\R}$ preserves a smooth volume form $\omega$. Then, letting $\eta:=\imath_X\omega=\omega(X, \,\cdot \,)$, where $\imath_X$ denotes the contraction operator, we have $d\eta=0$. Furthermore,  since $\eta$ is a smooth closed $1$-form, for any $p\in M$ and any simply connected neighbourhood $U$ of $p$ there exists a smooth (local Hamiltonian) map (unique up to additive constant)
%such that $dH=\eta$ on $U$. It follows that locally  $(\psi_t)_{t\in\R}$ is the solution of a Hamiltonian equation. Therefore, $(\psi_t)_{t\in\R}$ are often called \emph{locally Hamiltonian flows}.
%\\

\section{Introduction}
Let $M$ be a smooth compact connected orientable surface of genus $g\geq 1$.
Our primary focus is on smooth flows $\psi_\R = (\psi_t)_{t\in\R}$ on $M$  preserving a smooth area form $\omega$, {i.e.,} such that for any (orientable)
local coordinates $(x,y)$, we have $\omega=V(x,y)dx\wedge dy$ with $V$ positive and smooth. We denote by $X:M\to TM$ the associated vector field.
Then for (orientable) local coordinates $(x,y)$ such that $\omega=V(x,y)dx\wedge dy$,  the flow $\psi_\R$ is (locally) a solution to the Hamiltonian equation
\[
\frac{dx}{dt} = \frac{\frac{\partial H}{\partial y}(x,y)}{V(x,y)},\quad
\frac{dy}{dt} = -\frac{\frac{\partial H}{\partial x}(x,y)}{V(x,y)}
\]
for a smooth real-valued locally defined function $H$. The flows $\psi_\R$ are usually called \emph{locally Hamiltonian flows} or \emph{multivalued Hamiltonian flows}.
For general introduction to locally Hamiltonian flows on surfaces, we refer readers to  \cite{Fr-Ul2, Fr-Ki, Rav}.

The main goal of this article is to fully understand the problem of {the} existence of a solution $u:M\to\R$ and its regularity for the cohomological equation $Xu=f$, if  $f:M\to\R$ is an arbitrary smooth observable
(recall that $Xu(x)=\frac{d}{dt}u(\psi_tx)|_{t=0}$).
{
Cohomological equations {constitute} %are
an important area of study in dynamical systems since they are related to smooth conjugacy problems via Kolmogorov-Arnold-Moser techniques. In the context of flows on higher genus surfaces, it is worth {mentioning} the pioneering work of Marmi-Moussa-Yoccoz \cite{Ma-Mo-Yo2} on the conjugacy problem for small perturbations of translation flows.}

We always assume that all fixed points of $\psi_\R$ are isolated.  In that case, the set of fixed points $\mathrm{Fix}(\psi_\R)$ is finite. As $\psi_\R$ is area-preserving, every fixed point is either a center or a saddle.
In what follows, we deal only with \emph{perfect} (\emph{harmonic}) saddles defined as follows:
a fixed point  $\sigma\in \mathrm{Fix}(\psi_\R)$ is a perfect saddle of multiplicity $m_\sigma\geq 2$ if there exists a chart $(x,y)$ in a neighborhood $U_\sigma$ of $\sigma$ such that
$\omega=V(x,y)dx\wedge dy$ and $H(x,y)=\Im (x+iy)^{m_\sigma}$. We call $(x,y)$ \emph{a singular chart}.
We denote by $\mathrm{Sd}(\psi_\R)$ the set of perfect saddles of  $\psi_\R$.

We call a \emph{saddle connection} an orbit of $\psi_\R$ running from a saddle to a saddle. A \emph{saddle loop} is a saddle connection
joining the same saddle.
We deal only with flows such that all their saddle connections are loops.
%The set consisting of all saddle loops of the flow we denote by $\mathrm{SL}(\psi_\R)$.
If every fixed point is isolated, then $M$ splits into a finite number of components ($\psi_\R$-invariant surfaces with boundary) so that every component is either a \emph{minimal component} (every orbit, except fixed points and saddle loops, is dense in the component) or is a \emph{periodic component} (filled by periodic orbits, fixed points and saddle loops). {The existence of such a decomposition (with transitive components) was actually proved by Maier in \cite{Ma:tra} (see also \cite[Theorem~3.1.7]{NZ:flo} and \cite[Theorem~3]{Co-Gu-Li} for a more modern presentation). Minimality follows from standard arguments in \cite{Keane}.}

{
Recall that a linear bounded functional $f\mapsto \mathfrak{D}(f)$ is an \emph{invariant distribution} if $\mathfrak{D}(Xu)=0$ for any $u\in C^{\infty}(M)$.}
The problem of existence and regularity of solutions for the cohomological equation $Xu=f$ was essentially solved in two seminal articles \cite{Fo1,Fo3} by Forni, under the assumption that the flow $\psi_\R$ is minimal over the whole surface $M$ (has no saddle connection) and the function $f$ belongs to a certain weighted Sobolev space $H_W^s(M)$, $s\geq 1$.
Let us mention that being an element of {the} weighted Sobolev space enforces significant constraints on the behavior of the function $f$ around saddles, even for smooth functions, as described in \cite{Fo3}.
In \cite{Fo1,Fo3}, for a.e.\ locally Hamiltonian flows, Forni proved the existence of fundamental invariant distributions on $H_W^s(M)$, which are responsible for the regularity of the solution $u$ to $Xu=f$, for $f\in H_W^s(M)$.
If all Forni's {invariant} distributions evaluated at $f\in H_W^s(M)$ are zero, then  a solution $u\in H_\omega^{s'}(M)$ exists for some $0<s'<s$.
%where
%$H_\omega^s(M)$ is the fractional weighted Sobolev space associated to the Abelian form $\omega$ and the related area form. For a formal definition of $H_\omega^s(M)$ and useful characterization of its smooth elements we refer the reader to Section~2 in  \cite{Fo3}.

The problem of solving cohomological equations for other classes of smooth dynamical systems of  parabolic nature, as well as the regularity of solutions using invariant distributions, has also been studied in  \cite{Av-Fa-Ko,Av-Ko,Fl-Fo03,Fl-Fo07,Fo4,Gu-Li,Kat,LMS,Ta,Wa,Zu}.

\subsection{Invariant distributions and the main results when saddle loops exist}

The main goal of this article is to go beyond the case of a minimal flow on the whole surface $M$,
{as well as  beyond the case where the function $f$ belongs to weighted Sobolev spaces.}
%and beyond the case of the function $f$ belonging to weighted Sobolev spaces.
We deal with locally Hamiltonian flows restricted to any minimal component $M'\subset M$, where $f:M\to\R$ is any smooth function.
The main novelty of the proposed approach is its applicability to studying the regularity of the solution $u$ when the flow has saddle loops, a setting that has not been systematically investigated before.
The study of locally Hamiltonian flows in such a context gives rise to new invariant distributions, which, unlike Forni's {invariant} distributions, are local in nature. Two families of such local functionals $ \mathfrak{C}^k_{\sigma,l}$ and $ \mathfrak{d}^k_{\sigma,j}$ were introduced by the authors in \cite{Fr-Ki2}.
As shown in \cite{Fr-Ki2}, both families play an essential role in understanding the regularity of {solutions} to {the} cohomological equation if $f$ is any smooth function.

Throughout the article, we use the notations $x\vee y=\max\{x,y\}$ and $x\wedge y=\min\{x,y\}$ for any pair of real numbers $x,y$.
We denote by $\mathscr{TD}$ the set of triples $(\sigma,k,j)\in (\mathrm{Sd}(\psi_\R)\cap M')\times \Z_{\geq 0}\times\Z_{\geq 0}$ such that
 $0\leq j\leq k\wedge(m_\sigma-2)$ and $j\neq k-(m_\sigma-1)\operatorname{mod} m_\sigma$.
{Following \cite{Fr-Ki2} and using singular local coordinates around $\sigma$,}
for every $(\sigma,k,j)\in \mathscr{TD}$, we define the functional $\mathfrak{d}^k_{\sigma,j}:C^{k}(M)\to\C$ as follows:
\begin{equation}\label{def:gothd}
\mathfrak{d}^k_{\sigma,j}(f)=\sum_{0\leq n\leq \frac{k-j}{m_\sigma}}\frac{\binom{k}{j+nm_\sigma}\binom{\frac{(m_\sigma-1)-j}{m_\sigma}-1}{n}}{\binom{\frac{(k-j)-(m_\sigma-1)}{m_\sigma}}{n}}
\frac{\partial^k(f\cdot V)}{\partial z^{j+nm_\sigma} \partial \overline{z}^{k-j-nm_\sigma}}(0,0).
\end{equation}
The real number
$\widehat{\mathfrak{o}}(\mathfrak{d}^k_{\sigma,j})=\widehat{\mathfrak{o}}(\sigma,k)=k-(m_\sigma-2)$
we call the \emph{hat-order} of  $\mathfrak{d}^k_{\sigma,j}$.

For any $\sigma\in\mathrm{Sd}(\psi_\R)\cap M'$, its  neighbourhood $U_\sigma$ splits into $2m_\sigma$ angular sectors bounded by separatrices.  In singular coordinates $z=(x,y)$ they are of the form
\[U_{\sigma,l}:=\{z\in U_\sigma: \Arg z\in(\tfrac{\pi l}{m_\sigma},\tfrac{\pi (l+1)}{m_\sigma})\}\text{ for }0\leq l<2m_\sigma.\]
We denote by $\mathscr{TC}$ the set of triples $(\sigma,k,l)\in (\mathrm{Sd}(\psi_\R)\cap M')\times \Z_{\geq 0}\times\Z_{\geq 0}$ such that
  $0\leq l< 2m_\sigma$ and $U_{\sigma,l}\subset M'$. {Following \cite{Fr-Ki2}, and again using singular local coordinates around $\sigma$,} for every $(\sigma,k,l)\in \mathscr{TC}$, we define
the functional  $ \mathfrak{C}^k_{\sigma,l}:C^k(M)\to\C$  as follows:
\[\mathfrak{C}^k_{\sigma,l}(f):=\sum_{\substack{0\leq i\leq k\\i\neq m_\sigma-1\operatorname{mod} m_\sigma\\i\neq k-(m_\sigma-1)\operatorname{mod} m_\sigma}}
\theta_\sigma^{l(2i-k)}\binom{k}{i}\mathfrak{B}(\tfrac{(m_\sigma-1)-i}{m_\sigma},\tfrac{(m_\sigma-1)-k+i}{m_\sigma})\frac{\partial^{k}(f\cdot V)}{\partial z^i\partial\overline{z}^{k-i}}(0,0),\]
where $\theta_\sigma$ is the principal $2m_\sigma$-th root of unity. The (beta-like) function $\mathfrak{B}(x,y)$ is defined for any pair $x,y$ of real numbers such that  $x,y\notin \Z$  as follows:
\[\mathfrak{B}(x,y)=\frac{\pi e^{i\frac{\pi}{2}(y-x)}}{2^{x+y-2}}\frac{\Gamma(x+y-1)}{\Gamma(x)\Gamma(y)},\]
where we adopt the convention $\Gamma(0)=1$ and $\Gamma(-n)=1/(-1)^n n!$. For the real number
$\mathfrak{o}(\mathfrak{C}^k_{\sigma,l})=\mathfrak{o}(\sigma,k)=\frac{k-(m_\sigma-2)}{m_\sigma}$,
we call it the \emph{order} of  $\mathfrak{C}^k_{\sigma,l}$.

{\begin{remark}
As shown in \cite{Fr-Ki2} (see the proof of Corollary~5.5 therein), the value of the functional $\mathfrak{C}^k_{\sigma,l}(f)$ can be read as the weight of a singularity of some function of one variable depending only on $f$ and the flow around $\sigma$. Hence, it does not depend on the choice of singular coordinates. Since every functional $\mathfrak{d}^k_{\sigma,j}$ is a linear combination of the functionals $\mathfrak{C}^k_{\sigma,l}$ (see \cite[Section~4.3]{Fr-Ki2}), its values also do not depend on the choice of singular coordinates.
\end{remark}}

In this paper, for a.e.\ locally Hamiltonian flow (satisfying the Full Filtration Diophantine Condition (\ref{FDC}), defined in Section~\ref{sec;FDC-RTC}), we define a third family of distributions $\mathfrak{F}_{\bar{t}}$. {These distributions have global nature} and are smooth analogues of Forni's invariant distributions introduced in \cite{Fo1,Fo3}. We should emphasize that the definition of $\mathfrak{F}_{\bar{t}}$, unlike Forni's approach, does not use translational surface tools. Such techniques cannot be used due to the existence of saddle loops. Our approach is based on a modification of  correction operators invented by Marmi-Moussa-Yoccoz in \cite{Ma-Mo-Yo} (see also \cite{Ma-Mo-Yo2} and \cite{Ma-Yo}) in its simplest version and later extended in \cite{Fr-Ul} and \cite{Fr-Ki}.

Let $g\geq 1$ be the genus of $M'$ and let $\gamma$ be the number of saddles in $M'$.
We denote by $\mathscr{TF}^*$ the set of triples of the form $(k,+,i)$, $(k,0,s)$ or $(k,-,j)$ for $k\geq 0$, $1\leq i,j\leq g$ and $1\leq s<\gamma$. Let $\mathscr{TF}$ be the subset of triples in $\mathscr{TF}^*$ obtained by removing all triples of the form $(k,-,1)$ for $k\geq 0$. We denote by $0<\lambda_g<\ldots<\lambda_2<\lambda_1$ the positive Lyapunov exponents {of the Kontsevich-Zorich cocycle} associated with a flow satisfying the \ref{FDC} (see again Section~\ref{sec;FDC-RTC}).
In Section~\ref{sec:Forgendis}, for every triple $\bar{t}\in \mathscr{TF}^*$, we define {the} corresponding functional $\mathfrak{F}_{\bar{t}}$.
For any $\bar{t}\in \mathscr{TF}^*$, the real number
\[\mathfrak{o}(\mathfrak{F}_{\bar{t}})=\mathfrak{o}(\bar{t})=\left\{
\begin{array}{cl}
k-\frac{\lambda_i}{\lambda_1}&\text{ if }\bar{t}=(k,+,i),\\
k&\text{ if }\bar{t}=(k,0,s),\\
k+\frac{\lambda_j}{\lambda_1}&\text{ if }\bar{t}=(k,-,j),
\end{array}
\right.\]
we call the \emph{order} of  $\mathfrak{F}_{\bar{t}}$.

\medskip

Let $m$ be the maximal multiplicity of saddles in $\mathrm{Sd}(\psi_\R)\cap M'$.
Following \cite{Fr-Ki2}, for every {$r\geq -\frac{m-2}{m}$}, let
\[k_r=\left\{
\begin{array}{cl}
\lceil mr+(m-1)\rceil &\text{if }m=2\text{ and } r\leq \frac{1}{2},\\
\lceil mr+(m-2)\rceil &\text{otherwise.}
\end{array}
\right.\]
Recall that
\begin{align*}\label{eq:expkr}
&\max\{k\geq 0:\exists_{\sigma\in \mathrm{Sd}(\psi_\R)\cap M'}\mathfrak{o}(\sigma,k)< r\}+1=\lceil mr+(m-2)\rceil\leq k_r,\\
&\max\{k\geq 0:\exists_{\sigma\in \mathrm{Sd}(\psi_\R)\cap M'}\widehat{\mathfrak{o}}(\sigma,k)< r\}+1=\lceil r+(m-2)\rceil\leq k_r.
\end{align*}
Then, for every flow $\psi_\R$ restricted to its minimal component $M'$ and satisfying {the} \ref{FDC}, and for every $\bar{t}\in \mathscr{TF}^*$, the corresponding functional $\mathfrak{F}_{\bar{t}}$ is defined on $C^{k_{\mathfrak{o}(\bar{t})}+1}(M)$. The following two main results show how the three families of invariant distributions influence the regularity of the solution to the cohomological equation $Xu=f$, with smooth $u$ defined on the end compactification $M'_e$ of $M'\setminus \mathrm{Sd}(\psi_\R)$ as considered in \cite{Fr-Ki2}.

\begin{theorem}\label{thm:main1}
Let $\psi_\R$ be a locally Hamiltonian flow such that its restriction to a minimal component $M'$ satisfies the \ref{FDC}. Let $r\in \R_{>0}\setminus(\{\mathfrak{o}(\sigma,k):k\geq 0,\sigma\in \mathrm{Sd}(\psi_\R)\cap M'\}
\cup\{\mathfrak{o}(\bar{t}):\bar{t}\in \mathscr{TF}\})$. Suppose that $f\in C^{k_r}(M)$ and
\begin{itemize}
\item[(i)] $\mathfrak{d}^k_{\sigma,j}(f)=0$ for all $(\sigma,k,j)\in\mathscr{TD}$ with $\widehat{\mathfrak{o}}(\mathfrak{d}^k_{\sigma,j})<r$;
\item[(ii)] $\mathfrak{C}^k_{\sigma,l}(f)=0$ for all $(\sigma,k,l)\in\mathscr{TC}$ with ${\mathfrak{o}}(\mathfrak{C}^k_{\sigma,l})<r$;
\item[(iii)] $\mathfrak{F}_{\bar{t}}(f)=0$ for all $\bar{t}\in\mathscr{TF}$ with ${\mathfrak{o}}(\mathfrak{F}_{\bar{t}})<r$.
\end{itemize}
Then there exists $u\in C^r(M'_e)$ such that $Xu=f$ on $M'_e$. Moreover, there exists $C_r>0$ such that $\|u\|_{C^r(M'_e)}\leq C_r\|f\|_{C^{k_r}(M)}$.
\end{theorem}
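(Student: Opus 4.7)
The plan is to transfer the cohomological equation to a coboundary equation for the first-return map of $\psi_\R$ on a suitable transversal, and then solve that coboundary equation by a Marmi--Moussa--Yoccoz-type correction procedure along Rauzy--Veech induction, using the three families of distributions to kill successive layers of obstruction.

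First I would fix a smooth transversal $I\subset M'\setminus\mathrm{Fix}(\psi_\R)$ to the flow and form the first-return (interval exchange) map $T\colon I\to I$ together with the return-time function $\tau$. Setting $\Phi_f(x):=\int_0^{\tau(x)}f(\psi_sx)\,ds$, the equation $Xu=f$ on $M'_e$ becomes the coboundary equation $u\circ T-u=\Phi_f$ on $I$, with $u$ recovered from $u|_I$ by integration along the flow. It would remain to check that a solution $u|_I$ lying in the appropriate piecewise-$C^r$ space with the right asymptotic expansions at the endpoints extends uniquely to a function in $C^r(M'_e)$.

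Next I would invoke the companion paper \cite{Fr-Ki2} to convert the vanishing hypotheses on $\mathfrak{d}^k_{\sigma,j}$ and $\mathfrak{C}^k_{\sigma,l}$ into a statement about $\Phi_f$: namely, the local distributions of hat-order/order $<r$ at a saddle $\sigma$ are exactly the obstructions preventing the Birkhoff-type integral $\Phi_f$ from having a clean $C^r$ expansion at the endpoints of $I$ coming from $\sigma$. Their simultaneous vanishing places $\Phi_f$ in the (piecewise) function space $\zip_r$ on which the Kontsevich--Zorich cocycle can be analysed; this is the content of the spectral result Theorem~\ref{thm:spect}, which I would use as a black box. Under \ref{FDC} the cocycle then admits a full Oseledets decomposition on $\zip_r$ with exponents (normalised by $\lambda_1$) $\lambda_i/\lambda_1$ on the unstable directions $(k,+,i)$, $0$ on the central directions $(k,0,s)$ indexed by saddles, and $-\lambda_j/\lambda_1$ on the stable directions $(k,-,j)$, shifted by the integer $k$ coming from the piecewise $C^k$-scale. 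The functional $\mathfrak{F}_{\bar{t}}$ would then be defined as the projection of $\Phi_f$ onto the Oseledets subspace labelled by $\bar t$, so that its order $\mathfrak{o}(\bar t)$ is precisely the (normalised) Lyapunov exponent of its direction.

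With this machinery in place, the correction step follows the now-standard Marmi--Moussa--Yoccoz scheme as modified in \cite{Fr-Ul}, \cite{Fr-Ki}. Iterating $T$ via Rauzy--Veech induction, the KZ cocycle contracts the stable directions, so the stable components of $\Phi_f$ can be written as coboundaries outright; the only obstructions to solving $u\circ T-u=\Phi_f$ with a remainder of Oseledets order $<-r$ come from the unstable and central directions of order $<r$, i.e.\ from the triples in $\mathscr{TF}$. (The family $(k,-,1)$ is excluded from $\mathscr{TF}$ because the top stable exponent is already used to absorb the coboundary itself, so it produces no independent obstruction.) Assuming all these $\mathfrak{F}_{\bar t}(f)$ vanish, the MMY correction procedure converges and yields a solution $u|_I$ in the correct piecewise space; the quantitative bound $\|u\|_{C^r(M'_e)}\leq C_r\|f\|_{C^{k_r}(M)}$ comes from the exponential decay rates in the Oseledets estimates provided by \ref{FDC}. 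The assumption that $r$ avoids the discrete set $\{\mathfrak{o}(\sigma,k)\}\cup\{\mathfrak{o}(\bar t)\}$ is what prevents resonances between the polynomial/$C^k$ scale and the Lyapunov scale, which would otherwise force logarithmic corrections.

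The hardest step, as I see it, is the spectral statement Theorem~\ref{thm:spect}: showing that the KZ cocycle really does extend to the enlarged space $\zip_r$ (whose elements have non-integer asymptotic powers at the saddle-loop endpoints, dictated by the angular sector structure $U_{\sigma,l}\subset M'$) with exactly the predicted Oseledets spectrum, and that the correction operator from \cite{Ma-Mo-Yo}, \cite{Fr-Ul}, \cite{Fr-Ki} adapts to this setting. Once that is granted, the reduction to a first-return coboundary problem and the assembly of the three obstruction families into a complete set of invariant distributions is relatively routine.
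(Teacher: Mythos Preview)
Your plan is correct and matches the paper's approach: reduce to the IET coboundary equation $\varphi_f=v\circ T-v$, use the vanishing of the $\mathfrak{C}^k_{\sigma,l}$ (via Theorem~1.1 of \cite{Fr-Ki2}) to place $\varphi_f$ in $C^{n+\pag}$, use the vanishing of the $\mathfrak{F}_{\bar t}$ together with the spectral decomposition (Theorem~\ref{thm;spdecomp}, which feeds into Theorem~\ref{thm:cohsolmain1}) to solve for $v\in C^r(I)$, and then lift via Theorem~1.2 of \cite{Fr-Ki2}. One small clarification: the $\mathfrak{d}^k_{\sigma,j}$ do not enter into the regularity of $\varphi_f$ itself but rather into the extension step from $v\in C^r(I)$ to $u\in C^r(M'_e)$; your sentence ``It would remain to check that a solution $u|_I$ \ldots\ extends'' is exactly where they are used, so keep the two roles separate when you write this up.
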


\begin{theorem}[optimal regularity]\label{thm:main2}
Let $\psi_\R$ be a locally Hamiltonian flow such that its restriction to a minimal component $M'$ satisfies the \ref{FDC}. Fix  $r>0$ and suppose that $f\in C^{k_r}(M)$ and there exists $u\in C^r(M'_e)$ such that $Xu=f$ on $M'_e$. Then
\begin{itemize}
\item[(i)] $\mathfrak{d}^k_{\sigma,j}(f)=0$ for all $(\sigma,k,j)\in\mathscr{TD}$ with $\widehat{\mathfrak{o}}(\mathfrak{d}^k_{\sigma,j})<r$;
\item[(ii)] $\mathfrak{C}^k_{\sigma,l}(f)=0$ for all $(\sigma,k,l)\in\mathscr{TC}$ with ${\mathfrak{o}}(\mathfrak{C}^k_{\sigma,l})<r$;
\item[(iii)] $\mathfrak{F}_{\bar{t}}(f)=0$ for all $\bar{t}\in\mathscr{TF}$ with ${\mathfrak{o}}(\mathfrak{F}_{\bar{t}})<r$.
\end{itemize}
\end{theorem}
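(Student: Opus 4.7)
This is the necessity direction: I must verify that any $u \in C^r(M'_e)$ solving $Xu = f$ automatically forces each functional $\mathfrak{d}^k_{\sigma,j}(f)$, $\mathfrak{C}^k_{\sigma,l}(f)$ and $\mathfrak{F}_{\bar t}(f)$ with order strictly below $r$ to vanish. Since the three families are of quite different natures, I would treat them separately, handling the two local families first and leaving the global family for last.

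For the local distributions, I work in a singular chart at $\sigma$ in which $H = \Im z^{m_\sigma}$. A direct computation in complex coordinates gives $Xu \cdot V = \{H,u\} = m_\sigma \bigl( z^{m_\sigma-1} \partial_{\bar z} u + \bar z^{m_\sigma-1} \partial_z u \bigr)$, so the Taylor coefficients $\partial^k(fV)/\partial z^j \partial \bar z^{k-j}(0,0)$ appearing in the definitions of $\mathfrak{d}^k_{\sigma,j}$ and $\mathfrak{C}^k_{\sigma,l}$ are explicit linear combinations of Taylor coefficients of $u$, shifted by $m_\sigma - 1$. The linear forms defining these two families were constructed in \cite{Fr-Ki2} precisely as the cokernel of this shift operator on jets: $\mathfrak{d}^k_{\sigma,j}$ corresponds to two-sided jets at $\sigma$ and $\mathfrak{C}^k_{\sigma,l}$ to the one-sided jet from the sector $U_{\sigma,l}$. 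The regularity thresholds $r > \widehat{\mathfrak{o}}(\sigma,k)$ and $r > \mathfrak{o}(\sigma,k)$ are exactly what is needed for the corresponding jet of $u$ to be well-defined, so vanishing follows. The bulk of this step amounts to citing the analogous necessity results already established in Part~I.

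The genuinely new content is showing $\mathfrak{F}_{\bar t}(Xu) = 0$ for $\bar t \in \mathscr{TF}$ with $\mathfrak{o}(\bar t) < r$. The plan has three substeps. (i) Pass to the first-return map of $\psi_\R$ on a Poincaré section transverse to the flow inside $M'$, converting $Xu = f$ into a coboundary equation $\varphi_f = \widetilde u \circ T - \widetilde u$ over the induced IET $T$, with $\widetilde u$ and $\varphi_f$ inheriting a quantitative piecewise regularity from $u \in C^r(M'_e)$ and $f \in C^{k_r}(M)$. (ii) Apply Rauzy-Veech renormalization: $\|u\|_{C^r(M'_e)}$ then controls the special Birkhoff sums $S^{(n)} \varphi_f$ on each interval of length $|I^{(n)}|$ by $C \|u\|_{C^r(M'_e)} |I^{(n)}|^{r'}$ for every $r' < r$. (iii) Decompose $\varphi_f$ along the Oseledets filtration of the Kontsevich-Zorich cocycle on the functional space identified in Theorem~\ref{thm:spect}; a component of exponent $\mu$ is amplified by $|I^{(n)}|^{-\mu/\lambda_1}$, so the bound from (ii) forces every component with $-\mu/\lambda_1 < r$ to vanish. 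By the construction of $\mathfrak{F}_{\bar t}$ in Section~\ref{sec:Forgendis} via the modified Marmi-Moussa-Yoccoz correction operator, $\mathfrak{F}_{\bar t}(f)$ is precisely the Oseledets projection with $-\mu/\lambda_1 = \mathfrak{o}(\bar t)$, and therefore vanishes.

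The main obstacle is substep (i) together with the identification in (iii). Because saddle loops are permitted, the Poincaré section cannot be taken as a global translation-surface transversal as in \cite{Fo1, Fo3}, and the induced observable $\varphi_f$ will in general be distribution-valued near the endpoints of IET intervals coming from the loops. Translating the $C^r$-norm of $u$ on the end compactification $M'_e$, which may carry different limit data on different angular sectors at each saddle, into the correct piecewise norm controlling $\varphi_f$ in the functional space of Theorem~\ref{thm:spect}, and then matching the resulting Oseledets projections to the definition of $\mathfrak{F}_{\bar t}$ through the correction operator, is the technical heart of the argument; this is where the spectral content of Theorem~\ref{thm:spect} and the construction of Section~\ref{sec:Forgendis} must be used in a closely coupled fashion.
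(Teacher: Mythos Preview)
Your overall strategy matches the paper's: handle the local functionals $\mathfrak{d}^k_{\sigma,j}$ and $\mathfrak{C}^k_{\sigma,l}$ by citing Part~I, then reduce the vanishing of $\mathfrak{F}_{\bar t}$ to the coboundary equation $\varphi_f = v\circ T - v$ with $v = u|_I \in C^r(I)$ and a spectral/invariance argument for the IET functionals $\mathfrak{f}_{\bar t}$.

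The difference is that what you flag as ``the main obstacle'' is in fact a non-issue, and missing this makes your substeps (i)--(iii) more complicated than necessary. The point you overlook is that the three families are not independent: once the vanishing of all $\mathfrak{C}^k_{\sigma,l}(f)$ with $\mathfrak{o}(\sigma,k)<r$ has been established (via Theorem~1.3 of \cite{Fr-Ki2}), Theorem~1.1 of \cite{Fr-Ki2} immediately gives $\varphi_f \in C^{n+\pa}(\sqcup_\alpha I_\alpha)$ with $n=\lceil r\rceil$, $a=n-r$. In particular $\varphi_f$ has no singular part at the endpoints up to this order, the correction sum in \eqref{def:rsk} is zero, and so $\mathfrak{s}_r(f)=\varphi_f$. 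This yields directly $\mathfrak{F}_{\bar t}(f)=\mathfrak{f}_{\bar t}(\mathfrak{s}_r(f))=\mathfrak{f}_{\bar t}(\varphi_f)$, with no need to analyse distribution-valued behaviour or to transport sectorial $C^r$-data through the correction operator.

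With this identification in hand, the remaining task is exactly your (ii)--(iii), but packaged as a single black box: the invariance clause of Theorem~\ref{thm;spdecomp} states that if $\varphi \in C^{n+\pag}$ is a coboundary $v\circ T - v$ with $v\in C^r(I)$, then $\mathfrak{f}_{\bar t}(\varphi)=0$ for every $\bar t\in\mathscr{T}_{a,n}$ with $\mathfrak{o}(\bar t)<r$. Applying this to $\varphi_f$ finishes the proof in one line. Your Birkhoff-sum/Oseledets argument is precisely how that invariance clause is proved inside Theorem~\ref{thm;spdecomp}, so you were re-deriving it rather than invoking it.
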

{
%In summary, all three kinds of invariant distributions provide an analytical background necessary to fully solve the regularity problem of solving the cohomological equation for locally Hamiltonian flows on minimal components.
The invariant distributions  $\mathfrak{d}^k_{\sigma,j}$ and $\mathfrak{C}^k_{\sigma,l}$ were already introduced in  \cite{Fr-Ki2}, where their significance for the regularity problem of solving the cohomological equation was highlighted.
More precisely, in \cite{Fr-Ki2} we proved two results (Theorems~1.2~and~1.3 therein) that are fundamental to the present work. They appear identical to the present Theorems~\ref{thm:main1}~and~\ref{thm:main2}, except that their last condition (iii)  is replaced by another condition stating that the cohomological equation $v\circ T-v=\varphi_f$ has a $C^r$-solution. Here, $T$ is an IET which is a Poincar\'e map for the flow, and $\varphi_f$ is the first-recurrence integral of the function $f$ (for a formal description of these objects, see Section~\ref{sec:coheqIET}).
Therefore, to prove Theorem~\ref{thm:main1} using \cite{Fr-Ki2}, we only need to prove that conditions (i), (ii), and (iii) together imply the existence of a $C^r$-solution to the cohomological equation $v\circ T-v=\varphi_f$. This constitutes the primary challenge of the current paper. Conversely, to prove Theorem~\ref{thm:main2} using \cite{Fr-Ki2}, it is enough to show that the existence of a $C^r$-solution to the cohomological equation for $\varphi_f$ implies the condition (iii), which is the vanishing of the {new invariant} distributions $\mathfrak{F}_{\bar{t}}$.

To define the distributions $\mathfrak{F}_{\bar{t}}$, we first define a family of distributions $\mathfrak{f}_{\bar{t}}$ acting on some spaces $C^{n+\pag}$ of  $n$-differentiable functions on exchanged intervals, so that the $n$-th derivative has singularities at the end of the intervals.
In fact, the most significant contribution of the present paper is the construction of functionals $\mathfrak{f}_{\bar{t}}$. These functionals demonstrate how to correct the function $\varphi_f$ by piecewise polynomial functions, ensuring that a sequence of renormalizations (arising from the Kontsevich-Zorich cocycle) of the (polynomially) corrected function decays exponentially at an appropriate rate. As will be shown later, this condition is needed to prove the existence of a solution $v$ of class $C^r$.
Using {other} results from \cite{Fr-Ki2}, we know that if $f$ satisfies conditions (i) and (ii), then $\varphi_f$ belongs to a space $C^{n+\pag}$.
However,  if we consider an arbitrary smooth function $f$ (i.e., without regard for conditions (i) and (ii)), we must correct the function $\varphi_f$  to remove singularities for its derivatives of order less than $n$.
To define an appropriate (singular) correction that eliminates the influence of these singularities, we must combine distributions $\mathfrak{f}_{\bar{t}}$ (cf.\ \eqref{def:corrsing})  with distributions $\mathfrak{C}^k_{\sigma,l}$ (cf.\ \eqref{def:rsk}) as weights.
Finally, $\mathfrak{F}_{\bar{t}}(f)$ is defined by applying $\mathfrak{f}_{\bar{t}}$ to the function $\varphi_f$ after applying the singular correction (see Definition~\ref{def:Ft}). Consequently, if conditions (i), (ii), and (iii) are all satisfied, the correction is trivial (due to (ii)), and $\mathfrak{f}_{\bar{t}}(\varphi_f)=0$ (due to (iii)). It follows that the sequence of renormalizations of $\varphi_f$ decays exponentially with an appropriate rate.

In summary, the  results presented in \cite{Fr-Ki2} enable us to reduce the original regularity problem of solving the cohomological equation $Xu=f$ for the flow to the regularity problem of solving the cohomological equation $v\circ T-v=\varphi_f$ for the IET. Furthermore, these results are also essential for defining the {invariant} distributions $\mathfrak{F}_{\bar{t}}$.

Theorem~\ref{thm:main1} provides a key tool for investigating the existence and regularity of solutions to the cohomology equation $Xu=f$ on the entire surface $M$, rather than only on its minimal components. As noted earlier, the surface $M$ decomposes into a finite number of minimal and periodic components. The solvability of the cohomological equation on periodic components boils down to considering additional obstructions (infinitely many) arising from periodic orbits and saddle connections (e.g., integrals of the function $f$ along these orbits). By gluing the solutions constructed on both minimal and periodic components, we obtain a framework for analyzing the existence and regularity of solutions to the cohomological equation on the whole surface.

%Then we take $\mathfrak{F}_{\bar{t}}(f)=\mathfrak{f}_{\bar{t}}(\varphi_f)$. To apply this way of defining the operator, we need to have precise knowledge of the regularity of $\varphi_f$, which also comes from \cite{Fr-Ki2}.
%In fact, the most important contribution of the present paper is to give a construction of functionals $\mathfrak{f}_{\bar{t}}$, which indicate how to correct the function $\varphi$ by piecewise polynomial functions so that the corrected function is a sufficiently regular boundary.
%
%In \cite{Fr-Ki2}, we already introduced the construction of invariant distributions $\mathfrak{d}^k_{\sigma,j}, \mathfrak{C}^k_{\sigma,l}$ and obtained the solutions of the equation without specifying the vanishing condition for $\mathfrak{F}_{\bar{t}}$.
}

%The main goal of this article is to quantify optimal regularity of solutions of cohomological equations for locally Hamiltonian flows on compact surface $M$ of genus $g\geq 1$.

%\subsection{Preliminary definitions}
%For every $\sigma\in \mathrm{Sd}(\varphi_\R)$, $k\geq 0$ and  $0\leq l< 2m_\sigma$
%the distribution  $ \mathfrak{C}^k_{\sigma,l}:C^k(M)\to\C$ is defined as follows:
%\[\mathfrak{C}^k_{\sigma,l}(f):=\sum_{\substack{0\leq i\leq k\\i\neq m_\sigma-1\operatorname{mod} m_\sigma\\i\neq k-(m_\sigma-1)\operatorname{mod} m_\sigma}}
%\theta_\sigma^{l(2i-k)}\binom{k}{i}\mathfrak{B}(\tfrac{(m_\sigma-1)-i}{m_\sigma},\tfrac{(m_\sigma-1)-k+i}{m_\sigma})\frac{\partial^{k}(f\cdot V)}{\partial z^i\partial\overline{z}^{k-i}}(0,0),\]
%where $\theta_\sigma$ is the principal $2m_\sigma$-th root of unity. The (beta-like) function $\mathfrak{B}(x,y)$ is defined for any pair $x,y$ of real numbers such that  $x,y\notin \Z$  as follows:
%\[\mathfrak{B}(x,y)=\frac{\pi e^{i\frac{\pi}{2}(x-y)}}{2^{x+y-2}}\frac{\Gamma(x+y-1)}{\Gamma(x)\Gamma(y)},\]
%where we adopt the convention $\Gamma(0)=1$ and $\Gamma(-n)=1/(-1)^n n!$.
%The real number
%$\mathfrak{o}(\mathfrak{C}^k_{\sigma,l})=\mathfrak{o}(\sigma,k)=\frac{k-(m_\sigma-2)}{m_\sigma}$
%is called the \emph{order} of  $\mathfrak{C}^k_{\sigma,l}$.\\

\subsection{Cohomological equations over IETs and a spectral result}\label{sec:coheqIET}
Let us consider the restriction of a locally Hamiltonian flow $\psi_\mathbb{R}$ on $M$ to its minimal component $M'\subset M$ and let $I\subset M'$ be a  smooth transversal curve.
We always assume that each end of $I$ is the first meeting point of a separatrix (that is not a saddle connection) emanating {from} a saddle (incoming or outgoing) with the curve $I$.
By minimality, $I$ is a global transversal, and the first return map $T:I\to I$ is an interval exchange transformation (IET) in the so-called standard coordinates on $I$.
We denote by $I_\alpha$, $\alpha\in\mathcal{A}$ the intervals exchanged by $T$, and by $\tau:I\to\R_{>0}\cup\{+\infty\}$ the first return time map to the curve $I$, also called the {\emph{roof function}}.
The roof function $\tau: I \rightarrow \R_{>0}\cup\{+\infty\}$ is smooth on the interior of each exchanged interval and  has
\emph{singularities} at discontinuities of $T$.
For any continuous observable $f:M\to\C$, we deal with the corresponding map $\varphi_f:I\to\C\cup\{\infty\}$ given by
\[\varphi_f(x)=\int_0^{\tau(x)}f(\psi_t x)dt.\]
If $u$ is a solution of the cohomological equation $Xu=f$, then
%As a first return map to the cross section $I$ of the locally Hamiltonian flow $\psi_\R$ on $M$, solutions of the equations for IET $T$ are firstly considered.
\begin{equation}\label{eq:coheqT}
v(Tx)-v(x)=\varphi_f(x) \text{ on }I,
\end{equation}
where $v$ is the restriction of $u$ to the curve $I$. Therefore, the existence and regularity of the solution to the cohomological equation \eqref{eq:coheqT}
 is clearly a  necessary condition for the existence and the same regularity of the solution to $Xu=f$. As shown in \cite[Theorem~1.2]{Fr-Ki2}, this is also a sufficient condition under additional assumptions related to {the vanishing of} certain distributions  $\mathfrak{C}^k_{\sigma,l}$ and $\mathfrak{d}^k_{\sigma,j}$  on  $f$.
 Moreover, the regularity of solution $u$ depends on the regularity of the solution $v$ and the vanishing of all the mentioned distributions up to some level of their order or hat-order. For this reason, in the present paper, we primarily focus on the cohomological equation $v\circ T-v=\varphi_f$.
The regularity of  $\varphi_f$ was completely understood in \cite{Fr-Ki2}. It was shown there that $\varphi_f\in C^{n+\pag}(\sqcup_{\alpha \in \mathcal{A}} I_\alpha)$, {i.e.,}\ is piecewise $C^{n+1}$ and its $n$-th derivative has polynomial (of degree at most $0<a<1$) or logarithmic (if $a=0$) singularities at  discontinuities of the IET $T$. The degree of smoothness $n$ depends on the maximal order of vanishing for the distributions  $\mathfrak{C}^k_{\sigma,l}$, see Theorem~1.1 in \cite{Fr-Ki2}.

For any $k\in \N\cup\{\infty\}$, we denote by $\Phi^k(\sqcup_{\alpha \in \mathcal{A}} I_\alpha)$ the space of functions of the form $\varphi_f$ for $f\in C^k(M)$. The main tool used to solve the {cohomological} equation \eqref{eq:coheqT} is a spectral analysis of the functional version (on $\Phi^k(\sqcup_{\alpha \in \mathcal{A}} I_\alpha)$) of the Kontsevich-Zorich cocycle $S(j)$ (see Section~\ref{SpecialBS} for the definition). A certain type of spectral analysis (for positive Lyapunov exponents) of the cocycle $S(j)$ was already used in \cite{Fr-Ul} and \cite{Fr-Ki} to fully understand {the deviation of ergodic integrals for a.a.\ locally Hamiltonian flows on smooth observables.}
Our techniques are motivated by the correction operators invented by Marmi-Moussa-Yoccoz in \cite{Ma-Mo-Yo} (see also \cite{Ma-Mo-Yo2} and \cite{Ma-Yo}) in their simplest version (without singularities) and later extended in \cite{Fr-Ul} and \cite{Fr-Ki}.
%\textcolor{red}{Comment3-Add: more about KZ-cocycle here?
%}

To represent formally the main spectral result, let us consider an equivalence relation $\sim$ on the set of triples $\mathscr{TC}$, introduced in \cite{Fr-Ki2}.
Two triples $(\sigma,k,l),(\sigma,k,l')\in\mathscr{TC}$ are equivalent with respect to the equivalence relation $\sim$ if the angular sectors $U_{\sigma,l}$ and $U_{\sigma,l'}$ are connected through a chain of saddle loops emanating from the saddle $\sigma$. For every equivalence class $[(\sigma,k,l)]\in \mathscr{TC}/\sim$, let
\[\mathfrak{C}_{[(\sigma,k,l)]}(f):=\sum_{(\sigma,k,l')\sim (\sigma,k,l)}\mathfrak{C}^k_{\sigma,l}(f).\]
For any $k\geq 0$, let $\Gamma_k(\sqcup_{\alpha \in \mathcal{A}} I_\alpha)$ be the space of functions which are polynomials of degree at most $k$ on each interval $I_\alpha$, $\alpha\in \mathcal{A}$. This space plays an important role in solving cohomological equations in \cite{Fo-Ma-Ma}. We will define two families of functions $\{h_{\bar{t}}:\bar{t}\in\mathscr{TF}^*\}$ and $\{\xi_{[(\sigma,k,l)]}:[(\sigma,k,l)]\in \mathscr{TC}/\sim\}$ {such that
\begin{gather*}
 h_{\bar{t}}\in \Gamma_k(\sqcup_{\alpha \in \mathcal{A}} I_\alpha) \text{ if } \bar{t}=(k,\,\cdot\, ,\,\cdot\,);\\
\xi_{[(\sigma,k,l)]} \in C^{n+\pag}(\sqcup_{\alpha \in \mathcal{A}} I_\alpha) \text{ with }n=\lceil\mathfrak{o}(\sigma,k)\rceil,\ a=\mathfrak{o}(\sigma,k)-n,
\end{gather*}}
which are the keys to understanding the spectral properties of the Kontsevich-Zorich cocycle $S(j)$.
The main spectral result is as follows.
\begin{theorem}[spectral theorem]\label{thm:spect}
Let $\psi_\R$ be a locally Hamiltonian flow such that its restriction to a minimal component $M'$ satisfies the \ref{FDC}. For every $r>-\frac{m-2}{m}$ and $f\in C^{k_r}(M)$, we have
\[\varphi_f=\sum_{\substack{\bar{t}\in\mathscr{TF}^*\\ \mathfrak{o}(\bar{t})<r}}\mathfrak{F}_{\bar t}(f)h_{\bar{t}}+\sum_{\substack{[(\sigma,k,l)]\in\mathscr{TC}/\sim\\ \mathfrak{o}(\sigma,k)<r}}\mathfrak{C}_{[(\sigma,k,l)]}(f)\xi_{[(\sigma,k,l)]}+\mathfrak{r}_r(f)\]
so that {
\begin{gather}
 \label{eq:expt}
 \lim_{j\to\infty}\frac{1}{j}\log\|S(j)h_{\bar{t}}\|_{\sup}=\lim_{j\to\infty}\frac{1}{j}\log\|S(j)h_{\bar{t}}\|_{L^1}=-\lambda_1\mathfrak{o}(\bar{t});
\\
\label{eq:expsi1}
\lim_{j\to\infty}\frac{1}{j}\log\|S(j)\xi_{[(\sigma,k,l)]}\|_{L^1}=-\lambda_1\mathfrak{o}(\sigma,k);\\
\label{eq:expsi2}
\lim_{j\to\infty}\frac{1}{j}\log\|S(j)\xi_{[(\sigma,k,l)]}\|_{\sup}=-\lambda_1\mathfrak{o}(\sigma,k)\text{ if }\mathfrak{o}(\sigma,k)>0;\\
\label{eq:exrem}
\begin{split}
&\limsup_{j\to\infty}\frac{1}{j}\log\|S(j)\mathfrak{r}_r(f)\|_{\sup}\leq -\lambda_1 r \text{ if }r>0\text{ and }\\
&\limsup_{j\to\infty}\frac{1}{j}\log\|S(j)\mathfrak{r}_r(f)\|_{L^1}\leq -\lambda_1 r \text{ if }r\leq 0.
\end{split}
\end{gather}}
\end{theorem}
This theorem can be seen as a counterpart to spectral results from \cite{FGL} in {a} general (non-pseudo-Anosov) setting.
However, the most important advantage of Theorem~\ref{thm:spect} is that it (more precisely, its preceding version, Theorem~\ref{thm;spdecomp}) is used to solve (in Section~\ref{sec:highreg}) the regularity of solutions to the cohomological equation $v\circ T-v=\varphi_f$  (see Theorem~\ref{thm:cohsolmain1}).

{ The sequence of renormalizations  $(S(j)(\varphi_f))_{j\geq 0}$ (the Kontsevich-Zorich cocycle) arises by inducing the function (cocycle) $\varphi$ on a decreasing sequence of intervals $(I^{(j)})_{j\geq 0}$ that shrink to zero. As Zorich observed in \cite{Zo}, the sequence $(S(j)(\varphi_f))_{j\geq 0}$ is useful for controlling the growth of arbitrary Birkhoff sums of the function $\varphi_f$ by appropriately decomposing such sums into components of the form $S(j)(\varphi_f)$.  If the sequence $(S(j)(\varphi_f))_{j\geq 0}$ exhibits exponential decay, then all Birkhoff sums are uniformly bounded (for a.e.\ $T$), and classical Gottschalk-Hedlund arguments ensure a continuous solution to the equation $v\circ T-v=\varphi_f$. To improve the regularity of the solution, we modify techniques concerning orbit decompositions and space decompositions developed by Marmi-Yoccoz in \cite{Ma-Yo}, relating the regularity of the solution to the decay rate of the sequence $(S(j)(\varphi_f))_{j\geq 0}$. More precisely, if $S(j)(\varphi_f)$ decreases faster than $e^{-r\lambda_1 j}$, then the solution $v$ is of class $C^r$.

Our spectral theorem (Theorem~\ref{thm:spect}) states that if distribution vanishing conditions (ii) and (iii) are met, then $S(j)(\varphi_f)$ decreases faster than $e^{-(r-\epsilon)\lambda_1 j}$ for any small $\epsilon>0$. Therefore, $v\in C^{r-\epsilon}$ for any small $\epsilon>0$. This highlights the fundamental importance of the spectral theorem and the decay rate of the renormalization sequence.
}

%\textcolor{red}{
%In Section~
%\ref{sec;coh-iet}, we modify techniques regarding the decomposition of orbits for IETs developed by Marmi-Yoccoz in \cite{Ma-Yo}, to study the regularity of solutions in the H\"older scale.
%%Comment2, Add: more about regularity here...
%The classical Gottschalk-Hedlund Theorem guarantees a continuous solution, but precise control of the Birkhoff sums is necessary for H\"older regularity. Our improvement is to discover a link between the vanishing of new invariant distributions $\mathfrak{F}_{\bar t}$ and the growth rate of the Kontsevich-Zorich cocycle. More precisely, when exponential growth of the special Birkhoff sums cocycle is provided for a function $\varphi_f$ after adjusting correction operators,
% this guarantees a higher regularity of the solution and a precise H\"older exponent. These also depend on the size of order $\mathfrak{o}(\bar{t})$, which indicates the level of vanishing of the invariant distributions $\mathfrak{F}_{\bar t}$.
%%(appeared in Theorem \ref{thm:spect})
%}

\subsection{A new family of invariant distributions via extended correction operators}
In \cite{Fr-Ki2}, the authors defined two families of invariant distributions, $\mathfrak{C}^k_{\sigma,l}$ and $\mathfrak{d}^k_{\sigma,j}$, inspired by local analysis of higher order derivatives {of} $\varphi_f$ around {the} ends of intervals exchanged by $T$ {(see also recent developments concerning the deviation of ergodic integrals for locally Hamiltonian flows in the most general setting \cite[Theorem 2.1]{BFT}).
%They improved the previous results of \cite{Fr-Ki} by applying our techniques together with Theorem 5.6 in \cite{Fr-Ki2})}.
In the current paper, we introduce a new family $\mathfrak{f}_{\bar{t}}$, $\bar{t}\in\mathscr{TF}^*$, of invariant distributions over IETs and transport them to the level of the surface $M$ by composing with the operator $f\mapsto \varphi_f$.
The resulting distributions $\mathfrak{F}_{\bar{t}}$, $\bar{t}\in\mathscr{TF}^*$, generalize (emulate) the notion of Forni's invariant distributions (associated with Lyapunov exponents of {the Kontsevich}-Zorich cocycle). However, the method of construction is entirely different from the original {one}.

The invariant distributions $\mathfrak{f}_{\bar{t}}$ over IETs are defined on the space $C^{n+\pag}(\sqcup_{\alpha \in \mathcal{A}} I_\alpha)$ (if $\mathfrak{o}(\bar{t})<n-a$). In \cite{Fr-Ki}, the authors constructed invariant distributions for $n=0$, using correction through piecewise constant functions. We constructed so-called {\emph{correction operators}} $\mathfrak{h}_j$, $1\leq j\leq g$, but the construction was limited to the unstable subspace (corresponding to positive Lyapunov exponents) of the Kontsevich-Zorich cocycle. The original idea of correcting smooth functions was introduced by Marmi-Moussa-Yoccoz in \cite{Ma-Mo-Yo} and then developed in \cite{Fr-Ul} and \cite{Fr-Ki}.
%\textcolor{red}{In the proof of our results, we also introduce an extended version of the correction operators (see Section \ref{sec;correction}). Certain sums and series \textcolor{green}{(defined in Definition \ref{def;dio-series} and \ref{def;dio-series2})} relying
% on the matrices of the (accelerated) Kontsevich-Zorich cocycle play a central role in controlling the growth of  Birkhoff sums of $\varphi_f$.
% We show that these quantities are well-defined and grow in a controlled way on a filtration of linear subspaces (Oseledets filtration) under the \ref{FDC}.% (see Proposition \ref{prop;FDCbound})
}

In this paper, there are three types of new functionals that arise from other parts of the Oseledets splitting ($+/-/0$ denoting unstable/stable/central, {respectively}) associated with Lyapunov exponents (see Section \ref{sec;inv-distpag}).
Their construction is based on using new correction operators $\mathfrak{h}_{-j,i}$, $\mathfrak{h}^*_j$, $\mathfrak{h}_{0}$ and their higher-order derivatives.
The new correction operators allow us to correct  $\varphi_f$ by piecewise constant functions, not only related to unstable vectors as before, but also {to} central and stable vectors.
The construction of these three new types of correction operators is the most important technical novelty of the article, which allows {for} defining the counterparts of Forni's invariant {distributions} for flows with saddle loops. Together with the previously defined local invariant distributions $\mathfrak{C}^k_{\sigma,l}$ and $\mathfrak{d}^k_{\sigma,j}$, they provide complete and optimal knowledge of the regularity of solutions on the H\"older scale. This optimality of regularity seems to be the most important overall novelty of the article.

\subsection{Structure of the paper}
In \S~\ref{sec;IET}, we recall some basic notions related to
IETs, Rauzy-Veech induction, and accelerations of the Kontsevich-Zorich cocycle.
In \S~\ref{sec;DC}, we review {the} Oseledets filtration of the accelerated KZ-cocycles and formulate the corresponding  Full Filtration Diophantine Condition (\ref{FDC}). In the next section, we set up a new infinite series which are necessary for constructing extended correction operators.
In \S~\ref{sec;correction}, extended correction operators $\mathfrak{h}_{-j,i}$, $\mathfrak{h}^*_j$, and $\mathfrak{h}_{0}$ are constructed, and their basic properties are proved.
In \S~\ref{sec;spec}, we compute {the} Lyapunov exponents of the renormalization cocycle $S(j)$ for piecewise polynomial function $h_{i,l}$, $c_{s,l}$, and $h_{-j,l}$. These three classes of functions are then used to construct the functionals $\mathfrak{f}_{\bar t}$. The culmination of this section is the proof of the spectral result (Theorem~\ref{thm;spdecomp}), which is the main component of the proof of Theorem~\ref{thm:spect}.
Cohomological equations  for {IETs} and the  regularity of their solutions are studied in \S~\ref{sec;coh-iet}.
Finally, in \S~\ref{sec;last}, we conclude the regularity of solutions to cohomological equations for locally Hamiltonian flows. The main results are obtained from the main theorems in \cite{Fr-Ki2} {(the existence of solution under the vanishing of local invariant distributions $\mathfrak{C}^k_{\sigma,l}$ and $\mathfrak{d}^k_{\sigma,j}$) and the results of \S~\ref{sec;coh-iet} (involving the appearance of {global} invariant distributions $\mathfrak{F}_{\bar{t}}$) for the cohomological equations  for IETs.}

\section{Interval exchange transformations (IET)}\label{sec;IET}
% To define an IET we adopt the notation from \cite{ViB}.
%We adopt the notation from \cite{ViB}.
 Let $\mathcal{A}$
be a $d$-element alphabet and let $\pi=(\pi_0,\pi_1)$ be a pair of
bijections $\pi_\vep:\mathcal{A}\to\{1,\ldots,d\}$ for $\vep=0,1$.
For every $\lambda=(\lambda_\alpha)_{\alpha\in\mathcal{A}}\in
\R_{>0}^{\mathcal{A}}$, let
$|\lambda|:=\sum_{\alpha\in\mathcal{A}}\lambda_\alpha$, $I:=\left[0,|\lambda|\right)$, and for every $\alpha\in\mathcal{A}$,
\begin{gather*}
 I_{\alpha}:=[l_\alpha,r_\alpha),\text{ where
}l_\alpha=\sum_{\pi_0(\beta)<\pi_0(\alpha)}\lambda_\beta,\;\;\;r_\alpha
=\sum_{\pi_0(\beta)\leq\pi_0(\alpha)}\lambda_\beta.
%\\
% I'_{\alpha}:=[l'_\alpha,r'_\alpha),\text{ where
%}l'_\alpha=\sum_{\pi_1(\beta)<\pi_1(\alpha)}\lambda_\beta,\;\;\;r'_\alpha
%=\sum_{\pi_1(\beta)\leq\pi_1(\alpha)}\lambda_\beta.\
\end{gather*}
We denote by
$\mathcal{S}^0_{\mathcal{A}}$ the subset of \emph{irreducible} pairs,
{i.e.,}\ $\pi_1\circ\pi_0^{-1}\{1,\ldots,k\}\neq\{1,\ldots,k\}$ for $1\leq
k<d$.  We will always assume that $\pi \in
\mathcal{S}^0_{\mathcal{A}}$.
An \emph{interval exchange transformation} (IET) $T = T_{(\pi,\lambda)}:I\to I$ is a piecewise translation determined by the data $(\pi, \lambda )$, so that $T_{(\pi,\lambda)}$ translates the interval $I_{\alpha}$ for each $\alpha \in \mathcal{A}$ so that $T(x)=x+w_\alpha$ for $x\in I_\alpha$, where
$w=\Omega_\pi\lambda$ and $\Omega_\pi$ is  the matrix
$[\Omega_{\alpha\,\beta}]_{\alpha,\beta\in\mathcal{A}}$ given by
\[\Omega_{\alpha\,\beta}=
\left\{\begin{array}{cl} +1 & \text{ if
}\pi_1(\alpha)>\pi_1(\beta)\text{ and
}\pi_0(\alpha)<\pi_0(\beta),\\
-1 & \text{ if }\pi_1(\alpha)<\pi_1(\beta)\text{ and
}\pi_0(\alpha)>\pi_0(\beta),\\
0& \text{ in all other cases.}
\end{array}\right.\]
%We use also an alternative description of IET. Let  $\widehat{I}=(0,|I|]$ and denote by $\widehat{T}_{(\pi,\lambda)}:\widehat{I} \to \widehat{I}$ the
%exchange of the intervals
%$\widehat{I}_\alpha:=(l_\alpha,r_\alpha]$, $\alpha\in\mathcal{A}$,
%i.e.\ $\widehat{T}_{(\pi,\lambda)}x=x+w_\alpha$  for $x\in \widehat{I}_\alpha$.
%Let $End(T)$ stand for the set of ends of the intervals
%$I_\alpha, \alpha\in\mathcal{A}$.
 An IET  $T_{(\pi,\lambda)}$
satisfies the {\em Keane condition} (see \cite{Keane}) if
$T_{(\pi,\lambda)}^m l_{\alpha}\neq l_{\beta}$ for all $m\geq 1$
and for all $\alpha,\beta\in\mathcal{A}$ with $\pi_0(\beta)\neq 1$.

\subsection{Rauzy-Veech induction}\label{sec;RVI}
%The main tool to study the asymptotics of the cocycle $\varphi_f$
Rauzy-Veech induction (see \cite{Ra}) and its accelerations are standard renormalization procedures for IETs. For general background, we refer readers to the lecture notes by Yoccoz \cite{Yo,Yoc} or Viana \cite{ViB}.
%They are mainly used in our work to study asymptotics of the cocycle $\varphi_f$.

Let $T=T_{(\pi,\lambda)}$ be an interval exchange transformation satisfying Keane's condition. Let
$\widetilde I:= \big[0,\max (l_{\pi_0^{-1}(d)}, l_{\pi_1^{-1}(d)}) \big)$ and
denote by $\mathcal{R}(T) = \widetilde T : \widetilde I \to \widetilde I$ the first return map of $T$ to the interval $\widetilde I$.
Let
\begin{eqnarray*}
\epsilon =\epsilon(\pi,\lambda) = \left\{\begin{array}{cll}
0& \text{ if
}&\lambda_{\pi_0^{-1}(d)} > \lambda_{\pi_1^{-1}(d)},\\
1& \text{ if
}&\lambda_{\pi_0^{-1}(d)} < \lambda_{\pi_1^{-1}(d)} \end{array} \right.
\end{eqnarray*}
and
\begin{equation*}%\label{def;a(t)}
A(T) = A(\pi,\lambda) = Id+E_{\pi_{\epsilon}^{-1}(d)\,\pi_{1-\epsilon}^{-1}(d)} \in SL_{\mathcal A}(\Z),
\end{equation*}
where $Id$ is the identity matrix and $(E_{ij})_{kl} = \delta_{ik}\delta_{jl}$, using the Kronecker delta notation.
%If an IET $T$ satisfies Keane's condition, then $\widetilde T$ also satisfies Keane's condition.
%Let us consider a pair
%$\widetilde{\pi}=(\widetilde{\pi}_0,\widetilde{\pi}_1)\in\mathcal{S}^0_{\mathcal{A}}$,
%where
%\begin{eqnarray}\label{def:pi}
%\begin{split}
%\widetilde{\pi}_\vep(\alpha)&=&\pi_\vep(\alpha)
%\text{ for all }\alpha\in\mathcal{A}\text{ and }\\
%\widetilde{\pi}_{1-\vep}(\alpha)&=&\left\{
%\begin{array}{cll}
%\pi_{1-\vep}(\alpha)& \text{ if
%}&\pi_{1-\vep}(\alpha)\leq\pi_{1-\vep}\circ\pi^{-1}_\vep(d),\\
%\pi_{1-\vep}(\alpha)+1& \text{ if
%}&\pi_{1-\vep}\circ\pi^{-1}_\vep(d)<\pi_{1-\vep}(\alpha)<d,\\
%\pi_{1-\vep}\circ\pi^{-1}_\vep(d)+1& \text{ if
%}&\pi_{1-\vep}(\alpha)=d.\end{array} \right.
%\end{split}
%\end{eqnarray}
Then,  by Rauzy (see \cite{Ra}), $\widetilde T$ is also an IET on $d$-intervals satisfying Keane's condition, and
$\widetilde T = T_{(\widetilde \pi,\widetilde \lambda)}$ for some $\widetilde{\pi}=(\widetilde{\pi}_0,\widetilde{\pi}_1)\in\mathcal{S}^0_{\mathcal{A}}$ and
$\tilde \lambda = A^{-1}(\pi,\lambda)\lambda$.
Moreover, the renormalized version of the matrix $\Omega_{\widetilde{\pi}}$ is of the form
\begin{equation*}
\Omega_{\widetilde{\pi}}=A^t(\pi,\lambda)\cdot\Omega_{\pi}\cdot A(\pi,\lambda).
\end{equation*}
%It follows that $\ker \Omega_\pi =  A(\pi,\lambda) \ker \Omega_{\tilde \pi}$. Since $\Omega_\pi^t = -\Omega_\pi$,
Thus, taking $H(\pi) = \Omega_\pi(\R^{\mathcal A})$, we have $H(\tilde \pi) = A^t(\pi,\lambda) H(\pi)$.

\subsection{Kontsevich-Zorich cocycle and its accelerations}\label{sec;KZco}
Let $T=T_{(\pi,\lambda)}$ be an IET satisfying Keane's condition.
%A sequence of IETs $(\mathcal{R}^n(T))_{n \geq 0}$ is generated in the following way.
For every $n \geq 1$, we define
\[A^{(n)}(T): = A(T) \cdot A(\mathcal{R}(T)) \cdot \dotsc \cdot A(\mathcal{R}^{n-1}(T))\in  SL_{\mathcal A}(\Z).\]
This defines a multiplicative cocycle $A$ over the transformation $\mathcal{R}$ and it is called the \emph{Kontsevich-Zorich cocycle}.
Let $(n_k)_{k\geq 0}$ be an increasing sequence of integers with $n_0=0$, called an \emph{accelerating sequence}.
For every $k \geq 0$, let $T^{(k)}:= \mathcal{R}^{n_k}(T) : I^{(k)} \to I^{(k)}$.
Then
$T^{(k)}:I^{(k)}\to I^{(k)}$ is the first return map of
$T:I\to I$ to the interval $I^{(k)}\subset I$.
The sequence of IETs $(T^{(k)})_{k\geq 0}$ gives an
\emph{acceleration} of the Rauzy-Veech renormalization procedure associated with the {accelerating sequence} $(n_k)_{k\geq0}$.
%It is an increasing sequence of integers with $n_0=0$.

Let $(\pi^{(k)},\lambda^{(k)})$ be the pair defining $T^{(k)}$ and let $I_\alpha^{(k)}$, $\alpha \in \mathcal A$, be the intervals exchanged by $T^{(k)}$. Then $ \lambda^{(k)} = (\lambda_\alpha^{(k)})_{\alpha \in \mathcal A}$, where $\lambda^{(k)}_\alpha=|I_\alpha^{(k)}|$ for $\alpha \in \mathcal A$.

For every $k\geq 0$, define $Z(k+1):=A^{(n_{k+1}-n_{k})}(\mathcal{R}^{n_k}(T))^t$. We then have
\[\lambda^{(k)} = Z(k+1)^t\lambda^{(k+1)}, \quad k \geq 0.\]
Following notations from \cite{Ma-Mo-Yo}, for each $0 \leq k < l$, let
\[
Q(k,l) = Z(l)\cdot Z(l-1) \cdot \dotsc \cdot Z(k+2) \cdot Z(k+1) = A^{(n_{l}-n_{k})}(\mathcal{R}^{n_k}(T))^t.
\]
Then, $Q(k,l) \in SL_{\mathcal A}(\Z)$, and $ \lambda^{(k)} = Q(k,l)^t\lambda^{(l)}$. We write $Q(k) = Q(0,k)$.

%In what follows, the norm of a vector is defined as the sum of
%the absolute value of coefficients and for any matrix
%$B=[B_{\alpha\beta}]_{\alpha,\beta\in\mathcal{A}}$ we set
%$\|B\|=\max_{\alpha\in\mathcal{A}}\sum_{\beta\in\mathcal{A}}|B_{\alpha\beta}|$.
%It follows that
%\begin{equation}\label{eq:Ikl}
%|I^{(k)}|  \leq |I^{(l)}|\norm{Q(k,l)}.
%\end{equation}

\subsection{Rokhlin towers related to accelerations}\label{sec;Rokhlin}
Note that $Q_{\alpha\beta}(k)$ is the time spent by
any point of $I^{(k)}_{\alpha}$ in $I_{\beta}$ until it
returns to $I^{(k)}$. Then
$Q_{\alpha}(k)=\sum_{\beta\in\mathcal{A}}Q_{\alpha\beta}(k)$
is the first return time of points from $I^{(k)}_{\alpha}$ to
$I^{(k)}$.
The IET $T:I\to I$ thus splits into a set of $d$ {\emph{Rokhlin towers}} of the form
\[\big\{ T^i (I^{(k)}_{\alpha}), \ 0\leq i < Q_{\alpha}(k)\big\},\quad \alpha\in \mathcal{A},\]
so that the $Q_{\alpha}(k)$ floors of the $\alpha$-th tower are pairwise disjoint intervals.

%We also write
%\begin{equation}\label{eqn;towerdecom}
%I^{(k)}= \bigcup_{\beta\in\mathcal{A}} \bigcup_{i=0}^{Q_{\beta}(k,l)-1} (T^{(k)})^i  I^{(l)}_\beta.
%\end{equation}

\section{Diophantine conditions for IETs { and locally Hamiltonian flows}}\label{sec;DC}
In this section, we introduce a new Diophantine condition for IETs, which is a full-measure condition on the set of IETs. The Diophantine condition is a modified version of the previously introduced one in \cite{Fr-Ki} (see also \cite{Fr-Ul2}), {referred to as the} Filtration Diophantine condition (FDC). {This condition} is improved by extending the Oseledets filtration to stable and central subspaces. Based on this condition, we show that certain series involving matrices of the accelerated cocycle grows in a controlled manner. %(see also \cite[\S 3]{Fr-Ki}).

\subsection{Oseledets filtration}\label{sec;OF}
Fix $\pi\in \mathcal{S}^0_{\mathcal{A}}$.
Suppose that there exist $\lambda_1>\ldots > \lambda_g>\lambda_{g+1}=0$ such that for a.e.\ IET $(\pi,\lambda)$ there exists a filtration of linear subspaces (Oseledets filtration)
\begin{gather}\label{eq:flagsp}
\begin{split}
\{0\}=E_{0}(\pi,\lambda)\subset E_{-1}(\pi,\lambda)\subset\ldots\subset E_{-g}(\pi,\lambda)\subset E_{cs}(\pi,\lambda)\\
=E_{g+1}(\pi,\lambda)\subset E_{g}(\pi,\lambda)\subset\ldots\subset E_{1}(\pi,\lambda)=\Gamma:=\R^{\mathcal{A}}
\end{split}
\end{gather}
such that for every  $1\leq i\leq g$, we have
\begin{align}\label{eq:Oscond}
\begin{split}
&\lim_{n\to+\infty}\frac{\log\|Q(n)h\|}{n}=\lambda_{-i}:=-\lambda_i\text{ for all } h\in E_{-i}(\pi,\lambda)\setminus E_{-i+1}(\pi,\lambda),\\
&\lim_{n\to+\infty}\frac{\log\|Q(n)h\|}{n}=0\text{ for all } h\in E_{cs}(\pi,\lambda)\setminus E_{-g}(\pi,\lambda),\\
&\lim_{n\to+\infty}\frac{\log\|Q(n)h\|}{n}=\lambda_i\text{ for all } h\in E_{i}(\pi,\lambda)\setminus E_{i+1}(\pi,\lambda),\\
&\dim E_{-i}(\pi,\lambda)-\dim E_{-i+1}(\pi,\lambda)=\dim E_{i}(\pi,\lambda)-\dim E_{i+1}(\pi,\lambda)=1.
\end{split}
\end{align}
Suppose that there exists a filtration of linear subspaces which is complementary to the Oseledets filtration \eqref{eq:flagsp}:
\begin{align}\label{eq:osel}
\begin{split}
&\{0\}=U_1\subset U_2\subset\ldots \subset U_{g}\subset U_{g+1}\subset U_{-g}\subset\ldots \subset U_{-1}\subset U_0=\Gamma\\
&\text{such that }U_{g+1}\subset H(\pi) \text{ and }E_j(\pi,\lambda)\oplus U_j = \Gamma\text{ for } -g\leq j\leq g+1.
\end{split}
\end{align}
As $E_{-g}\oplus U_{g+1}=H(\pi)$, $U_{j+1}=U_{j}\oplus (U_{j+1}\cap E_{j})$ and $\dim (U_{j+1}\cap E_{j})=1$, for every $ j\in\pm\{1,\ldots, g\}$, there exists $h_j\in U_{j+1}\cap E_{j}$ such that
\[h_j\in H(\pi),\quad U_{j+1}=U_{j}\oplus\R h_j\text{ and }\lim_{n\to+\infty}\frac{\log\|Q(n)h_j\|}{n}=\lambda_j.\]
Let $c_1,\ldots, c_{\gamma-1}$ be a basis of $U_{-g}\cap E_{g+1}$.
Then, for every $2\leq j\leq g+1$, the linear subspace $U_j\subset \Gamma$ is  generated by $h_1,\ldots, h_{j-1}$
and for every $0\leq j\leq g$, the linear subspace $U_{-j}\subset \Gamma$ is  generated by $h_1,\ldots, h_{g}$, $c_1,\ldots, c_{\gamma-1}$ and
$h_{-g},\ldots, h_{-j-1}$. Moreover,
\begin{equation}\label{neq:posexph}
\text{if $0\neq h \in U_j$ then }\lim_{n\to+\infty}\frac{\log\|Q(n)h\|}{n}\geq \lambda_{j-1},
\end{equation}
where $\lambda_{-g-1}=-\lambda_{g+1}=0$.
%\end{remark}

{
%and
%\begin{align*}
%\lim_{l\to+\infty}\frac{\log\|Q(k,l)h\|}{l}&=-\lambda_i\text{ for all } h\in E_{-i}^{(k)}\setminus E_{-i+1}^{(k)};\\
%\lim_{l\to+\infty}\frac{\log\|Q(k,l)h\|}{l}&=0\text{ for all } h\in E_{g+1}^{(k)}\setminus E_{-g}^{(k)};\\
%\lim_{l\to+\infty}\frac{\log\|Q(k,l)h\|}{l}&=\lambda_i\text{ for all } h\in E_{i}^{(k)}\setminus E_{i+1}^{(k)}.
%\end{align*}
%For each $k\geq 0$,
For any $k\geq 0$, let $\Gamma^{(k)} \subset L^1(I^{(k)})$ be the subspace of functions that are constant on $I_\alpha^{(k)} \subset I^{(k)}$, for $\alpha \in \mathcal A$. Then, we identify any function $\sum_{\alpha \in \mathcal A} h_\alpha \chi_{I_\alpha^{(k)}} \in \Gamma^{(k)}$ with the vector $h  = (h_\alpha)_{\alpha \in \mathcal A} \in \R^{\mathcal{A}}$. We also write $\Gamma = \Gamma^{(0)}$.

For any $k\geq 0$ and $-g\leq j\leq g+1$, let $E^{(k)}_j:=Q(k)E_j$ and $U^{(k)}_j:=Q(k)U_j$. Then,
$
E^{(k)}_j\oplus U^{(k)}_j=\Gamma^{(k)}.
$
For  any choice of a complementary filtration \eqref{eq:osel}, all $0\leq k\leq l$, and every $-g\leq j\leq g+1$, we consider
the restrictions of the operator $Q(k,l):\Gamma^{(k)}\to \Gamma^{(l)}$ given by
\begin{gather*}
Q|_{E_j}(k,l):E_{j}^{(k)}\to E_{j}^{(l)},\quad
Q|_{U_j}(k,l):U_{j}^{(k)}\to U_{j}^{(l)}.
\end{gather*}
%and the corresponding projections
%\begin{gather*}
%P^{(k)}_{E_j}: \Gamma^{(k)} \to E_{j}^{(k)},\quad P^{(k)}_{U_j}: \Gamma^{(k)} \to U_{j}^{(k)},\text{ i.e. }P^{(k)}_{E_j}\oplus P^{(k)}_{U_j}=\operatorname{Id}.
%\end{gather*}
%Then for all $0\leq k\leq l$ we have
%\[E_{j}^{(l)}=Q(k,l)E_{j}^{(k)}, \quad U_{j}^{(l)}=Q(k,l)U_{j}^{(k)}.\]
}

\subsection{Rokhlin Tower Condition and Filtration Diophantine Condition}\label{sec;FDC-RTC}
The following Rokhlin Towers Condition (RTC) was introduced in \cite{Fr-Ul2}.

\begin{definition}[RTC]\label{def:RTC}
An IET $T_{(\pi,\lambda)}$ together with  an acceleration  satisfies  RTC if there exists a constant $0<\delta<1$ such that
\begin{align}
 \tag{RT} \label{def:FDC-g}
\begin{split}
& \text{for any $k\geq 1,$ there exists a number $0<p_{k}\leq \min_{\alpha\in\mathcal{A}}Q_\alpha(k)$} \text{ such that}   \\
&  \{T^iI^{(k)}:0\leq i<p_k\} \text{ is a Rokhlin tower of intervals with measure $\geq \delta|I|$.}
\end{split}
\end{align}
\end{definition}
For any  sequence $(r_n)_{n\geq 0}$ of real numbers, and for all $0\leq k\leq l$, we will use the notation $r(k,l):=\sum_{k\leq j<l}r_j$.

\begin{definition}[\customlabel{FDC}{FFDC}]\label{def;FDC}
An IET $T : I \to I$ satisfying Keane's condition and being Oseledets-generic ({i.e.,}\ there is a filtration of linear subspaces \eqref{eq:flagsp} satisfying \eqref{eq:Oscond}), satisfies
the \emph{Full Filtration Diophantine Condition (FFDC)} if for every $\tau > 0$, there exist constants  $C,\kappa\geq 1$, an accelerating sequence $(n_k)_{k\geq0}$, a  sequence of natural numbers $(r_n)_{n\geq 0}$ with $r_0 = 0$, and
a complementary filtration $(U_j)_{{-g} \leq j \leq g+1}$ (satisfying \eqref{eq:osel}) such that \eqref{def:FDC-g} holds and:
\begin{align}
\lim_{n\to+\infty}\frac{r(0,n)}{n}&\in(1,1+\tau)\label{def;sdc0},\\
\norm{Q|_{E^{(k)}_j}(k,l)} \leq Ce^{(\lambda_j+\tau)r(k,l)} &\text{ for all }0\leq k<l\text{ and }1\leq j\leq g+1 \label{def;sdc1},\\
\norm{Q|_{E^{(k)}_{-j}}(k,l)} \leq Ce^{(-\lambda_j+\tau)r(k,l)} &\text{ for all }0\leq k<l\text{ and }1\leq j\leq g \label{def;sdc11},\\
\norm{Q|_{U^{(k)}_j}(k,l)^{-1}} \leq Ce^{(-\lambda_{j-1}+\tau)r(k,l)} &\text{ for all }0\leq k<l\text{ and }2\leq j\leq g+1 \label{def;sdc12},\\
\norm{Q|_{U^{(k)}_{-j}}(k,l)^{-1}} \leq Ce^{(\lambda_{j+1}+\tau)r(k,l)} &\text{ for all }0\leq k<l\text{ and }0\leq j\leq g \label{def;sdc13},\\
\norm{Z(k+1)} \leq Ce^{\tau k} &\text{ for all } k\geq 0\label{def;sdc2},\\
C^{-1}e^{\lambda_1k}\leq \norm{Q(k)} \leq Ce^{\lambda_1(1+\tau)k} &\text{ for all }k\geq 0 \label{def;sdc3},\\
\max_{\alpha\in\mathcal{A}}\frac{|I^{(k)}|}{|I^{(k)}_\alpha|}\leq \kappa &\text{ for all }k\geq 0 \label{def;sdc4},\\
\big|\sin \angle \big(E_j^{(k)}, U_j^{(k)}\big) \big| \geq c  \norm{Q(k)}^{-\tau} &\text{ for all }k\geq 0 \text{ and }-g\leq j\leq g+1. \label{def;sdc5}
%\norm{P_i(k,l)}\leq C \norm{Q(k)}^\tau &\text{ for all }k\geq 0 .
%\norm{P|_{\Gamma_i}(k,l)}\leq C \norm{Q(k)}^\tau &\text{ for all }k\geq 0 \label{def;sdc6}\\
\end{align}
\end{definition}

{
\begin{remark}
Note that {the Diophantine condition (FFDC) we have defined here} clearly implies the Filtration Diophantine Condition (FDC) introduced in \cite{Fr-Ki}, which in turn implies the Uniform Diophantine Condition (UDC) introduced in \cite{Fr-Ul2}. As shown in \cite[Remark 3.7]{Fr-Ul2}, {the UDC implies two Diophantine conditions considered classical: the Roth-type condition defined in \cite{Ma-Mo-Yo} and the restricted Roth-type condition considered in \cite{Ma-Mo-Yo2}. For more detailed discussions, we refer readers to the recent survey {by} Ulcigrai about Diophantine conditions \cite{Ul:ICM}.}
\end{remark}
}

\begin{definition}
A locally Hamiltonian flow $\psi_\R$ on $M$, with isolated fixed points and {restricted to  a  minimal component} $M'\subset M$, satisfies
the \emph{Full Filtration Diophantine Condition (FFDC)} if there exists a transversal $I\subset M'$ such that the corresponding IET $T:I\to I$ satisfies the \ref{FDC}.
\end{definition}

\begin{theorem}\label{thm;FDCRTC}
 Almost every IET satisfies the \ref{FDC}.
\end{theorem}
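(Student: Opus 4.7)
The plan is to follow the strategy of \cite{Fr-Ki} and \cite{Fr-Ul2} used to establish the weaker Filtration DC and the Rokhlin Tower Condition, and to enhance it so that a single accelerating sequence witnesses all the bounds \eqref{def:FDC-g} and \eqref{def;sdc0}--\eqref{def;sdc5} simultaneously. The genuinely new ingredients beyond (F)DC+RTC are the stable/central bounds \eqref{def;sdc11} and the complementary-filtration bounds \eqref{def;sdc12}--\eqref{def;sdc13}, which will be obtained by applying Oseledets' theorem both to the cocycle and to its transpose-inverse, together with the quantitative transversality \eqref{def;sdc5}, which will require a tempering argument from Pesin theory.

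First, I would fix a Rauzy class and work with the Zorich acceleration $\mathcal Z$ of Rauzy--Veech induction, whose natural invariant probability is ergodic and for which $\log\|Z(1)\|$ and $\log\|Z(1)^{-1}\|$ are integrable. Applying the Oseledets Multiplicative Ergodic Theorem to both $Z(1)$ and its transpose-inverse produces, for a.e.\ $(\pi,\lambda)$, both the filtration \eqref{eq:flagsp} satisfying \eqref{eq:Oscond} and a dual filtration $(U_j)$ satisfying \eqref{eq:osel}; the symmetry of the Lyapunov spectrum on $H(\pi)$ and the dimension $\gamma-1$ of the central block follow from Forni's simplicity/non-vanishing results on the Kontsevich--Zorich cocycle. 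The vectors $h_j$ and $c_s$ of Section~\ref{sec;OF} are then selected in the one-dimensional intersections $U_{j+1}\cap E_j$ and inside $U_{-g}\cap E_{g+1}$.

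Second, given $\tau>0$, a Lusin--Egorov step produces a compact set $K$ of invariant measure $>1-\tau^2$ on which (i) the convergence in \eqref{eq:Oscond} is uniform, (ii) the angles $\angle(E_j,U_j)$ are bounded below by some $c>0$, (iii) $\|Z(1)\|$ is bounded, and (iv) each ratio $|I_\alpha|/|I|$ is bounded below. Take $(n_k)$ to be the consecutive visits of $(\pi,\lambda)$ to $K$ under $\mathcal Z$, and set $r_k:=n_k-n_{k-1}$. Birkhoff and Kac give $r(0,n)/n\to 1/\mathrm{meas}(K)\in(1,1+O(\tau^2))$, which is \eqref{def;sdc0}. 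Splitting $Q(k,l)$ into Birkhoff blocks between successive visits to $K$ and using (i) yields \eqref{def;sdc1}--\eqref{def;sdc13} and \eqref{def;sdc3}; (iii) plus Borel--Cantelli on the $L^1$-tail of $\log\|Z(1)\|$ gives \eqref{def;sdc2}; (iv) combined with Zorich's bounded-distortion estimate transfers to $|I^{(k)}_\alpha|/|I^{(k)}|$ and yields \eqref{def;sdc4}. Finally, the Rokhlin Tower Condition \eqref{def:FDC-g} follows exactly as in \cite{Fr-Ul2} by taking $p_k\sim\min_\alpha Q_\alpha(k)$, the uniform distortion on $K$ guaranteeing mass $\geq\delta|I|$.

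The main obstacle is \eqref{def;sdc5}: forcing the angle between $E_j^{(k)}$ and $U_j^{(k)}$ to decay at most polynomially in $\|Q(k)\|$. Pointwise positivity is automatic from $E_j\oplus U_j=\Gamma$, but Oseledets only gives sub-exponential decay a priori. I would invoke the tempering lemma from Pesin theory: along any Oseledets-regular orbit, $n\mapsto|\log\sin\angle(E_j\circ\mathcal Z^n,U_j\circ\mathcal Z^n)|$ grows subexponentially. A second Egorov step applied to this tempering function, followed by a further thinning of $K$ and of the subsequence $(n_k)$, forces its growth to be bounded by $\tau\, r(0,k)\leq (\tau/\lambda_1)\log\|Q(k)\|$. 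Combined with \eqref{def;sdc3} this yields \eqref{def;sdc5} after absorbing constants into a renamed $\tau$. Collecting all estimates establishes FFDC for the Zorich-invariant measure; absolute continuity of that measure with respect to Lebesgue on each simplex upgrades the conclusion to Lebesgue-a.e.\ IET.
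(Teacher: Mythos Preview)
Your proposal is correct and follows essentially the same route as the paper: the paper's own proof simply defers to Theorem~3.2 in \cite{Fr-Ki} and observes that the only addition needed for FFDC is to extend the estimates \eqref{def;sdc11}, \eqref{def;sdc13} and \eqref{def;sdc5} to the negative part $E^{(k)}_{-j}$, $U^{(k)}_{-j}$ of the filtration, which is precisely the straightforward Oseledets/tempering modification you outline. One minor attribution: simplicity of the Kontsevich--Zorich spectrum is due to Avila--Viana (Forni proved positivity of $\lambda_2$), and the symmetry of the spectrum on $H(\pi)$ together with the dimension $\gamma-1$ of the central block come from the symplectic structure of $\Omega_\pi$ rather than from Forni's work.
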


\begin{proof}
Most of the proof of Theorem follows similarly from the proof of Theorem 3.2 in \cite{Fr-Ki}. In addition to the proof of the FDC in {\cite[Appendix A]{Fr-Ki}}, it suffices to slightly modify the construction of the full measure set $\Xi$ {defined in \cite[\S A.3]{Fr-Ki}} to show that every $(\pi,\lambda)\in \Xi$ satisfies \eqref{def;sdc11}, \eqref{def;sdc13}, and \eqref{def;sdc5} not only on the non-negative part of the filtration (as shown in \cite{Fr-Ki}) but also on its negative part, i.e., on   $E^{(k)}_{-j}$ and $U^{(k)}_{-j}$ for $1\leq j\leq g$. Since this modification is straightforward, we omit the details.
\end{proof}

\begin{remark}
{
We denote by $\mathcal{F}$ the set of smooth locally Hamiltonian flows on $M$ with isolated fixed points, which are  centers or perfect saddles.
For every $\psi_\R\in\mathcal{F}$ preserving an area form $\omega$, let $X:M\to TM$ be the corresponding vector field $X$. Consider the $1$-form $\imath_X\omega=\omega(X, \,\cdot \,)$. Since $\omega$ is $X$-invariant, $\imath_X\omega$ is a smooth real closed $1$-form.

For any vector $\overline{m}=(m_1,m_2,\ldots,m_s)$ of natural numbers $\geq 2$ and any $0\leq c\leq\sum_{i=1}^s(m_i-1)$, we denote by  $\mathcal{F}_{\overline{m},c}$
the set of smooth locally Hamiltonian flows with $c$ centers and $s$  {perfect} saddles of multiplicity $m_1,m_2,\ldots,m_s$.
A measure-theoretical notion of typicality on $\mathcal{F}$ (on each  $\mathcal{F}_{\overline{m},c}$ separately) is defined by {the
 so-called}  \emph{Katok fundamental class} (introduced  in \cite{Ka0}).
Let $\gamma_1, \dots, \gamma_n$ be a base of $H_1(M, \mathrm{Fix}(\psi_\mathbb{R}), \mathbb{R})$, where $n=2g+s+c-1$.
Define the period map:
\[\Theta(\psi_\mathbb{R})=\Big(\int_{\gamma_1}\imath_X\omega,\ldots,\int_{\gamma_n}\imath_X\omega\Big)\in\mathbb{R}^n,\]
which is well-defined in a neighbourhood of $\psi_\mathbb{R} \in \mathcal{F}_{\overline{m},c}$.
The $\Theta$-pullback of the Lebesgue measure class ({i.e.,}\ the class of sets with zero measure) gives the desired measure class on $\mathcal{F}_{\overline{m},c}$.

When we use the expression ``a.e.\ locally Hamiltonian flow'',  we mean full measure in each $\mathcal{F}_{\overline{m},c}$  with respect to the corresponding measure class.
Note that a.e.\ flow $\psi_\mathbb{R}\in\mathcal{F}_{\overline{m},0}$ is minimal.
If  $c\geq 1$, then every $\psi_\mathbb{R}\in\mathcal{F}_{\overline{m},c}$  has a non-trivial splitting into minimal and periodic components, and we consider only the restriction of $\psi_\mathbb{R}$ to any of its minimal components $M'\subset M$.}
\medskip

In view of Theorem~\ref{thm;FDCRTC}, almost every (with respect to the Katok fundamental class) locally Hamiltonian flow $\psi_\R\in \mathcal{F}$ restricted to its minimal component $M'\subset M$ satisfies the \ref{FDC}.
\end{remark}
%\noindent As we have mentioned before, the proof is rather standard and we postpone it to Appendix~\ref{sec:App1}.

\begin{remark}
As $1=|I|\leq|I^{(n)}|\|Q(n)\|\leq |I|/\kappa=\kappa^{-1}$, by \eqref{def;sdc3}, we have
\begin{equation}\label{eq:invIk}
|I^{(n)}|^{-1}\leq \|Q(n)\|\leq C e^{(\lambda_1+\tau)n}\text{ and  }
|I^{(n)}|\leq {\kappa^{-1}} \|Q(n)\|^{-1}\leq \kappa^{-1}C e^{-\lambda_1n}.
\end{equation}
As $\lim_{n\to+\infty}n/r(0,n)>1/(1+\tau)>1-\tau$, there exists $c>0$ such that
\begin{equation}\label{eq:nrn}
(1-\tau)r(0,n)-c\leq n\leq r(0,n) \text{ for all }n\geq 0.
\end{equation}
\end{remark}

\begin{remark}\label{rmk:h-1}
Let us consider the map $\bar{\xi}:I\to\R$ given by $\bar{\xi}(x)=x$ and the corresponding coboundary $\bar{\xi}\circ T-\bar{\xi}$. Then {$\bar{\xi}\circ T-\bar{\xi}$ is a piecewise constant map which can be treated as a vector in $\Gamma$. Moreover,} for every $k\geq 0$, we have
\[
\big(Q(k)(\bar{\xi}\circ T-\bar{\xi})\big)_\alpha=\bar{\xi}(T^{Q_\alpha(k)}x)-\bar{\xi}(x)=T^{Q_\alpha(k)}x-x
\text{ for any }x\in I^{(k)}_\alpha.\]
Therefore $\|Q(k)(\bar{\xi}\circ T-\bar{\xi})\|\leq |I^{(k)}|\leq \kappa^{-1}C e^{-\lambda_1k}$. By \eqref{eq:Oscond}, $\bar{\xi}\circ T-\bar{\xi}\in E_{-1}(\pi,\lambda)$. Since the space $E_{-1}(\pi,\lambda)$ is one-dimensional, we have $h_{-1}=c(\bar{\xi}\circ T-\bar{\xi})$ for some $c\neq 0$.
\end{remark}

{
Recall that for any  $k\geq 0$ and $-g\leq j\leq g+1$, we have $E^{(k)}_j\oplus U^{(k)}_j=\Gamma^{(k)}$. We denote by $P_{E^{(k)}_j}:\Gamma^{(k)}\to E^{(k)}_j$ and $P_{U^{(k)}_j}:\Gamma^{(k)}\to U^{(k)}_j$
the corresponding projections, {i.e.,}\  $P_{E^{(k)}_j}+P_{U^{(k)}_j}=Id_{\Gamma^{(k)}}$.}
In view of \eqref{def;sdc5}, using the arguments of the proof of Lemma~3.5 in \cite{Fr-Ki} {(based on the fact that the  norm of projections depends on the angle between the subspaces),}
for any $\tau>0$, there exists $C>0$ such that for all $k\geq 0$ and $-g\leq j\leq g+1$,
\begin{align}\label{eqn;projbound}
\begin{split}
\|P_{E^{(k)}_j}\|\leq C \norm{Q(k)}^\tau\quad\text{and}\quad &\|P_{U^{(k)}_j}\|\leq C \norm{Q(k)}^\tau.
\end{split}
\end{align}
Moreover, by definition, for any pair $0\leq k<l$ and any $-g\leq j\leq g+1$, we have
\[Q(k,l)\circ P_{E^{(k)}_j}= P_{E^{(l)}_j}\circ Q(k,l)\quad\text{and}\quad Q(k,l)\circ P_{U^{(k)}_j}= P_{U^{(l)}_j}\circ Q(k,l).\]

\subsection{Diophantine series}\label{sec;DCS}
For every $a\geq 0$ and $s\geq 1$, let $\langle s\rangle^a= s^a$ if $a>0$, and $\langle s\rangle^a= 1+\log s$ if $a=0$.

\begin{definition}\label{def;dio-series}%\cite[Definition 3]{Fr-Ki}
For every IET $T : I \rightarrow I$ satisfying Keane's condition, any $0\leq a<1$, any $2\leq i\leq g+1$, any $\tau>0$, and any accelerating sequence, we define sequences
$(K^{a,i,\tau}_{k}(T))_{k\geq 0}, (C^{a,i,\tau}_{k}(T))_{k\geq 0}$ so that
\begin{align*}
&K^{a,i,\tau}_{k}(T):=
\sum_{l\geq k}\|Q|_{U^{(k)}_i}(k,l+1)^{-1}\|\|Z(l+1)\|\langle\|Q(l)\|\rangle^{a}\|Q(l+1)\|^\tau,
\\
&C^{a,i,\tau}_{k}(T):= \sum_{0 \leq l < k}\|Q|_{E^{(l+1)}_i}(l+1,k)\|\|Z(l+1)\|\langle \|Q(l)\|\rangle^{a} \|Q(l+1)\|^\tau.
\end{align*}
\end{definition}

\begin{proposition}\cite[Proposition 3.6]{Fr-Ki}\label{prop;FDCbound}
Let $T : I \rightarrow I$ be an IET satisfying the \ref{FDC} and let $0\leq a<1$. Suppose that $2\leq i\leq g+1$ is chosen such that $a\lambda_1<\lambda_{i-1}$.
Then, for every $0 < \tau < \frac{\lambda_{i-1}-\lambda_{1}a}{3(1+\lambda_1)}$, the sequences $(K^{a,i,\tau}_k)_{k\geq 0}, (C^{a,i,\tau}_k)_{k\geq 0}$ are well defined and
\begin{align}\label{prop;sdc-1}
\begin{split}
K^{a,i,\tau}_{k}(T) & \leq C_\tau e^{(\lambda_{1}a+5\tau(1+\lambda_1))r(0,k)},\\
C^{a,i,\tau}_{k}(T) & \leq C_\tau e^{(\max\{\lambda_{i},\lambda_1 a\}+3\tau(1+\lambda_1))r(0,k)}.
\end{split}
\end{align}
\end{proposition}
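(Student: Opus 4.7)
The plan is to bound each summand term by term using the FFDC estimates and then sum a geometric-type series. Two ingredients are essential: the spectral bounds \eqref{def;sdc1} and \eqref{def;sdc12} along the filtration, and the fundamental comparison $n \leq r(0,n)$ from \eqref{eq:nrn}, which lets every auxiliary factor be converted into exponential growth in the partial sums $r(\cdot,\cdot)$ rather than in $n$.

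First I would record pointwise estimates for each of the four factors. Combining \eqref{def;sdc2} and \eqref{eq:nrn} gives $\|Z(l+1)\| \leq C e^{\tau r(0,l)}$; combining \eqref{def;sdc3} and \eqref{eq:nrn} gives $\|Q(l+1)\|^\tau \leq C e^{\lambda_1(1+\tau)\tau\, r(0,l+1)}$. For $a>0$, $\langle \|Q(l)\|\rangle^a = \|Q(l)\|^a \leq C e^{\lambda_1(1+\tau)a\, r(0,l)}$, while the borderline case $a=0$ is handled by absorbing the logarithm $1+\log\|Q(l)\|$ into an extra $e^{\tau r(0,l)}$ at the price of a $\tau$-dependent constant. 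Finally, \eqref{def;sdc12} gives $\|Q|_{U^{(k)}_i}(k,l+1)^{-1}\| \leq C e^{(-\lambda_{i-1}+\tau) r(k,l+1)}$ for the $K$-series, and \eqref{def;sdc1} gives $\|Q|_{E^{(l+1)}_i}(l+1,k)\| \leq C e^{(\lambda_i+\tau) r(l+1,k)}$ for the $C$-series.

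For $K^{a,i,\tau}_k(T)$, splitting $r(0,l+1) = r(0,k) + r(k,l+1)$ and multiplying the four estimates, the $l$-th summand is at most
\[
C_\tau \exp\!\bigl((\lambda_1 a + c_1(\tau))\, r(0,k) + (-\lambda_{i-1} + \lambda_1 a + c_2(\tau))\, r(k,l+1)\bigr),
\]
where $c_1(\tau), c_2(\tau) \leq 3\tau(1+\lambda_1)$ collect the $\tau$-perturbations from the four factors. Under the hypothesis $\tau < (\lambda_{i-1}-\lambda_1 a)/(3(1+\lambda_1))$ the coefficient of $r(k,l+1)$ is strictly negative; since $r_{l+1}\geq 1$ each successive term decreases by a factor bounded away from $1$, so the tail is dominated by a convergent geometric series whose total absorbs into an additional $\tau(1+\lambda_1)$ correction. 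This yields $K^{a,i,\tau}_k(T) \leq C_\tau e^{(\lambda_1 a + 5\tau(1+\lambda_1)) r(0,k)}$.

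For $C^{a,i,\tau}_k(T)$ the analysis is dual: substituting $r(l+1,k) = r(0,k) - r(0,l+1)$ bounds the $l$-th summand by
\[
C_\tau \exp\!\bigl((\lambda_i + \tau)\, r(0,k) + (-\lambda_i + \lambda_1 a + c(\tau))\, r(0,l+1)\bigr).
\]
When $\lambda_1 a \leq \lambda_i$ the coefficient of $r(0,l+1)$ is (up to $\tau$-corrections) non-positive and the series sums to a constant, giving leading exponent $\lambda_i$ on $r(0,k)$; when $\lambda_1 a > \lambda_i$ the coefficient is positive and the summand is maximal at $l=k-1$, where $r(0,l+1) \leq r(0,k)$, giving leading exponent $\lambda_1 a$. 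The polynomial prefactor $k$ from $\sum_{l=0}^{k-1}$ is absorbed into an $e^{\tau r(0,k)}$ via $k \leq r(0,k)$. Taking the maximum produces the claimed bound. The only substantive obstacle is the accounting: each FFDC bound applied in a single term contributes a stray $\tau$, $\lambda_1\tau$, or $\lambda_1 a\tau$, and these must be tallied precisely to yield the clean constants $5\tau(1+\lambda_1)$ and $3\tau(1+\lambda_1)$. The cutoff $\tau < (\lambda_{i-1}-\lambda_1 a)/(3(1+\lambda_1))$ is exactly what keeps the dominant $\tau$-corrections inside the spectral gap $\lambda_{i-1}-\lambda_1 a$, guaranteeing absolute convergence.
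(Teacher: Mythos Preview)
Your proposal is correct and follows the standard route. The paper does not actually give its own proof of this proposition---it is quoted verbatim from \cite[Proposition~3.6]{Fr-Ki}---but the very next result, Proposition~\ref{prop;FDCbound4}, is proved in the paper by exactly the mechanism you describe: bound each factor using the FFDC inequalities \eqref{def;sdc1}--\eqref{def;sdc3}, convert all exponents from $l$ to $r(0,l)$ via $l\leq r(0,l)$, split $r(0,l+1)=r(0,k)+r(k,l+1)$, and sum the resulting geometric (or sub-geometric) series. Your handling of both cases, the negative-exponent tail for $K$ and the max-of-two-regimes dichotomy for $C$, mirrors that argument precisely, and your caveat that the only work left is the $\tau$-bookkeeping is accurate.
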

%The series was originally designed to construct some correction operators, {see \cite[\S 3.3]{Fr-Ki}} on $C^{0+\pa}$. We now present a new type of series for the similar purpose on $C^{n+\pa}$.

\begin{definition}\label{def;dio-series2}
For every IET $T : I \rightarrow I$ satisfying Keane's condition, any $0\leq j\leq g+1$, any non-negative sequence $\bar{s}=(s_k)_{k\geq 0}$,  any $\tau>0$, and any accelerating sequence, we define sequences
%$(N^{a,i,\tau}_{k})_{k\geq 0},
$(V^{j,\tau}_{k}(T,\bar{s}))_{k\geq 0}, (W^{j,\tau}_{k}(T,\bar{s}))_{k\geq 0}$ so that
\begin{align*}
%&N^{a,i,\tau}_{k}(T):= \|Z(k+1)\|(C^{a,i,\tau}_{k}+K^{a,i,\tau}_{k})|I^{(k)}| + (C^{a,j,\tau}_{k+1}+K^{a,j,\tau}_{k+1})|I^{(k+1)}|,
%\\
&V^{j,\tau}_{k}(T,\bar{s}):=\sum_{l \geq k} \|Q|_{U^{(k)}_{-j}}(k,l+1)^{-1}\|\|Q(l+1)\|^\tau\|Z(l+1)\|s_{l},
\\
&W^{j,\tau}_{k}(T,\bar{s}):=\sum_{0\leq l<k} \|Q|_{E^{(l+1)}_{-j}}(l+1,k)\|\|Q(l+1)\|^\tau\|Z(l+1)\|s_l.
\end{align*}
\end{definition}

\begin{proposition}\label{prop;FDCbound4}
Let $T : I \rightarrow I$ be an IET satisfying the \ref{FDC}.  Fix $0\leq j\leq g$, $\lambda_{j+1}<\rho$, and
$0 < \tau < \frac{\rho-\lambda_{j+1}}{\lambda_1+3}$. Then there exists $C_{\tau}>0$ such that
for any non-negative sequence $\bar{s}=(s_k)_{k\geq 0}$ with $s_k\leq D e^{-\rho r(0,k+1)}$ and for all $k\geq 0$, we have
\begin{align}\label{prop;sdc-5}
V^{j,\tau}_{k}(T,\bar{s}) &\leq C_\tau D e^{(-\rho + (\lambda_1+2)\tau)r(0,k)},\\
\label{prop;sdc-6}
W^{j,\tau}_{k}(T,\bar{s}) &\leq C_\tau D e^{(\max\{-\rho,-\lambda_{j} \} + (\lambda_1+3)\tau)r(0,k)}.
\end{align}
\end{proposition}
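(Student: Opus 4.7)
The plan is to mirror the proof of Proposition~\ref{prop;FDCbound}, replacing the upper-half Oseledets bounds by their lower-half analogues \eqref{def;sdc11} and \eqref{def;sdc13}, and treating the decay factor $s_l \leq D e^{-\rho r(0,l+1)}$ as the substitute for the weight $\langle \|Q(l)\|\rangle^a$ used in the positive case. Throughout, I would repeatedly invoke the crude size bounds \eqref{def;sdc2}, \eqref{def;sdc3}, together with the comparison $l+1 \leq r(0,l+1)$ from \eqref{eq:nrn}, so that every factor $e^{\tau (l+1)}$ or $\|Q(l+1)\|^\tau$ is absorbed into $e^{O(\tau)\, r(0,l+1)}$.

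For the estimate of $V^{j,\tau}_k$, I would first apply \eqref{def;sdc13} in the form $\|Q|_{U^{(k)}_{-j}}(k,l+1)^{-1}\| \leq C e^{(\lambda_{j+1}+\tau) r(k,l+1)}$ and then use $r(k,l+1) = r(0,l+1) - r(0,k)$ to pull a factor $e^{-(\lambda_{j+1}+\tau)r(0,k)}$ out of the sum. Combined with the other crude bounds and with $s_l \leq D e^{-\rho r(0,l+1)}$, the remaining summand is of the form $e^{-\gamma\, r(0,l+1)}$ with $\gamma = \rho - \lambda_{j+1} - 2\tau - \lambda_1 \tau(1+\tau)$, which the hypothesis $\tau < (\rho-\lambda_{j+1})/(\lambda_1+3)$ keeps strictly positive for $\tau$ small. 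Using $r(0,l+1) \geq (1-\tau)(l+1) - c$ from \eqref{eq:nrn} one recognises a geometric series whose total contribution is absorbed into the prefactor, giving \eqref{prop;sdc-5} with constant $C_\tau$.

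For $W^{j,\tau}_k$ the argument is symmetric but uses \eqref{def;sdc11} instead: the factor $\|Q|_{E^{(l+1)}_{-j}}(l+1,k)\| \leq C e^{(-\lambda_j+\tau) r(l+1,k)}$ provides decay as $l$ moves away from $k$, and writing $r(l+1,k) = r(0,k) - r(0,l+1)$ pulls out $e^{(-\lambda_j+\tau)r(0,k)}$. The residual summand is $e^{(\lambda_j - \rho + \lambda_1\tau(1+\tau))r(0,l+1)}$, and the dichotomy that produces the $\max$ in \eqref{prop;sdc-6} arises at this step. If $\lambda_j < \rho$ (say $\lambda_j + (\lambda_1+2)\tau < \rho$) the terms are decaying in $l$ and the sum is bounded by a constant, yielding total rate $-\lambda_j + O(\tau)$; if $\lambda_j > \rho$ the terms grow in $l$ and the sum is dominated by $l = k-1$, contributing an extra $e^{(\lambda_j - \rho + O(\tau))r(0,k)}$ that cancels the $e^{(-\lambda_j+\tau)r(0,k)}$ and yields total rate $-\rho + O(\tau)$. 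The borderline case follows by choosing $\tau$ smaller. For $j=0$ the subspace $E^{(l+1)}_0=\{0\}$ is trivial and the bound is vacuous.

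The main obstacle, as in the proof of Proposition~\ref{prop;FDCbound}, is the careful bookkeeping of the $\tau$-dependent losses accumulated from each of \eqref{def;sdc2}, \eqref{def;sdc3}, \eqref{def;sdc11}, \eqref{def;sdc13}, and \eqref{eq:nrn}. One has to verify that they collectively fit within the advertised slack $(\lambda_1+2)\tau$ for $V$ and $(\lambda_1+3)\tau$ for $W$, which is precisely what forces the choice of range $\tau < (\rho-\lambda_{j+1})/(\lambda_1+3)$ in the hypothesis: it is the smallest margin that simultaneously keeps $\gamma$ positive (so the $V$-series converges) and absorbs all the $O(\tau)$ corrections picked up when transferring from the $(l+1)$-scale to the $r(0,l+1)$-scale.
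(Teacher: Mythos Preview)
Your proposal is correct and follows essentially the same route as the paper: apply \eqref{def;sdc13} and \eqref{def;sdc11} respectively, absorb $\|Z(l+1)\|$ and $\|Q(l+1)\|^\tau$ via \eqref{def;sdc2}--\eqref{def;sdc3} and $l+1\leq r(0,l+1)$, then sum. The only cosmetic difference is in the $W$-bound: rather than your dichotomy on the sign of $\lambda_j-\rho$, the paper bounds each summand uniformly by $e^{(\max\{-\lambda_j,-\rho\}+\tau(\lambda_1+2))r(0,k)}$ (using $r(l,k)+r(0,l)=r(0,k)$), picks up a factor $k$ from the $k$ terms, and absorbs $k$ into the extra $\tau$ of slack---but this is the same estimate organised more compactly.
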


\begin{proof} By Definition~\ref{def;FDC},
\begin{align*}
V^{j,\tau}_{k}&\leq \sum_{l\geq k}C^3De^{(\lambda_{j+1}+\tau)r(k,l+1)}e^{(\lambda_1+\tau)\tau (l+1)}e^{\tau (l+1)}e^{-\rho r(0,l+1)}\\
&\leq \sum_{l\geq k}C^3De^{(\lambda_{j+1}+\tau)r(k,l+1)}e^{(-\rho+\tau(\lambda_1+2)) r(0,l+1)}\\
&= De^{(-\rho+\tau(\lambda_1+2)) r(0,k)}\sum_{l\geq k}C^3e^{(-\rho+\lambda_{j+1}+\tau(\lambda_1+3))r(k,l+1)}\\
&\leq De^{(-\rho+\tau(\lambda_1+2)) r(0,k)}\sum_{l\geq k}C^3e^{(-\rho+\lambda_{j+1}+\tau(\lambda_1+3))(l+1-k)}\\
&= De^{(-\rho+\tau(\lambda_1+2)) r(0,k)}\sum_{l\geq 1}C^3e^{(-\rho+\lambda_{j+1}+\tau(\lambda_1+3))l}.
\end{align*}
As $-\rho+\lambda_{j+1}+\tau(\lambda_1+3)<0$, the above series is convergent, so we get \eqref{prop;sdc-5}. Moreover, again by Definition~\ref{def;FDC},
\begin{align*}
W^{j,\tau}_{k}&\leq \sum_{0\leq l< k}C^3De^{(-\lambda_{j}+\tau)r(l+1,k)}e^{(\lambda_1+\tau)\tau (l+1)}e^{\tau (l+1)}e^{-\rho r(0,l+1)}\\
&\leq \sum_{1\leq l\leq k}C^3De^{(-\lambda_{j}+\tau)r(l,k)}e^{(-\rho+\tau(\lambda_1+2)) r(0,l)}\\
& \leq C^3Dke^{(\max\{-\lambda_{j},-\rho\}+\tau(\lambda_1+2)) r(0,k)}\\
&\leq C^3D C'e^{(\max\{-\lambda_{j},-\rho\}+\tau(\lambda_1+3)) r(0,k)},
\end{align*}
which gives \eqref{prop;sdc-6}.
\end{proof}

\section{Extended correction operators}\label{sec;correction}
In this section, we define  three types of new correction operators: $\mathfrak{h}^*_{j},\mathfrak{h}_{-j,i}$, and $\mathfrak{h}_{0}$, for $2 \leq i \leq g+1$ and $ 0 \leq j \leq g$.
 %$\mathfrak{h}_{-j,i}:C^{n+\pag}(\sqcup_{\alpha \in \mathcal{A}} I_\alpha) \rightarrow U_{-j}$ for $0\leq a <1, n\geq 1$ with $-g \leq j \leq g+1$.
These operators are motivated by the correction operator $\mathfrak{h}_{i}$ previously defined on $C^{0+\pa}$, the space of functions with polynomial singularities. In \cite[\S 6]{Fr-Ki}, the maps from $C^{0+\pa}$ were corrected by piecewise constant functions coming from the unstable subspace. Our new operators are constructed to correct piecewise smooth functions whose higher order derivatives have polynomial singularities (i.e., elements of $C^{n+\pag}$), using piecewise polynomial functions. %This is for having better control over the growth  of the special Birkhoff sums of the corrected (higher order) derivatives of cocycle in the next section.

%\subsection{Preliminary for construction}% correction operator
\subsection{$C^{n+\pag}$ space}\label{sec;cnpag}
Fix $0\leq a<1$ and an integer $n \geq 0$. Following  \cite[\S 2]{Fr-Ki2},  $C^{n+\pa}(\sqcup_{\alpha \in \mathcal{A}}I_\alpha)$ is the
space of $C^{n+1}$-functions on $\bigcup_{\alpha\in\mathcal{A}}\Int I_\alpha$ such that
\begin{align*}
p_{a}(D^{n}\varphi):
= \max_{\alpha\in\mathcal{A}}\sup_{x\in(l_\alpha,r_\alpha)}\max\{|D^{n+1}\varphi(x)(x-l_\alpha)^{1+a}|,|D^{n+1}\varphi(x)(r_\alpha-x)^{1+a}|\}
\end{align*}
is finite, and
\begin{align*}
C_{\alpha,n}^{a,+}(\varphi) &=(-1)^{n}C_{\alpha}^+(D^n\varphi):=(-1)^{n+1}\lim_{x\searrow l_\alpha}D^{n+1}\varphi(x)(x-l_\alpha)^{1+a},\\
C_{\alpha,n}^{a,-}(\varphi) &=C_{\alpha}^-(D^n\varphi):=\lim_{x\nearrow r_\alpha}D^{n+1} \varphi(x)(r_\alpha-x)^{1+a}
\end{align*}
exist. The space $C^{n+\pa}(\sqcup_{\alpha \in \mathcal{A}}I_\alpha)$
 is a Banach space equipped with the norm
 \[\|\varphi\|_{C^{n+\pa}}:= \sum_{k=0}^n\|D^{k}\varphi\|_{L^1(I)}+p_a(D^{n}\varphi).
\]
{We denote by $C^{n+\pag}(\sqcup_{\alpha \in \mathcal{A}}I_\alpha) \subset C^{n+\pa}(\sqcup_{\alpha \in \mathcal{A}}I_\alpha)$ the subset (the union of four closed subspaces of co-dimension 2) of functions $\varphi \in C^{n+\pa}(\sqcup_{\alpha \in \mathcal{A}}I_\alpha)$ of \emph{geometric type}, i.e., such that}
\[C_{\pi_0^{-1}(d),n}^{a,-} (\varphi) \cdot C_{\pi_1^{-1}(d),n}^{a,-}(\varphi) =0\quad\text{and}\quad C_{\pi_0^{-1}(1),n}^{a,+}(\varphi) \cdot C_{\pi_1^{-1}(1),n}^{a,+}(\varphi)=0.\]
%Recall that, by Lemma~4.2 in \cite{Fr-Ki}, for $n=0$ the space
%$C^{n+\pag}$ equipped with the norm $\|\,\cdot\,\|_{C^{n+\pa}}$ is a closed subspace of $C^{n+\pa}$. This gives the same condition for any $n\geq0$.

%\textcolor{red}{
%In fact, $C^{n+\pag}$ is the union of four closed subspaces of $C^{n+\pa}$, which are all of co-dimension 2.}

\subsection{Special Birkhoff sums}
\label{SpecialBS}
Assume that an IET $T : I \rightarrow I$ satisfies Keane's condition.
 For any $0\leq k<l$ and any measurable map $\varphi:I^{(k)}\to\R$ over the IET
$T^{(k)}:I^{(k)}\to I^{(k)}$, we denote by
$S(k,l)\varphi:I^{(l)}\to\R$ the renormalized map over
$T^{(l)}$ given by
\[S(k,l)\varphi(x)=\sum_{0\leq i<Q_{\beta}(k,l)}\varphi((T^{(k)})^ix),\text{ for }x\in I^{(l)}_\beta.\]
Sums of this form are called \emph{special Birkhoff sums}. {By} convention, we write $S(k)\varphi$ for $ S(0,k)\varphi$, and $S(k,k)\varphi=\varphi$.
 If $\varphi$ is integrable, then
\begin{equation}\label{nase}
\| S(k,l)\varphi\|_{L^1(I^{(l)})}\leq
\|\varphi\|_{L^1(I^{(k)})}\quad\text{and}\quad
%\end{equation}
%\begin{equation}\label{zachcal}
\int_{I^{(l)}}S(k,l)\varphi(x)\,dx=
\int_{I^{(k)}}\varphi(x)\,dx.
\end{equation}
If additionally $\varphi\in \bv(\sqcup_{\alpha \in \mathcal{A}}I_\alpha^{(k)} )$ (is of bounded variation), then
\begin{equation}\label{neq:Skvar}
\var{S(k,l)\varphi}\leq\var{\varphi}\ \text{ and }\ \|S(k,l)\varphi\|_{\sup}\leq\|Q(k,l)\|\|\varphi\|_{\sup},
\end{equation}
where $\var{\varphi}$ is the sum of variations of $\varphi$ restricted to $\Int I_\alpha$, for $\alpha\in\mathcal{A}$.

We denote by $\Gamma^{(k)}$ the set of functions on $I^{(k)}$ which are constant on all $I^{(k)}_\alpha$, $\alpha\in\mathcal{A}$.
Clearly, $S(k,l)\Gamma^{(k)} = \Gamma^{(l)}$, and $S(k,l)$ is the linear automorphism of $\R^{\mathcal A}$ whose matrix in the canonical basis is $Q(k,l)$.

\begin{remark}%\label{rem;pa}
%Proposition 5.2 and 5.3
In view of \S 5  in \cite{Fr-Ki},  $S(k,l):C^{n+\pag}(\sqcup_{\alpha \in \mathcal{A}}I^{(k)}_\alpha )\to C^{n+\pag}(\sqcup_{\alpha \in \mathcal{A}}I^{(l)}_\alpha )$. Moreover, for every IET $T$ satisfying the \ref{FDC}, there exists $C\geq 1$ such that
for all $0 \leq k \leq l$ and for every function $\varphi \in C^{0+\pag}(\sqcup_{\alpha \in \mathcal{A}}I^{(k)}_\alpha )$, %for any $n\geq 0$,
\begin{align}\label{eqn;renormpaos2}
\begin{split}
&p_a(S(k,l)\varphi) \leq Cp_a(\varphi)\text{ if }0<a<1,\\
&p_a(S(k,l)\varphi) \leq C(1+\log\|Q(k,l)\|)p_a(\varphi)\text{ if }a=0.
\end{split}
\end{align}
\end{remark}

\subsection{Correction operator on $C^{0+\pag}$}
For any integrable map $f:I\to\R$ and any subinterval $J\subset I$, let $m(f,J)$ stand for the mean value of $f$ on $J$,
that is,
\begin{equation*}\label{def;m(f,J)}
m(f,J)=\frac{1}{|J|}\int_Jf(x)\,dx.
\end{equation*}
For the IET $T^{(k)}$ let  $\mathcal{M}^{(k)}:L^1(I^{(k)})\to \Gamma^{(k)}$ be the corresponding mean-value \emph{projection operator}, given by
\begin{equation*}\label{eqn;projec}
\mathcal{M}^{(k)}(f)=\sum_{\alpha\in\mathcal{A}}m(f,I^{(k)}_\alpha)\chi_{I^{(k)}_\alpha}.
\end{equation*}
This operator projects any map onto a piecewise constant function whose values are equal to the mean value of $f$ on the exchanged intervals $I^{(k)}_\alpha$, $\alpha\in\mathcal{A}$. %It will be used in defining correction operator in the \S\ref{sec; COC}.

\begin{theorem}[Theorem 6.1 in \cite{Fr-Ki}]\label{thm;correction}
Assume that $T$ satisfies the \ref{FDC}. For any $0\leq a<1$, take $2\leq j\leq g+1$ so that $ \lambda_1 a<\lambda_{j-1}$. There exists a bounded linear operator
$\mathfrak{h}_j: C^{0+\pag}(\sqcup_{\alpha \in \mathcal{A}} I_\alpha) \rightarrow U_j $, {which satisfies the following:
for any $\tau>0$, there exists $C=C_\tau\geq 1$ such that for every $\varphi \in C^{0+\pag}(\sqcup_{\alpha \in \mathcal{A}} I_\alpha) $ with $\mathfrak{h}_j(\varphi) = 0$,}
\begin{equation}\label{eqn;maincorrection}
\|\mathcal{M}^{(k)}(S(k)\varphi)\| \leq C\left(\big(K^{a,j,\tau}_k+C^{a,j,\tau}_k\big) p_a(\varphi)+ \|{Q|_{E_j}}(k)\| \frac{\norm{\varphi}_{L^1(I^{(0)})}}{|I^{(0)}|}\right).
\end{equation}
\end{theorem}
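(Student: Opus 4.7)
I would follow the Marmi--Moussa--Yoccoz correction scheme adapted to IETs in \cite{Fr-Ul,Fr-Ki}: telescope the mean-value projection of the special Birkhoff sums, split the resulting orbit of the Kontsevich--Zorich cocycle along the Oseledets decomposition, and define $\mathfrak{h}_j(\varphi)$ so as to cancel its unstable part. Concretely, set $\phi^{(k)}:=\mathcal{M}^{(k)}(S(k)\varphi)\in\Gamma^{(k)}\cong\R^{\mathcal{A}}$ and $\psi^{(k)}:=S(k)\varphi-\phi^{(k)}$, so that $\psi^{(k)}$ has vanishing mean on every $I^{(k)}_\alpha$. Since $S(k+1)=S(k,k+1)\circ S(k)$ and $S(k,k+1)$ acts on $\Gamma^{(k)}$ as the matrix $Q(k,k+1)$, one obtains
\[
E(k+1):=\phi^{(k+1)}-Q(k,k+1)\phi^{(k)}=\mathcal{M}^{(k+1)}\bigl(S(k,k+1)\psi^{(k)}\bigr),
\]
and iterating yields $\phi^{(k)}=Q(k)\phi^{(0)}+\sum_{l=0}^{k-1}Q(l+1,k)E(l+1)$.

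The technical core is the estimate $\|E(l+1)\|\leq C\|Z(l+1)\|\langle\|Q(l)\|\rangle^{a}\,p_{a}(\varphi)$. Unfolding $\int_{I^{(l+1)}_\beta}S(l,l+1)\psi^{(l)}$ by the change of variables $y=(T^{(l)})^{i}x$ writes it as a sum of at most $O(\|Z(l+1)\|)$ integrals of $\psi^{(l)}$ over disjoint sub-intervals of the $I^{(l)}_\alpha$'s; the primitive $\Psi^{(l)}(x):=\int_{l_\alpha}^{x}\psi^{(l)}$, which vanishes at both endpoints of $I^{(l)}_\alpha$ by mean-zero, reduces each such integral to two boundary evaluations of $\Psi^{(l)}$, and these are bounded via (\ref{eqn;renormpaos2}) in the form $p_{a}(S(l)\varphi)\leq C\langle\|Q(l)\|\rangle^{a}p_{a}(\varphi)$ together with $|I^{(l+1)}_\beta|\geq|I^{(l+1)}|/\kappa$. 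The $\langle\cdot\rangle^{a}$ is designed precisely to absorb the logarithmic factor appearing when $a=0$, and the geometric-type hypothesis is needed so that (\ref{eqn;renormpaos2}) applies uniformly along the whole renormalization orbit.

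With this bound in hand, decompose $\phi^{(k)}$ along $E^{(k)}_j\oplus U^{(k)}_j=\R^{\mathcal{A}}$; using $P_{U^{(k)}_j}Q(l+1,k)=Q(l+1,k)P_{U^{(l+1)}_j}$ and the identity $Q(k)|_{U_j}\circ(Q(l+1)|_{U_j})^{-1}=Q|_{U^{(k)}_j}(k,l+1)^{-1}$, factor out $Q(k)|_{U_j}$ from the unstable component and define
\[
\mathfrak{h}_j(\varphi):=P_{U_j}\phi^{(0)}+\sum_{l=0}^{\infty}(Q(l+1)|_{U_j})^{-1}P_{U^{(l+1)}_j}E(l+1)\in U_j.
\]
Convergence of the series and boundedness of $\mathfrak{h}_j\colon C^{0+\pag}\to U_j$ follow from the $E(l+1)$-estimate, the projection bound (\ref{eqn;projbound}), the decay $\|(Q(l+1)|_{U_j})^{-1}\|\leq Ce^{(-\lambda_{j-1}+\tau)r(0,l+1)}$, and the hypothesis $\lambda_{1}a<\lambda_{j-1}$, essentially in the form of Proposition \ref{prop;FDCbound}. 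When $\mathfrak{h}_j(\varphi)=0$, one then reads
\[
P_{U^{(k)}_j}\phi^{(k)}=-\sum_{l\geq k}Q|_{U^{(k)}_j}(k,l+1)^{-1}P_{U^{(l+1)}_j}E(l+1),
\]
whose norm is at most $Cp_{a}(\varphi)K^{a,j,\tau}_{k}(T)$, while
\[
P_{E^{(k)}_j}\phi^{(k)}=Q(k)|_{E_j}P_{E_j}\phi^{(0)}+\sum_{l=0}^{k-1}Q|_{E^{(l+1)}_j}(l+1,k)P_{E^{(l+1)}_j}E(l+1)
\]
is at most $C\|Q|_{E_j}(k)\|\,\|\varphi\|_{L^{1}(I^{(0)})}/|I^{(0)}|+Cp_{a}(\varphi)C^{a,j,\tau}_{k}(T)$ (using the trivial bound $\|\phi^{(0)}\|\leq\kappa\|\varphi\|_{L^{1}}/|I|$). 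Adding the two pieces gives (\ref{eqn;maincorrection}).

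The main obstacle is the estimate on $\|E(l+1)\|$: one has to combine a mean-zero primitive argument, the singularity-preserving behavior of special Birkhoff sums (\ref{eqn;renormpaos2}), and the combinatorics of the one-step matrix $Z(l+1)$, and separate cleanly the factors $\|Z(l+1)\|$ and $\langle\|Q(l)\|\rangle^{a}$ that later enter $K^{a,j,\tau}_{k}$ and $C^{a,j,\tau}_{k}$. Everything past that step is essentially linear algebra along the Oseledets splitting together with the summability already packaged into Proposition \ref{prop;FDCbound}.
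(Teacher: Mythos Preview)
Your plan is correct and matches the approach of the cited source: the paper does not reprove this theorem here but records the resulting operator in \eqref{def:hi}, and the parallel argument for $\mathfrak{h}^*_j$ in Lemma~\ref{lem:defh-j} follows exactly your telescoping/Oseledets-splitting scheme (with $v_k=\phi^{(k)}$ and $\Delta v_{k+1}=E(k+1)$). One simplification: you describe the bound on $E(l+1)$ via a mean-zero primitive, but the paper's toolbox gives it more directly from the $L^1$ estimates \eqref{eqn;Mknorm}, \eqref{nase} and \eqref{eqn;Mkaverage}, namely $\|E(l+1)\|\leq \tfrac{2\kappa}{|I^{(l+1)}|}\|\psi^{(l)}\|_{L^1(I^{(l)})}\leq C\,\tfrac{|I^{(l)}|^{1-a}}{|I^{(l+1)}|}\,p_a(S(l)\varphi)$, after which \eqref{eq:invIk} and \eqref{eqn;renormpaos2} produce the factor $\|Z(l+1)\|\langle\|Q(l)\|\rangle^a$ you need.
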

The operator $\mathfrak{h}_j: C^{0+\pag}(\sqcup_{\alpha \in \mathcal{A}} I_\alpha) \rightarrow U_j \subset H(\pi)$ called the \emph{correction operator}, is given by
\begin{align}
\label{def:hi}
\mathfrak{h}_j(\varphi) &=  \lim_{k\to\infty} Q(0,k)^{-1}\circ P_{U^{(k)}_j}\circ \mathcal{M}^{(k)}\circ S(k)(\varphi)\\
&=  \sum_{l \geq 0} Q(0,l)^{-1}\circ P_{U^{(l)}_j}\circ\big(\mathcal{M}^{(l)}\circ S(l)-Z(l)\circ\mathcal{M}^{(l-1)}\circ S(l-1)\big)(\varphi),\nonumber
\end{align}
where $\mathcal{M}^{(-1)}=0$.
\begin{remark}\label{rem:hjhj'}
Note that for all $2\leq j'\leq j\leq g+1$, we have $P_{U^{(0)}_{j'}}\circ P_{U^{(0)}_j}=P_{U^{(0)}_{j'}}$. It follows that $P_{U^{(0)}_{j'}}\circ \mathfrak{h}_j=\mathfrak{h}_{j'}$, hence $\mathfrak{h}_j(\varphi)=0$ implies $\mathfrak{h}_{j'}(\varphi)=0$. Moreover, by definition,
$\mathfrak{h}_j(h)=0$ for every $h\in E_j$, and $\mathfrak{h}_j(h)=h$ for every $h\in U_j$, in particular $\mathfrak{h}_j\circ \mathfrak{h}_j=\mathfrak{h}_j$.
\end{remark}

%\begin{lemma}
%Assume that $T$ satisfies \ref{FDC} and $0\leq a<1$. Let $\varphi\in C^{0+\pag}(\sqcup_{\alpha \in \mathcal{A}} I_\alpha)$ be a coboundary with an $L^1$ transfer map, i.e.\ there exists $u\in L^1(I)$ such that
%$\varphi=u-u\circ T$. Then $\mathfrak{h}_j(\varphi)=0$ for all $2\leq j\leq g+1$.
%\end{lemma}
%
%\begin{proof}
%\end{proof}

\subsection{First step: correction operator $\mathfrak{h}^*_j$ on BV}\label{sec;first-cor}
As {the} first step, we construct an initial extended correction operator $\mathfrak{h}^*_j$ on the space of bounded variation functions taking value in the space  $U_{-j}$ from the complementary filtration.  \medskip

By definition, for every $\varphi \in \bv(\sqcup_{\alpha \in \mathcal{A}}I_\alpha^{(k)} )$, we have
\begin{equation}\label{eqn:normbv}
\big\|\M^{(k)}(\varphi)\big\| \leq  \|\varphi\|_{\sup}\text{ and }\big\|\varphi-\M^{(k)}(\varphi)\big\|_{\sup} \leq  \var{\varphi}.
\end{equation}
Let
$P^{(k)}_0: L^1(\sqcup_{\alpha \in \mathcal{A}} I_\alpha^{(k)}) \rightarrow L^1(\sqcup_{\alpha \in \mathcal{A}} I_\alpha^{(k)})$ be a linear operator given by
\[P^{(k)}_0(\varphi) = \varphi - \M^{(k)}(\varphi).\]
If $\varphi \in BV(\sqcup_{\alpha \in \mathcal{A}} I_\alpha)$, then
\begin{equation}\label{eqn;Pkv}
\|P^{(k)}_0(S(k)\varphi)\|_{\sup} \leq \var(S(k)\varphi).
\end{equation}
By \S 6.1 in \cite{Fr-Ki}, for every $0\leq a<1$ and $\varphi \in C^{0+\pa}(\sqcup_{\alpha \in \mathcal{A}}I_\alpha^{(k)} )$,
%\begin{lemma}\cite[\S 6.1, Initial estimates]{Fr-Ki}
\begin{gather}\label{eqn;normMk}
\big\|\M^{(k)}(\varphi)\big\|_{L^1(I^{(k)})} \leq 2 \norm{\varphi}_{L^1(I^{(k)})}\\
\label{eqn;Mkaverage}
\big\|\varphi - \M^{(k)}(\varphi)\big\|_{L^1(I^{(k)})} \leq  \frac{2^{2+a}d}{1-a}p_a(\varphi)|I^{(k)}|^{1-a}.
\end{gather}
Therefore, for $\varphi \in C^{0+\pa}(\sqcup_{\alpha \in \mathcal{A}}I_\alpha )$, we obtain
\begin{equation}\label{eqn;upperboundvarphi}
\frac{\|S(k)\varphi\|_{L^1(I^{(k)})}}{|I^{(k)}|} \leq \norm{\mathcal{M}^{(k)}(S(k)\varphi)} +p_a(S(k)\varphi)
\frac{2^{2+a}}{(1-a)|I^{(k)}|^{a}}.
\end{equation}
%\end{lemma}
%Then, by \eqref{eqn;normMk} and \eqref{eqn;Mkaverage},
%\begin{equation}\label{eqn;l1phi}
%\big\|\varphi \big\|_{L^1(I^{(k)})} \leq 2 \norm{\varphi}_{L^1(I^{(k)})}+\frac{2^{2+a}d}{1-a}p_a(\varphi)|I^{(k)}|^{1-a}.  %\leq \frac{2^{3+a}d}{1-a}|I^{(k)}|^{1-a}\norm{\varphi}_a.
%\end{equation}
As
\begin{equation}\label{equiv-norm}
\frac{|I^{(k)}| \norm{h}}{\kappa} \leq \min_{\beta \in \mathcal{A}} |I_\beta^{(k)}|\norm{h} \leq \norm{h}_{L^1(I^{(k)})} \leq |I^{(k)}|\norm{h}
\text{ for every }h \in \Gamma^{(k)},
\end{equation}
by \eqref{eqn;normMk},
for every $\varphi \in C^{0+\pa}(\sqcup_{\alpha \in \mathcal{A}}I_\alpha )$,
\begin{equation}\label{eqn;Mknorm}
\|\M^{(k)}(\varphi)\| \leq \frac{2\kappa }{|I^{(k)}|}\norm{\varphi}_{L^1(I^{(k)})}.
\end{equation}

\begin{lemma}\label{lem:defh-j}
Let $0\leq j\leq g$ and $\varphi \in BV(\sqcup_{\alpha \in \mathcal{A}} I_\alpha)$ be such that
\begin{equation}\label{eq:ser}
\sum_{l \geq 1} \|Q|_{U^{(0)}_{-j}}(l)^{-1}\|\|Q(l)\|^\tau\|Z(l)\|\var(S(l-1)\varphi)<+\infty.
\end{equation}
Then the limit
\begin{equation}\label{def:h*j}
\mathfrak{h}^*_j(\varphi)=\lim_{l\to\infty}Q(0,l)^{-1}\circ P_{U^{(l)}_{-j}}\circ\mathcal{M}^{(l)}\circ S(l)(\varphi)\in U_{-j}
\end{equation}
exists, and there exists a universal constant $C>0$ such that
\begin{align}\label{eq:normh}
\|\mathfrak{h}^*_j(\varphi)\|\leq  C\Big( \norm{\varphi}_{\sup}
+\sum_{l \geq 1} \|Q|_{U^{(0)}_{-j}}(l)^{-1}\|\|Q(l)\|^\tau\|Z(l)\|\var(S(l-1)\varphi)\Big).
\end{align}
Moreover, for every $k\geq 1$, we have
\begin{gather}\label{eq:szam}
\begin{split}
&\big\|\mathcal{M}^{(k)}(S(k)(\varphi-\mathfrak{h}^*_j(\varphi)))\big\|\\
&\leq C\Big(\sum_{l> k} \|Q|_{U^{(k)}_{-j}}(k,l)^{-1}\|\|Q(l)\|^\tau\|Z(l)\|\var(S(l-1)\varphi) \\
&+\sum_{1\leq l\leq k}\|Q|_{E^{(l)}_{-j}}(l,k)\|\|Q(l)\|^\tau\|Z(l)\|\var(S(l-1)\varphi)+
\|Q|_{E^{(0)}_{-j}}(k)\| \norm{\varphi}_{\sup}\Big).
\end{split}
\end{gather}
\end{lemma}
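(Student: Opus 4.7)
The plan is to reproduce, on the negatively-labelled part of the complementary filtration, the telescoping construction of the correction operator from Theorem~6.1 of \cite{Fr-Ki}. The essential replacement is that, whereas the correction on $C^{0+\pa}$ is driven by the singular-norm quantity $p_a$, here the driving quantity is the total variation of successive special Birkhoff sums $\var(S(l-1)\varphi)$, which explains why the natural domain is $\bv(\sqcup_\alpha I_\alpha)$ and why the new Diophantine series $V^{j,\tau}_k$ of Definition~\ref{def;dio-series2} takes exactly this form.

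First I would set $A_l(\varphi):=Q(0,l)^{-1}\circ P_{U^{(l)}_{-j}}\circ \mathcal{M}^{(l)}\circ S(l)(\varphi)\in U_{-j}$ and rewrite
\[
A_l(\varphi)=A_0(\varphi)+\sum_{k=1}^{l}\big(A_k(\varphi)-A_{k-1}(\varphi)\big).
\]
Using $Q(0,k)=Z(k)\circ Q(0,k-1)$, the intertwining relation $P_{U^{(k)}_{-j}}\circ Z(k)=Z(k)\circ P_{U^{(k-1)}_{-j}}$, the fact that $S(k-1,k)$ coincides with $Z(k)$ on $\Gamma^{(k-1)}$, and the basic identity
\[
\mathcal{M}^{(k)}\circ S(k-1,k)-S(k-1,k)\circ \mathcal{M}^{(k-1)}=\mathcal{M}^{(k)}\circ S(k-1,k)\circ P^{(k-1)}_{0},
\]
one derives that $A_k(\varphi)-A_{k-1}(\varphi)$ factors through $P^{(k-1)}_0\circ S(k-1)(\varphi)$. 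Estimating the resulting factors by \eqref{def;sdc13}, \eqref{eqn;projbound}, \eqref{neq:Skvar}, \eqref{eqn:normbv} and \eqref{eqn;Pkv} in turn yields
\[
\|A_k(\varphi)-A_{k-1}(\varphi)\|\leq C\,\|Q|_{U^{(0)}_{-j}}(k)^{-1}\|\,\|Q(k)\|^{\tau}\,\|Z(k)\|\,\var(S(k-1)\varphi),
\]
the discrepancy between $\|Q|_{U^{(0)}_{-j}}(k-1)^{-1}\|$ and $\|Q|_{U^{(0)}_{-j}}(k)^{-1}\|$ being absorbed in at most one extra $\|Z(k)\|$ factor. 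Summability is then precisely hypothesis \eqref{eq:ser}, so $\mathfrak{h}^*_j(\varphi):=\lim_l A_l(\varphi)$ exists in $U_{-j}$; the trivial bound $\|A_0(\varphi)\|\leq C\|\mathcal{M}^{(0)}\varphi\|\leq C\|\varphi\|_{\sup}$ together with summation over $k\geq 1$ gives \eqref{eq:normh}.

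For \eqref{eq:szam} I would decompose $\varphi=(\varphi-\mathfrak{h}^*_j(\varphi))+\mathfrak{h}^*_j(\varphi)$. Since $\mathfrak{h}^*_j(\varphi)\in U_{-j}\subset \Gamma$ and $\mathcal{M}^{(k)}$ is the identity on $\Gamma^{(k)}$, one has $\mathcal{M}^{(k)}(S(k)\mathfrak{h}^*_j(\varphi))=Q(k)\mathfrak{h}^*_j(\varphi)\in U^{(k)}_{-j}$, so splitting via $P_{E^{(k)}_{-j}}+P_{U^{(k)}_{-j}}=\operatorname{Id}$ gives
\[
\mathcal{M}^{(k)}\bigl(S(k)(\varphi-\mathfrak{h}^*_j(\varphi))\bigr)=P_{E^{(k)}_{-j}}\mathcal{M}^{(k)}S(k)\varphi+\bigl(P_{U^{(k)}_{-j}}\mathcal{M}^{(k)}S(k)\varphi-Q(k)\mathfrak{h}^*_j(\varphi)\bigr).
\]
The second (unstable) piece equals $-Q(k)\sum_{l>k}(A_l(\varphi)-A_{l-1}(\varphi))$; applying $Q(k)\circ Q(0,l)^{-1}=Q(k,l)^{-1}$ reorganizes it as a telescoping tail bounded by the first sum in \eqref{eq:szam}, now with $\|Q|_{U^{(k)}_{-j}}(k,l)^{-1}\|$ in place of $\|Q|_{U^{(0)}_{-j}}(l)^{-1}\|$. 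The first (stable) piece is handled by a parallel finite telescoping of the sequence $B_l:=Q(l,k)\circ P_{E^{(l)}_{-j}}\circ\mathcal{M}^{(l)}\circ S(l)(\varphi)$ for $0\leq l\leq k$: the same commutation/identity argument yields an analogous factorization of $B_l-B_{l-1}$, but the forward restriction bound \eqref{def;sdc11} on $E_{-j}$ now replaces \eqref{def;sdc13}, producing the second sum in \eqref{eq:szam}; the initial term $B_0=Q(k)P_{E^{(0)}_{-j}}\mathcal{M}^{(0)}\varphi$ yields the final contribution $\|Q|_{E^{(0)}_{-j}}(k)\|\|\varphi\|_{\sup}$.

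The main technical obstacle is bookkeeping: one must arrange the commutations so that on the tail one produces the \emph{inverse} restricted cocycle norm $\|Q|_{U^{(k)}_{-j}}(k,l)^{-1}\|$ (matching $V^{j,\tau}_k$) while on the head one produces the \emph{forward} restricted cocycle norm $\|Q|_{E^{(l)}_{-j}}(l,k)\|$ (matching $W^{j,\tau}_k$). Once this is correctly set up, the rest reduces to combining \eqref{neq:Skvar}, \eqref{eqn;Pkv}, \eqref{eqn;projbound} with the FFDC bounds of Definition~\ref{def;FDC}.
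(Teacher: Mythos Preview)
Your proposal is correct and follows essentially the same route as the paper's proof. The paper sets $v_k=\mathcal{M}^{(k)}S(k)\varphi$, splits $v_k=u_k+e_k$ along $U^{(k)}_{-j}\oplus E^{(k)}_{-j}$, and works with the increments $\Delta u_k=P_{U^{(k)}_{-j}}\Delta v_k$ and $\Delta e_k=P_{E^{(k)}_{-j}}\Delta v_k$; your $A_l-A_{l-1}$ is exactly $Q(l)^{-1}\Delta u_l$ and your $B_l-B_{l-1}$ is exactly $Q(l,k)\Delta e_l$, so the two arguments coincide after this change of notation. One minor remark: the lemma itself does not invoke the specific FFDC exponent bounds \eqref{def;sdc11} or \eqref{def;sdc13}; these are only used downstream in Theorem~\ref{thm;correction2} and Proposition~\ref{prop;FDCbound4}. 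All that enters here is the projection bound \eqref{eqn;projbound} (which supplies the $\|Q(l)\|^{\tau}$ factor) together with \eqref{eqn:normbv}, \eqref{neq:Skvar} and \eqref{eqn;Pkv}.
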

\begin{proof}
Let  $v_k := \M^{(k)} \circ S(k)(\varphi)$.
Direct calculation shows that
\begin{align}\label{eq:Stodelta}
\begin{split}
\big(S(k,k+1)&\circ P^{(k)}_0 \circ S(k)(\varphi) -P_0^{(k+1)}\circ
S(k,k+1)\circ S(k)(\varphi)\big) \\
& =  -S(k,k+1)\circ \M^{(k)} \circ S(k)(\varphi) + \M^{(k+1)}\circ S(k+1)(\varphi)\\
& = -Z(k+1)v_k + v_{k+1}.
\end{split}
\end{align}
Then, by \eqref{eqn;Pkv} and \eqref{neq:Skvar},
\begin{align*}
\|S(k,k+1)\circ P^{(k)}_0 \circ S(k)(\varphi)\|_{\sup} &\leq \|Z(k+1)\|\|P^{(k)}_0 \circ S(k)(\varphi)\|_{\sup}\\
& \leq \|Z(k+1)\| \var(S(k)\varphi),
\end{align*}
and
\begin{align*}
\|P^{(k+1)}_0 \circ S(k+1)(\varphi)\|_{\sup} \leq \var(S(k+1)\varphi)\leq \var(S(k)\varphi).
\end{align*}
This gives
\begin{equation}\label{eqn;anvv0}
\|Z(k+1)v_k - v_{k+1}\| \leq 2\|Z(k+1)\| \var(S(k)\varphi).
%\big(W^{a,j,\tau}_{k+1}+V^{a,j,\tau}_{k+1}\big)
\end{equation}

For any sequence $(x_k)_{k\geq 0}$ in $\R^{\mathcal{A}}$, let $\Delta x_{k+1} = x_{k+1}-Z(k+1)x_k$ for $k\geq 0$, and $\Delta x_0=x_0$.
Then, by telescoping,
\begin{align}\label{def;ehk0}
x_{k} = \sum_{j=0}^{k} Q(j,k) \Delta x_j.
\end{align}
By \eqref{eqn:normbv} and \eqref{eqn;anvv0},
\begin{equation}\label{eqn:delta00}
\|\Delta v_0\|  \leq  \|\varphi\|_{\sup}\text{ and }\norm{\Delta v_{k+1}}\leq 2\|Z(k+1)\| \var(S(k)\varphi).
\end{equation}
For every $k\geq 0$, let $e_k=P_{E^{(k)}_{-j}}v_k\in E_{-j}^{(k)}$ and $u_k=P_{U^{(k)}_{-j}}v_k\in U_{-j}^{(k)}$. Then $v_ k = u_k + e_k$. Since $Z(k+1)(E_{-j}^{(k)})=E_{-j}^{(k+1)}$ and $Z(k+1)(U_{-j}^{(k)})=U_{-j}^{(k+1)}$, we have
\begin{gather}
\label{eq:deltau}
\Delta u_{k+1} = u_{k+1}-Z(k+1)u_k = P_{U^{(k+1)}_{-j}}\Delta v_{k+1}, \\
 \Delta e_{k+1} = e_{k+1}-Z(k+1)e_k=P_{E^{(k+1)}_{-j}}\Delta v_{k+1},\nonumber\\
\Delta u_0=u_0=P_{U^{(0)}_{-j}}\Delta v_{0}, \ \Delta e_0 = e_0=P_{E^{(0)}_{-j}}\Delta v_{0}.\nonumber
\end{gather}
In view of \eqref{eqn;projbound} and \eqref{eqn:delta00}, we have
\begin{align}\label{u0e00}
\norm{\Delta u_0}\leq C\norm{\Delta v_0}\leq  C\norm{\varphi}_{\sup}, \quad \norm{\Delta e_0}\leq C\norm{\Delta v_0}\leq  C\norm{\varphi}_{\sup},
\end{align}
and for every $k\geq 1$, we have
\begin{align}\label{ukek0}
\begin{split}
\norm{\Delta u_{k}} \leq 2C\|Q(k)\|^{\tau}\|Z(k)\| \var(S(k-1)\varphi), \\
\norm{\Delta e_{k}} \leq 2C\|Q(k)\|^{\tau}\|Z(k)\| \var(S(k-1)\varphi).
\end{split}
\end{align}
%{
%}

Let us consider the infinite series $v :=  \sum_{l \geq 0} Q(l)^{-1}\Delta u_l$. Since
\begin{align}\label{eq:suml}
\begin{split}
\sum_{l \geq 0}& \|Q|_{U^{(0)}_{-j}}(l)^{-1}\|\|\Delta u_l\| \\
&\leq C\big(\norm{\varphi}_{\sup}+2\sum_{l \geq 1}\|Q|_{U^{(0)}_{-j}}(l)^{-1}\|\|Q(l)\|^{\tau}\|Z(l)\| \var(S(l-1)\varphi)\big)
\end{split}
\end{align}
is finite,
$v\in U_{-j}$ is well defined. In view of \eqref{eq:Stodelta} and \eqref{eq:deltau}, we have
\begin{align*}
 Q&(l)^{-1}\Delta u_l=Q(l)^{-1}\circ P_{U^{(l)}_{-j}}(\M^{(l)}\circ S(l)(\varphi)-S(l-1,l)\circ \M^{(l-1)} \circ S(l-1)(\varphi))\\
 &=Q(l)^{-1}\circ P_{U^{(l)}_{-j}}\circ\M^{(l)}\circ S(l)(\varphi)-Q(l-1)^{-1}\circ P_{U^{(l-1)}_{-j}}\circ\M^{(l-1)}\circ S(l-1)(\varphi).
\end{align*}
It follows that $\mathfrak{h}^*_j(\varphi)$ is well defined and $\mathfrak{h}^*_j(\varphi)=v$, so by \eqref{eq:suml}, we obtain \eqref{eq:normh}.

By the definition of $v$, \eqref{def;ehk0} and \eqref{ukek0}, for every $k\geq 0$, we have
\begin{align}\label{eqn;qedifference0}
\begin{split}
 \|Q(k)v - u_{k}\|& = \Big\|\sum_{l> k} Q|_{U^{(k)}_{-j}}(k,l)^{-1}\Delta u_{l}\Big\|\leq \sum_{l> k} \|Q|_{U^{(k)}_{-j}}(k,l)^{-1}\|\|\Delta u_{l}\| \\
  &\leq 2C\sum_{l> k} \|Q|_{U^{(k)}_{-j}}(k,l)^{-1}\|\|Q(l)\|^{\tau}\|Z(l)\| \var(S(l-1)\varphi).
\end{split}
 \end{align}
To obtain the bound of norm of $e_k \in E_{-j}^{(k)}$, we apply \eqref{def;ehk0}, \eqref{ukek0} and \eqref{u0e00},
\begin{align*}\label{eqn;hk}
\begin{split}
&\|e_k\| \leq \sum_{0\leq l\leq k}\|Q(l,k)\Delta e_{l}\|\leq\sum_{0\leq l\leq k}\|Q|_{E^{(l)}_{-j}}(l,k)\|
\|\Delta e_{l}\|\\
&\leq C\big(\|Q|_{E^{(0)}_{-j}}(k)\|\|\varphi\|_{\sup}+2\sum_{1\leq l\leq k}\|Q|_{E^{(l)}_{-j}}(l,k)\|
\|Q(l)\|^{\tau}\|Z(l)\| \var(S(l-1)\varphi)\big).
\end{split}
\end{align*}
Combining with \eqref{eqn;qedifference0}, we conclude
\begin{align*}
\|&Q(k)v - v_{k}\|\leq 2C\Big(\sum_{l> k} \|Q|_{U^{(k)}_{-j}}(k,l)^{-1}\|\|Q(l)\|^\tau\|Z(l)\|\var(S(l-1)\varphi) \\
&+\sum_{1\leq l\leq k}\|Q|_{E^{(l)}_{-j}}(l,k)\|\|Q(l)\|^\tau\|Z(l)\|\var(S(l-1)\varphi)+
\|Q|_{E^{(0)}_{-j}}(k)\| \norm{\varphi}_{\sup}\Big).
\end{align*}
Since $\mathcal{M}^{(k)}(S(k)(\mathfrak{h}^*_{j}(\varphi)))=Q(k)v$, this gives \eqref{eq:szam}.
\end{proof}
\begin{remark}
\label{rem:hjhj'-}
Suppose that $0\leq j\leq j'\leq g$. Then, the operator $\mathfrak{h}^*_{j'}$ is well defined, and $P_{U^{(0)}_{-j'}}\circ \mathfrak{h}^*_j=\mathfrak{h}^*_{j'}$.  Hence $\mathfrak{h}^*_j(\varphi)=0$ implies $\mathfrak{h}^*_{j'}(\varphi)=0$. In view of \eqref{def:hi} and \eqref{def:h*j}, the same arguments show that for every $2\leq l\leq g+1$, we have $P_{U^{(0)}_{l}}\circ \mathfrak{h}^*_j=\mathfrak{h}_{l}$.  Hence, $\mathfrak{h}^*_j(\varphi)=0$ implies $\mathfrak{h}_{l}(\varphi)=0$. Moreover, by definition,
$\mathfrak{h}^*_{j}(h)=0$ for every $h\in E_{-j}$, and $\mathfrak{h}^*_{j}(h)=h$ for every $h\in U_{-j}$.
\end{remark}

\subsection{Second step: correction operator $\mathfrak{h}_{-j,i}$}\label{sec;extcp}% $\mathfrak{h}_{-j,i}$
Now we introduce a second type correction operators $\mathfrak{h}_{-j,i}:C^{1+\pag}(\sqcup_{\alpha \in \mathcal{A}} I_\alpha)\to U_{-j}$ for $2\leq i\leq g+1$ and $1\leq j\leq g$. These operators extend previous (standard) correction operators $\mathfrak{h}_{i}$ to the complement of the stable part of the Oseledets filtration. For this purpose, we {pre-compose  the operator $\mathfrak{h}^*_{j}$ with an operator $K_i$ such that $\mathfrak{h}_{i}\circ D\circ K_i=0$.}
%use a certain modification of the operator $\mathfrak{h}^*_{j}$, which we  link with .

For all $0\leq a<1$ and $2\leq i\leq g+1$, let
\[C_i^{1+\pag}(\sqcup_{\alpha \in \mathcal{A}} I_\alpha)=\{\varphi\in C^{1+\pag}(\sqcup_{\alpha \in \mathcal{A}} I_\alpha):\mathfrak{h}_i(D\varphi) = 0\}.\]
Let us consider the sequence $\bar{s}=(s_k)_{k\geq 0}$ given by $s_k:=|I^{(k)}|(K^{a,i,\tau}_{k}+C^{a,i,\tau}_{k})$. %as in Definition \ref{def;dio-series}.

{Throughout the paper, we use standard big $O$ notation. We  emphasize that the constant  implicit in this notation does not depend on the function $\varphi$, but does depend on the parameter $\tau>0$.}

\begin{theorem}\label{thm;correction2}
Assume that $T$ satisfies the \ref{FDC}. Let $0\leq a<1$,  $2\leq i\leq g+1$ and $1\leq j\leq g$ so that
$a\lambda_1  < \lambda_{i-1}$
and $\max\{a\lambda_1,\lambda_i\}  < \lambda_1-\lambda_{j+1}$.
%so that $-\lambda_i\leq \lambda_1 a - \lambda_1 <-\lambda_{i-1}$.
% Assume that $\mathfrak{h}_i(\varphi') = 0$.
Then the linear operator
$\mathfrak{h}_j^*: C_i^{1+\pag}(\sqcup_{\alpha \in \mathcal{A}} I_\alpha) \rightarrow U_{-j} $ is well defined and bounded. Moreover, for any $0<\tau<\frac{\max\{a\lambda_1,\lambda_i\} - \lambda_1+\lambda_{j+1}}{9(1+\lambda_1)}$, there exists a constant $C=C_\tau\geq 1$ such that for any $\varphi \in C_i^{1+\pag}(\sqcup_{\alpha \in \mathcal{A}} I_\alpha) $ with $\mathfrak{h}^*_j(\varphi) = 0$, we have
\begin{align}
\label{eqn;varsk}
\var(S(k)\varphi) &\leq C s_k\norm{D\varphi}_{C^{0+\pa}},\\
\label{eqn;maincorrection3}
\|\mathcal{M}^{(k)}(S(k)\varphi)\| &\leq C\big(\big(W_k^{j,\tau}(\bar{s})+V_k^{j,\tau}(\bar{s})\big) \norm{D\varphi}_{C^{0+\pa}}+ \|Q|_{E^{(0)}_{-j}}(k)\| \norm{\varphi}_{\sup}\big)
\end{align}
with
\begin{align*}
s_k&=O(e^{(\max\{\lambda_{i},\lambda_1 a\}-\lambda_1+6\tau(1+\lambda_1))r(0,k+1)}),\\
V_{k}^{j,\tau}(T,\bar{s})&=O(e^{(\max\{\lambda_{i},\lambda_1 a\}-\lambda_1+8\tau(1+\lambda_1))r(0,k)}),\\
W_{k}^{j,\tau}(T,\bar{s})&=O(e^{(\max\{\lambda_{i}-\lambda_1,\lambda_1 a-\lambda_1,-\lambda_j\}+9\tau(1+\lambda_1))r(0,k)}).
\end{align*}
\end{theorem}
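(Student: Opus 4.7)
The strategy is to reduce the theorem to Lemma~\ref{lem:defh-j} by supplying the variation bound \eqref{eqn;varsk} needed to verify its hypothesis \eqref{eq:ser}. The mechanism for producing this bound is to apply the already-established Theorem~\ref{thm;correction} to $D\varphi$ rather than to $\varphi$; the definition of $C_i^{1+\pag}$ is tailored precisely so that $\mathfrak{h}_i(D\varphi)=0$, unlocking that theorem. The key observation that glues the two levels together is that $S(k)$ commutes with differentiation, because on each floor of the Rokhlin tower $S(k)\varphi(x)=\sum_{0\leq i<Q_\beta(k)}\varphi(T^i x)$ and each $T^i$ is an affine translation; hence $D\circ S(k)=S(k)\circ D$.

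First, I would establish \eqref{eqn;varsk}. Note $D\varphi\in C^{0+\pag}$ with $\mathfrak{h}_i(D\varphi)=0$, so Theorem~\ref{thm;correction} yields $\|\mathcal{M}^{(k)}(S(k)D\varphi)\|\leq C(K^{a,i,\tau}_k+C^{a,i,\tau}_k)\|D\varphi\|_{C^{0+\pa}}$, absorbing $\|Q|_{E_i}(k)\|/|I^{(0)}|$ into $C^{a,i,\tau}_k$. Since polynomial singularities of exponent $a<1$ are integrable, $\var(S(k)\varphi)=\|D(S(k)\varphi)\|_{L^1(I^{(k)})}=\|S(k)D\varphi\|_{L^1(I^{(k)})}$. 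The $L^1$-control \eqref{eqn;upperboundvarphi} applied to $S(k)D\varphi$, combined with the renormalization bound \eqref{eqn;renormpaos2} on $p_a(S(k)D\varphi)$, leaves a leftover factor $|I^{(k)}|^{1-a}p_a(D\varphi)$; this is dominated by $|I^{(k)}|K^{a,i,\tau}_k\,p_a(D\varphi)$ since $|I^{(k)}|^{-a}\leq \|Q(k)\|^a$ is a lower-order component of $K^{a,i,\tau}_k$. Summing gives $\var(S(k)\varphi)\leq Cs_k\|D\varphi\|_{C^{0+\pa}}$ with $s_k=|I^{(k)}|(K^{a,i,\tau}_k+C^{a,i,\tau}_k)$, whose declared exponential rate follows by plugging in Proposition~\ref{prop;FDCbound} and \eqref{eq:invIk}.

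Next, to show that $\mathfrak{h}^*_j$ is well-defined, I would verify the hypothesis of Lemma~\ref{lem:defh-j}: the series \eqref{eq:ser} equals (after a shift $l\mapsto l+1$ matching the index conventions of Definition~\ref{def;dio-series2}) $V^{j,\tau}_0(T,\bar{s})\|D\varphi\|_{C^{0+\pa}}$. With $s_l\leq D e^{-\rho r(0,l+1)}$ for $\rho=\lambda_1-\max\{\lambda_i,\lambda_1 a\}-O(\tau)$, the diophantine hypothesis $\max\{\lambda_1 a,\lambda_i\}<\lambda_1-\lambda_{j+1}$ guarantees $\rho>\lambda_{j+1}$ for sufficiently small $\tau>0$, so Proposition~\ref{prop;FDCbound4} supplies the convergence as well as the desired exponential bound on $V^{j,\tau}_k$. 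Consequently Lemma~\ref{lem:defh-j} gives $\mathfrak{h}^*_j(\varphi)\in U_{-j}$ together with $\|\mathfrak{h}^*_j(\varphi)\|\leq C(\|\varphi\|_{\sup}+V^{j,\tau}_0(T,\bar{s})\|D\varphi\|_{C^{0+\pa}})\leq C'\|\varphi\|_{C^{1+\pa}}$, proving boundedness.

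Finally, assuming $\mathfrak{h}^*_j(\varphi)=0$, I would invoke the estimate \eqref{eq:szam} of Lemma~\ref{lem:defh-j} and substitute $\var(S(l-1)\varphi)\leq Cs_{l-1}\|D\varphi\|_{C^{0+\pa}}$. The two sums in \eqref{eq:szam} reindex exactly into $V^{j,\tau}_k(T,\bar{s})$ and $W^{j,\tau}_k(T,\bar{s})$; together with the trivial term $\|Q|_{E^{(0)}_{-j}}(k)\|\|\varphi\|_{\sup}$, this is \eqref{eqn;maincorrection3}. The claimed exponential rates for $V^{j,\tau}_k$ and $W^{j,\tau}_k$ come from Proposition~\ref{prop;FDCbound4} applied with the same $\rho$. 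The principal technical obstacle is not conceptual but the cumulative tracking of $\tau$-losses through the chain $|I^{(k)}|\to s_k\to V^{j,\tau}_k,\,W^{j,\tau}_k$: one must choose $\tau$ small enough that the chain of inequalities from \eqref{def;sdc1}--\eqref{def;sdc5} combined with the conclusions of Propositions~\ref{prop;FDCbound} and \ref{prop;FDCbound4} retain a strict sign in $\rho-\lambda_{j+1}$, which is precisely what the quantitative threshold on $\tau$ in the theorem encodes.
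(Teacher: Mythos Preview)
Your proposal is correct and follows essentially the same route as the paper: derive \eqref{eqn;varsk} by applying Theorem~\ref{thm;correction} to $D\varphi$ via $\var(S(k)\varphi)=\|S(k)D\varphi\|_{L^1(I^{(k)})}$ and \eqref{eqn;upperboundvarphi}, then feed this bound into Lemma~\ref{lem:defh-j} and Proposition~\ref{prop;FDCbound4} to obtain well-definedness, boundedness, and \eqref{eqn;maincorrection3}. The paper makes the same absorption of the $|I^{(k)}|^{-a}p_a$ term into $s_k$ that you flag, and the only cosmetic difference is that you make the commutation $D\circ S(k)=S(k)\circ D$ explicit while the paper uses it silently.
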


\begin{proof}%[Proof of Theorem \ref{thm;correction2}]
As  $\varphi \in C_i^{1+\pag}(\sqcup_{\alpha \in \mathcal{A}} I_\alpha)$,  we have $D\varphi \in C^{0+\pag}(\sqcup_{\alpha \in \mathcal{A}} I_\alpha)$ and $\mathfrak{h}_{i}(D\varphi) = 0$.
By \eqref{eqn;upperboundvarphi}, \eqref{eqn;renormpaos2},  \eqref{eq:invIk}, and Theorem \ref{thm;correction}, we have
\begin{align*}%\label{eqn;varsk}
\begin{split}
\var(&S(k)\varphi) = \norm{S(k)(D\varphi)}_{L^1(I^{(k)})} \\
& \leq |I^{(k)}|\Big(\norm{\mathcal{M}^{(k)}(S(k)D\varphi)} +p_a(S(k)D\varphi)\frac{2^{a+2}}{(1-a)||I^{(k)}|^{a}}\Big)\\
& \leq C_a|I^{(k)}|\left(\big(K^{a,i,\tau}_k+C^{a,i,\tau}_k\big) p_a(D\varphi)+ \|Q|_{E_{i}}(k)\| \frac{\norm{D\varphi}_{L^1(I^{(0)})}}{|I^{(0)}|}\right)\\
& \leq C_a|I^{(k)}|\big(K^{a,i,\tau}_k+C^{a,i,\tau}_k\big)\norm{D\varphi}_{C^{0+\pa}}\leq C_\tau s_k\norm{D\varphi}_{C^{0+\pa}},
\end{split}
\end{align*}
which gives \eqref{eqn;varsk}. It follows that
\begin{align*}
\|Q&|_{U^{(k)}_{-j}}(k,l)^{-1}\|\|Q(l)\|^\tau\|Z(l)\|\var(S(l-1)\varphi) \\
&=O\big(\|Q|_{U^{(k)}_{-j}}(k,l)^{-1}\|\|Q(l)\|^\tau \|Z(l)\| s_{l-1}\norm{D\varphi}_{C^{0+\pa}}\big),\\
\|Q&|_{E^{(l)}_{-j}}(l,k)\|\|Q(l)\|^\tau\|Z(l)\|\var(S(l-1)\varphi)\\
&=O\big(\|Q|_{E^{(l)}_{-j}}(l,k)\|\|Q(l)\|^\tau \|Z(l)\| s_{l-1}\norm{D\varphi}_{C^{0+\pa}}\big).
\end{align*}
In view of \eqref{prop;sdc-1}, \eqref{eq:invIk}, and \eqref{eq:nrn}, we have
\begin{align*}
s_k&= O(e^{-\lambda_1k}e^{(\max\{\lambda_{i},\lambda_1 a\}+5\tau(1+\lambda_1))r(0,k)})\\
&= O(e^{-\lambda_1(1-\tau)r(0,k+1)}e^{(\max\{\lambda_{i},\lambda_1 a\}+5\tau(1+\lambda_1))r(0,k)})\\
&= O(e^{(\max\{\lambda_{i},\lambda_1 a\}-\lambda_1+6\tau(1+\lambda_1))r(0,k+1)}).
\end{align*}
As $\max\{\lambda_{i},\lambda_1 a\}+\lambda_{j+1}-\lambda_1+6\tau(1+\lambda_1)+\tau(3+\lambda_1)<0$, by Proposition~\ref{prop;FDCbound4},
\begin{align*}
V_k^{j,\tau}(T,\bar{s})&=O(e^{(\max\{\lambda_{i},\lambda_1 a\}-\lambda_1+8\tau(1+\lambda_1))r(0,k)}),\\
W_k^{j,\tau}(T,\bar{s})&=O(e^{(\max\{\lambda_{i}-\lambda_1,\lambda_1 a-\lambda_1,-\lambda_j\}+9\tau(1+\lambda_1))r(0,k)}).
\end{align*}
As $V_0^{j,\tau}(T,\bar{s})$ is finite, the series \eqref{eq:ser} is convergent. By Lemma~\ref{lem:defh-j} (see \eqref{eq:normh}), the operator
$\mathfrak{h}_j^*: C_i^{1+\pag}(\sqcup_{\alpha \in \mathcal{A}} I_\alpha) \rightarrow U_{-j} $ is well defined and bounded.
Moreover, in view of \eqref{eq:szam}, this also gives \eqref{eqn;maincorrection3}.
\end{proof}

%In view of Proposition \ref{prop;FDCbound4}, %$\|Q_{E_{-j}}(k)\|=O(e^{(-\lambda_j+\tau)k})$ and  the fact that $\lambda_1 a<\lambda_{i-1}$,
%we have the following corollary.
For every $\varphi\in L^1(I)$, we denote by $\widetilde{\varphi}\in AC(I)$ its primitive integral $\widetilde{\varphi}(x)=\int_0^x\varphi(y)dy$.

\begin{corollary}\label{cor:h-j}
Assume that $T$ satisfies the \ref{FDC}. Let $0\leq a<1$, $2\leq i\leq g+1$, and $1\leq j\leq g$ so that $a\lambda_1<\lambda_{i-1}$ and $\max\{a\lambda_1,\lambda_i\}<\lambda_1-\lambda_{j+1}$.
There exists a bounded operator $\mathfrak{h}_{-j,i}:C^{1+\pag}(\sqcup_{\alpha \in \mathcal{A}} I_\alpha)\to U_{-j}$
such that for every $\varphi \in C^{1+\pag}(\sqcup_{\alpha \in \mathcal{A}} I_\alpha)$ with $\mathfrak{h}_{-j,i}(\varphi)=0$ and $\mathfrak{h}_{i}(D\varphi)=0$, we have
%Then for every $\varphi \in \operatorname{P_{a+1}G}(\sqcup_{\alpha \in \mathcal{A}} I_\alpha)$ with $\mathfrak{h}^*_j(\varphi) = 0$ and $\mathfrak{h}_i(\varphi') = 0$, for every small enough $\tau>0$, there exists a FDC-acceleration such that
\begin{equation}\label{eqn;maincorrection2}
\|S(k)\varphi\|_{\sup} = O(e^{(\max\{\lambda_{i}-\lambda_1,\lambda_1 a -\lambda_1, -\lambda_{j}\} +\tau)r(0,k)})\|\varphi\|_{C^{1+\pa}}
\quad\text{for any}\quad\tau>0.
\end{equation}
\end{corollary}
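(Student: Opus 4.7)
The strategy is to reduce the corollary to Theorem~\ref{thm;correction2} by first bringing the derivative into the subspace $C_i^{1+\pag}$. Define
\[\mathfrak{h}_{-j,i}(\varphi):=\mathfrak{h}^*_j\bigl(\varphi-\widetilde{\mathfrak{h}_i(D\varphi)}\bigr).\]
This is well-posed on $C^{1+\pag}$: the correction $\mathfrak{h}_i(D\varphi)\in U_i$ is (as an element of $\Gamma=\R^{\mathcal A}$) piecewise constant on the intervals $I_\alpha$, hence its primitive $\widetilde{\mathfrak{h}_i(D\varphi)}$ is piecewise affine, smooth on each $I_\alpha$, and carries trivial polynomial-singularity data, so subtracting it preserves $C^{1+\pag}$ and the geometric type. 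Idempotence $\mathfrak{h}_i\circ\mathfrak{h}_i=\mathfrak{h}_i$ (Remark~\ref{rem:hjhj'}) then ensures $\mathfrak{h}_i\bigl(D(\varphi-\widetilde{\mathfrak{h}_i(D\varphi)})\bigr)=0$, so the argument of $\mathfrak{h}^*_j$ lies in $C_i^{1+\pag}$ and Theorem~\ref{thm;correction2} gives that $\mathfrak{h}_{-j,i}\colon C^{1+\pag}\to U_{-j}$ is well defined and bounded.

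Under the hypotheses $\mathfrak{h}_{-j,i}(\varphi)=0$ and $\mathfrak{h}_i(D\varphi)=0$, one has $\widetilde{\mathfrak{h}_i(D\varphi)}\equiv 0$, so $\varphi$ itself belongs to $C_i^{1+\pag}$ with $\mathfrak{h}^*_j(\varphi)=0$. We may therefore apply Theorem~\ref{thm;correction2} directly to $\varphi$. Combining the elementary decomposition
\[\|S(k)\varphi\|_{\sup}\leq \|\mathcal{M}^{(k)}(S(k)\varphi)\|+\|S(k)\varphi-\mathcal{M}^{(k)}(S(k)\varphi)\|_{\sup}\leq \|\mathcal{M}^{(k)}(S(k)\varphi)\|+\var(S(k)\varphi),\]
which follows from \eqref{eqn:normbv}, with the estimates \eqref{eqn;varsk} and \eqref{eqn;maincorrection3} reduces the task to controlling explicit exponentials in $r(0,k)$.

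The four contributions, taken together with the FDC bound \eqref{def;sdc11} giving $\|Q|_{E^{(0)}_{-j}}(k)\|=O(e^{(-\lambda_j+\tau)r(0,k)})$ and the relation \eqref{eq:nrn} that trades $r(0,k+1)$ for $r(0,k)$ at the cost of an $O(\tau)r(0,k)$ slack, carry exponents (modulo $O(\tau)$ corrections proportional to $1+\lambda_1$):
\[\underbrace{\max\{\lambda_i,\lambda_1 a\}-\lambda_1}_{\var\text{ and }V_k^{j,\tau}},\qquad \underbrace{\max\{\lambda_i-\lambda_1,\lambda_1 a-\lambda_1,-\lambda_j\}}_{W_k^{j,\tau}},\qquad \underbrace{-\lambda_j}_{\|Q|_{E^{(0)}_{-j}}(k)\|\cdot\|\varphi\|_{\sup}}.\]
Their maximum is $\max\{\lambda_i-\lambda_1,\lambda_1 a-\lambda_1,-\lambda_j\}$. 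Since $\|D\varphi\|_{C^{0+\pa}}+\|\varphi\|_{\sup}\leq \|\varphi\|_{C^{1+\pa}}$ and since the required strict inequalities $\lambda_1 a<\lambda_{i-1}$ and $\max\{\lambda_1 a,\lambda_i\}<\lambda_1-\lambda_{j+1}$ coincide exactly with those of Theorem~\ref{thm;correction2}, for any prescribed $\tau'>0$ we select the FDC parameter $\tau$ small enough to absorb the slack into $\tau'$, yielding \eqref{eqn;maincorrection2}.

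The main obstacle is not any new estimate but the bookkeeping of several competing rates: one must verify that $-\lambda_j$ genuinely governs the bound in the regime where the $E^{(0)}_{-j}$-projection contribution dominates over the $\max\{\lambda_i-\lambda_1,\lambda_1 a-\lambda_1\}$ contributions from $\var$ and $V_k^{j,\tau}$, and one must track that the $r(0,k+1)\leftrightarrow r(0,k)$ swap and the Oseledets-projection $\|Q(l)\|^\tau$ factors can all be absorbed into a single arbitrarily small $\tau'$. Once this is arranged, the corollary follows by direct packaging of Theorem~\ref{thm;correction2}.
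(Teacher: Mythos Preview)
Your argument is correct and coincides with the paper's proof: both define $\mathfrak{h}_{-j,i}=\mathfrak{h}^*_j\circ K_i$ with $K_i(\varphi)=\varphi-\widetilde{\mathfrak{h}_i(D\varphi)}$, observe that the hypotheses force $K_i(\varphi)=\varphi$ and $\mathfrak{h}^*_j(\varphi)=0$, and then combine \eqref{eqn;Pkv} with the bounds \eqref{eqn;varsk}, \eqref{eqn;maincorrection3} from Theorem~\ref{thm;correction2}. One small slip: the inequality $\|D\varphi\|_{C^{0+\pa}}+\|\varphi\|_{\sup}\leq\|\varphi\|_{C^{1+\pa}}$ is not literally true (the $C^{1+\pa}$-norm involves $\|\varphi\|_{L^1}$, not $\|\varphi\|_{\sup}$), but $\|\varphi\|_{\sup}\leq C\|\varphi\|_{C^{1+\pa}}$ holds with a constant depending only on $\min_\alpha|I_\alpha|$, which suffices for the $O(\cdot)$ conclusion.
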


\begin{proof}
Let $K_{i}:C^{1+\pag}(\sqcup_{\alpha \in \mathcal{A}} I_\alpha)\to C_i^{1+\pag}(\sqcup_{\alpha \in \mathcal{A}} I_\alpha)$ be the bounded operator defined by $K_{i}(\varphi)=\varphi-\widetilde{\mathfrak{h}_{i}(D\varphi)}$. Since
$\mathfrak{h}_{i}(DK_{i}(\varphi))=\mathfrak{h}_{i}(D\varphi)-\mathfrak{h}_{i}(\mathfrak{h}_{i}(D\varphi))=0$,
we really have $K_{i}(\varphi)\in C_i^{1+\pag}(\sqcup_{\alpha \in \mathcal{A}} I_\alpha)$.
We can use Theorem~\ref{thm;correction2} to define $\mathfrak{h}_{-j,i}:C^{1+\pag}(\sqcup_{\alpha \in \mathcal{A}} I_\alpha)\to U_{-j}$ as $\mathfrak{h}_{-j,i}:=\mathfrak{h}^*_{j}\circ K_{i}$.

Suppose that $\varphi \in C^{1+\pag}(\sqcup_{\alpha \in \mathcal{A}} I_\alpha)$ is such that
$\mathfrak{h}_{-j,i}(\varphi)=0$ and $\mathfrak{h}_{i}(D\varphi)=0$. Then
{$\mathfrak{h}^*_{j}(K_{i}(\varphi))=\mathfrak{h}_{-j,i}(\varphi)=0 \text{ and } \varphi=K_{i}(\varphi)+\widetilde{\mathfrak{h}_{i}(D\varphi)}=K_{i}(\varphi),$}
 so $\mathfrak{h}^*_{j}(\varphi)=0$. In view of \eqref{eqn;Pkv} and Theorem~\ref{thm;correction2}, %Propositions~\ref{prop;FDCbound} and  \ref{prop;FDCbound4},
\begin{align*}
\|S(k)\varphi\|_{\sup}&\leq \|\mathcal{M}^{(k)}(S(k)\varphi)\|+\var(S(k)\varphi)=O(e^{(\max\{\lambda_{i}-\lambda_1,\lambda_1 a -\lambda_1, -\lambda_{j}\} +\tau)r(0,k)}).
\end{align*}
\end{proof}

\begin{remark}
\label{rem:hji}
Suppose that $1\leq j\leq j'\leq g$ and $2\leq i'\leq i\leq g+1$. Then the operator $\mathfrak{h}_{-j',i'}$ is well defined. By Remarks~\ref{rem:hjhj'} and \ref{rem:hjhj'-}, $\mathfrak{h}_{-j,i}(\varphi)=0$ and $\mathfrak{h}_{i}(D\varphi)=0$ imply
\[\mathfrak{h}_{-j',i'}(\varphi)=0, \ \mathfrak{h}_{i'}(D\varphi)=0 ,\quad\text{and}\quad \mathfrak{h}_{l}(\varphi)=0\quad \text{for every}\quad 2\leq l\leq g+1.\]
%In view of Remarks~\ref{rem:hjhj'} and \ref{rem:hjhj'-},
Moreover, by the same remarks, we also have
$\mathfrak{h}_{-j,i}(h)=0$ for every $h\in E_{-j}$, and $\mathfrak{h}_{-j,i}(h)=h$ for every $h\in U_{-j}$, in particular $\mathfrak{h}_{-j,i}\circ \mathfrak{h}_{-j,i}=\mathfrak{h}_{-j,i}$.
\end{remark}

\subsection{Third step: correction operator $\mathfrak{h}_{0}$}\label{sec;higher-cor}
The last correction operator  $\mathfrak{h}_{0}:C^{2+\pag}(\sqcup_{\alpha \in \mathcal{A}} I_\alpha)\to U_{0}=\Gamma$ plays the same roles as  $\mathfrak{h}_{-j,i}$ but {only} for the parameter $j=0$.
As in the construction of $\mathfrak{h}_{-j,i}$, we also use the  operator $\mathfrak{h}^*_{j}$ (for $j=0$), but we need to link it with the derivative of $\mathfrak{h}_{-g,2}$, and the second derivative of $\mathfrak{h}_{2}$.
%To handle higher order derivatives of $\varphi$, we introduce a new correction operator $\mathfrak{h}_{0}$. This operator is constructed to link derivatives of $\varphi$ and previously defined correction operators.

\begin{theorem}\label{thm:defh0}
Assume that $T$ satisfies the \ref{FDC}. Let $0\leq a<1$.
There exists a bounded operator $\mathfrak{h}_{0}:C^{2+\pag}(\sqcup_{\alpha \in \mathcal{A}} I_\alpha)\to U_{0}$
%and \textcolor{red}{a function $c:C^{1+\pag}(\sqcup_{\alpha \in \mathcal{A}} I_\alpha)\to \R_{\geq 0}$}
such that if $\varphi \in C^{2+\pag}(\sqcup_{\alpha \in \mathcal{A}} I_\alpha)$ satisfies
\begin{gather*}
\mathfrak{h}_{0}(\varphi)=0,\ \mathfrak{h}_{-g,2}(D\varphi)=0,\ \mathfrak{h}_{2}(D^2\varphi)=0,\text{  and }
\|S(k)D\varphi\|_{\sup} = O(e^{-\rho r(0,k)})c(D\varphi),
\end{gather*}
{for some  constants $\rho>0$ and $c(D\varphi)\geq 0$,}
then for every
\[0<\tau<\min\{\lambda_{1}-\lambda_2,\lambda_1(1-a), \lambda_{g},\rho\}/3(1+\max\{\lambda_1,\rho\}),\]
we have
\begin{equation}\label{eqn;maincorrection4}
\|S(k)\varphi\|_{\sup} = O(e^{(-\rho-\lambda_1+2\tau(\lambda_1+\rho+1))r(0,k)})c(D\varphi).
\end{equation}
\end{theorem}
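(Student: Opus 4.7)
The plan is to mimic the construction of $\mathfrak{h}_{-j,i}$ from Corollary~\ref{cor:h-j}, but with a two-layer preconditioner that kills both first- and second-derivative obstructions before invoking Lemma~\ref{lem:defh-j} with $j=0$. First, I introduce the bounded preconditioning operator $K:C^{2+\pag}(\sqcup_{\alpha\in\mathcal{A}}I_\alpha)\to C^{2+\pag}(\sqcup_{\alpha\in\mathcal{A}}I_\alpha)$ defined by
\[K(\varphi):=\varphi-\widetilde{\widetilde{\mathfrak{h}_2(D^2\varphi)}}-\widetilde{\mathfrak{h}_{-g,2}(D\varphi-\widetilde{\mathfrak{h}_2(D^2\varphi)})},\]
with $\widetilde{f}(x):=\int_0^x f(y)\,dy$. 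The two subtracted terms are piecewise polynomials of degree at most $2$ on $\sqcup_\alpha I_\alpha$ (as iterated primitives of piecewise constants lying in $U_2$ and $U_{-g}$), so $K$ is bounded on $C^{2+\pag}$. Using the idempotence relations $\mathfrak{h}_2\circ\mathfrak{h}_2=\mathfrak{h}_2$ (Remark~\ref{rem:hjhj'}) and $\mathfrak{h}_{-g,2}\circ\mathfrak{h}_{-g,2}=\mathfrak{h}_{-g,2}$ (Remark~\ref{rem:hji}), a direct calculation shows $\mathfrak{h}_2(D^2K(\varphi))=0$ and $\mathfrak{h}_{-g,2}(DK(\varphi))=0$ for every $\varphi\in C^{2+\pag}$, and conversely $K(\varphi)=\varphi$ whenever both $\mathfrak{h}_2(D^2\varphi)=0$ and $\mathfrak{h}_{-g,2}(D\varphi)=0$.

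Next, I define $\mathfrak{h}_0(\varphi):=\mathfrak{h}^*_0(K(\varphi))$, where $\mathfrak{h}^*_0$ denotes the $j=0$ version of the operator from Lemma~\ref{lem:defh-j}. To verify well-definedness, I apply Corollary~\ref{cor:h-j} with $i=2$, $j=g$ (the hypotheses $a\lambda_1<\lambda_1$ and $\max\{a\lambda_1,\lambda_2\}<\lambda_1$ are automatic since $a<1$ and $\lambda_2<\lambda_1$) to $DK(\varphi)\in C^{1+\pag}$, using $\mathfrak{h}_{-g,2}(DK(\varphi))=0$ and $\mathfrak{h}_2(D(DK(\varphi)))=\mathfrak{h}_2(D^2K(\varphi))=0$ from Step~1. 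This yields $\|S(k)DK(\varphi)\|_{\sup}=O(e^{(\max\{\lambda_2-\lambda_1,\lambda_1 a-\lambda_1,-\lambda_g\}+\tau)r(0,k)})\|DK(\varphi)\|_{C^{1+\pa}}$. Combined with $\var(S(l)K(\varphi))=\|S(l)DK(\varphi)\|_{L^1(I^{(l)})}\leq|I^{(l)}|\|S(l)DK(\varphi)\|_{\sup}$ and \eqref{eq:invIk}, this produces exponential decay of $\var(S(l)K(\varphi))$ strong enough to force convergence of the series \eqref{eq:ser} with $j=0$, and hence well-definedness of $\mathfrak{h}^*_0(K(\varphi))\in U_0=\Gamma$.

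Finally, assume the three vanishing hypotheses together with the a priori bound $\|S(k)D\varphi\|_{\sup}=O(e^{-\rho r(0,k)})c(D\varphi)$. Then Step~1 gives $K(\varphi)=\varphi$, so $\mathfrak{h}^*_0(\varphi)=0$. Applying Lemma~\ref{lem:defh-j} with $j=0$ and noting that $E_0=\{0\}$ kills the last two summands of \eqref{eq:szam}, we obtain
\[\|\mathcal{M}^{(k)}(S(k)\varphi)\|\leq C\sum_{l>k}\|Q(k,l)^{-1}\|\|Q(l)\|^\tau\|Z(l)\|\var(S(l-1)\varphi),\]
which, after reindexing $l\mapsto l+1$, is exactly $V^{0,\tau}_k(T,\bar s)$ with $s_l:=\var(S(l)\varphi)$. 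Using $s_l\leq|I^{(l)}|\|S(l)D\varphi\|_{\sup}$, the bound $|I^{(l)}|\leq\kappa^{-1}Ce^{-\lambda_1 l}$ of \eqref{eq:invIk}, \eqref{eq:nrn}, and the bound $r_l=O(\tau l)+c$ (a consequence of \eqref{def;sdc0}) puts $s_l$ into the form $D\,e^{-\rho_P r(0,l+1)}$ with $\rho_P=\lambda_1+\rho-O(\tau(\lambda_1+\rho))$, which exceeds $\lambda_{0+1}=\lambda_1$ for sufficiently small $\tau$. Proposition~\ref{prop;FDCbound4} with $j=0$ then yields $\|\mathcal{M}^{(k)}(S(k)\varphi)\|=O(e^{(-\rho-\lambda_1+2\tau(\lambda_1+\rho+1))r(0,k)})c(D\varphi)$. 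The same exponential bound holds directly for $\var(S(k)\varphi)$, and combining via $\|S(k)\varphi\|_{\sup}\leq\|\mathcal{M}^{(k)}(S(k)\varphi)\|+\var(S(k)\varphi)$ (see \eqref{eqn;Pkv}) gives \eqref{eqn;maincorrection4}. The principal technical hurdle is the careful calibration of the $\tau$-losses in passing between $r(0,l-1)$, $r(0,l)$, $r(0,l+1)$ and the $\|Z(l+1)\|$-factors so that the cumulative error combines to exactly $2\tau(\lambda_1+\rho+1)$; the subexponential growth of $r_l$ forced by the FFDC is what makes this calibration possible.
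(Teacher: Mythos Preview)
Your proof is correct and follows essentially the same route as the paper. Your preconditioner $K$ is literally the paper's operator (expand $\mathfrak{h}_{-g,2}(D\varphi-\widetilde{\mathfrak{h}_2(D^2\varphi)})$ by linearity to see they agree), and your use of Lemma~\ref{lem:defh-j} with $j=0$ together with $E_0=\{0\}$ to drop the stable-part terms of \eqref{eq:szam} matches the paper exactly. The only cosmetic difference is that you package the tail estimate through Proposition~\ref{prop;FDCbound4}, whereas the paper unwinds the same geometric series by hand; your justification of the bound $r_l\leq\tau l+O(1)$ from \eqref{def;sdc0} combined with $l\leq r(0,l)$ from \eqref{eq:nrn} is the one step you should make explicit, since it is what lets you convert the $r(0,l)$--decay of $s_l$ into the $r(0,l+1)$--decay required by Proposition~\ref{prop;FDCbound4}.
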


\begin{proof}
Let us consider
\[%\operatorname{P^{-g,2}_{a+2}G}
C_{-g,2}^{2+\pag}(\sqcup_{\alpha \in \mathcal{A}} I_\alpha)=\{\varphi\in C^{2+\pag}(\sqcup_{\alpha \in \mathcal{A}} I_\alpha):\mathfrak{h}_{-g,2}(D\varphi) = 0, \mathfrak{h}_2(D^2\varphi) = 0\}.\]
By Corollary~\ref{cor:h-j}, for every $\varphi\in C_{-g,2}^{2+\pag}(\sqcup_{\alpha \in \mathcal{A}} I_\alpha)$, we have
\begin{align*}
\|S(k)D\varphi\|_{\sup}&=O(e^{(\max\{\lambda_{2}-\lambda_1,\lambda_1 a -\lambda_1, -\lambda_{g}\} +\tau)r(0,k)})\|D\varphi\|_{C^{1+\pa}}\\
&=O(e^{-\rho_0r(0,k)})\|D\varphi\|_{C^{1+\pa}},
\end{align*}
with $\rho_0:=\min\{\lambda_{1}-\lambda_2,\lambda_1(1-a), \lambda_{g}\} -\tau>0$. As
\[\var(S(k)\varphi)=\|S(k)D\varphi\|_{L^1(I^{(k)})}\leq |I^{(k)}|\|S(k)D\varphi\|_{\sup},\]
it follows that, for $l>k$, we have
\begin{align*}
\|Q&|_{U^{(k)}_{0}}(k,l)^{-1}\|\|Q(l)\|^\tau\|Z(l)\|\var(S(l-1)\varphi) \\
&\leq \|Q(k,l)^{-1}\|\|Q(l)\|^\tau\|Z(l)\||I^{(l-1)}|\|S(l-1)D\varphi\|_{\sup}\\
&= O(e^{(\lambda_1+\tau)r(k,l)}e^{\tau(\lambda_1+\tau)l}e^{\tau l}e^{-\lambda_1(l-1)}e^{-\rho_0r(0,l-1)})\|D\varphi\|_{{C^{1+\pa}}}.
\end{align*}
By \eqref{eq:nrn}, it follows that
\begin{align}\label{eq:U01}
\begin{split}
\|&Q|_{U^{(k)}_{0}}(k,l)^{-1}\|\|Q(l)\|^\tau\|Z(l)\|\var(S(l-1)\varphi) \\
&= O(e^{(\lambda_1+\tau)r(k,l)}e^{\tau(\lambda_1+2)r(0,l)}e^{-(1-\tau)\lambda_1r(0,l)}e^{-(1-\tau)\rho_0r(0,l)})\|D\varphi\|_{{C^{1+\pa}}}\\
&= O(e^{(-\lambda_1-\rho_0+\tau(3\lambda_1+2))r(0,k)}e^{(-\rho_0+\tau(3\lambda_1+3))r(k,l)})\|D\varphi\|_{{C^{1+\pa}}}.
\end{split}
\end{align}
The same arguments show that if additionally $\|S(k)D\varphi\|_{\sup} = O(e^{-\rho r(0,k)})c(D\varphi)$, then
\begin{align}\label{eq:U02}
\begin{split}
\|Q&|_{U^{(k)}_{0}}(k,l)^{-1}\|\|Q(l)\|^\tau\|Z(l)\|\var(S(l-1)\varphi) \\
&= O(e^{(-\lambda_1-\rho+\tau(2\lambda_1+\rho+2))r(0,k)}e^{(-\rho+\tau(2\lambda_1+\rho+3))r(k,l)})c(D\varphi).
\end{split}
\end{align}
As $-\rho_0+3\tau(\lambda_1+1)<0$, by \eqref{eq:U01},  the series \eqref{eq:ser} is convergent for $j=0$. By Lemma~\ref{lem:defh-j},
the operator $\mathfrak{h}_0^*:C_{-g,2}^{2+\pag}(\sqcup_{\alpha \in \mathcal{A}} I_\alpha)\to U_0=\Gamma$ is well defined, and
if $\mathfrak{h}_0^*(\varphi)=0$, then
\[
\big\|\mathcal{M}^{(k)}(S(k)\varphi)\big\|\leq C\sum_{l> k} \|Q|_{U^{(k)}_{0}}(k,l)^{-1}\|\|Q(l)\|^\tau\|Z(l)\|\var(S(l-1)\varphi).
\]
Therefore,
\begin{gather}\label{eq:szam1}
\begin{split}
\|S(k)\varphi\|_{\sup}&\leq
\big\|\mathcal{M}^{(k)}(S(k)\varphi)\big\|+\var(S(k)\varphi)\\
&\leq 2C\sum_{l> k} \|Q|_{U^{(k)}_{0}}(k,l)^{-1}\|\|Q(l)\|^\tau\|Z(l)\|\var(S(l-1)\varphi).
\end{split}
\end{gather}
Let $K:C^{2+\pag}(\sqcup_{\alpha \in \mathcal{A}} I_\alpha)\to C_{-g,2}^{2+\pag}(\sqcup_{\alpha \in \mathcal{A}} I_\alpha)$ be the bounded operator defined by \[K(\varphi):=\varphi-\widetilde{\widetilde{\mathfrak{h}_{2}(D^2\varphi)}}-\widetilde{\mathfrak{h}_{-g,2}(D\varphi)}+\widetilde{\mathfrak{h}_{-g,2}(\widetilde{\mathfrak{h}_{2}(D^2\varphi)})}.\]
Then
\[DK(\varphi)=D\varphi-\widetilde{\mathfrak{h}_{2}(D^2\varphi)}-\mathfrak{h}_{-g,2}(D\varphi-\widetilde{\mathfrak{h}_{2}(D^2\varphi)})\text{ and }
D^2K(\varphi)=D^2\varphi-{\mathfrak{h}_{2}(D^2\varphi)}.\]
Since
$\mathfrak{h}_{2}(D^2K(\varphi))=\mathfrak{h}_{2}(D^2\varphi)-\mathfrak{h}_{2}(\mathfrak{h}_{2}(D^2\varphi))=0$
and \[\mathfrak{h}_{-g,2}(DK(\varphi))=\mathfrak{h}_{-g,2}(D\varphi-\widetilde{\mathfrak{h}_{2}(D^2\varphi)})-
\mathfrak{h}_{-g,2}(\mathfrak{h}_{-g,2}(D\varphi-\widetilde{\mathfrak{h}_{2}(D^2\varphi)}))=0,\]
we really have $K(\varphi)\in C_{-g,2}^{2+\pag}(\sqcup_{\alpha \in \mathcal{A}} I_\alpha)$.
Finally, we define $\mathfrak{h}_{0}:C^{2+\pag}(\sqcup_{\alpha \in \mathcal{A}} I_\alpha)\to U_{0}$ as $\mathfrak{h}_{0}=\mathfrak{h}^*_{0}\circ K$.
%\medskip

Suppose that  $\varphi \in C^{2+\pag}(\sqcup_{\alpha \in \mathcal{A}} I_\alpha)$ is such that
\[
\mathfrak{h}_{0}(\varphi)=0,\ \mathfrak{h}_{-g,2}(D\varphi)=0,\ \mathfrak{h}_{2}(D^2\varphi)=0,  \text{ and } \|S(k)D\varphi\|_{\sup} = O(e^{-\rho r(0,k)})c(D\varphi).
\]
Then $\mathfrak{h}^*_{0}(K(\varphi))=\mathfrak{h}_{0}(\varphi)=0$ with $K(\varphi)=\varphi$, so $\mathfrak{h}^*_{0}(\varphi)=0$.
In view of \eqref{eq:szam1} and \eqref{eq:U02}, this gives
\begin{align*}
\|S(k)\varphi\|_{\sup}&= e^{(-\lambda_1-\rho+\tau(2\lambda_1+\rho+2))r(0,k)}O\Big(\sum_{l>k}e^{(-\rho+\tau(2\lambda_1+\rho+3))r(k,l)}\Big)
c(D\varphi).
\end{align*}
As
\[\sum_{l>k}e^{(-\rho+\tau(2\lambda_1+\rho+3))r(k,l)}\leq\sum_{l\geq 1}e^{(-\rho+\tau(2\lambda_1+\rho+3))l}<+\infty,\]
this gives \eqref{eqn;maincorrection4}.
\end{proof}

\begin{remark}\label{rem:h0}
Using Remark~\ref{rem:hjhj'-}~and~\ref{rem:hji}, we obtain that $\mathfrak{h}_{0}(\varphi)=0$, $\mathfrak{h}_{-g,2}(D\varphi)=0$, and $\mathfrak{h}_{2}(D^2\varphi)=0$  imply $\mathfrak{h}_{-j,i}(\varphi)=0$, for any pair $i,j$, and $\mathfrak{h}_{l}(\varphi)=0$ for any $2\leq l\leq g+1$. By Remark~\ref{rem:hjhj'-}, we also have $\mathfrak{h}_{0}(h)=h$ for every $h\in \Gamma$, in particular $\mathfrak{h}_{0}\circ \mathfrak{h}_{0}=\mathfrak{h}_{0}$.
\end{remark}

Finally, we prove a fast exponential decay of special Birkhoff sums for  $\varphi \in C^{n+\pag}(\sqcup_{\alpha \in \mathcal{A}} I_\alpha)$ under some vanishing conditions for derivatives of previously defined  correction operators.
This result is a key step in the construction of invariant distributions $\mathfrak{f}_{\bar t}$, and the proof of the spectral theorem.
\begin{theorem}\label{thm:h-j}
Assume that $T$ satisfies the \ref{FDC}. Let $0\leq a<1$, $2\leq i\leq g+1$, and $1\leq j\leq g$ with $a\lambda_1<\lambda_{i-1}$ and $\max\{a\lambda_1,\lambda_i\}<\lambda_1-\lambda_{j+1}$.
Let $n\geq 1$.
Suppose that $\varphi \in \cpang(\sqcup_{\alpha \in \mathcal{A}} I_\alpha)$ is such that $\mathfrak{h}_{-j,i}(D^{n-1}\varphi)=0$, $\mathfrak{h}_{i}(D^{n}\varphi)=0$  and $\mathfrak{h}_{0}(D^{l}\varphi)=0$ for all $0\leq l<n-1$. Then,
 for every small enough $\tau>0$, we have
\begin{equation}\label{eqn;maincorrection5}
\|S(k)\varphi\|_{\sup} = O(e^{(-n\lambda_1+\max\{\lambda_{i},\lambda_1 a,\lambda_1 -\lambda_{j}\} +\tau)r(0,k)})\|\varphi\|_{{C^{n+\pa}}}.
\end{equation}
\end{theorem}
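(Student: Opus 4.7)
We plan to prove this by induction on $n$, combining Corollary \ref{cor:h-j} (base case $n=1$) with Theorem \ref{thm:defh0} (inductive step), after first establishing a closure of the vanishing assumptions on all intermediate derivatives of $\varphi$.

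\textbf{Step 1 (Closure of vanishing conditions).} We claim that under the theorem's hypotheses, $\mathfrak{h}_2(D^m \varphi) = 0$ for $0 \leq m \leq n$ and $\mathfrak{h}_{-g,2}(D^m \varphi) = 0$ for $0 \leq m \leq n-1$. This is proved by downward induction on $m$. At $m = n$, the hypothesis $\mathfrak{h}_i(D^n \varphi) = 0$ combined with Remark \ref{rem:hjhj'} yields $\mathfrak{h}_2(D^n \varphi) = 0$. At $m = n - 1$, the hypotheses $\mathfrak{h}_{-j,i}(D^{n-1}\varphi) = 0$ and $\mathfrak{h}_i(D^n\varphi) = 0$ combined with Remark \ref{rem:hji} yield both $\mathfrak{h}_{-g,2}(D^{n-1}\varphi) = 0$ and $\mathfrak{h}_2(D^{n-1}\varphi) = 0$. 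For each $m \leq n - 2$, apply Remark \ref{rem:h0} to $\psi := D^m \varphi$: the theorem's hypothesis provides $\mathfrak{h}_0(\psi) = 0$, while the preceding step of this induction provides $\mathfrak{h}_{-g,2}(D\psi) = 0$ and $\mathfrak{h}_2(D^2\psi) = 0$; hence $\mathfrak{h}_{-g,2}(\psi) = 0$ and $\mathfrak{h}_2(\psi) = 0$.

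\textbf{Step 2 (Induction on $n$).} The base case $n = 1$ is exactly Corollary \ref{cor:h-j} (the conditions on $\mathfrak{h}_0$ being vacuous). For $n \geq 2$, apply the inductive hypothesis at level $n-1$ to $\psi := D\varphi \in C^{(n-1)+\pag}(\sqcup_{\alpha \in \mathcal{A}} I_\alpha)$, whose level-$(n-1)$ hypotheses translate directly: $\mathfrak{h}_{-j,i}(D^{n-2}\psi) = \mathfrak{h}_{-j,i}(D^{n-1}\varphi) = 0$, $\mathfrak{h}_i(D^{n-1}\psi) = \mathfrak{h}_i(D^n\varphi) = 0$, and $\mathfrak{h}_0(D^l\psi) = \mathfrak{h}_0(D^{l+1}\varphi) = 0$ for $0 \leq l \leq n-3$. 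This yields
\[\|S(k)D\varphi\|_{\sup} = O\bigl(e^{(-(n-1)\lambda_1 + M + \tau') r(0,k)}\bigr)\|D\varphi\|_{C^{(n-1)+\pa}},\]
where $M := \max\{\lambda_i, \lambda_1 a, \lambda_1 - \lambda_j\}$ and $\tau' > 0$ is arbitrarily small. Now apply Theorem \ref{thm:defh0} to $\varphi$ itself with $\rho := (n-1)\lambda_1 - M - \tau' > 0$: the conditions $\mathfrak{h}_0(\varphi) = 0$ (hypothesis at $l = 0$), $\mathfrak{h}_{-g,2}(D\varphi) = 0$, and $\mathfrak{h}_2(D^2\varphi) = 0$ (both from Step 1) all hold, and the needed decay of $\|S(k)D\varphi\|_{\sup}$ was just obtained. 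The conclusion of Theorem \ref{thm:defh0} then gives
\[\|S(k)\varphi\|_{\sup} = O\bigl(e^{(-\rho - \lambda_1 + O(\tau))r(0,k)}\bigr) \|\varphi\|_{C^{n+\pa}} = O\bigl(e^{(-n\lambda_1 + M + O(\tau))r(0,k)}\bigr) \|\varphi\|_{C^{n+\pa}},\]
which is the claimed estimate once the $O(\tau)$ error is absorbed into a freshly renamed $\tau$.

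The main technical obstacle is Step 1: Theorem \ref{thm:defh0} requires three separate vanishing conditions on three consecutive derivatives, whereas the theorem's hypothesis only explicitly covers the top two derivatives and $\mathfrak{h}_0$ at the lower ones. The closure step is essential, and Remarks \ref{rem:hjhj'}, \ref{rem:hji}, and \ref{rem:h0} are precisely tailored to propagate vanishing among the operators $\mathfrak{h}_0$, $\mathfrak{h}_{-g,2}$, and $\mathfrak{h}_2$ inductively, which is what makes the clean inductive step possible.
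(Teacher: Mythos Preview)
Your proposal is correct and follows essentially the same approach as the paper's own proof: first a downward-induction closure step (using Remarks \ref{rem:hjhj'}, \ref{rem:hji}, \ref{rem:h0}) to propagate the vanishing of $\mathfrak{h}_{-g,2}$ and $\mathfrak{h}_2$ to all intermediate derivatives, then an induction on $n$ with Corollary \ref{cor:h-j} as the base case and Theorem \ref{thm:defh0} applied to $\varphi$ (with the inductive hypothesis supplying the decay of $S(k)D\varphi$) as the inductive step.
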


\begin{proof}
First we show that $\mathfrak{h}_{-g,2}(D^{l+1}\varphi)=0$ and $\mathfrak{h}_{2}(D^{l+2}\varphi)=0$ for all $0\leq l<n-1$. As $\mathfrak{h}_{-j,i}(D^{n-1}\varphi)=0$ and $\mathfrak{h}_{i}(D^{n}\varphi)=0$, by Remark~\ref{rem:hji} applied to $D^{n-1}\varphi$, we have $$\mathfrak{h}_{-g,2}(D^{n-1}\varphi)=0, \ \mathfrak{h}_{2}(D^{n}\varphi)=0 \text{ and } \mathfrak{h}_{2}(D^{n-1}\varphi)=0.$$
This gives our claim for $l=n-2$. As $\mathfrak{h}_{0}(D^{n-2}\varphi)=0$, by Remark~\ref{rem:h0} applied to $D^{n-2}\varphi$, we obtain $\mathfrak{h}_{-g,2}(D^{n-2}\varphi)=0$. Together with $\mathfrak{h}_{2}(D^{n-1}\varphi)=0$ this gives our claim  for $l=n-3$. Repeating the same arguments for lower-order derivatives and using induction, we get our claim for every $0\leq l<n-1$.

The proof of \eqref{eqn;maincorrection5} is also done by induction on $n$. The base case $n=1$ follows directly from Corollary~\ref{cor:h-j}.
Assume that the induction hypothesis \eqref{eqn;maincorrection5} holds for a particular $n\geq 1$. Suppose that $\varphi \in C^{n+1+\pag}(\sqcup_{\alpha \in \mathcal{A}} I_\alpha)$ is such that $\mathfrak{h}_{-j,i}(D^{n}\varphi)=0$, $\mathfrak{h}_{i}(D^{n+1}\varphi)=0$,  and $\mathfrak{h}_{0}(D^{l}\varphi)=0$ for all $0\leq l<n$. By the induction hypothesis applied to $D\varphi$,  for every small enough $\tau>0$, we have
\[\|S(k)D\varphi\|_{\sup} = O(e^{(-n\lambda_1+\max\{\lambda_{i},\lambda_1 a,\lambda_1 -\lambda_{j}\} +\tau)r(0,k)})\|D\varphi\|_{{C^{n+\pa}}}.\]
By assumption and the first part of the proof,
$$\mathfrak{h}_{0}(\varphi)=0, \  \mathfrak{h}_{-g,2}(D\varphi)=0, \ \mathfrak{h}_{2}(D^2\varphi)=0.$$
In view of Theorem~\ref{thm:defh0} applied to $\rho=n\lambda_1-\max\{\lambda_{i},\lambda_1 a,\lambda_1 -\lambda_{j}\} -\tau$, we get
\[\|S(k)\varphi\|_{\sup} = O(e^{(-(n+1)\lambda_1+\max\{\lambda_{i},\lambda_1 a,\lambda_1 -\lambda_{j}\} +2(n+1)(\lambda_1+1)\tau)r(0,k)})\|\varphi\|_{{C^{n+1+\pa}}}.
\]
\end{proof}

\section{Spectrum of the functional KZ-cocycles}\label{sec;spec}
{The} special Birkhoff sums cocycle $S(k)$ is an infinite dimensional extension of the KZ-cocycle. In this section, we compute {the} Lyapunov exponents of the cocycle $S(k)$ on $C^{n+\pa}$.
We construct a finite set of piecewise polynomial functions that form a basis for the spectral {theorem (Theorem~\ref{thm;spdecomp}).
These piecewise polynomials are obtained by applying {the} correction operators constructed in the previous section. In turn, their Lyapunov exponents for the cocycle $S(k)$ are related to the Lyapunov exponents of {the} standard KZ-cocycle.}
%Inductively, the Lyapunov spectrum is defined along such polynomials depending on differentiability of $\varphi$.

%Let us define extended cocycles $\mathfrak{S}(k) : \mathcal{G} \times C^{n+\pa} \rightarrow \mathcal{G} \times C^{n+\pa}$ given by
%\[ \mathfrak{S}(k)(T, \varphi) = (T^{(k)}, S(k)\varphi).  \]

%\begin{definition} For $r \geq 1$, we denote by $rd$-dimensional space $\Gamma(r)$ the set of functions $\chi \in C^\infty(\sqcup_{\alpha \in \mathcal{A}} I_\alpha)$ such that  $\chi |_{I_\alpha}$ is a polynomial of degree $<r$.
%\end{definition}
\subsection{Lyapunov exponents for piecewise polynomials}
For every $l\geq 0$, we denote by $\R_l[x]$ the linear space of polynomials of degree not greater than $l$. Since every linear operator
defined on a finite dimensional linear space is bounded, for every $l\geq 0$, there exists a constant $c_l>0$ such that for every $f\in \R_l[x]$,
we have $c_l\|D^lf\|_{C^0([0,1])}\leq\|f\|_{L^1([0,1])}$. Therefore, for every interval $I=[a,b]\subset \R$, we obtain
\[\frac{\norm{f}_{L^1(I)}}{|I|}=\norm{f(a+|I|(\,\cdot\,))}_{L^1([0,1])}\geq c_l \|\frac{d^l}{dx^l}f(a+|I|x)\|_{C^0([0,1])}=c_l|I|^l\|D^lf\|_{C^0(I)}. \]
For every  $l\geq 0$, we denote by $\Gamma_l(\sqcup_{\alpha \in \mathcal{A}} I_\alpha)$ the space of maps $f:I\to\R$ such that for every $\alpha\in\mathcal{A}$, the restriction of $f$ to $I_\alpha$ belongs to $\R_l[x]$. { Note that each element of the space $\Gamma_0(\sqcup_{\alpha \in \mathcal{A}} I_\alpha)$ is a piecewise constant function, so it can be treated as an element of the space $\Gamma$. Therefore, we will identify $\Gamma_0(\sqcup_{\alpha \in \mathcal{A}} I_\alpha)$ with $\Gamma$. Then, for every $f\in \Gamma_l(\sqcup_{\alpha \in \mathcal{A}} I_\alpha)$, we have $D^lf\in\Gamma_0(\sqcup_{\alpha \in \mathcal{A}} I_\alpha)=\Gamma$}, and
\begin{equation}\label{eq:lowest}
\frac{1}{|I|}\norm{f}_{L^1(I)}\geq c_l\Big(\min_{\alpha\in\mathcal{A}}|I_\alpha|\Big)^l \|D^lf\|.
\end{equation}

Let  $h_1,\ldots, h_{g}, c_1,\ldots, c_{\gamma-1},h_{-g},\ldots, h_{-1}$ be a basis of $\Gamma$ described in
Section~\ref{sec;OF}. Then
\begin{equation}\label{eq:grhi0}
\lim_{k \rightarrow \infty} \frac{\log\norm{Q(k)h_i}}{k} = \lambda_i \text{ for } 1\leq |i|\leq g,
\ \lim_{k \rightarrow \infty} \frac{\log\norm{Q(k)c_s}}{k}=0\text{ for }1\leq s<\gamma.
\end{equation}
For every $2\leq i\leq g+1$, choose $1\leq j_i\leq g$ such that $\lambda_1-\lambda_{j_i}\leq \lambda_i<\lambda_1-\lambda_{j_i+1}$, and
for every $1\leq j\leq g$, choose $2\leq i_j\leq g+1$ such that $\lambda_{i_j}\leq \lambda_1-\lambda_{j}< \lambda_{i_j-1}$.
%Then $i_{j_i}>i$.
%Next, for every $1\leq j\leq g$ let us consider  $\mathfrak{h}_{-j}:\operatorname{P_{a+1}G}(\sqcup_{\alpha \in \mathcal{A}} I_\alpha)\to U_{-j}$ given by $\mathfrak{h}_{-j}:=\mathfrak{h}_{-j,i_j}$.

\begin{definition}\label{def;hnl}
For every $l\geq 0$, let $h_{i,l}$ for $1\leq i\leq g$, $c_{s,l}$ for $1\leq s<\gamma$, and $h_{-j,l}$, for $1\leq j\leq g$, be elements of  $\Gamma_l(\sqcup_{\alpha \in \mathcal{A}} I_\alpha)$ defined inductively as follows:
\begin{gather*}
h_{i,0}=h_{i},\ h_{i,1}=\widetilde{h_{i}}-\mathfrak{h}_{-j_i,i}(\widetilde{h_{i}}),\ h_{i,l+1}=\widetilde{h_{i,l}}-\mathfrak{h}_0(\widetilde{h_{i,l}}),\text{ for }l\geq 1\text{ if }2\leq i\leq g,\\
h_{1,0}=h_{1},\ h_{1,1}=\widetilde{h_{1}}-\mathfrak{h}_{g+1}(\widetilde{h_{1}}),\ h_{1,2}=\widetilde{h_{1,1}}-\mathfrak{h}_{-1,g+1}(\widetilde{h_{1,1}}),\\
h_{1,l+1}=\widetilde{h_{1,l}}-\mathfrak{h}_0(\widetilde{h_{1,l}}),\text{ for }l\geq 2,\\
c_{s,0}=c_{s},\ c_{s,1}=\widetilde{c_{s}}-\mathfrak{h}_{-1,g+1}(\widetilde{c_{s}}),\ c_{s,l+1}=\widetilde{c_{s,l}}-\mathfrak{h}_0(\widetilde{c_{s,1}}),\text{ for }l\geq 1,\\
h_{-j,0}=h_{-j},\ h_{-j,l+1}=\widetilde{h_{-j,l}}-\mathfrak{h}_0(\widetilde{h_{-j,l}}),\text{ for }l\geq 0.
\end{gather*}
\end{definition}
Since $\mathfrak{h}_0\circ \mathfrak{h}_0=\mathfrak{h}_0$, $\mathfrak{h}_{g+1}\circ \mathfrak{h}_{g+1}=\mathfrak{h}_{g+1}$, and $\mathfrak{h}_{-1,g+1}\circ \mathfrak{h}_{-1,g+1}=\mathfrak{h}_{-1,g+1}$, we obtain
\begin{gather}
\label{eq:prophch1}
D^nh_{i,l}=h_{i,l-n},\ D^nc_{s,l}=c_{s,l-n},\ D^nh_{-j,l}=h_{-j,l-n}, \text{ if }0\leq n\leq l,\\
\label{eq:prophch2}
 \mathfrak{h}_{-j_i,i}(h_{i,1})=0,\ \mathfrak{h}_0(h_{i,l})=0, \text{ for }l\geq 2\text{ if }2\leq i\leq g, \\
\label{eq:prophch3}
 \mathfrak{h}_{g+1}(h_{1,1})=0,\  \mathfrak{h}_{-1,g+1}(h_{1,2})=0,\ \mathfrak{h}_0(h_{1,l})=0, \text{ for }l\geq 3, \\
\label{eq:prophch4}
 \mathfrak{h}_{-1,g+1}(c_{s,1})=0,\ \mathfrak{h}_0(c_{s,l})=0,\text{ for }l\geq 2,\\
\label{eq:prophch5}
 \mathfrak{h}_0(h_{-j,l})=0\text{ for }l\geq 1.
\end{gather}

In view of \eqref{eq:prophch1}, $h_{i,l}$, for $1\leq |i|\leq g$ and $0\leq l\leq n$, together with $c_{s,l}$, for $1\leq s<\gamma$ and $0\leq l\leq n$, form a basis of the space $\Gamma_n(\sqcup_{\alpha \in \mathcal{A}} I_\alpha)$.
Hence every $h\in \Gamma_n(\sqcup_{\alpha \in \mathcal{A}} I_\alpha)$ has a unique decomposition
\[h=\sum_{0\leq l\leq n}\Big(\sum_{1\leq |i|\leq g}d(h,h_{i,l})h_{i,l}+\sum_{1\leq s<\gamma}d(h,c_{s,l})c_{s,l}\Big).\]

Lyapunov exponents of {the cocycle} $S(k)$ for $h_{i,l}$, $c_{s,l}$ are computed by adapting inductive definitions and using Theorem~\ref{thm:h-j}. Their lower bounds are obtained by applying the \ref{FDC} properties of $T$.
\begin{proposition}\label{lem;Qkh}
Assume that $T$ satisfies the \ref{FDC}. Then, for every $l\geq 0$,
\begin{align}\label{eq:grhi}
\begin{split}
&\lim_{k \rightarrow \infty} \frac{\log \norm{S(k)h_{i,l}}_{\sup}}{k} =
\lim_{k \rightarrow \infty} \frac{\log (\norm{S(k)h_{i,l}}_{L^1(I^{(k)})}/|I^{(k)}|)}{k} =\lambda_i - l\lambda_1 \\
&\lim_{k \rightarrow \infty} \frac{\log \norm{S(k)c_{s,l}}_{\sup}}{k} = \lim_{k \rightarrow \infty} \frac{\log (\norm{S(k)c_{s,l}}_{L^1(I^{(k)})}/|I^{(k)}|)}{k}=- l\lambda_1,
\end{split}
\end{align}
 for  $i\in\pm\{1,\ldots,g\}$ and for  $1\leq s<\gamma$.
Moreover, for every $h\in \Gamma_n(\sqcup_{\alpha \in \mathcal{A}} I_\alpha)$,
\begin{align}\label{eq:skcomb}
\begin{split}
\lim_{k \rightarrow \infty} \frac{\log \norm{S(k)h}_{\sup}}{k} =\max\big(&\{\lambda_i - l\lambda_1:0\leq l\leq n,1\leq |i|\leq g,d(h,h_{i,l})\neq 0\}\\
&\cup\{- l\lambda_1:0\leq l\leq n,1\leq s<\gamma,d(h,c_{s,l})\neq 0\}\big).
\end{split}
\end{align}
\end{proposition}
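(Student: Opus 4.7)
The plan is to first pin down the Lyapunov exponent of each individual basis element $h_{i,l}$, $c_{s,l}$, $h_{-j,l}$ by sandwiching $\|S(k)\cdot\|_{\sup}$ between the upper bound from Theorem~\ref{thm:h-j} and the matching lower bound produced by the polynomial-norm inequality~\eqref{eq:lowest}. The combination formula~\eqref{eq:skcomb} will then follow from the finite-dimensional Oseledets structure of the cocycle $S(k)$ restricted to $\Gamma_n(\sqcup_\alpha I_\alpha)$.

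For the upper bound, the key observation is that Definition~\ref{def;hnl} has been tailored so that the vanishing hypotheses of Theorem~\ref{thm:h-j} are automatically met at every stage. Taking for instance $2\le i\le g$ and $l\ge 1$, relations \eqref{eq:prophch1}--\eqref{eq:prophch2} combined with Remark~\ref{rem:hjhj'} give $\mathfrak{h}_{-j_i,i}(D^{l-1}h_{i,l})=\mathfrak{h}_{-j_i,i}(h_{i,1})=0$, $\mathfrak{h}_i(D^l h_{i,l})=\mathfrak{h}_i(h_i)=0$ (the latter since $h_i\in E_i$), and $\mathfrak{h}_0(D^{l'}h_{i,l})=0$ for $0\le l'\le l-2$. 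Choosing $a>0$ small enough and using $\lambda_1-\lambda_{j_i}\le \lambda_i<\lambda_1-\lambda_{j_i+1}$, Theorem~\ref{thm:h-j} supplies $\|S(k)h_{i,l}\|_{\sup}=O(e^{(\lambda_i-l\lambda_1+\tau)r(0,k)})$; via \eqref{eq:nrn} and letting $\tau\to 0$ this forces $\limsup k^{-1}\log\|S(k)h_{i,l}\|_{\sup}\le \lambda_i-l\lambda_1$. The cases $i=1$, $i=-j$, and $c_{s,l}$ are handled analogously through \eqref{eq:prophch3}--\eqref{eq:prophch5}; the case $l=0$ is simply \eqref{eq:grhi0}. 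For the matching lower bound, since $D$ commutes with $S(k)$ (the latter being a sum of translations by fixed amounts along each tower), $D^l S(k)h_{i,l}=S(k)(D^l h_{i,l})=Q(k)h_i$. Applying \eqref{eq:lowest} to $f=S(k)h_{i,l}\in\Gamma_l(\sqcup_\alpha I^{(k)}_\alpha)$ and using \eqref{def;sdc4}, \eqref{eq:invIk} together with the Oseledets lower bound $\|Q(k)h_i\|\ge C^{-1}e^{(\lambda_i-\tau)k}$ from \eqref{eq:grhi0} yields
\[
\liminf_k \frac{1}{k}\log\frac{\|S(k)h_{i,l}\|_{L^1(I^{(k)})}}{|I^{(k)}|}\ge \lambda_i-l\lambda_1.
\]
Combined with $\|S(k)h_{i,l}\|_{L^1}/|I^{(k)}|\le \|S(k)h_{i,l}\|_{\sup}$, both limits are pinned to $\lambda_i-l\lambda_1$. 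Parallel arguments handle $c_{s,l}$ (with $\|Q(k)c_s\|$ of exponent $0$) and $h_{-j,l}$ (with $\|Q(k)h_{-j}\|$ of exponent $-\lambda_j$).

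For \eqref{eq:skcomb}, the upper bound $\limsup\le \mu_{\max}$ is immediate from the triangle inequality and the per-basis-vector limits just established. The delicate part is the matching lower bound, which must exclude cancellations between basis vectors sharing the same Lyapunov exponent (most notably $h_{1,l+1}$ and $c_{s,l}$, both of exponent $-l\lambda_1$). The plan is a downward induction on the top nonzero level $l^*:=\max\{l:d(h,h_{i,l})\ne 0\text{ or }d(h,c_{s,l})\ne 0\}$. Since $D^{l^*+1}h=0$, the function $S(k)h$ lies in $\Gamma_{l^*}(\sqcup_\alpha I^{(k)}_\alpha)$, so \eqref{eq:lowest} at level $l=l^*$ combined with $D^{l^*}S(k)h=Q(k)D^{l^*}h$ applied to the nonzero piecewise constant $D^{l^*}h=\sum_i d(h,h_{i,l^*})h_i+\sum_s d(h,c_{s,l^*})c_s\in\Gamma$ delivers, via standard Oseledets on $\Gamma$, the level-$l^*$ lower bound $\liminf k^{-1}\log\|S(k)h\|_{\sup}\ge \mu_{=l^*}$. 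Decomposing $h=h^{(l^*)}+h'$ with $h'\in\Gamma_{l^*-1}$ and applying the induction hypothesis to $h'$, the reverse triangle inequality delivers the correct exponent whenever $\mu_{=l^*}\ne \mu_{<l^*}$. The main obstacle is the equality case $\mu_{=l^*}=\mu_{<l^*}=\mu_{\max}$, where one must rule out cancellation between the top-level and lower-level pieces at the $\mu_{\max}$-slice of the Oseledets filtration of $\Gamma_n$; since each basis vector $h_{i,l}$, $c_{s,l}$ has now been shown to achieve its predicted exponent, a standard Oseledets dimension count on the finite-dimensional cocycle $S(k)|_{\Gamma_n}$ forces the basis vectors of any fixed exponent $\mu$ to project linearly independently onto the corresponding Oseledets subspace, thereby ruling out the cancellation.
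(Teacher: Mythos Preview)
Your treatment of \eqref{eq:grhi} is correct and follows the paper's approach: upper bound via Theorem~\ref{thm:h-j} (the vanishing hypotheses being built into Definition~\ref{def;hnl}) and the matching lower bound via \eqref{eq:lowest}. One case you gloss over is $h_{1,1}$, which falls outside Theorem~\ref{thm:h-j} (that theorem requires $n\geq 1$); the paper handles it separately by combining Theorem~\ref{thm;correction} with the bounded-variation estimate $\var(S(k)h_{1,1})\leq\var h_{1,1}$.

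For \eqref{eq:skcomb} your inductive scheme is on the right track, but the ``equality case'' is mishandled. The appeal to an Oseledets dimension count on $S(k)|_{\Gamma_n}$ is not justified: \ref{FDC} supplies an Oseledets filtration only for $Q(k)$ on $\Gamma$, not for $S(k)$ on $\Gamma_n$, and manufacturing the latter from the per-vector exponents you just computed is circular --- that is precisely the content of \eqref{eq:skcomb}. Fortunately no such count is needed. Your lower bound from \eqref{eq:lowest} is already for $h$ itself (since $D^{l^*}h=D^{l^*}h^{(l^*)}$, the lower-level piece $h'$ being annihilated), so $\liminf_k k^{-1}\log\|S(k)h\|_{\sup}\geq\mu_{=l^*}$ holds unconditionally. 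The correct dichotomy is therefore: either $\mu_{=l^*}=\mu_{\max}$, which covers both $\mu_{=l^*}>\mu_{<l^*}$ \emph{and} the equality case $\mu_{=l^*}=\mu_{<l^*}$, and \eqref{eq:lowest} finishes directly; or $\mu_{=l^*}<\mu_{\max}$, i.e.\ $\mu_{<l^*}>\mu_{=l^*}$, where the reverse triangle inequality plus the induction hypothesis applies cleanly. The paper's own argument is terse on this inductive aspect and essentially records only the first branch.
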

\begin{proof}
If $l=0$, then \eqref{eq:grhi} follows directly from \eqref{eq:grhi0}.
\smallskip

{\textbf{Case 1.}} Suppose that $\varphi=h_{-j,l}$ for some $l\geq 1$. Then $\varphi\in C^{l+1+\pag}(\sqcup_{\alpha \in \mathcal{A}} I_\alpha)$ with $a=0$,  $\mathfrak{h}_{-j,i_j}(D^{l}\varphi)=0$, $\mathfrak{h}_{i_j}(D^{l+1}\varphi)=0$,  and $\mathfrak{h}_{0}(D^{p}\varphi)=0$ for all $0\leq p<l$. Indeed, as $D^{l}\varphi=h_{-j}\in E_{-j}$, by Remark~\ref{rem:hji}, we have $\mathfrak{h}_{-j}(D^{l}\varphi)=\mathfrak{h}_{-j}(h_{-j})=0$ and $D^{l+1}\varphi=Dh_{-j}=0$. In view of Theorem~\ref{thm:h-j}, this gives
\[\limsup_{k \rightarrow \infty} \frac{\log \norm{S(k)\varphi}_{\sup}}{k}\leq -(l+1)\lambda_1+\max\{\lambda_{i_j},\lambda_1 a,\lambda_1 -\lambda_{j}\}=-l\lambda_1-\lambda_j.\]

\textbf{Case 2.} Suppose that $\varphi=h_{i,l}$ for some $2\leq i\leq g$ and $l\geq 1$. Then $\varphi\in C^{l+\pag}(\sqcup_{\alpha \in \mathcal{A}} I_\alpha)$ with $a=0$,  $\mathfrak{h}_{-j_i,i}(D^{l-1}\varphi)=0$, $\mathfrak{h}_{i}(D^{l}\varphi)=0$,  and $\mathfrak{h}_{0}(D^{p}\varphi)=0$ for all $0\leq p<l-1$. Indeed,
as $D^{l}\varphi=h_{i}\in E_{i}$, by Remark~\ref{rem:hjhj'}, we have $\mathfrak{h}_{i}(D^{l}\varphi)=0$. Moreover, by definition,  $\mathfrak{h}_{-j_i,i}(D^{l-1}h_{i,l})=0$. In view of Theorem~\ref{thm:h-j}, this gives
\[\limsup_{k \rightarrow \infty} \frac{\log \norm{S(k)\varphi}_{\sup}}{k}\leq -l\lambda_1+\max\{\lambda_{i},\lambda_1 a,\lambda_1 -\lambda_{j_i}\}=-l\lambda_1+\lambda_i.\]

{\textbf{Case 3.}} Suppose that $\varphi=h_{1,1}$. Then $\varphi\in C^{0+\pag}(\sqcup_{\alpha \in \mathcal{A}} I_\alpha)$ with $a=0$ and   $\mathfrak{h}_{g+1}(\varphi)=\mathfrak{h}_{g+1}(h_{1,1})=0$. In view of Theorem~\ref{thm;correction} and Proposition~\ref{prop;FDCbound}, for every $\tau>0$ small enough,
$\|\mathcal{M}^{(k)}(S(k)\varphi)\|=O(e^{\tau k})$.
As $\varphi=h_{1,1}$ is of bounded variation, we also have $\var(S(k)\varphi)\leq \var(\varphi)$. Since $\|S(k)\varphi\|_{\sup}\leq \|\mathcal{M}^{(k)}(S(k)\varphi)\|+\var(S(k)\varphi)$, this gives
\[\limsup_{k \rightarrow \infty} \frac{\log \norm{S(k)\varphi}_{\sup}}{k}\leq 0=-\lambda_1+\lambda_1.\]

{\textbf{Case 4.}} Suppose that $\varphi=h_{1,l}$ for some  $l\geq 2$.
%As $j_{g+1}=1$ and $i_1=g+1$, we have
Then, $\varphi\in C^{l-1+\pag}(\sqcup_{\alpha \in \mathcal{A}} I_\alpha)$ with $a=0$,  $\mathfrak{h}_{-1,g+1}(D^{l-2}\varphi)=0$, $\mathfrak{h}_{g+1}(D^{l-1}\varphi)=0$,  and $\mathfrak{h}_{0}(D^{p}\varphi)=0$ for all $0\leq p<l-2$.
 In view of Theorem~\ref{thm:h-j}, this gives
\[\limsup_{k \rightarrow \infty} \frac{\log \norm{S(k)\varphi}_{\sup}}{k}\leq -(l-1)\lambda_1+\max\{\lambda_{g+1},\lambda_1 a,\lambda_1 -\lambda_{1}\}=-l\lambda_1+\lambda_1.\]

{\textbf{Case 5.}} Suppose that $\varphi=c_{s,l}$ for some $l\geq 1$. Then, $\varphi\in C^{l+\pag}(\sqcup_{\alpha \in \mathcal{A}} I_\alpha)$ with $a=0$,  $\mathfrak{h}_{-1,g+1}(D^{l-1}\varphi)=0$, $\mathfrak{h}_{g+1}(D^{l}\varphi)=0$,  and $\mathfrak{h}_{0}(D^{p}\varphi)=0$ for all $0\leq p<l-1$. Indeed,
as $D^{l}\varphi=c_{s}\in E_{g+1}$, by Remark~\ref{rem:hjhj'}, we have $\mathfrak{h}_{g+1}(D^{l}\varphi)=0$.  In view of Theorem~\ref{thm:h-j}, this gives
\[\limsup_{k \rightarrow \infty} \frac{\log \norm{S(k)\varphi}_{\sup}}{k}\leq -l\lambda_1+\max\{\lambda_{g+1},\lambda_1 a,\lambda_1 -\lambda_{1}\}=-l\lambda_1.\]

{\textbf{Final step.}}
In summary, for every $\varphi\in \Gamma_l(\sqcup_{\alpha \in \mathcal{A}} I_\alpha)$ of the form $h_{i,l}$, $c_{s,l}$ or $h_{-j,l}$, we have $D^l\varphi\in\Gamma$, and
\begin{equation}\label{neq:+lal}
\limsup_{k \rightarrow \infty} \frac{\log \!\norm{S(k)\varphi}_{\sup}}{k}\!\leq\! -l\lambda_1+\lambda(D^l\varphi)\ \text{ with }\ \lambda(D^l\varphi)\!=\!\lim_{k \rightarrow \infty} \!\frac{\log \!\norm{Q(k)D^l\varphi}}{k}.
\end{equation}
It follows that \eqref{neq:+lal} holds also for any $\varphi\in \Gamma_l(\sqcup_{\alpha \in \mathcal{A}} I_\alpha)$.
On the other hand, if additionally $D^l\varphi\neq 0$, then, by \eqref{eq:lowest}, \eqref{def;sdc4}, and \eqref{eq:invIk}, {for any $\tau > 0$ there exist constants  $C,\kappa>0$ such that}
\[\frac{1}{|I^{(k)}|}\norm{S(k)\varphi}_{L^1(I^{(k)})}\geq c_l\kappa^l|I^{(k)}|^l\norm{S(k)D^l\varphi}\geq c_l\kappa^lC^{-l}e^{-(\lambda_1+\tau)lk}\norm{Q(k)D^l\varphi}.\]
It follows that
\[\liminf_{k \rightarrow \infty} \frac{\log (\norm{S(k)\varphi}_{L^1(I^{(k)})}/|I^{(k)}|)}{k}\geq -l\lambda_1+\lambda(D^l\varphi),\]
so
\[\lim_{k \rightarrow \infty} \frac{\log \norm{S(k)\varphi}_{\sup}}{k}=\lim_{k \rightarrow \infty} \frac{\log (\norm{S(k)\varphi}_{L^1(I^{(k)})}/|I^{(k)}|)}{k}= -l\lambda_1+\lambda(D^l\varphi).\]
This completes the proof.
%For any $j \geq 1$, set  $h_{i,j} \in \Gamma(j)$ such that $D^j(h_{i,j}) = h_i \in \Gamma$.
%\begin{align*}
%\norm{S(k)h_{i,j}}_{\sup} = \sum_{\alpha \in \mathcal A}\norm{(S(k)h_{i,j})_\alpha}  = \sum_{\alpha \in \mathcal A}\left|\int_{I_\alpha^{(k)}}(S(k)h_{i,j-1}(x))_\alpha dx\right| \leq |I^{(k)}|\norm{S(k)h_{i,j-1}}_{\sup}.
%\end{align*}
%By repeating the same argument, we obtain
%\begin{align*}
%\norm{S(k)h_{i,j}}_{\sup}  \leq |I^{(k)}|^j\norm{S(k)h_{i,0}} = |I^{(k)}|^j\norm{Q(k)h_i}.
%\end{align*}
%In view of \eqref{eqn;Ik},
%\[
%\lim_{k \rightarrow \infty} \frac{\log \norm{S(k)h_{i,j}}_{\sup}}{k} \leq \lambda_i - j\lambda_1.
%\]
%For the opposite direction, it follows from applying Lemma \ref{lemma;polynomial} for $f = (S(k)h_{i,j})_\alpha$ for each $\alpha \in \mathcal A$. Then, there exists $C>0$ such that
%\begin{align*}
%\norm{S(k)h_{i,j}}_{\sup} = \sum_{\alpha \in \mathcal A} \left|\int_{I_\alpha^{(k)}}(S(k)h_{i,j-1}(x))_\alpha dx\right|  \geq C |I^{(k)}|\norm{S(k)h_{i,j-1}}_{\sup}.
%\end{align*}
\end{proof}

\subsection{New functionals arising from correcting operators}\label{sec;reduction}
In this section, we develop the idea of constructing invariant distributions by decomposing correction operators with respect to base elements,
{a concept} introduced in \cite{Fr-Ul2} and \cite[\S 9.1]{Fr-Ki}.
The original idea is to decompose the operator $\mathfrak{h}_i:C^{0+\pag}(\sqcup_{\alpha \in \mathcal{A}} I_\alpha)\to U_i$ relative to the base elements $h_1,\ldots,h_{i-1}$ of $U_{i}$.
We extend this idea by taking a decomposition of the correction operators $\mathfrak{h}_{-j,i}$ and $\mathfrak{h}_{0}$.
Using an inductive procedure, we get a new family of functionals defined on $C^{n+\pa}$, which  are adjusted to define invariant distributions $\mathfrak{f}_{\bar t}$ {in  Section~\ref{sec;inv-distpag}}.
%The idea was previously introduced to decompose a cocycle $\varphi \in  C^{0+\pa}$ by sum of invariant distributions corresponding positive Lyapunov exponents of $S(k)$
%(see  \cite[\S 9.1]{Fr-Ki}). In our case  the further decompositions in $C^{n+\pa}$ are attained  by adjusting the basis of extended correction operators.
\medskip

For every  $0\leq a<1$, let $2\leq i_a\leq g+1$ and $1\leq j_a\leq g$ such that
\[
\lambda_{i_a}\leq \lambda_1 a<\lambda_{i_a-1} \text{ and } \lambda_1-\lambda_{j_a}\leq \lambda_1 a<\lambda_1-\lambda_{j_a+1}.
\]
Let us consider the bounded operators $d^+_{i,0}:C^{0+\pag}(\sqcup_{\alpha \in \mathcal{A}} I_\alpha)\to\R$, for $1\leq i<i_a$, such that
for every $\varphi\in C^{0+\pag}(\sqcup_{\alpha \in \mathcal{A}} I_\alpha)$,
\begin{equation}\label{eqn;corhj1}
\mathfrak{h}_{i_a}(\varphi) = \sum_{1\leq i<i_a}d^+_{i,0}(\varphi)h_i.
\end{equation}
Since $\mathfrak{h}_{i_a}:C^{0+\pag}(\sqcup_{\alpha \in \mathcal{A}} I_\alpha)\to U_{i_a}$ is bounded and $h_1,\ldots, h_{i_a-1}$ is a basis of  $U_{i_a}$, the functionals are well defined and bounded.

Next, let us consider the bounded operators:
\begin{align*}
d^+_{i,1}&:C^{1+\pag}(\sqcup_{\alpha \in \mathcal{A}} I_\alpha)\to\R, \text{ for } 1\leq i\leq g,\\
d^0_{s,1}&:C^{1+\pag}(\sqcup_{\alpha \in \mathcal{A}} I_\alpha)\to\R, \text{ for } 1\leq s<\gamma,\\
d^-_{-j,1}&:C^{1+\pag}(\sqcup_{\alpha \in \mathcal{A}} I_\alpha)\to\R, \text{ for } j_a< j\leq g,
\end{align*}
%$d^+_{i,1}:C^{1+\pag}(\sqcup_{\alpha \in \mathcal{A}} I_\alpha)\to\R$ for $1\leq i\leq g$,
%$d^0_{s,1}:C^{1+\pag}(\sqcup_{\alpha \in \mathcal{A}} I_\alpha)\to\R$ for $1\leq s<\gamma$,
%$d^-_{-j,1}:C^{1+\pag}(\sqcup_{\alpha \in \mathcal{A}} I_\alpha)\to\R$ for $j_a< j\leq g$,
such that
for every $\varphi \in C^{1+\pag}(\sqcup_{\alpha \in \mathcal{A}} I_\alpha)$,
\begin{gather}\label{eqn;corhj2}
\begin{split}
\mathfrak{h}_{-j_a,i_a}&\Big(\varphi-\sum_{1\leq i<i_a}d^+_{i,0}(D\varphi)h_{i,1}\Big) \\
&= \sum_{1\leq i\leq g}d^+_{i,1}(\varphi)h_i+\sum_{1\leq s<\gamma}d^0_{s,1}(\varphi)c_s+\sum_{j_a<j\leq g}d^-_{-j,1}(\varphi)h_{-j}.
\end{split}
\end{gather}
Since  $\mathfrak{h}_{-j_a,i_a}:C^{1+\pag}(\sqcup_{\alpha \in \mathcal{A}} I_\alpha)\to U_{-j_a}$ is bounded and  $h_1,\ldots, h_{g},c_1,\ldots c_s,h_{-g},\ldots,$ $h_{-j_a+1}$ is a basis of  $U_{-j_a}$, the
functionals are well defined and bounded.

Next, let us consider the bounded operators:
\begin{align*}
d^+_{i,2}&:C^{2+\pag}(\sqcup_{\alpha \in \mathcal{A}} I_\alpha)\to\R, \text{ for }1\leq i\leq g,\\
d^0_{s,2}&:C^{2+\pag}(\sqcup_{\alpha \in \mathcal{A}} I_\alpha)\to\R, \text{ for } 1\leq s<\gamma,\\
d^-_{-j,2}&:C^{2+\pag}(\sqcup_{\alpha \in \mathcal{A}} I_\alpha)\to\R, \text{ for } 1\leq j\leq g,
\end{align*}
%$d^+_{i,2}:C^{2+\pag}(\sqcup_{\alpha \in \mathcal{A}} I_\alpha)\to\R$ for $1\leq i\leq g$,
%$d^0_{s,2}:C^{2+\pag}(\sqcup_{\alpha \in \mathcal{A}} I_\alpha)\to\R$ for $1\leq s<\gamma$,
%$d^-_{-j,2}:C^{2+\pag}(\sqcup_{\alpha \in \mathcal{A}} I_\alpha)\to\R$ for $1\leq j\leq g$,
such that
for every $\varphi \in C^{2+\pag}(\sqcup_{\alpha \in \mathcal{A}} I_\alpha)$,
\begin{align}\label{eqn;corhj3}
\begin{split}
\mathfrak{h}_{0}&\Big(\varphi-\sum_{1\leq i<i_a}d^+_{i,0}(D^2\varphi)h_{i,2}-\sum_{1\leq i\leq g}d^+_{i,1}(D\varphi)h_{i,1}
-\sum_{1\leq s<\gamma}d^0_{s,1}(D\varphi)c_{s,1}\Big)\\
& = \sum_{1\leq i\leq g}d^+_{i,2}(\varphi)h_i+\sum_{1\leq s<\gamma}d^0_{s,2}(\varphi)c_s+\sum_{1\leq j\leq g}d^-_{-j,2}(\varphi)h_{-j}.
\end{split}
\end{align}

For any $l\geq 3$, let us consider the bounded operators:
\begin{align*}
d^+_{i,l}&: C^{l+\pag}(\sqcup_{\alpha \in \mathcal{A}} I_\alpha)\to\R,  \text{ for } 1\leq i\leq g,\\
d^0_{s,l}&:C^{l+\pag}(\sqcup_{\alpha \in \mathcal{A}} I_\alpha)\to\R, \text{ for }1\leq s<\gamma,\\
d^-_{-j,l}&:C^{l+\pag}(\sqcup_{\alpha \in \mathcal{A}} I_\alpha)\to\R, \text{ for } 1\leq j\leq g,
\end{align*}
%$d^+_{i,l}: C^{l+\pag}(\sqcup_{\alpha \in \mathcal{A}} I_\alpha)\to\R$ for $1\leq i\leq g$,
%$d^0_{s,l}:C^{l+\pag}(\sqcup_{\alpha \in \mathcal{A}} I_\alpha)\to\R$ for $1\leq s<\gamma$,
%$d^-_{-j,l}:C^{l+\pag}(\sqcup_{\alpha \in \mathcal{A}} I_\alpha)\to\R$ for $1\leq j\leq g$
%%$d^+_{3,i}, d^0_{3,s}, d^-_{3,-j}: C^{3+\pag}(\sqcup_{\alpha \in \mathcal{A}} I_\alpha)\to\R$ for each $1\leq i\leq g$, $1\leq s<\gamma$, and $1\leq j\leq g$ respectively
such that
for every $\varphi \in C^{l+\pag}(\sqcup_{\alpha \in \mathcal{A}} I_\alpha)$,
\begin{align}\label{eqn;corhj4}
\begin{split}
\mathfrak{h}_{0}&\Big(\varphi-\sum_{1\leq i\leq g}d^+_{i,l-2}(D^2\varphi)h_{i,2}-\sum_{1\leq i\leq g}d^+_{i,l-1}(D\varphi)h_{i,1}-\sum_{1\leq s<\gamma}d^0_{s,l-1}(D\varphi)c_{s,1}\Big)\\
& = \sum_{1\leq i\leq g}d^+_{i,l}(\varphi)h_i+\sum_{1\leq s<\gamma}d^0_{s,l}(\varphi)c_s+\sum_{1\leq j\leq g}d^-_{-j,l}(\varphi)h_{-j}.
\end{split}
\end{align}

The following lemma is {necessary} for proving lower bounds for the growth of the cocycle $S(k)$ in the sense of $L^1$-norm.
\begin{lemma}\label{lem:estlowa}
Assume that $T$ satisfies the \ref{FDC}. Let  $0\leq a<1$ and $n\geq 0$. Then,  for every $\varphi\in \cpang(\sqcup_{\alpha \in \mathcal{A}} I_\alpha)$ with $\sum_{\alpha\in\mathcal{A}}(|C_{\alpha,n}^{a,+}(\varphi)|+|C_{\alpha,n}^{a,-}(\varphi)|)>0$, we have
\begin{equation}\label{eq:estlowa}
\liminf_{k \rightarrow \infty} \frac{\log (\norm{S(k)(\varphi)}_{L^1(I^{(k)})}/|I^{(k)}|)}{k}\geq (a-n)\lambda_1.
\end{equation}
\end{lemma}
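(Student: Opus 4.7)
The strategy is to localize $S(k)\varphi$ near the right endpoint of an $I^{(k)}_\gamma$ at which $\varphi$'s singular coefficient is seen, and then extract a quantitative $L^1$-lower bound from the resulting expansion. By assumption, fix $\alpha_0\in\mathcal{A}$ and a sign $\epsilon_0\in\{+,-\}$ with $C^{a,\epsilon_0}_{\alpha_0,n}(\varphi)\neq 0$; we treat $\epsilon_0=-$, the $+$ case being symmetric. By the Rokhlin tower structure of \S\ref{sec;Rokhlin}, for each $k$ there is a unique pair $(\gamma_0,j_0)=(\gamma_0(k),j_0(k))$ with $0\leq j_0<Q_{\gamma_0}(k)$ and $T^{j_0}(r^{(k)}_{\gamma_0})=r_{\alpha_0}$, so the entire analysis is carried out near $x=r^{(k)}_{\gamma_0}$.

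\textbf{Expansion and Chebyshev bound.} Setting $z=r^{(k)}_{\gamma_0}-x$ and $p_j=T^j r^{(k)}_{\gamma_0}$, we Taylor-expand each $\varphi\circ T^j$ on $I^{(k)}_{\gamma_0}$. If $p_j$ lies in the interior of its containing $I_{\alpha_j}$ then $\varphi(p_j-z)$ is a polynomial in $z$ of degree $\leq n$ plus a $C^{n+1}$-smooth remainder $O(z^{n+1})$; if instead $p_j=r_{\alpha_j}$, equivalently $\alpha_j\in A^-_{\gamma_0}(k):=\{\alpha:r_\alpha\text{ is the right endpoint of a floor in column }\gamma_0\}$, then the $\cpang$-structure gives $\varphi(r_{\alpha_j}-z)=\tilde Q_{\alpha_j}(z)+c_{n,a}\,C^{a,-}_{\alpha_j,n}(\varphi)z^{n-a}+o(z^{n-a})$ with $c_{n,a}\neq 0$. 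Summing yields
\[S(k)\varphi(x)=P_k(z)+c_{n,a}\,K^{\mathrm{tot}}(k)\,z^{n-a}+o(z^{n-a}),\qquad K^{\mathrm{tot}}(k):=\sum_{\alpha\in A^-_{\gamma_0}(k)}C^{a,-}_{\alpha,n}(\varphi),\]
with $P_k$ a polynomial of degree $\leq n$. Because $\{1,z,\ldots,z^n,z^{n-a}\}$ are linearly independent in $L^1(0,1)$ (as $a\in[0,1)$), equivalence of norms on their finite-dimensional span and rescaling $z=\delta u$ produce a universal constant $C_n>0$ with $\int_0^\delta|P(z)+Kz^{n-a}|\,dz\geq C_n|K|\delta^{n+1-a}$ for every polynomial $P$ of degree $\leq n$, every $K\in\R$ and every $\delta>0$. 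Absorbing the $o(z^{n-a})$ remainder for $\delta$ below a $\varphi$-dependent threshold (ultimately satisfied as $|I^{(k)}|\to 0$), then choosing $\delta=|I^{(k)}_{\gamma_0}|\geq \kappa^{-1}|I^{(k)}|$ via \eqref{def;sdc4} and invoking $|I^{(k)}|\geq C^{-1}e^{-(\lambda_1+\tau)k}$ from \eqref{eq:invIk}, we obtain
\[\frac{\|S(k)\varphi\|_{L^1(I^{(k)})}}{|I^{(k)}|}\;\geq\; c\,|K^{\mathrm{tot}}(k)|\,|I^{(k)}|^{n-a}\;\geq\; c'\,|K^{\mathrm{tot}}(k)|\,e^{-(n-a)(\lambda_1+\tau)k}.\]
Provided $|K^{\mathrm{tot}}(k)|$ stays bounded below uniformly in $k$, taking $\log$, dividing by $k$, and letting $\tau\to 0$ gives the required $\liminf\geq(a-n)\lambda_1$.

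\textbf{Main obstacle.} The crux is to rule out persistent cancellation in the signed partial sum $K^{\mathrm{tot}}(k)$: an adversarial Rauzy--Veech partition could group several $\alpha$ with $C^{a,-}_{\alpha,n}(\varphi)\neq 0$ into a single column $\gamma_0(k)$ whose coefficients happen to sum to zero. The remedy is to consider simultaneously the left-endpoint analogue $K^{\mathrm{tot},+}_\gamma(k):=\sum_{\alpha\in A^+_\gamma(k)}C^{a,+}_{\alpha,n}(\varphi)$ and to optimize the localization over all columns $\gamma$ and both signs $\epsilon_0$; one then argues, using the dynamical richness of the partitions $A^\pm_\gamma(k)$ guaranteed by the \ref{FDC}, that the coefficient vectors $(C^{a,\pm}_{\alpha,n}(\varphi))_\alpha$ on which every such signed sum vanishes form a proper subspace and that the intersection over $k$ of these subspaces is trivial, so that $\max_\gamma\bigl(|K^{\mathrm{tot},+}_\gamma(k)|+|K^{\mathrm{tot},-}_\gamma(k)|\bigr)\geq c_\varphi>0$ uniformly. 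This is the technical heart of the lemma.
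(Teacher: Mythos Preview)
Your expansion-and-Chebyshev strategy is plausible in spirit, but the proof has a genuine gap precisely where you flag it: the non-cancellation of $K^{\mathrm{tot}}(k)$. You assert that ``the intersection over $k$ of these subspaces is trivial'' by dynamical richness, but nothing in the \ref{FDC} data directly rules out that, for the specific vector $(C^{a,\pm}_{\alpha,n}(\varphi))_\alpha$, one has $\max_\gamma|K^{\mathrm{tot},\pm}_\gamma(k)|\to 0$ along a subsequence (for instance if two nonzero coefficients of opposite sign repeatedly land in the same column). You have reduced the lemma to a combinatorial statement that is at least as hard as the lemma itself, and left it unproved. A secondary issue is uniformity of the ``$o(z^{n-a})$'' remainder: you are summing $\sim Q_{\gamma_0}(k)\asymp e^{\lambda_1 k}$ Taylor remainders, each with a constant that blows up when the floor approaches an endpoint of the containing $I_\alpha$; you do not explain why the aggregated error is still $o(z^{n-a})$ with a threshold independent of $k$. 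Finally, when $a=0$ the singular term is $z^n\log z$, not $z^{n-a}=z^n$, so your linear-independence claim needs adjustment.

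The paper takes a quite different route that bypasses all of this. It works at the level of $D^n\varphi$: from \cite[Theorem~1.1, Part V]{Fr-Ki} one already knows that if $C^\pm_\alpha(D^n\varphi)\neq 0$ then there are subintervals $\widehat J^{(k)}\subset I^{(k)}_\alpha$ of definite proportion on which $|S(k)D^n\varphi|\geq |C^\pm_\alpha|/|I^{(k)}_\alpha|^a$. This imported result already encodes the non-cancellation. One then applies the elementary observation that if $|D^n f|\geq c$ on an interval $J$, there is a subinterval of proportion $\geq 4^{-n}$ on which $|f|\geq c|J|^n/4^{n(n+1)/2}$ (iterated mean-value/Markov-type bound). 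Integrating the resulting pointwise lower bound for $|S(k)\varphi|$ over $J^{(k)}$ and using $|I^{(k)}_\alpha|\geq\kappa^{-1}|I^{(k)}|$ together with \eqref{eq:invIk} gives \eqref{eq:estlowa} directly. The payoff of the paper's approach is that the delicate cancellation analysis is already done once (in \cite{Fr-Ki}) at the level of $S(k)D^n\varphi$, and the passage from $D^n\varphi$ to $\varphi$ is purely local calculus with no summation over the orbit.
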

\begin{proof}
By the proof of Theorem~1.1 (see Part V) in \cite{Fr-Ki}, if $C_\alpha^{\pm}(D^n\varphi)\neq 0$, then there exist $\vep>0$ and
a sequence of intervals $\widehat{J}^{(k)}\subset I^{(k)}_\alpha$, $k\geq 1$, such that
\begin{equation}\label{eq:IJ}
|\widehat{J}^{(k)}|\geq \frac{\vep|I^{(k)}_\alpha|}{4}\text{ and }\ |(S(k)D^n\varphi)(x)| \geq  \frac{|C^{\pm}_\alpha|}{|I^{(k)}_\alpha|^{a}}
\text{ for all }x\in \widehat{J}^{(k)} \text{ and }k\geq 1.
\end{equation}
An elementary argument shows that if $f:I\to\R$ is a $C^1$-function such that $|Df(x)|\geq a>0$ for all $x\in I$, then there exists
a subinterval $J\subset I$ such that $|J|\geq |I|/4$ and $|f(x)|\geq a|I|/4$ {for any $x\in J$} (see \cite[Lemma 4.7]{Fr-Ki}). It follows that, for every $n\geq 1$, if $f:I\to\R$ is a $C^n$-function such that $|D^nf(x)|\geq a>0$ for all $x\in I$, then there exists
a subinterval $J\subset I$ such that $|J|\geq |I|/4^n$ and $|f(x)|\geq a|I|^n/4^{n(n+1)/2}$.

In view of \eqref{eq:IJ}, it follows that there exists
a sequence of intervals ${J}^{(k)}\subset\widehat{J}^{(k)}\subset I^{(k)}_\alpha$, $k\geq 1$, such that
\[
|{J}^{(k)}|\geq \frac{\vep|I^{(k)}_\alpha|}{4^{n+1}}\text{ and }\ |(S(k)\varphi)(x)| \geq  \vep^n\frac{|C^{\pm}_\alpha|}{|I^{(k)}_\alpha|^{a}}\frac{|I^{(k)}_\alpha|^{n}}{4^{n(n+3)/2}}
\text{ for all }x\in {J}^{(k)} \text{ and }k\geq 1.
\]
Therefore,
\[\frac{1}{|I^{(k)}|}\norm{S(k)(\varphi)}_{L^1(I^{(k)})}\geq \vep^{n+1}\frac{1}{|I^{(k)}|}\frac{|C^{\pm}_\alpha|}{|I^{(k)}_\alpha|^{a}}\frac{|I^{(k)}_\alpha|^{n+1}}{4^{(n+1)^2}}.\]
By \eqref{def;sdc4} and \eqref{eq:invIk}, {for any $\tau > 0$ there exist constants  $C,\kappa>0$ such that}
\[\frac{|I^{(k)}_\alpha|^{n+1-a}}{|I^{(k)}|}\geq \kappa^{n+1-a}|I^{(k)}|^{n-a}\geq \kappa^{n+1-a}C^{a-n}e^{-(\lambda_1+\tau)(n-a)k}.\]
This gives \eqref{eq:estlowa}.
\end{proof}

In the following theorem, we prove the first version of the spectral result for the cocycle $S(k)$ on $\cpang$.
Each map $\varphi\in \cpang(\sqcup_{\alpha \in \mathcal{A}} I_\alpha)$ is decomposed with respect to the base elements $h_{i,l}$, $c_{s,l}$, $h_{-j,l}$ {in $\Gamma_l(\sqcup_{\alpha \in \mathcal{A}} I_\alpha)$} with weights determined by derivatives of the functionals {$d^+_{i,l}, d^0_{s,l}, d^-_{-j,l}$} defined at the beginning of {this} subsection.  Theorem~\ref{thm:h-j} is again  the main tool of the proof.

\begin{theorem}\label{thm:redPa}
Assume that $T$ satisfies the \ref{FDC}.
For any $0\leq a<1$ and $n\geq 1$, there exists a bounded operator $\mathfrak{r}_{a,n}:\cpang(\sqcup_{\alpha \in \mathcal{A}} I_\alpha)\to \cpang(\sqcup_{\alpha \in \mathcal{A}} I_\alpha)$ such that for every $\varphi\in \cpang(\sqcup_{\alpha \in \mathcal{A}} I_\alpha)$,
\begin{align*}
\begin{split}
&\varphi=\mathfrak{r}_{a,n}(\varphi)+\sum_{1\leq i<i_a}d^+_{i,0}(D^n\varphi)h_{i,n}\\
&+\sum_{1\leq i\leq g}d^+_{i,1}(D^{n-1}\varphi)h_{i,n-1}+\sum_{1\leq s<\gamma}d^0_{s,1}(D^{n-1}\varphi)c_{s,n-1}+\sum_{j_a< j\leq g}d^-_{-j,1}(D^{n-1}\varphi)h_{-j,n-1}\\
&+\!\sum_{2\leq l\leq n}\!\Big(\!\sum_{1\leq i\leq g}d^+_{i,l}(D^{n-l}\varphi)h_{i,n-l}+\!\sum_{1\leq s<\gamma}d^0_{s,l}(D^{n-l}\varphi)c_{s,n-l}+\!\sum_{1\leq j\leq g}d^-_{-j,l}(D^{n-l}\varphi)h_{-j,n-l}\Big),
%&+\sum_{0\leq l<n-2}\Big(\sum_{1\leq i\leq g}d^+_{3,i}(D^{l}\varphi)h_{i,l}+\sum_{1\leq s<\gamma}d^0_{3,s}(D^{l}\varphi)c_{s,l}+\sum_{1\leq j\leq g}d^-_{3,-j}(D^{l}\varphi)h_{-j,l}\Big)
\end{split}
\end{align*}
and, for any $\tau>0$,
\begin{equation}\label{eq:ran<}
 \norm{S(k)\mathfrak{r}_{a,n}(\varphi)}_{\sup}= O(e^{\lambda_1(a-n+\tau)k})\|\mathfrak{r}_{a,n}(\varphi)\|_{C^{n+\pa}}.
\end{equation}
If additionally $\sum_{\alpha\in\mathcal{A}}(|C_{\alpha,n}^{a,+}(\varphi)|+|C_{\alpha,n}^{a,-}(\varphi)|)>0$, then
\begin{equation}\label{eq:ran=}
\lim_{k \rightarrow \infty} \frac{\log \norm{S(k)\mathfrak{r}_{a,n}(\varphi)}_{\sup}}{k}=\lim_{k \rightarrow \infty} \frac{\log \frac{\norm{S(k)\mathfrak{r}_{a,n}(\varphi)}_{L^1(I^{(k)})}}{|I^{(k)}|}}{k}= (a-n)\lambda_1.
\end{equation}
\end{theorem}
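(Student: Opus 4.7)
The plan is to define $\mathfrak{r}_{a,n}(\varphi)$ as $\varphi$ minus the entire right-hand side of the displayed decomposition, with coefficients read off from the functionals $d^+_{i,l}, d^0_{s,l}, d^-_{-j,l}$ introduced in \eqref{eqn;corhj1}--\eqref{eqn;corhj4}. Boundedness of $\mathfrak{r}_{a,n}:\cpang(\sqcup_{\alpha \in \mathcal{A}} I_\alpha)\to\cpang(\sqcup_{\alpha \in \mathcal{A}} I_\alpha)$ will follow from boundedness of each of these functionals, which in turn comes from boundedness of the correction operators $\mathfrak{h}_{i_a}$, $\mathfrak{h}_{-j_a,i_a}$, $\mathfrak{h}_0$ (Theorem~\ref{thm;correction}, Corollary~\ref{cor:h-j}, Theorem~\ref{thm:defh0}) combined with linear independence of the basis $\{h_i,c_s,h_{-j}\}$ of $\Gamma$; since the subtracted basis elements are piecewise polynomials, they lie in $\cpang$ and preserve membership.

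The core of the argument is to show that $\mathfrak{r}_{a,n}(\varphi)$ satisfies the three vanishing hypotheses of Theorem~\ref{thm:h-j} with $i=i_a$, $j=j_a$:
\begin{gather*}
\mathfrak{h}_{i_a}(D^n\mathfrak{r}_{a,n}(\varphi))=0,\qquad \mathfrak{h}_{-j_a,i_a}(D^{n-1}\mathfrak{r}_{a,n}(\varphi))=0,\\
\mathfrak{h}_0(D^l\mathfrak{r}_{a,n}(\varphi))=0\ \text{for}\ 0\leq l\leq n-2.
\end{gather*}
By \eqref{eq:prophch1}, each subtracted basis element has degree at most $n$ on every $I_\alpha$, so $D^n$ annihilates all of them except the $h_{i,n}$ terms; hence $D^n\mathfrak{r}_{a,n}(\varphi)=D^n\varphi-\sum_{i<i_a}d^+_{i,0}(D^n\varphi)h_i$, and the first identity follows at once from \eqref{eqn;corhj1} together with $\mathfrak{h}_{i_a}(h_i)=h_i$ for $i<i_a$ (Remark~\ref{rem:hjhj'}). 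The second follows analogously from \eqref{eqn;corhj2}, using that $h_i$ ($i\leq g$), $c_s$, and $h_{-j}$ (for $j>j_a$) all lie in $U_{-j_a}$ and so are fixed by $\mathfrak{h}_{-j_a,i_a}$ (Remark~\ref{rem:hji}).

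For the third family I will apply \eqref{eqn;corhj3} when $l=n-2$ and \eqref{eqn;corhj4} (with its internal index equal to $n-l$) when $0\leq l\leq n-3$, each evaluated at $\psi=D^l\varphi$. Computing the difference $D^l\mathfrak{r}_{a,n}(\varphi)-[\text{argument of }\mathfrak{h}_0\text{ in \eqref{eqn;corhj3}/\eqref{eqn;corhj4}}]$ row by row via \eqref{eq:prophch1}, the potentially awkward pieces $h_{i,2}$, $h_{i,1}$, $c_{s,1}$ will appear with identical coefficients in both expressions and cancel, so that $\mathfrak{h}_0$ is never evaluated on them (in particular $\mathfrak{h}_0(h_{1,2})$ is not needed). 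What remains are the degree-zero pieces $-\sum_i d^+_{i,n-l}(D^l\varphi)h_i-\sum_s d^0_{s,n-l}(D^l\varphi)c_s-\sum_j d^-_{-j,n-l}(D^l\varphi)h_{-j}$, together with terms $h_{-j,1}$ (annihilated by $\mathfrak{h}_0$ via \eqref{eq:prophch5}) and higher-order terms $h_{i,m}$, $c_{s,m}$ with $m\geq 2$ (annihilated by $\mathfrak{h}_0$ via \eqref{eq:prophch2}--\eqref{eq:prophch4}); applying $\mathfrak{h}_0$ and using $\mathfrak{h}_0 h=h$ on $\Gamma$, together with \eqref{eqn;corhj3}/\eqref{eqn;corhj4} on the argument, then yields the vanishing. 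The careful bookkeeping of which basis element of which degree arises from each row of the decomposition under $D^l$ is the principal technical obstacle.

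With all three vanishing conditions verified, Theorem~\ref{thm:h-j} gives
\[
\|S(k)\mathfrak{r}_{a,n}(\varphi)\|_{\sup}=O\!\left(e^{(-n\lambda_1+\max\{\lambda_{i_a},\lambda_1 a,\lambda_1-\lambda_{j_a}\}+\tau)r(0,k)}\right)\|\mathfrak{r}_{a,n}(\varphi)\|_{C^{n+\pa}},
\]
and the maximum equals $\lambda_1 a$ by the choice of $i_a,j_a$; converting $r(0,k)$ into $k$ via \eqref{eq:nrn} and renaming $\tau$ yields \eqref{eq:ran<}. For the matching lower bound in \eqref{eq:ran=}, each subtracted basis element is piecewise polynomial of degree at most $n$, so $D^{n+1}$ annihilates it on every $I_\alpha$ and $C^{a,\pm}_{\alpha,n}(\mathfrak{r}_{a,n}(\varphi))=C^{a,\pm}_{\alpha,n}(\varphi)$. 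Under the hypothesis $\sum_\alpha(|C^{a,+}_{\alpha,n}(\varphi)|+|C^{a,-}_{\alpha,n}(\varphi)|)>0$, Lemma~\ref{lem:estlowa} supplies $\liminf_k \frac{1}{k}\log(\|S(k)\mathfrak{r}_{a,n}(\varphi)\|_{L^1(I^{(k)})}/|I^{(k)}|)\geq (a-n)\lambda_1$; combined with the upper bound and the trivial inequality $\|\cdot\|_{L^1(I^{(k)})}/|I^{(k)}|\leq\|\cdot\|_{\sup}$, both normalized exponents of $S(k)\mathfrak{r}_{a,n}(\varphi)$ equal $(a-n)\lambda_1$, proving \eqref{eq:ran=}.
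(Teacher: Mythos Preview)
Your proposal is correct and follows essentially the same route as the paper: define $\mathfrak{r}_{a,n}(\varphi)$ by subtraction, verify the vanishing of $\mathfrak{h}_{i_a}(D^n\mathfrak{r}_{a,n}(\varphi))$, $\mathfrak{h}_{-j_a,i_a}(D^{n-1}\mathfrak{r}_{a,n}(\varphi))$, and $\mathfrak{h}_0(D^l\mathfrak{r}_{a,n}(\varphi))$ for $0\le l\le n-2$ by matching with \eqref{eqn;corhj1}--\eqref{eqn;corhj4} and the annihilation properties \eqref{eq:prophch2}--\eqref{eq:prophch5}, then invoke Theorem~\ref{thm:h-j} (with $\max\{\lambda_{i_a},\lambda_1 a,\lambda_1-\lambda_{j_a}\}=\lambda_1 a$) for the upper bound and Lemma~\ref{lem:estlowa} for the lower bound. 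Your observation that the $h_{i,2}$, $h_{i,1}$, $c_{s,1}$ terms stay inside the argument of $\mathfrak{h}_0$ (so that $\mathfrak{h}_0(h_{1,2})$ is never needed) is exactly how the paper handles this point.
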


\begin{proof}
In view of \eqref{eq:prophch1}% and  \eqref{eqn;corhj4}
, for every $0\leq m\leq n-1$,
\begin{align*}
&D^m\mathfrak{r}_{a,n}(\varphi)=D^m\varphi-\sum_{1\leq i<i_a}d^+_{i,0}(D^n\varphi)h_{i,n-m}\\
&-\sum_{1\leq i\leq g}d^+_{i,1}(D^{n-1}\varphi)h_{i,n-1-m}-\sum_{1\leq s<\gamma}d^0_{s,1}(D^{n-1}\varphi)c_{s,n-1-m}-\sum_{j_a< j\leq g}d^-_{-j,1}(D^{n-1}\varphi)h_{-j,n-1-m}\\
&-\!\sum_{2\leq l\leq n-m}\!\Big(\!\sum_{1\leq i\leq g}\!d^+_{i,l}(D^{n-l}\varphi)h_{i,n-l-m}\!-\!\sum_{1\leq s<\gamma}\!d^0_{s,l}(D^{n-l}\varphi)c_{s,n-l-m}\!-\!\sum_{1\leq j\leq g}\!d^-_{-j,l}(D^{n-l}\varphi)h_{-j,n-l-m}\!\Big).
%&-\sum_{m+1\leq l<n-2}\Big(\sum_{1\leq i\leq g}d^+_{3,i}(D^{l}\varphi)h_{i,l-m}-\sum_{1\leq s<\gamma}d^0_{3,s}(D^{l}\varphi)c_{s,l-m}-\sum_{1\leq j\leq g}d^-_{3,-j}(D^{l}\varphi)h_{-j,l-m}\Big)\\
%&-\mathfrak{h}_{0}(D^m\varphi).
\end{align*}
Suppose that $0\leq m\leq n-3$.
Since $\mathfrak{h}_{0}(h_{i,l})=0$ for $l\geq 3$  (see \eqref{eq:prophch3}), $\mathfrak{h}_{0}(c_{s,l})=0$ for $l\geq 2$ (see \eqref{eq:prophch4}), $\mathfrak{h}_{0}(h_{-j,l})=0$ for $l\geq 1$ (see \eqref{eq:prophch5}), and $\mathfrak{h}_{0}(h)=h$ for $h\in\Gamma$ (see Remark~\ref{rem:h0}), it follows that
\begin{align*}
\mathfrak{h}_{0}&(D^m\mathfrak{r}_{a,n}(\varphi))=\mathfrak{h}_{0}\Big(D^m\varphi-\sum_{1\leq i\leq g}d^+_{i,n-m-2}(D^{m+2}\varphi)h_{i,2}\\
&-\sum_{1\leq i\leq g}d^+_{i,n-m-1}(D^{m+1}\varphi)h_{i,1}-\sum_{1\leq s<\gamma}\!d^0_{s,n-m-1}(D^{m+1}\varphi)c_{s,1}\Big)\\
&-\sum_{1\leq i\leq g}d^+_{i,n-m}(D^{m}\varphi)h_{i}-\sum_{1\leq s<\gamma}d^0_{s,n-m}(D^{m}\varphi)c_{s}-\sum_{1\leq j\leq g}d^-_{-j,n-m}(D^{m}\varphi)h_{-j}.
\end{align*}
In view of \eqref{eqn;corhj4}, this gives $\mathfrak{h}_{0}(D^m\mathfrak{r}_{a,n}(\varphi))=0$.
The same arguments show that
%Similarly,
%\begin{align*}
%&D^{n-2}\mathfrak{r}_{a,n}(\varphi)=D^{n-2}\varphi-\sum_{1\leq i<i_a}d^+_{i,0}(D^n\varphi)h_{i,2}\\
%&-\sum_{1\leq i\leq g}d^+_{i,1}(D^{n-1}\varphi)h_{i,1}-\sum_{1\leq s<\gamma}d^0_{s,1}(D^{n-1}\varphi)c_{s,1}-\sum_{j_a< j\leq g}d^-_{-j,1}(D^{n-1}\varphi)h_{-j,1}\\
%&-\sum_{1\leq i\leq g}d^+_{i,2}(D^{n-2}\varphi)h_{i}-\sum_{1\leq s<\gamma}d^0_{s,2}(D^{n-2}\varphi)c_{s}-\sum_{1\leq j\leq g}d^-_{-j,2}(D^{n-2}\varphi)h_{-j}.
%\end{align*}
%As $\mathfrak{h}_{0}(h_{i,2})=\mathfrak{h}_{0}(c_{s,2})=\mathfrak{h}_{0}(h_{-j,2})=0$ for $i>1$, we have
\begin{align*}
\mathfrak{h}_{0}(D^{n-2}&\mathfrak{r}_{a,n}(\varphi))=\mathfrak{h}_{0}\Big(D^{n-2}\varphi-\sum_{1\leq i<i_a}d^+_{i,0}(D^n\varphi)h_{i,2}\\&-\sum_{1\leq i\leq g}d^+_{i,1}(D^{n-1}\varphi)h_{i,1}-\sum_{1\leq s<\gamma}d^0_{s,1}(D^{n-1}\varphi)c_{s,1}\Big)\\
&-\sum_{1\leq i\leq g}d^+_{i,2}(D^{n-2}\varphi)h_{i}-\sum_{1\leq s<\gamma}d^0_{s,2}(D^{n-2}\varphi)c_{s}-\sum_{1\leq j\leq g}d^-_{-j,2}(D^{n-2}\varphi)h_{-j}.
\end{align*}
In view of \eqref{eqn;corhj3}, this gives $\mathfrak{h}_{0}(D^{n-2}\mathfrak{r}_{a,n}(\varphi))=0$.

Next, we pass to the $(n-1)$-th derivative,
\begin{align*}
&D^{n-1}\mathfrak{r}_{a,n}(\varphi)=D^{n-1}\varphi-\sum_{1\leq i<i_a}d^+_{i,0}(D^n\varphi)h_{i,1}\\
&-\sum_{1\leq i\leq g}d^+_{i,1}(D^{n-1}\varphi)h_{i}-\sum_{1\leq s<\gamma}d^0_{s,1}(D^{n-1}\varphi)c_{s}-\sum_{j_a< j\leq g}d^-_{-j,1}(D^{n-1}\varphi)h_{-j}.
\end{align*}
In view of \eqref{eqn;corhj2}, this gives $\mathfrak{h}_{-j_a,i_a}(D^{n-1}\mathfrak{r}_{a,n}(\varphi))=0$.

Finally, we pass to the $n$-th derivative,
\begin{align*}
D^{n}\mathfrak{r}_{a,n}(\varphi)=D^{n}\varphi-\sum_{1\leq i<i_a}d^+_{i,0}(D^n\varphi)h_{i}.
\end{align*}
In view of \eqref{eqn;corhj1}, this gives $\mathfrak{h}_{i_a}(D^{n}\mathfrak{r}_{a,n}(\varphi))=0$.

Since $\max\{\lambda_{i_a},a\lambda_1,\lambda_1-\lambda_{j_a}\}=a\lambda_1$, by Theorem~\ref{thm:h-j}, for any $\tau>0$, we have
\[ \norm{S(k)\mathfrak{r}_{a,n}(\varphi)}_{\sup}=O(e^{\lambda_1(a-n+\tau)k})\|\mathfrak{r}_{a,n}(\varphi)\|_{C^{n+\pa}}.
\]
The final lower bound in \eqref{eq:ran=} follows directly from Lemma~\ref{lem:estlowa}.
\end{proof}

\begin{remark}\label{rmk:n=0}
Theorem~\ref{thm:redPa} remains true also in the case when $n=0$, except that in formulas \eqref{eq:ran<} and \eqref {eq:ran=} we must replace the sup norm by the $L^1$-norm. Here, $\mathfrak{r}_{a,0}:C^{0+\pag}(\sqcup_{\alpha \in \mathcal{A}} I_\alpha)\to C^{0+\pag}(\sqcup_{\alpha \in \mathcal{A}} I_\alpha)$ is given by
\[\mathfrak{r}_{a,0}(\varphi)=\varphi-\sum_{1\leq i<i_a}d^+_{i,0}(\varphi)h_{i},\]
so $\mathfrak{h}_{i_a}(\mathfrak{r}_{a,0}(\varphi))=0$ for every $\varphi\in C^{0+\pag}(\sqcup_{\alpha \in \mathcal{A}} I_\alpha)$.
By Theorem~\ref{thm;correction} and Proposition~\ref{prop;FDCbound}, for any $\tau>0$,
\[\|\mathcal{M}^{(k)}(S(k)(\mathfrak{r}_{a,0}(\varphi)))\|=O(e^{(a\lambda_1+\tau)k})\|\mathfrak{r}_{a,0}(\varphi)\|_{C^{0+\pa}}.\]
In view of \eqref{eqn;upperboundvarphi} and \eqref{eqn;renormpaos2}, it follows that
\[\|S(k)(\mathfrak{r}_{a,0}(\varphi))\|_{L^1(I^{(k)})}/|I^{(k)}|=O(e^{(a\lambda_1+\tau)k})\|\mathfrak{r}_{a,0}(\varphi)\|_{C^{0+\pa}}.\]
The lower bound follows again directly from Lemma~\ref{lem:estlowa}.
\end{remark}

\subsection{Invariant distributions on $C^{n+\pag}(\sqcup_{\alpha \in \mathcal{A}} I_\alpha)$}\label{sec;inv-distpag}
For every $0\leq a<1$ and $n\geq 0$, we denote by $\mathscr{T}^*_{a,n}$ (or, resp.\ $\mathscr{T}_{a,n}$)  the subset of triples $\bar{t}\in\mathscr{TF}^*$ (or, resp.\ $\mathscr{TF}$) of the form $(l,+,i)$, $(l,0,s)$ or $(l,-,j)$, for $0\leq l\leq n$,
with the additional {restrictions:}
\begin{itemize}
\item if $l=n$, then we deal only with $(n,+,i)$ for $1\leq i<i_a$;
\item if $l=n-1$, then we deal only with $(n-1,+,i)$ for all $1\leq i\leq g$, $(n-1,0,s)$ for all $1\leq s<\gamma$ and $(n-1,-,j)$ for $j_a< j\leq g$.
\end{itemize}
Recall that $\mathscr{TF}$ is the subset of triples in $\mathscr{TF}^*$ obtained after removing all triples of the form $(l,-,1)$.

\begin{remark}
By definition,
\[\bar{t}\in \mathscr{T}^*_{a,n}\Longleftrightarrow \mathfrak{o}(\bar{t})\leq (n-\tfrac{\lambda_{i_a-1}}{\lambda_1})\vee (n-1+\tfrac{\lambda_{j_a+1}}{\lambda_1}).\]
As $\lambda_{i_a}\leq\lambda_1 a<\lambda_{i_a-1}$ and $\lambda_{j_a+1}<\lambda_1(1-a)\leq \lambda_{j_a}$, it follows that
\begin{equation}\label{eq:maxex}
\bar{t}\in \mathscr{T}^*_{a,n}\Longleftrightarrow \mathfrak{o}(\bar{t})<n-a.
\end{equation}
\end{remark}

\begin{definition}\label{def;Dan}
For every $\bar{t}\in\mathscr{T}^*_{a,n}$, let $\mathfrak{f}_{\bar{t}}:C^{n+\pag}(\sqcup_{\alpha \in \mathcal{A}} I_\alpha)\to\C$  and  $h_{\bar{t}}\in \Gamma_n(\sqcup_{\alpha \in \mathcal{A}} I_\alpha)$ be defined as follows:
\begin{itemize}
\item  $\mathfrak f_{\bar{t}}=d^{+}_{i,n-l}\circ D^{l}$  and  $h_{\bar{t}}:=h_{i,l}$ if $\bar{t}=(l,+,i)$;
\item  $\mathfrak f_{\bar{t}}=d^{0}_{s,n-l}\circ D^{l}$  and  $h_{\bar{t}}:=c_{s,l}$ if $\bar{t}=(l,0,s)$;
\item  $\mathfrak f_{\bar{t}}=d^{-}_{-j,n-l}\circ D^{l}$  and  $h_{\bar{t}}:=h_{-j,l}$ if $\bar{t}=(l,-,j)$.
\end{itemize}
\end{definition}

\begin{theorem}\label{thm;spdecomp}
Assume that $T$ satisfies the \ref{FDC}. Let $0\leq a<1$ and $n\geq 0$. Then,  every $\varphi\in \cpang(\sqcup_{\alpha \in \mathcal{A}} I_\alpha)$ is decomposed as follows:
\begin{equation}\label{eq:decom}
\varphi=\sum_{\bar{t}\in \mathscr{T}^*_{a,n}}\mathfrak{f}_{\bar{t}}(\varphi)h_{\bar t}+\mathfrak{r}_{a,n}(\varphi),
\end{equation}
so that for any $\tau>0$ and for all $0\leq l<n$,
\begin{gather}
\label{eq:skdl}
\|S(k)(D^l\mathfrak{r}_{a,n}(\varphi))\|_{\sup}=O(e^{(-\lambda_1(n-l-a)+\tau)k})\|D^l\mathfrak{r}_{a,n}(\varphi)\|_{C^{n-l+\pa}},\\
\label{eq:skdn}
\|S(k)(D^n\mathfrak{r}_{a,n}(\varphi))\|_{L^1(I^{(k)})}/|I^{(k)}|=O(e^{(\lambda_1a+\tau)k})\|D^n\mathfrak{r}_{a,n}(\varphi)\|_{C^{0+\pa}},
\text{ and }\\
\label{eq:skhd}
\lim_{k\to\infty}\frac{1}{k}\log\Big\|S(k)\sum_{\bar{t}\in \mathscr{T}^*_{a,n}}a_{\bar t}h_{\bar t}\Big\|_{\sup}=-\lambda_1\min\{\mathfrak{o}(\bar{t}):\bar t\in \mathscr{T}^*_{a,n}, a_{\bar t}\neq 0\}.
\end{gather}
If additionally $\sum_{\alpha\in\mathcal{A}}(|C_{\alpha,n}^{a,+}(\varphi)|+|C_{\alpha,n}^{a,-}(\varphi)|)>0$, then
\begin{gather}
\label{eq:skdl1}
\lim_{k\to\infty}\frac{1}{k}\log\|S(k)(D^l\mathfrak{r}_{a,n}(\varphi))\|_{\sup}=-\lambda_1(n-l-a),\text{ for  }0\leq l<n,\text{ and }\\
\lim_{k\to\infty}\frac{1}{k}\log\big(\|S(k)(D^l\mathfrak{r}_{a,n}(\varphi))\|_{L^1(I^{(k)})}/|I^{(k)}|\big)=-\lambda_1(n-l-a),\text{ for }0\leq l\leq n.
\end{gather}
Moreover, for each $\bar t\in \mathscr{T}_{a,n}$ the functional $\mathfrak{f}_{\bar t}: \cpang(\sqcup_{\alpha \in \mathcal{A}} I_\alpha)\to \C$  is invariant, {i.e.,} for every $\varphi\in\cpang(\sqcup_{\alpha \in \mathcal{A}} I_\alpha)$ such that  $\varphi=v\circ T-v$ for some $v\in C^{r}(I)$ with $\mathfrak{o}(\bar t)<r\leq n-a$, we have $\mathfrak{f}_{\bar t}(\varphi)=0$.
Also, the functionals $C^{a,\pm}_{\alpha,n}: \cpang(\sqcup_{\alpha \in \mathcal{A}} I_\alpha)\to \C$ are invariant, {i.e.,} if
 $\varphi=v\circ T-v$ for some $v\in C^{r}(I)$ with $n-a<r$, then $C^{a,\pm}_{\alpha,n}(\varphi)=0$ for every $\alpha\in \mathcal{A}$.
\end{theorem}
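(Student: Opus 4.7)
The plan is to verify the five assertions in turn, with the decomposition and upper bounds coming essentially for free from Theorem~\ref{thm:redPa} and Theorem~\ref{thm:h-j}, and with the invariance statements forming the genuine content. First I would observe that \eqref{eq:decom} is a direct repackaging of Theorem~\ref{thm:redPa}: every triple $\bar t\in\mathscr{T}^*_{a,n}$ corresponds to precisely one term in that identity, with coefficient $\mathfrak{f}_{\bar t}=d^{\epsilon}_{\cdot,n-l}\circ D^l$ and basis element $h_{\bar t}=h_{\cdot,l}$ as in Definition~\ref{def;Dan}, and the index restrictions built into $\mathscr{T}^*_{a,n}$ at levels $l=n$ and $l=n-1$ mirror exactly the restricted ranges appearing in the four lines of Theorem~\ref{thm:redPa}. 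For the remainder estimate \eqref{eq:skdl} with $0\leq l<n$, the proof of Theorem~\ref{thm:redPa} already furnishes the vanishings $\mathfrak{h}_{i_a}(D^n\mathfrak{r}_{a,n}(\varphi))=0$, $\mathfrak{h}_{-j_a,i_a}(D^{n-1}\mathfrak{r}_{a,n}(\varphi))=0$ and $\mathfrak{h}_0(D^m\mathfrak{r}_{a,n}(\varphi))=0$ for every $0\leq m\leq n-2$, which are precisely the hypotheses of Theorem~\ref{thm:h-j} applied to $D^l\mathfrak{r}_{a,n}(\varphi)\in C^{(n-l)+\pag}(\sqcup_{\alpha\in\mathcal{A}}I_\alpha)$ with indices $(i_a,j_a)$; since $\max\{\lambda_{i_a},\lambda_1 a,\lambda_1-\lambda_{j_a}\}=\lambda_1 a$ by the choice of $i_a,j_a$, this produces the desired rate $-\lambda_1(n-l-a)$. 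For $l=n$, $D^n\mathfrak{r}_{a,n}(\varphi)$ lies in $C^{0+\pag}$ with $\mathfrak{h}_{i_a}(D^n\mathfrak{r}_{a,n}(\varphi))=0$, so Theorem~\ref{thm;correction} together with Proposition~\ref{prop;FDCbound}, \eqref{eqn;upperboundvarphi} and \eqref{eqn;renormpaos2} delivers \eqref{eq:skdn}, exactly along the lines of Remark~\ref{rmk:n=0}.

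Next, the spectral identity \eqref{eq:skhd} follows immediately from \eqref{eq:skcomb} in Proposition~\ref{lem;Qkh} upon the identifications $\lambda_i-l\lambda_1=-\lambda_1\mathfrak{o}((l,+,i))$, $-l\lambda_1=-\lambda_1\mathfrak{o}((l,0,s))$ and $-\lambda_j-l\lambda_1=-\lambda_1\mathfrak{o}((l,-,j))$. For the sharp lower bounds \eqref{eq:skdl1}, the key observation is that each $h_{\bar t}\in\Gamma_n(\sqcup_{\alpha\in\mathcal{A}}I_\alpha)$ is piecewise polynomial of degree at most $n$, hence $D^{n-l+1}h_{\bar t}$ is polynomial on each $I_\alpha$ and the limits defining $C^{a,\pm}_{\alpha,n-l}(D^l h_{\bar t})$ all vanish. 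Therefore $C^{a,\pm}_{\alpha,n-l}(D^l\mathfrak{r}_{a,n}(\varphi))=C^{a,\pm}_{\alpha,n}(\varphi)$ for every $\alpha$, the non-degeneracy hypothesis transfers unchanged, and applying Lemma~\ref{lem:estlowa} to the $C^{(n-l)+\pag}$-function $D^l\mathfrak{r}_{a,n}(\varphi)$ produces the matching lower bound $-\lambda_1(n-l-a)$ on $\|S(k)D^l\mathfrak{r}_{a,n}(\varphi)\|_{L^1(I^{(k)})}/|I^{(k)}|$; the sup-norm version for $l<n$ then follows from the trivial inequality $\|f\|_{L^1(I^{(k)})}/|I^{(k)}|\leq\|f\|_{\sup}$.

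The main obstacle is the invariance of the functionals $\mathfrak{f}_{\bar t}$ for $\bar t\in\mathscr{T}_{a,n}$. Since $T$ is a piecewise translation, differentiation commutes with the coboundary operation, $D^l(v\circ T-v)=v^{(l)}\circ T-v^{(l)}$, so every derivative of a $C^r$-coboundary is again a coboundary with generating function in $C^{r-l}$. By Definition~\ref{def;Dan} together with \eqref{eqn;corhj1}--\eqref{eqn;corhj4}, each coefficient $d^{\epsilon}_{\cdot,n-l}$ is read off a single correction operator ($\mathfrak{h}_{i_a}$, $\mathfrak{h}_{-j_a,i_a}$ or $\mathfrak{h}_0$ according as $n-l$ is $0$, $1$ or $\geq 2$) applied to $D^l\varphi$, together with auxiliary lower-order data that is itself built from the same correction operators applied to higher derivatives; invariance therefore reduces to showing that each of these three correction operators annihilates coboundaries whose generating function has sufficient regularity. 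For $\mathfrak{h}_{i_a}$ this is the argument of \cite{Fr-Ki}: $\|\mathcal{M}^{(k)}\circ S(k)(w\circ T-w)\|\leq 2\|w\|_{\sup}$ while $\|Q|_{U^{(k)}_{i_a}}(0,k)^{-1}\|\leq Ce^{-(\lambda_{i_a-1}-\tau)k}$ by \eqref{def;sdc12}, so the defining limit \eqref{def:hi} is zero. For $\mathfrak{h}_{-j_a,i_a}=\mathfrak{h}^*_{j_a}\circ K_{i_a}$ the coboundary passes through $K_{i_a}$ unchanged because $\mathfrak{h}_{i_a}(Dw\circ T-Dw)=0$ by the previous step, after which the Taylor-expansion estimate $\|\mathcal{M}^{(k)}(S(k)(w\circ T-w))\|=O(|I^{(k)}|\|w'\|_{\sup})=O(e^{-\lambda_1 k})$ compensates the growth $\|Q|_{U^{(k)}_{-j_a}}(0,k)^{-1}\|\leq Ce^{(\lambda_{j_a+1}+\tau)k}$ to force the net limit to zero at rate $e^{-(\lambda_1-\lambda_{j_a+1}-\tau)k}$; the operator $\mathfrak{h}_0$ is handled identically, invoking one further derivative of $v$ to obtain the decay $\|S(k)D(w\circ T-w)\|_{\sup}=O(e^{-\rho r(0,k)})$ demanded by Theorem~\ref{thm:defh0}. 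The triples $(l,-,1)$ are deliberately excluded from $\mathscr{T}_{a,n}$ precisely because $h_{-1}$ is itself a coboundary of $\bar\xi$ (Remark~\ref{rmk:h-1}), so the components $\mathfrak{f}_{(l,-,1)}$ record genuine coboundary content and cannot be forced to vanish.

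The remaining invariance of $C^{a,\pm}_{\alpha,n}$ is the easiest step: if $v\in C^r(I)$ with $r>n-a$, then the coboundary $\varphi=v\circ T-v$ inherits enough regularity from $v$ that $\varphi^{(n+1)}$ (defined piecewise when $r>n+1$, or interpreted through its Hölder continuity when $n-a<r\leq n+1$) is bounded on a neighbourhood of each endpoint of $I_\alpha$; consequently the products $\varphi^{(n+1)}(x)(r_\alpha-x)^{1+a}$ and $\varphi^{(n+1)}(x)(x-l_\alpha)^{1+a}$ both tend to zero as $x$ approaches the endpoint, yielding $C^{a,\pm}_{\alpha,n}(\varphi)=0$ for every $\alpha\in\mathcal{A}$.
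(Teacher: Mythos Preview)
Your treatment of the decomposition \eqref{eq:decom} and of the estimates \eqref{eq:skdl}--\eqref{eq:skdl1} is correct and matches the paper: everything indeed follows from Theorem~\ref{thm:redPa}, Theorem~\ref{thm:h-j}, Proposition~\ref{lem;Qkh}, Lemma~\ref{lem:estlowa} and Remark~\ref{rmk:n=0} in the way you describe.

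The invariance argument, however, has a genuine gap. Your strategy is to show that the correction operators $\mathfrak{h}_{i_a},\mathfrak{h}_{-j_a,i_a},\mathfrak{h}_0$ annihilate coboundaries $w\circ T-w$ with $w=D^lv$; but when $\varphi=v\circ T-v$ with $v\in C^{r}(I)$ and $r=m+b$ ($0<b\leq 1$), the derivative $D^{m+1}v$ need not exist, so for $\bar t=(m+1,+,i)$ with $\mathfrak{o}(\bar t)=m+1-\lambda_i/\lambda_1<r$ you cannot feed $D^{m+1}\varphi$ into your $\mathfrak{h}_{i_a}$ estimate as a coboundary of a bounded function. There is a second difficulty: by \eqref{eqn;corhj2}--\eqref{eqn;corhj4} the input to $\mathfrak{h}_{-j_a,i_a}$ or $\mathfrak{h}_0$ is not $D^l\varphi$ alone but $D^l\varphi$ minus correction terms built from $d^+_{\cdot,0}(D^{l+1}\varphi),\,d^{\pm}_{\cdot,1}(D^{l+1}\varphi),\ldots$; since only those coefficients with order $<r$ are forced to vanish, the residual terms do not disappear and the input is not a pure coboundary. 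The paper avoids both issues by a different route: it bounds $\|S(k)D^j\varphi\|_{\sup}$ for $j\leq m$ and $\|\mathcal{M}^{(k)}(S(k)D^{m+1}\varphi)\|$ directly from the H\"older regularity of $D^{m}v$ (no $(m{+}1)$-st derivative of $v$ is ever needed), combines this with \eqref{eq:skdl}--\eqref{eq:skdn} to bound $\|S(k)D^j(\varphi-\mathfrak{r}_{a,n}(\varphi))\|$, and then reads off the vanishing of $\mathfrak{f}_{\bar t}(\varphi)$ from the exact spectral identity \eqref{eq:skcomb} for $\sum_{\bar t}\mathfrak{f}_{\bar t}(\varphi)D^jh_{\bar t}\in\Gamma_{n-j}$.

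Your argument for $C^{a,\pm}_{\alpha,n}$ is also incomplete. The claim that $D^{n+1}\varphi$ is bounded near the endpoints is only justified when $r>n+1$; in the critical range $n-a<r\leq n+1$ one cannot write $D^{n+1}\varphi=D^{n+1}v\circ T-D^{n+1}v$. The paper instead argues by contradiction: assuming $C_\alpha^{+}(D^n\varphi)\neq 0$, one integrates the lower bound $|D^{n+1}\varphi(x)|\geq c\,|x-l_\alpha|^{-1-a}$ once (case $a=0$) or twice (case $0<a<1$) to produce a growth estimate on $D^{n}\varphi$ or $D^{n-1}\varphi$ that is incompatible with the H\"older exponent $r-n>-a$ (resp.\ $r-(n-1)>1-a$) inherited from $v$.
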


\begin{proof}
All claims of the theorem, aside from invariance, are derived directly from Proposition~\ref{lem;Qkh}, Theorem~\ref{thm:redPa}, and Remark~\ref{rmk:n=0}, so we focus on invariance only.
\medskip

Suppose that $\varphi=v\circ T-v$ for some $v\in C^{r}(I)$ with $r\leq n-a$. Let $r=m+b$ with an integer $0\leq m<n$ and $0<b\leq 1$. By \eqref{eq:decom}, for every $0\leq j\leq n$,
\begin{equation}\label{eq:Djdiff}
D^j(\varphi-\mathfrak{r}_{a,n}(\varphi))=\sum_{\bar t\in \mathscr{T}^*_{a,n}}\mathfrak{f}_{\bar t}(\varphi)D^jh_{\bar t}.
\end{equation}
Then, for every $0\leq j\leq m$, we have $D^jv\in C^{m-j+b}(I)$, and for every $x\in I^{(k)}_\alpha$,
\begin{align*}
|S(k)D^j\varphi(x)|&=|D^jv(T^{Q_\alpha(k)}(x))-D^jv(x)|
\leq
\left\{\begin{array}{ll}
\|D^{j}v\|_{C^1}|I^{(k)}|& \text{if }0\leq j<m,\\
\|D^{m}v\|_{C^b}|I^{(k)}|^b& \text{if } j=m.
\end{array}\right.
\end{align*}
This also gives
\[\Big|\int_{I^{(k)}_\alpha}S(k)D^{m+1}\varphi(x)\,dx\Big|=|S(k)D^{m}\varphi(r^{(k)}_\alpha)-S(k)D^{m}\varphi(l^{(k)}_\alpha)|\leq 2\|v\|_{C^{m+b}}|I^{(k)}|^b.\]
As $|I^{(k)}|=O(e^{-\lambda_1 k})$, $|I^{(k)}_\alpha|^{-1}=O(|I^{(k)}|^{-1})$, and $|I^{(k)}|^{-1}=O(e^{(\lambda_1+\tau) k})$ for every $\tau>0$, we obtain
\begin{gather*}
\limsup_{k\to\infty}\frac{1}{k}\log\|S(k)D^j\varphi\|_{\sup}\leq -\lambda_1\text{ if }j<m;\\
\limsup_{k\to\infty}\frac{1}{k}\log\|S(k)D^m\varphi\|_{\sup}\leq -b\lambda_1;\\
\limsup_{k\to\infty}\frac{1}{k}\log\|\mathcal{M}^{(k)}(S(k)D^{m+1}\varphi)\|\leq (1-b)\lambda_1.
\end{gather*}
As $m<n$, in view of \eqref{eq:skdl} and \eqref{eq:skdn}, it follows that
\begin{gather}
\label{eq:Djdiff1}
\limsup_{k\to\infty}\frac{1}{k}\log\|S(k)(D^j(\varphi-\mathfrak{r}_{a,n}(\varphi)))\|_{\sup}\leq -\lambda_1\text{ if }0\leq j<m;\\
\label{eq:Djdiff2}
\limsup_{k\to\infty}\frac{1}{k}\log\|S(k)(D^{m}(\varphi-\mathfrak{r}_{a,n}(\varphi)))\|_{\sup}\leq -b\lambda_1;\\
\label{eq:Djdiff3}
\limsup_{k\to\infty}\frac{1}{k}\log\|\mathcal{M}^{(k)}(S(k)(D^{m+1}(\varphi-\mathfrak{r}_{a,n}(\varphi))))\|
\leq (1-b)\lambda_1.
\end{gather}
In view of \eqref{eq:Djdiff}, $\widetilde{\varphi}=D^{m+1}(\varphi-\mathfrak{r}_{a,n}(\varphi))\in \Gamma_{n-m-1}(\sqcup_{\alpha \in \mathcal{A}} I_\alpha)$. Therefore, by \eqref{eqn:normbv} and \eqref{neq:Skvar},
\[\|S(k)\widetilde{\varphi}\|_{\sup}\leq \|\mathcal{M}^{(k)}(S(k)\widetilde{\varphi})\|+\var(S(k)\widetilde{\varphi})\leq
\|\mathcal{M}^{(k)}(S(k)\widetilde{\varphi})\|+\var{\widetilde{\varphi}}.\]
In view of \eqref{eq:Djdiff3}, this gives
\begin{equation}\label{eq:Djdiff4}
\limsup_{k\to\infty}\frac{1}{k}\log\|S(k)(D^{m+1}(\varphi-\mathfrak{r}_{a,n}(\varphi)))\|_{\sup}
\leq (1-b)\lambda_1.
\end{equation}
On the other hand, by \eqref{eq:Djdiff} and \eqref{eq:skcomb},
\begin{gather*}
\lim_{k\to\infty}\frac{1}{k}\log\|S(k)(D^j(\varphi-\mathfrak{r}_{a,n}(\varphi)))\|_{\sup}=
\lim_{k\to\infty}\frac{1}{k}\log\|S(k)\!\sum_{\bar t\in \mathscr{T}^*_{a,n}}\mathfrak{f}_{\bar t}(\varphi)D^jh_{\bar t}\|_{\sup}\\
=
\lambda_1\max\left\{-\mathfrak{o}(\bar t)+j: \bar t\in \mathscr{T}^*_{a,n}, \mathfrak{f}_{\bar t}(\varphi)\neq 0, D^jh_{\bar t}\neq 0\right\}.
\end{gather*}
In view of  \eqref{eq:Djdiff1}, \eqref{eq:Djdiff2}, and \eqref{eq:Djdiff4}, this yields
\begin{align}
\label{eq:minl}
&\min\left\{\mathfrak{o}(\bar t): \bar t\in \mathscr{T}^*_{a,n}, \mathfrak{f}_{\bar t}(\varphi)\neq 0, D^lh_{\bar t}\neq 0\right\}\geq l+1\text{ if }0\leq l<m,\\
\label{eq:minm}
&\min\left\{\mathfrak{o}(\bar t): \bar t\in \mathscr{T}^*_{a,n}, \mathfrak{f}_{\bar t}(\varphi)\neq 0, D^{m}h_{\bar t}\neq 0\right\}\geq m+b,\\
\label{eq:minm+1}
&\min\left\{\mathfrak{o}(\bar t): \bar t\in \mathscr{T}^*_{a,n}, \mathfrak{f}_{\bar t}(\varphi)\neq 0, D^{m+1}h_{\bar t}\neq 0\right\}\geq m+b.
\end{align}
%Recall that $h_{\bar t}=h_{\pm i,l}$ for $1\leq i\leq g$ or $h_{\bar t}=c_{s,l}$ for $1\leq s<\gamma$ and, by \eqref{eq:prophch1},
%$D^jh_{\bar t}=0$ if $j>l$.
%In view of  \eqref{eq:Djdiff1}, \eqref{eq:Djdiff2}, \eqref{eq:Djdiff4}, for every $\bar t\in \mathscr{T}^*_{a,n}$ if
%\begin{itemize}
%\item $\mathfrak{o}(\bar t)<j+1$ and $D^jh_{\bar t}\neq 0$ for some $0\leq j<m$ or;
%\item $\mathfrak{o}(\bar t)<m+b$ and $D^{m}h_{\bar t}\neq 0$ or;
%\item $\mathfrak{o}(\bar t)<m+b$ and $D^{m+1}h_{\bar t}\neq 0$,
%\end{itemize}
%then $\mathfrak{f}_{\bar t}(\varphi)=0$.
Let $\bar t\in \mathscr{T}_{a,n}$ be any triple such that $\mathfrak{o}(\bar t)<r=m+b$. By definition, $\mathfrak{f}_{\bar t}$, $h_{\bar t}$, and $\mathfrak{o}(\bar t)$, are of the form:
\begin{align*}
& \mathfrak{f}_{\bar t}=d^{+}_{i,n-l}\circ D^{l},\quad h_{\bar t}=h_{i,l},\quad \text{and}\quad\mathfrak{o}(\bar t)=l-\frac{\lambda_i}{\lambda_1},\quad\text{or}\\
& \mathfrak{f}_{\bar t}=d^{0}_{s,n-l}\circ D^{l},\quad h_{\bar t}=c_{s,l},\quad\text{and}\quad\mathfrak{o}(\bar t)=l, \quad\text{or}\\
& \mathfrak{f}_{\bar t}=d^{-}_{-j,n-l}\circ D^{l}, \quad h_{\bar t}=h_{-j,l},\quad \text{and}\quad\mathfrak{o}(\bar t)=l+\frac{\lambda_j}{\lambda_1}\text{ with }j\neq 1,
\end{align*}
for $0\leq l\leq m+1$. If $0\leq l<m$, then $D^lh_{\bar t}\neq 0$ and $\mathfrak{o}(\bar t)\leq l+\lambda_{2}/\lambda_1<l+1$. Then, by \eqref{eq:minl}, $\mathfrak{f}_{\bar t}(\varphi)=0$.
If $l=m$ or $m+1$, then $D^{l}h_{\bar t}\neq 0$ and $\mathfrak{o}(\bar t)<m+b$. Then, by \eqref{eq:minm} and \eqref{eq:minm+1}, $\mathfrak{f}_{\bar t}(\varphi)=0$ as well. This completes the proof of invariance for the functionals $\mathfrak{f}_{\bar t}$, $\bar t\in \mathscr{T}_{a,n}$.

\medskip

Suppose that $\varphi=v\circ T-v$ for some $v\in C^{r}(I)$ with $r> n-a$.

Assume that $0<a<1$.
Then, $D^{n-1}\varphi=D^{n-1}v\circ T-D^{n-1}v$ with $D^{n-1}v\in C^{1-a+\tau}(I)$, where $0<\tau<(r-n+a)\wedge a$. Therefore, $D^{n-1}\varphi$ is $(1-a+\tau)$-H\"older on any interval $I_{\alpha}$, $\alpha\in\mathcal{A}$. Suppose, contrary to our claim, that $C_\alpha^+(D^n\varphi)=C_{\alpha,n}^{a,+}(\varphi)\neq 0$.
Then, there exists $\vep>0$ such that
\[0<c:=|C_\alpha^+(D^n\varphi)|/2\leq |D^{n+1}\varphi(x)||x-l_\alpha|^{1+a}\text{ for }x\in (l_\alpha,l_\alpha+\vep].\]
Hence, for every $x\in (l_\alpha,l_\alpha+\vep]$,
\begin{align*}
\Big|\frac{c}{a(x-l_\alpha)^a}-\frac{c}{a\vep^a}\Big|&=\int_x^{l_\alpha+\vep}\frac{c}{(s-l_\alpha)^{1+a}}ds\leq \Big|\int_x^{l_\alpha+\vep}D^{n+1}\varphi(s)ds\Big|\\
&\leq |D^{n}\varphi(x)-D^{n}\varphi(l_\alpha+\vep)|.
\end{align*}
It follows that there exists $0<\delta<\vep$ such that
\[\frac{c}{2a(x-l_\alpha)^a}\leq |D^{n}\varphi(x)|\text{ for }x\in (l_\alpha,l_\alpha+\delta].\]
Hence, for every $x,y\in (l_\alpha,l_\alpha+\delta]$,
\begin{align*}
\frac{c}{2a(1-a)}&|(y-l_\alpha)^{1-a}-(x-l_\alpha)^{1-a}|=\int_x^{y}\frac{c}{2a(s-l_\alpha)^a}ds\leq \Big|\int_x^{y}D^{n}\varphi(s)ds\Big|\\
&\leq |D^{n-1}\varphi(x)-D^{n-1}\varphi(y)|\leq \|D^{n-1}\varphi\|_{C^{1-a+\tau}}|(y-l_\alpha)-(x-l_\alpha)|^{1-a+\tau}.
\end{align*}
It follows that $c\leq 2a(1-a)\|D^{n-1}\varphi\|_{C^{1-a+\tau}} s^\tau$  for every $s\in(0,\delta]$, contrary to $|C_\alpha^+(D^n\varphi)|=2c>0$.
This gives $C_{\alpha,n}^{a,+}(\varphi)=C_\alpha^+(D^n\varphi)=0$, and the same arguments also show that $C_{\alpha,n}^{a,-}(\varphi)=C_\alpha^-(D^n\varphi)=0$.

If $a=0$, then the proof proceeds in the same way.
In this case, $D^{n}\varphi=D^{n}v\circ T-D^{n}v$ with $D^{n}v\in C^{\tau}(I)$, where $0<\tau<(r-n)\wedge 1$. Therefore, $D^{n}\varphi$ is $\tau$-H\"older on any  $I_{\alpha}$, $\alpha\in\mathcal{A}$. Suppose that $C_{\alpha,n}^{a,+}(\varphi)\neq 0$.
As in the previous case, there exists $\vep>0$ such that
\[0<c:=|C_\alpha^+(D^n\varphi)|/2\leq |D^{n+1}\varphi(x)||x-l_\alpha|\text{ for }x\in (l_\alpha,l_\alpha+\vep].\]
Hence, for every $x,y\in (l_\alpha,l_\alpha+\vep]$,
\begin{align*}
c&|\log(y-l_\alpha)-\log(x-l_\alpha)|=\int_x^{y}\frac{c}{s-l_\alpha}ds\leq \Big|\int_x^{y}D^{n+1}\varphi(s)ds\Big|\\
&\leq |D^{n}\varphi(x)-D^{n}\varphi(y)|\leq \|D^{n}\varphi\|_{C^{\tau}}|(y-l_\alpha)-(x-l_\alpha)|^{\tau}.
\end{align*}
It follows that $c\log 2\leq \|D^{n}\varphi\|_{C^{\tau}} s^\tau$  for every $s\in(0,\vep/2]$, contrary to $|C_\alpha^+(D^n\varphi)|=2c>0$.
 This completes the proof.
%By the invariance of functionals in $\mathscr{D}_{a,n}$, we have
%$\mathfrak{d}(\varphi)=0$ for all $\mathfrak{d}\in \mathscr{D}_{a,n}$. Hence
%\[\varphi=\sum_{\mathfrak{d}\in \mathscr{D}^*_{a,n}\setminus \mathscr{D}_{a,n}}\mathfrak{d}(\varphi)h_{\mathfrak d}+\mathfrak{r}_{a,n}(\varphi).\]
%By Corollary~\ref{cor:cobh}, for every $\mathfrak{d}\in \mathscr{D}^*_{a,n}\setminus \mathscr{D}_{a,n}$ we have $h_{\mathfrak d}=u_{\mathfrak d}-u_{\mathfrak d}\circ T$ for a polynomial $u_{\mathfrak d}$. Let $\bar{u}=u-\sum_{\mathfrak{d}\in \mathscr{D}^*_{a,n}\setminus \mathscr{D}_{a,n}}\mathfrak{d}(\varphi)u_{\mathfrak d}$. Then $\bar{u}\in C^r(I)$ and $\mathfrak{r}_{a,n}(\varphi)=\bar{u}-\bar{u}\circ T$.
%Then $D^{n-1}\bar{u}\in C^{r-(n-1)}(I)$ with $r-(n-1)>0$ and for every $x\in I^{(k)}_\alpha$,
%\begin{align*}
%|S(k)D^{n-1}\mathfrak{r}_{a,n}(\varphi)(x)|=|D^{n-1}\bar{u}(x)-D^{n-1}\bar{u}(T^{Q_\alpha(k)}(x))|\leq
%\|\bar u\|_{C^r}|I^{(k)}|^b,
%\end{align*}
%where $b=\min\{1, r-(n-1)\}$.
%Therefore
%\begin{align*}
%\Big|\int_{I^{(k)}_\alpha}S(k)D^{n}\mathfrak{r}_{a,n}(\varphi)(x)\,dx\Big|&=|S(k)D^{n-1}\mathfrak{r}_{a,n}(\varphi)(l^{(k)}_\alpha)-
%S(k)D^{n-1}\mathfrak{r}_{a,n}(\varphi)(l^{(k)}_\alpha)|\\&\leq 2\|\bar u\|_{C^{r}}|I^{(k)}|^b.
%\end{align*}
%Hence
%\begin{gather*}
%\limsup_{k\to\infty}\frac{1}{k}\log\|\mathcal{M}^{(k)}(S(k)D^{n}\mathfrak{r}_{a,n}(\varphi))\|\leq (1-b)\lambda_1.
%\end{gather*}
\end{proof}

\begin{lemma}\label{lem:unidecom}
The decomposition \eqref{eq:decom} is unique, {i.e.,} if
\[\varphi=\sum_{\bar t\in \mathscr{T}^*_{a,n}}a_{\bar t}h_{\bar t}+\widetilde{\varphi}\quad
\text{with}
\quad\limsup_{k\to\infty}\frac{1}{k}\log\|S(k)\widetilde{\varphi}\|_{\sup}\leq -\lambda_1(n-a),\]
then $a_{\bar t}=\mathfrak{f}_{\bar t}(\varphi)$ for every $\bar t\in \mathscr{T}^*_{a,n}$. In particular,
$\mathfrak{f}_{\bar t}(\mathfrak{r}_{a,n}(\varphi))=0$ for every $\bar t\in \mathscr{T}^*_{a,n}$.
\end{lemma}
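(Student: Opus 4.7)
The plan is to compare the two representations of $\varphi$ by subtraction and then exploit the spectral dichotomy coming from \eqref{eq:skhd} together with the order constraint \eqref{eq:maxex}. Setting $b_{\bar t}:=\mathfrak{f}_{\bar t}(\varphi)-a_{\bar t}$, the canonical decomposition of Theorem~\ref{thm;spdecomp} combined with the assumed one gives
\[
\sum_{\bar t\in\mathscr{T}^*_{a,n}}b_{\bar t}\,h_{\bar t}\;=\;\widetilde\varphi-\mathfrak{r}_{a,n}(\varphi),
\]
so the task reduces to showing that the only linear combination of the $h_{\bar t}$ whose special Birkhoff sums decay at sup-norm rate $\leq -\lambda_1(n-a)$ is the trivial one.

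First I would bound the right-hand side under the cocycle: by hypothesis on $\widetilde\varphi$, and by \eqref{eq:ran<} applied with $\tau\downarrow 0$ to $\mathfrak{r}_{a,n}(\varphi)$, the triangle inequality yields
\[
\limsup_{k\to\infty}\frac{1}{k}\log\|S(k)(\widetilde\varphi-\mathfrak{r}_{a,n}(\varphi))\|_{\sup}\;\leq\;-\lambda_1(n-a).
\]
Next, arguing by contradiction, suppose some $b_{\bar t}\neq 0$ and set $\mu:=\min\{\mathfrak{o}(\bar t):b_{\bar t}\neq 0,\ \bar t\in\mathscr{T}^*_{a,n}\}$. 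By \eqref{eq:skhd}, the left-hand side has \emph{exact} sup-norm growth rate $-\lambda_1\mu$. The equivalence \eqref{eq:maxex} forces $\mathfrak{o}(\bar t)<n-a$ for every $\bar t\in\mathscr{T}^*_{a,n}$, hence $\mu<n-a$, so $-\lambda_1\mu>-\lambda_1(n-a)$, contradicting the bound just displayed. Thus every $b_{\bar t}$ vanishes, i.e.\ $a_{\bar t}=\mathfrak{f}_{\bar t}(\varphi)$ for all $\bar t$.

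The ``in particular'' clause is then immediate: apply the uniqueness just established to the trivial representation $\mathfrak{r}_{a,n}(\varphi)=\sum_{\bar t\in\mathscr{T}^*_{a,n}}0\cdot h_{\bar t}+\mathfrak{r}_{a,n}(\varphi)$, which is admissible because $\mathfrak{r}_{a,n}(\varphi)\in C^{n+\pag}(\sqcup_{\alpha\in\mathcal{A}}I_\alpha)$ and \eqref{eq:ran<} supplies the required decay of $\|S(k)\mathfrak{r}_{a,n}(\varphi)\|_{\sup}$; this forces $\mathfrak{f}_{\bar t}(\mathfrak{r}_{a,n}(\varphi))=0$. The only technical wrinkle I anticipate is the edge case $n=0$, where \eqref{eq:ran<} is replaced by the $L^1/|I^{(k)}|$ estimate of Remark~\ref{rmk:n=0} while the hypothesis and \eqref{eq:skhd} remain in sup norm. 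I would handle this by observing that for $n=0$ every $h_{\bar t}\in\Gamma_0=\Gamma$, so $S(k)h_{\bar t}$ is constant on each $I^{(k)}_\alpha$ and \eqref{def;sdc4} gives the uniform equivalence $\|S(k)h\|_{\sup}\asymp\|S(k)h\|_{L^1}/|I^{(k)}|$; the same scheme then carries through with the norm on the right-hand side swapped accordingly.
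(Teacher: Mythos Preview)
Your proof is correct and follows essentially the same route as the paper: subtract the two decompositions, bound the sup-norm growth of $\widetilde\varphi-\mathfrak{r}_{a,n}(\varphi)$ by $-\lambda_1(n-a)$, invoke \eqref{eq:skhd} for the exact exponent $-\lambda_1\mu$ of the combination $\sum b_{\bar t}h_{\bar t}$, and use \eqref{eq:maxex} to force $\mu\geq n-a$, hence all $b_{\bar t}=0$. Your treatment is in fact slightly more complete than the paper's, which leaves the ``in particular'' clause implicit and does not separately discuss the $n=0$ case.
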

\begin{proof}
By assumption,
$\widetilde{\varphi}-\mathfrak{r}_{a,n}(\varphi)=\sum_{\bar t\in \mathscr{T}^*_{a,n}}(\mathfrak{f}_{\bar t}(\varphi)-a_{\bar t})h_{\bar t}$
and
\[\limsup_{k\to\infty}\frac{1}{k}\log\|S(k)(\widetilde{\varphi}-\mathfrak{r}_{a,n}(\varphi))\|_{\sup}\leq -\lambda_1(n-a).\]
On the other hand, by \eqref{eq:skhd},
\[
\lim_{k\to\infty}\frac{1}{k}\log\Big\|S(k)\sum_{\bar t\in \mathscr{T}^*_{a,n}}(\mathfrak{f}_{\bar t}(\varphi)-a_{\bar t})h_{\bar t}\Big\|_{\sup}=-\lambda_1\min\{\mathfrak{o}(\bar t):\bar t\in \mathscr{T}^*_{a,n}, \mathfrak{f}_{\bar t}(\varphi)\neq a_{\bar t}\}.\]
In view of \eqref{eq:maxex}, both give $a_{\bar t}=\mathfrak{f}_{\bar t}(\varphi)$ for every $\bar t\in \mathscr{T}^*_{a,n}$.
\end{proof}

\begin{remark}\label{rmk:uniquedef}
Let us consider two pairs $(n_1,a_1)$, $(n_2,a_2)$ such that $n_1-a_1<n_2-a_2$. Then $C^{n_2+\mathrm{P}_{a_2}}(\sqcup_{\alpha \in \mathcal{A}} I_\alpha)\subset C^{n_1+\mathrm{P}_{a_1}}(\sqcup_{\alpha \in \mathcal{A}} I_\alpha)$ and  $\mathscr{T}^*_{a_1,n_1}\subset \mathscr{T}^*_{a_2,n_2}$. Suppose that $\bar{t}_1\in\mathscr{T}^*_{a_1,n_1}$, $\bar{t}_2\in \mathscr{T}^*_{a_2,n_2}$ are such that $\bar{t}_1=\bar{t}_2$. By Lemma~\ref{lem:unidecom},  $\mathfrak{f}_{\bar{t}_1}:C^{n_1+\mathrm{P}_{a_1}}(\sqcup_{\alpha \in \mathcal{A}} I_\alpha)\to\C$ is an extension of $\mathfrak{f}_{\bar{t}_2}:C^{n_2+\mathrm{P}_{a_2}}(\sqcup_{\alpha \in \mathcal{A}} I_\alpha)\to\C$.
\end{remark}

\section{Solving cohomological equations on IET}\label{sec;coh-iet}
Given $\varphi\in C^{n+\pag}(\sqcup_{\alpha \in \mathcal{A}} I_\alpha)$, we provide a smooth solution $v$ (whose {$(n-1)$-st derivative} is H\"older continuous) to the cohomological equation $v\circ T-v=\varphi$ for the IET $T$, provided that the sequence $S(k)\varphi$ decays {exponentially} fast enough. By combining this with the spectral result (Theorem~\ref{thm;spdecomp}), we get {the} regularity of the solutions, {which depends} on the vanishing of the invariant distributions $\mathfrak{f}_{\bar t}$. The main regularity estimates are obtained through orbit decompositions and space decompositions invented by Marmi-Moussa-Yoccoz \cite[\S 2.2.3]{Ma-Mo-Yo} and \cite[\S 3.7-8]{Ma-Yo}.

\subsubsection*{Time decomposition }
 \begin{itemize}
\item  Let $T$ {be} an IET satisfying Keane's condition, {$x\in I$}, and $N\geq 1$. Let $y$ be the point of the orbit $(T^jx)_{0 \leq j <N}$ which is closest to $0$.
\item We split the orbit into positive/negative parts $(T^jy)_{0 \leq j <N^+}$ and $(T^jy)_{N^- \leq j <0}$, where $N = N^+ -N^-$.
\item  Let $k\geq0$ be the largest number such that at least one element of $(T^jy)_{0<j< N^+}$ belongs to $I^{(k)}$.
\item Let $y,T^{(k)}y, \ldots, {(T^{(k)})}^{q(k)}y$ be all points of $(T^jy)_{0\leq j<N^+}$ that belong to $I^{(k)}$ for some $q(k)>0$. Let $y(k):=y$.
%\item $(T^{(k)})^{q(k)}(y)$
\item We define $y(l),q(l)$ inductively backward for $0\leq l < k$. Let $y(k-1) = (T^{(k)})^{q(k)}(y)$ and let $y(l) = T^{N(l)}(y)$ be the last point of the orbit $(T^jy)_{0 \leq j <N}$ which belongs to $I^{(l+1)}$.
Let $y(l),T^{(l)}(y(l)), \ldots, {(T^{(l)})}^{q(l)}(y(l)):= y(l-1)$ be all points of $(T^jy)_{N(l)\leq j<N^+}$ that belong to $I^{(l)}$ for some $q(l)\geq 0$.
%\item Finally, $y(0) = T^{N(0)}y$ for $N(0) < N^+$ and $T^{j}(y) \notin I^{(1)}$ for $N(0)<j<N^+$. We set $q(0) = N^+ - N(0)$ and $y(k) = y$.
  \end{itemize}
Then,
\begin{equation}\label{def;tpd}
\sum_{0 \leq j < N^+}\varphi(T^iy) = \sum_{l=0}^k\sum_{0\leq j <q(l)}S(l)\varphi((T^{(l)})^j(y(l)))\text{ with }q(l) \leq \|Z(l+1)\|.
\end{equation}
The negative part of the orbit is divided  in a similar way.
%As $q(l)$ for $0 \leq l \leq k$ is at most equal to the maximal return time of $T^{(l)}$ into $I^{(l+1)}$, we have $ $.
%If $\varphi$ is bounded,
%\begin{equation}\label{eqn;timedecomp}
%\|\varphi^{(n)}\|_{\sup} \leq 2\sum_{l \in \N}\norm{Z(l+1)}\norm{S(l)\varphi}_{\sup}
%\end{equation}

\subsubsection*{Space decomposition}\label{sec;Spacedec}
Recall the partition into Rokhlin towers in \S~\ref{sec;Rokhlin}
\begin{equation*}
I = \bigcup_{\alpha\in\mathcal{A}} \bigcup_{i=0}^{Q_{\alpha}(k)-1} T^i(I^{(k)}_\alpha).
\end{equation*}
\begin{itemize}
\item For any pair $x_- < x_+$ of points in $I$, let $k\geq 0$ be the smallest integer such that  $(x_-, x_+)$ contains at least of one interval of the $k$-th partition.
\item Let $J^{(k)}(1), \ldots, J^{(k)}(q(k))$  be all intervals of the $k$-th partition contained in $(x_-, x_+)$. Then  $0<q(k)\leq \|Z(k)\|$.
\item For every $l \geq k$, let $x_+(l) < x_+$ be the largest end point of an interval of the $l$-th partition. Then $x_+(l) \geq x_+(l-1)$ for any $l>k$.
\item For any $l>k$, the interval $(x_+(l-1), x_+(l))$ is the union of intervals $J_+^{(l)}(1), \ldots,$ $J_+^{(l)}(q_+(l))$ of the $l$-th partition for some $0\leq q_+(l)\leq \|Z(l)\|$.
\item The point $x_-(l)$,  $0\leq q_-(l)\leq \|Z(l)\|$, and intervals $J_-^{(l)}(1), \ldots, J_-^{(l)}(q_-(l))$ of the $l$-th partition are defined in a similar way.
\end{itemize}
This yields the following decomposition of $(x_-, x_+)$:
\begin{equation}\label{def;spd}
(x_-,x_+) = \bigcup_{1 \leq q \leq q(k)} J^{(k)}(q)\cup\bigcup_{l>k}\bigcup_{\epsilon = \pm}\bigcup_{1\leq q \leq q_\epsilon(l)} J_\epsilon^{(l)}(q).
\end{equation}
%Since the number of intervals of $(l+1)$-th partition contained in a single interval of $l$-th partition is at most $\max_\beta\sum_\alpha Q_{\alpha\beta}(l,l+1)$, we have $q(k) \leq \|Z(k)\|$ and  $q_\pm(l)\leq \|Z(l)\|$.

%and for $l>k$ and $\epsilon=\pm$,
%\begin{equation}\label{def;spd2}
%q(k) \leq \norm{Z(k)},\quad q_\epsilon(k) \leq \norm{Z(k)}.
%\end{equation}

%\begin{lemma}\cite[Cor. 4.6]{Fr-Ul} If $\varphi \in BV(\sqcup_{\alpha\in \mathcal{A}}I_{\alpha})$, then
%\begin{equation}\label{eqn;var}
%\|\varphi\|_{\sup}\leq \frac{|I^{(k)}|}{\min_{\alpha\in\mathcal{A}}|I_\alpha^{(k)}|}\frac{\|\varphi\|_{L^1(I^{(k)})}}{|I^{(k)}|}+\var(\varphi).
%\end{equation}
%\end{lemma}

\subsection{H\"older solutions}
In this section, solutions to the cohomological equation $v\circ T-v=\varphi$  are obtained by applying standard Gottschalk-Hedlund arguments for $\varphi \in C^{1+\pag}$. H\"older regularity of solutions
follows from {the} exponential decay of $S(k)\varphi$ and from certain bounds on the growth of  $S(k)D\varphi$.

\begin{lemma}
Let $0\leq a<1$ and $\varphi \in C^{1+\pag}(\sqcup_{\alpha \in \mathcal{A}} I_\alpha)$.  Suppose that  {there exists a constant $c_1(\varphi)\geq0$} such that for any $\tau>0$, we have $\|S(k)\varphi\|_{\sup}=O(e^{(-\lambda_1(1-a)+\tau)k})c_1(\varphi)$.
Then, there exists a continuous solution $v\in C^0(I)$ to the cohomological equation
$\varphi = v\circ T-v$
such that $v(0)=0$, and
\begin{equation}\label{eqn:cobosc}
\sup\{|v(x)-v(y)|:x,y\in I\}\leq 2\sum_{l=0}^\infty \norm{Z(l+1)}\norm{S(l)\varphi}_{\sup}.
\end{equation}
\end{lemma}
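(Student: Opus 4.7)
The strategy follows the classical Gottschalk--Hedlund route: one establishes uniform bounds on Birkhoff sums of $\varphi$, and then extracts a continuous coboundary. The first step uses the time decomposition \eqref{def;tpd} to control $\sum_{0\leq j<N}\varphi(T^jx)$ for arbitrary $x\in I$ and $N\geq 1$. Split the orbit $\{T^jx\}_{0\leq j<N}$ at the point $y$ closest to $0$ into one-sided pieces of lengths $N^+$ and $|N^-|$; applying \eqref{def;tpd} to each side and using $q(l)\leq\|Z(l+1)\|$ gives
\[
\Big|\sum_{0\leq j<N^+}\varphi(T^jy)\Big| \leq \sum_{l=0}^{\infty}\|Z(l+1)\|\|S(l)\varphi\|_{\sup},
\]
and the analogous bound for the negative half-orbit. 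Combining them yields $|\sum_{0\leq j<N}\varphi(T^jx)|\leq 2B$ with $B:=\sum_{l\geq 0}\|Z(l+1)\|\|S(l)\varphi\|_{\sup}$, uniformly in $x$ and $N$.

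Next, verify $B<\infty$. By \ref{FDC} condition \eqref{def;sdc2}, $\|Z(l+1)\|\leq Ce^{\tau l}$ for any $\tau>0$, and by hypothesis $\|S(l)\varphi\|_{\sup}=O(e^{(-\lambda_1(1-a)+\tau)l})c_1(\varphi)$. Since $a<1$, one may choose $\tau$ so small that $2\tau<\lambda_1(1-a)$, making $\|Z(l+1)\|\|S(l)\varphi\|_{\sup}$ geometrically summable. With uniform boundedness of Birkhoff sums in hand, I would invoke Gottschalk--Hedlund in the standard IET form: the orbit of $0$ is dense by Keane's condition, so for each $x\in I$ pick a sequence $N_k\to\infty$ with $T^{N_k}0\to x$ and define $v(x):=\lim_k \sum_{0\leq j<N_k}\varphi(T^j0)$; boundedness along with the decay of $S(l)\varphi$ (which controls the oscillation of partial Birkhoff sums over short orbit arcs) guarantees that the limit exists and is independent of the chosen sequence. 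This produces a continuous $v:I\to\R$ with $v(0)=0$ and $v\circ T-v=\varphi$. The oscillation bound \eqref{eqn:cobosc} is then inherited: $v(x)-v(y)$ equals a limiting Birkhoff sum of $\varphi$ along an orbit of $0$, and any such Birkhoff sum is bounded by $2B$ by the step-one estimate.

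The main technical obstacle is the rigorous justification that the limit defining $v$ exists and is independent of the approximating sequence, given that $T$ has discontinuities and $\varphi$ itself is only piecewise smooth. The resolution is again via the time decomposition: if $T^{N}0$ and $T^{M}0$ both approach $x$ as $N,M\to\infty$, then $\sum_{N\leq j<M}\varphi(T^j0)$ is itself a Birkhoff sum along a short orbit segment, and when one applies \eqref{def;tpd} to this segment the decomposition only involves renormalization levels $l$ in the convergent tail of $\sum_l\|Z(l+1)\|\|S(l)\varphi\|_{\sup}$; hence the difference vanishes. Once this is in place, everything else is essentially formal, and the assumption $\varphi\in C^{1+\pag}$ enters only to guarantee that $\varphi$ is bounded on each $I_\alpha$ (as $a<1$ makes the singularities integrable), so that the constructed $v$ is genuinely continuous on $I$.
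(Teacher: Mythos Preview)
Your strategy matches the paper's exactly: bound the Birkhoff sums uniformly via the time decomposition \eqref{def;tpd}, check convergence of $\sum_l\|Z(l+1)\|\|S(l)\varphi\|_{\sup}$ using \eqref{def;sdc2} and the hypothesis, and then invoke a Gottschalk--Hedlund argument. The only difference is that the paper does not construct $v$ by hand but cites Theorem~3.4 in \cite{Ma-Mo-Yo2} for the existence of a continuous solution, and then obtains \eqref{eqn:cobosc} directly from $|v(T^nx)-v(x)|=|\varphi^{(n)}(x)|\leq 2B$ together with density of the orbit and continuity of $v$.

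One point in your direct construction is phrased loosely and would not survive as written: the orbit segment from $T^N0$ to $T^M0$ is \emph{not} short (its length $M-N$ can be arbitrarily large), and the time decomposition \eqref{def;tpd} as stated always begins at level $0$, so it does not ``only involve renormalization levels in the tail''. The correct version of what you have in mind is: once $T^N0$ and $T^M0$ both lie in $I^{(k)}$, the sum $\sum_{N\leq j<M}\varphi(T^j0)$ is a Birkhoff sum of $S(k)\varphi$ along an orbit of $T^{(k)}$, and applying the time decomposition \emph{to $T^{(k)}$} (i.e.\ starting from level $k$) yields only terms with $l\geq k$, hence a bound by $2\sum_{l\geq k}\|Z(l+1)\|\|S(l)\varphi\|_{\sup}$. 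This repairs the argument, but the paper's route via the cited theorem avoids the issue entirely.
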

\begin{proof}
In view of \eqref{def;tpd}, for any $n \in \N$,
\[\norm{\varphi^{(n)}}_{\sup} \leq 2\sum_{l=0}^\infty \norm{Z(l+1)}\norm{S(l)\varphi}_{\sup}. \]
As $\norm{Z(l+1)}=O(e^{\tau l})$ and $\|S(l)\varphi\|_{\sup}=O(e^{(-\lambda_1(1-a)+\tau)l})c_1(\varphi)$, the series on the right side of the inequality converges, and
the $n$-th Birkhoff sums of $\varphi$ are uniformly bounded. By the classical Gottschalk-Hedlund-type arguments (see \cite[Theorem~3.4]{Ma-Mo-Yo2}), the cohomological equation  has a continuous solution $v$. Moreover, for any $x\in I$ and $n\geq 1$,
\[|v(T^nx)-v(x)|=|\varphi^{(n)}(x)|\leq 2\sum_{l=0}^\infty \norm{Z(l+1)}\norm{S(l)\varphi}_{\sup}.\]
As the orbit $\{T^nx\}_{n\geq 0}$ is dense and $v$ is continuous, this gives \eqref{eqn:cobosc}.
Since the function $v$ is unique up to an additive constant, it can be always be chosen so that $v(0)=0$.
\end{proof}

{In what follows, we will always deal with  solutions satisfying $v(0)=0$.}
 For any interval $J\subset I$, let $\mathrm{osc}(v,J):=\sup\{|v(x)-v(y)|:x,y\in J\}$.

\begin{corollary}\label{cor:oscIk}
Let $0\leq a<1$ and $\varphi \in C^{1+\pag}(\sqcup_{\alpha \in \mathcal{A}} I_\alpha)$. {Suppose that there exists a constant $c_1(\varphi)\geq0$} such that for any $\tau>0$, we have $\|S(k)\varphi\|_{\sup}=O(e^{(-\lambda_1(1-a)+\tau)k})c_1(\varphi)$.
Then, for every $\tau>0$,
\begin{equation}\label{eqn:cobosck}
\mathrm{osc}(v, I^{(k)})=O(e^{(-\lambda_1(1-a)+\tau)k})c_1(\varphi).
\end{equation}
\end{corollary}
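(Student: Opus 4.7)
The idea is to apply the preceding lemma not to the original IET $T$, but to the induced IET $T^{(k)}:I^{(k)}\to I^{(k)}$ together with the renormalized observable $S(k)\varphi$. For any $x\in I^{(k)}$ one has
\[
v(T^{(k)}x)-v(x)=\sum_{0\leq i<Q(x)}\varphi(T^ix)=S(k)\varphi(x),
\]
where $Q(x)$ is the first return time of $x$ to $I^{(k)}$. Hence $v|_{I^{(k)}}$ is a continuous solution of the cohomological equation over $T^{(k)}$ with transfer function $S(k)\varphi$, and the oscillation we wish to bound is exactly the oscillation on $I^{(k)}$ of such a solution.

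Next I would verify the hypotheses of the preceding lemma for the pair $(T^{(k)},S(k)\varphi)$. The regularity $S(k)\varphi\in C^{1+\pag}(\sqcup_{\alpha\in\mathcal{A}}I^{(k)}_\alpha)$ follows from the remark after \eqref{eqn;renormpaos2}. For the exponential decay of $\|S(l)(S(k)\varphi)\|_{\sup}$ (measured using the renormalization of $T^{(k)}$, which coincides with the tail of the original renormalization starting from level $k$) one uses
\[
S(l)\bigl(S(k)\varphi\bigr)=S(k,k+l)S(k)\varphi=S(k+l)\varphi,
\]
together with the assumption $\|S(k+l)\varphi\|_{\sup}=O(e^{(-\lambda_1(1-a)+\tau)(k+l)})c_1(\varphi)$. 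Thus the preceding lemma applied on $I^{(k)}$ yields
\[
\mathrm{osc}(v,I^{(k)})\leq 2\sum_{l\geq 0}\|Z(k+l+1)\|\,\|S(k+l)\varphi\|_{\sup}.
\]

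Finally I would estimate the series. By \eqref{def;sdc2} one has $\|Z(k+l+1)\|\leq Ce^{\tau(k+l+1)}$, so the hypothesis on $\|S(k+l)\varphi\|_{\sup}$ gives
\[
\mathrm{osc}(v,I^{(k)})\leq C'c_1(\varphi)e^{(-\lambda_1(1-a)+2\tau)k}\sum_{l\geq 0}e^{(-\lambda_1(1-a)+2\tau)l}.
\]
For $\tau$ small enough the exponent $-\lambda_1(1-a)+2\tau$ is negative, so the geometric series converges and
\[
\mathrm{osc}(v,I^{(k)})=O\!\left(e^{(-\lambda_1(1-a)+2\tau)k}\right)c_1(\varphi).
\]
Since $\tau>0$ was arbitrary, after relabeling $2\tau\rightsquigarrow\tau$ we obtain \eqref{eqn:cobosck}. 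There is no substantial obstacle beyond this straightforward renormalization argument; the only point requiring care is the correct bookkeeping between the accelerating sequences of $T$ and $T^{(k)}$, which is handled by the identity $S(l)\circ S(k)=S(k+l)$.
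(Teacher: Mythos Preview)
Your proposal is correct and follows essentially the same route as the paper: observe that $S(k)\varphi=v\circ T^{(k)}-v$ on $I^{(k)}$, apply the oscillation bound \eqref{eqn:cobosc} from the preceding lemma to the pair $(T^{(k)},S(k)\varphi)$, and then estimate the resulting tail series $\sum_{l\geq k}\|Z(l+1)\|\,\|S(l)\varphi\|_{\sup}$ using \eqref{def;sdc2} and the assumed decay. Your extra care in verifying $S(k)\varphi\in C^{1+\pag}$ and in matching the accelerations of $T$ and $T^{(k)}$ is harmless but not strictly needed, since the paper simply invokes \eqref{eqn:cobosc} directly.
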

\begin{proof}
As $\varphi=v\circ T-v$, for every $k\geq 0$, we have $S(k)\varphi=v\circ T^{(k)}-v$ on $I^{(k)}$. Then, by \eqref{eqn:cobosc} applied to $T^{(k)}:I^{(k)}\to I^{(k)}$, we have
\[\mathrm{osc}(v, I^{(k)})=\sup\{|v(x)-v(y)|:x,y\in I^{(k)}\}\leq 2\sum_{l\geq k}^\infty \norm{Z(l+1)}\norm{S(l)\varphi}_{\sup}.\]
As $\norm{Z(l+1)}=O(e^{\tau l})$ and $\|S(l)\varphi\|_{\sup}=O(e^{(-\lambda_1(1-a)+\tau)l})c_1(\varphi)$, this gives \eqref{eqn:cobosck}.
\end{proof}

The following elementary calculations will be used to estimate $\mathrm{osc}(v,T^i(I_\alpha^{(k)}))$ for $1\leq i < Q_\alpha(k)$ in Lemma~\ref{lem;iterated}.
\begin{lemma}
Let $\varphi \in C^{0+\pag}(\sqcup_{\alpha \in \mathcal{A}} I_\alpha)$. Then, for every $\alpha\in\mathcal{A}$ and any Borel set $J\subset I_\alpha$,
\begin{equation}\label{eq:intJ}
\int_J|\varphi(x)|dx\leq
\left\{
\begin{array}{cl}
\frac{\norm{\varphi}_{L^1(I)}|J|}{|I|}+\frac{2^{a+3}p_a(\varphi)|J|^{1-a}}{a(1-a)}&\text{if }0<a<1,\\
\frac{\norm{\varphi}_{L^1(I)}|J|}{|I|}+4p_a(\varphi)|J|(1+\log\frac{|I|}{|J|})&\text{if }a=0.
\end{array}
\right.
\end{equation}
\end{lemma}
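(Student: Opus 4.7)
The plan is to split $\int_J|\varphi(x)|\,dx$ into a ``mean'' contribution on $I_\alpha$ and an ``oscillation'' contribution controlled by $D\varphi$, then exploit the pointwise bound
\[
|D\varphi(s)|\le p_a(\varphi)\min\bigl\{(s-l_\alpha)^{-1-a},(r_\alpha-s)^{-1-a}\bigr\}
\]
coming from the definition of the seminorm $p_a$ applied to a $C^{0+\pa}$ function.

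First, since $\varphi$ is $C^1$ on $(l_\alpha,r_\alpha)$, for any $x,z\in(l_\alpha,r_\alpha)$ one has $\varphi(x)=\varphi(z)+\int_z^x D\varphi(s)\,ds$. Averaging over $z\in I_\alpha$ gives the representation
\[
\varphi(x)=m(\varphi,I_\alpha)+\frac{1}{|I_\alpha|}\int_{I_\alpha}\int_z^x D\varphi(s)\,ds\,dz.
\]
Integrating $|\varphi(x)|$ over $x\in J$, applying Fubini and the triangle inequality, yields
\[
\int_J|\varphi(x)|\,dx \le |m(\varphi,I_\alpha)|\,|J|+\int_{I_\alpha}|D\varphi(s)|\,w(s)\,ds,
\]
where $w(s)=|I_\alpha|^{-1}\bigl|\{(x,z)\in J\times I_\alpha: s\text{ lies between }x\text{ and }z\}\bigr|$. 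Crucially, $w(s)\le|J|$ uniformly, and moreover $w(s)$ is small when $s$ is far from $J$ because then only points $z$ on one specific side of $s$ contribute. The mean term is handled by $|m(\varphi,I_\alpha)|\le\|\varphi\|_{L^1(I_\alpha)}/|I_\alpha|\le\|\varphi\|_{L^1(I)}/|I|$ after accounting for a geometric constant absorbable into the stated bound (the natural averaging inequality $\|\varphi\|_{L^1(I_\alpha)}/|I_\alpha|\le d\cdot\|\varphi\|_{L^1(I)}/|I|$ combined with the constant $2^{a+3}$).

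Second, for the oscillation term I split $I_\alpha$ at its midpoint $c=(l_\alpha+r_\alpha)/2$. On $[l_\alpha,c]$, use $|D\varphi(s)|\le p_a(\varphi)(s-l_\alpha)^{-1-a}$; on $[c,r_\alpha]$, use the symmetric bound. Within each half, the weight estimate $w(s)\le|J|\wedge|J|\cdot(\text{distance-from-}J\text{ factor})$ isolates two regimes: ``close to $J$,'' where $w(s)$ can saturate at $|J|$ and one integrates $\int t^{-1-a}\,dt$ on a length-$|J|$ window giving a $|J|^{-a}$ bound (times $|J|$), and ``far from $J$,'' where $w(s)$ decays and the integrated weight contributes $\int_{|J|}^{|I_\alpha|/2}s\cdot s^{-1-a}\,ds\le|J|^{1-a}/(1-a)$ after an elementary change of variables. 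Combining the two regimes one obtains for $0<a<1$ the bound
\[
\int_{I_\alpha}|D\varphi(s)|w(s)\,ds\le \frac{2^{a+3}}{a(1-a)}p_a(\varphi)|J|^{1-a},
\]
where the $1/a$ comes from the ``close'' regime and the $1/(1-a)$ from the ``far'' regime, and the factor $2^{a+3}$ accounts for the two halves and the crude triangle-inequality bound on $w$.

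Third, for the borderline $a=0$ case the same decomposition applies, but now the ``far'' regime produces a logarithmic integral $\int_{|J|}^{|I_\alpha|}s^{-1}\,ds=\log(|I_\alpha|/|J|)\le\log(|I|/|J|)$ in place of the power, giving the claimed term $4p_a(\varphi)|J|\bigl(1+\log(|I|/|J|)\bigr)$ after combining with the ``close'' regime which contributes the constant $|J|$.

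The main obstacle I expect is the bookkeeping of the singular weight in the oscillation term: naively integrating $(s-l_\alpha)^{-1-a}$ on all of $I_\alpha$ diverges, so the argument must use the fact that $w(s)$ decays to $0$ as $s$ approaches $l_\alpha$ at a rate sufficient to cancel the singularity. The split of both $I_\alpha$ into halves (to disentangle the two endpoint singularities) and of each half into ``close to $J$'' and ``far from $J$'' regions (to exploit either $w(s)\le|J|$ or $w(s)\le|J|\cdot(\text{small factor})$) is precisely what produces the sharp scaling $|J|^{1-a}$ and keeps the constant independent of the position of $J$ inside $I_\alpha$.
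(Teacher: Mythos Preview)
Your approach differs from the paper's. Rather than introduce the weight $w(s)$ and integrate $|D\varphi|$ against it, the paper invokes a pointwise bound on $|\varphi(x)|$ itself (cited from Remark~2.1 of \cite{Fr-Ki2}), of the form
\[
|\varphi(x)|\le \frac{\|\varphi\|_{L^1}}{|I|}+p_a(\varphi)\Bigl(\frac{1}{a\,\min\{x-l_\alpha,r_\alpha-x\}^a}+\frac{2^{a+2}}{a(1-a)|I_\alpha|^a}\Bigr)
\]
for $0<a<1$ (with the logarithmic analogue when $a=0$), and then simply integrates this over $J$, using the rearrangement-type bound $\int_J\min\{x-l_\alpha,r_\alpha-x\}^{-a}\,dx\le 2\int_0^{|J|}t^{-a}\,dt$. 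This is considerably shorter; your derivation is more self-contained but longer.

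There is, however, a genuine gap in your treatment of the mean term. The inequality $\|\varphi\|_{L^1(I_\alpha)}/|I_\alpha|\le\|\varphi\|_{L^1(I)}/|I|$ is false (take $\varphi$ supported only on $I_\alpha$), and your attempted repair $\|\varphi\|_{L^1(I_\alpha)}/|I_\alpha|\le d\cdot\|\varphi\|_{L^1(I)}/|I|$ is equally false, since nothing bounds $|I|/|I_\alpha|$ in terms of $d$ alone. In fact, taking $\varphi=\chi_{I_\alpha}$ (so $p_a(\varphi)=0$) and $J=I_\alpha$ shows that the inequality \eqref{eq:intJ} itself fails as stated whenever $|I_\alpha|<|I|$; the first term on the right should carry $|I_\alpha|$ rather than $|I|$ in the denominator (or absorb the constant $\kappa$ from \eqref{def;sdc4}). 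This is harmless in the sole application, Lemma~\ref{lem;iterated}, where \eqref{def;sdc4} is in force. Your oscillation analysis via $w(s)\le\min\{|J|,\,2(s-l_\alpha),\,2(r_\alpha-s)\}$ is sound and does yield the correct $|J|^{1-a}$ scaling.
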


\begin{proof}
By Remark~2.1 in \cite{Fr-Ki2}, for any $x\in \Int I_\alpha$,
\begin{align*}
|\varphi(x)|& \leq \frac{\norm{\varphi}_{L^1}}{|I|} +p_a(\varphi)\Big(\frac{1}{a\min\{x-l_\alpha, r_\alpha-x\}^{a}}+
\frac{2^{a+2}}{a(1-a)|I_\alpha|^{a}}\Big)\text{ if }0<a<1,\\
|\varphi(x)| &\leq \frac{\norm{\varphi}_{L^1}}{|I|} + p_a(\varphi)\Big(\log\frac{|I_\alpha|}{2\min\{x-l_\alpha, r_\alpha-x\}}+2 \Big)\text{ if }a=0.
\end{align*}
It follows that, if $0<a<1$, then
\[\int_J|\varphi(x)|dx\leq \frac{\norm{\varphi}_{L^1(I)}|J|}{|I|}+\frac{2^{a+2}p_a(\varphi)|J|}{a(1-a)|I|^{a}}+\frac{2p_a(\varphi)}{a}\int_{0}^{|J|}x^{-a}dx,\]
and
if $a=0$, then
\[\int_J|\varphi(x)|dx\leq \frac{\norm{\varphi}_{L^1(I)}|J|}{|I|}+2p_a(\varphi)|J|-2p_a(\varphi)\int_{0}^{|J|}\log (x/|I|)dx. \]
This gives \eqref{eq:intJ}.
\end{proof}

\begin{lemma}\label{lem;iterated}
Suppose that $\varphi \in C^{1+\pag}(\sqcup_{\alpha \in \mathcal{A}} I_\alpha)$ {and there exist $c_0(D\varphi),c_1(\varphi)\geq0$} such that  for any $\tau>0$, we have \[\|S(k)\varphi\|_{\sup}=O(e^{(-\lambda_1(1-a)+\tau)k})c_1(\varphi)\quad \text{and}\quad
\frac{\|S(k)D\varphi\|_{L^1(I^{(k)})}}{|I^{(k)}|}=O(e^{(\lambda_1a+\tau)k})c_0(D\varphi).\]
  Then, for any $k\geq0$, $\alpha\in \mathcal{A}$, and $0 \leq N < Q_\alpha(k)$,
\begin{equation}\label{eq:oscmain}
\mathrm{osc}(v,T^N(I_\alpha^{(k)}))= \mathrm{osc}(v,I_\alpha^{(k)})+ O(e^{(-\lambda_1(1-a)+\tau)k})(c_0(D\varphi)+p_a(D\varphi)).
\end{equation}
\end{lemma}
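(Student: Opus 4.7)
\textbf{Reduction to $\mathrm{osc}(\varphi^{(N)},I_\alpha^{(k)})$.} The identity $v\circ T^N-v=\varphi^{(N)}$ on $I_\alpha^{(k)}$ gives, for any $z_r=T^Nw_r$ with $w_r\in I_\alpha^{(k)}$,
\[
v(z_1)-v(z_2)=\bigl(v(w_1)-v(w_2)\bigr)+\bigl(\varphi^{(N)}(w_1)-\varphi^{(N)}(w_2)\bigr).
\]
Taking suprema in both directions yields $|\mathrm{osc}(v,T^N I_\alpha^{(k)})-\mathrm{osc}(v,I_\alpha^{(k)})|\leq \mathrm{osc}(\varphi^{(N)},I_\alpha^{(k)})$, so it is enough to show that $\mathrm{osc}(\varphi^{(N)},I_\alpha^{(k)})=O(e^{(-\lambda_1(1-a)+\tau)k})(c_0(D\varphi)+p_a(D\varphi))$.

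\textbf{Parallel orbits and aligned time decomposition.} Fix $w_1,w_2\in I_\alpha^{(k)}$ and set $\delta:=|w_1-w_2|\leq |I_\alpha^{(k)}|$. Since $N<Q_\alpha(k)$, each $T^i$ with $0\leq i\leq N$ acts as a single translation on $I_\alpha^{(k)}$, so the two orbits $(T^iw_r)_{0\leq i<N}$ are parallel translates differing by $w_1-w_2$. Applying the time decomposition \eqref{def;tpd} to each of them involves only levels $l=0,\dots,k-1$ (the orbits never return to $I^{(k)}$). For $\delta$ smaller than the minimum length of the intervals exchanged by $T^{(l)}$, the two decompositions share the same $N^{\pm}$ and the same $q(l)\leq\|Z(l+1)\|$, with base points $w_r(l)$ that differ by $\delta$, giving
\[
\varphi^{(N)}(w_1)-\varphi^{(N)}(w_2)=\sum_{l=0}^{k-1}\sum_{j=0}^{q(l)-1}\Bigl[S(l)\varphi\bigl((T^{(l)})^j w_1(l)\bigr)-S(l)\varphi\bigl((T^{(l)})^j w_2(l)\bigr)\Bigr].
\]

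\textbf{Per-level bound via \eqref{eq:intJ} and summation.} On each continuity interval of $T^{(l)}$ the derivative of $S(l)\varphi$ equals $S(l)D\varphi$ and $(T^{(l)})^j$ is a translation, so the mean-value theorem produces intervals $J_{l,j}\subset I^{(l)}$ of length $\delta$ such that the $l$-th inner sum above is dominated by $\sum_{j<q(l)}\int_{J_{l,j}}|S(l)D\varphi|$. Applying \eqref{eq:intJ} to $S(l)D\varphi\in C^{0+\pag}(\sqcup_\gamma I_\gamma^{(l)})$ on each $J_{l,j}$ bounds this integral by
\[
\delta\,\frac{\|S(l)D\varphi\|_{L^1(I^{(l)})}}{|I^{(l)}|}+\frac{2^{a+3}}{a(1-a)}\,\delta^{1-a}\,p_a(S(l)D\varphi)
\]
(with a logarithmic correction when $a=0$). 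Combining the hypothesis $\|S(l)D\varphi\|_{L^1(I^{(l)})}/|I^{(l)}|=O(e^{(\lambda_1 a+\tau)l})c_0(D\varphi)$, the renormalization estimate $p_a(S(l)D\varphi)\leq Cp_a(D\varphi)$ from \eqref{eqn;renormpaos2}, the FFDC bounds $q(l)\leq\|Z(l+1)\|=O(e^{\tau l})$, and $\delta\leq\kappa^{-1}|I^{(k)}|=O(e^{-\lambda_1 k})$, the total level-$l$ contribution is $O(e^{(\lambda_1 a+2\tau)l-\lambda_1 k})c_0(D\varphi)+O(e^{\tau l-\lambda_1(1-a)k})p_a(D\varphi)$. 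Summing these geometric series in $l$ from $0$ to $k-1$ (each dominated by its value at $l=k-1$) gives $O(e^{(-\lambda_1(1-a)+3\tau)k})(c_0(D\varphi)+p_a(D\varphi))$, as required; the logarithmic factor in the case $a=0$ is absorbed into an extra $e^{\tau k}$.

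\textbf{Main obstacle.} The chief delicate point is the alignment of the two time decompositions: for the (few) boundary blocks per level $l$ close to $k$ where $\delta$ may exceed the distance to a discontinuity of $T^{(l)}$ and the two orbits fall into different continuity intervals, the MVT argument fails. On these blocks one instead bounds the pair $|S(l)\varphi(\cdot)-S(l)\varphi(\cdot)|$ directly by $2\|S(l)\varphi\|_{\sup}=O(e^{(-\lambda_1(1-a)+\tau)l})c_1(\varphi)$ using the first hypothesis; the number of such mismatches at each level is bounded independently of $l$, so their total contribution after summation over $l<k$ is also $O(e^{(-\lambda_1(1-a)+2\tau)k})c_1(\varphi)$, consistent with the claimed error.
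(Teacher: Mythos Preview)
Your argument is correct in outline and very close to the paper's, but you have taken a detour that manufactures a difficulty which is not really there. The paper applies the fundamental theorem of calculus \emph{once} at the outset: since every $T^i$ with $0\leq i<N<Q_\alpha(k)$ is a single translation on $I_\alpha^{(k)}$, one has $D\varphi^{(N)}(x)=\sum_{i=0}^{N-1}D\varphi(T^ix)$ there, and hence
\[
\varphi^{(N)}(x_2)-\varphi^{(N)}(x_1)=\int_{x_1}^{x_2}\sum_{i=0}^{N-1}D\varphi(T^ix)\,dx,
\]
which is then bounded by $\int_{I_\alpha^{(k)}}|\sum_i D\varphi(T^ix)|\,dx$. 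The time decomposition \eqref{def;tpd} is now applied to this \emph{single} Birkhoff sum of $D\varphi$; as $x$ varies over $I_\alpha^{(k)}$ the decomposition data $q(l)$ and the maps $x\mapsto x(l)$ are the same (they depend only on the tower combinatorics, not on the position within $I_\alpha^{(k)}$), so the translated intervals $J_l\subset I^{(l)}$ have length $|I_\alpha^{(k)}|$ and one applies \eqref{eq:intJ} directly.

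Your ``main obstacle'' never arises. Because the $k$-th Rokhlin tower refines the $l$-th one for every $l\leq k$, each level $T^iI_\alpha^{(k)}$ is entirely contained in a single $I_\gamma^{(l)}$; thus the two parallel orbits through $w_1,w_2\in I_\alpha^{(k)}$ always lie in the same $I_\gamma^{(l)}$ at every step of the decomposition, regardless of the size of $\delta$. There are no mismatched blocks, and the fix via $\|S(l)\varphi\|_{\sup}$ and the resulting extra $c_1(\varphi)$ term are unnecessary. Once you drop that paragraph, your argument collapses to the paper's (with $\delta$ in place of $|I_\alpha^{(k)}|$, which only sharpens the constants), and you recover exactly the stated bound involving only $c_0(D\varphi)+p_a(D\varphi)$.
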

\begin{proof}
Since $\varphi = v\circ T-v$, by telescoping, for any $x_1,x_2\in I^{(k)}_\alpha$, we have
\[v(T^Nx_2)-v(T^Nx_1)-(v(x_2)-v(x_1))=\varphi^{(N)}(x_2)-\varphi^{(N)}(x_1)=\int_{x_1}^{x_2}\sum_{i=0}^{N-1}D\varphi(T^ix)\,dx.\]
Hence
\begin{equation}\label{eq:oscint}
\mathrm{osc}(v,T^N(I_\alpha^{(k)}))\leq \mathrm{osc}(v,I_\alpha^{(k)})+\int_{I_\alpha^{(k)}}\Big|\sum_{i=0}^{N-1}D\varphi(T^ix)\Big|\,dx.
\end{equation}
In view of \eqref{def;tpd}, for every $x\in I^{(k)}_\alpha$, we have
\begin{equation}\label{eqn;specialbirkhoff}
\sum_{i=0}^{N-1}D\varphi(T^ix) = \sum_{l=0}^{k}\sum_{0\leq i <q(l)}S(l)D\varphi((T^{(l)})^ix(l)),
\end{equation}
wher $0\leq q(l) \leq \norm{Z(l+1)}$, $I^{(k)}_\alpha\ni x\mapsto x(l)\in J_l\subset I^{(l)}$ is a translation, and $J_l$ is the image of $I^{(k)}_\alpha$ by this translation.
It follows that,
\begin{equation}\label{eq:sumint}
\int_{I_\alpha^{(k)}}\Big|\sum_{i=0}^{N-1}D\varphi(T^ix)\Big|\,dx
\leq \sum_{l=0}^{k}\sum_{0\leq i <q(l)}\int_{(T^{(l)})^iJ_l}|S(l)D\varphi(x)|dx.
\end{equation}

Assume that $0<a<1$. As $|(T^{(l)})^iJ_l|=|J_l|=|I^{(k)}_\alpha|$, in view of \eqref{eq:intJ},
\[\int_{(T^{(l)})^iJ_l}|S(l)D\varphi(x)|dx\leq \frac{\norm{S(l)D\varphi}_{L^1(I^{(l)})}|I^{(k)}_\alpha|}{|I^{(l)}|}+\frac{2^{a+3}p_a(S(l)D\varphi)|I^{(k)}_\alpha|^{1-a}}{a(1-a)}.\]
By \eqref{eqn;renormpaos2}, there exists $C>0$ such that
\begin{align}\label{eqn;renormpaos3}
\begin{split}
&p_a(S(l)D\varphi) \leq Cp_a(D\varphi)\text{ if }0<a<1,\\
&p_a(S(l)D\varphi) \leq C(1+\log\|Q(l)\|)p_a(D\varphi)\text{ if }a=0.
\end{split}
\end{align}
As $\frac{\|S(l)D\varphi\|_{L^1(I^{(l)})}}{|I^{(l)}|}=O(e^{(\lambda_1a+\tau)l})c_0(D\varphi)$ and $|I^{(k)}|=O(e^{-\lambda_1k})$, it follows that
\begin{equation}\label{eq:intJl}
\int_{(T^{(l)})^iJ_l}|S(l)D\varphi(x)|dx=O(e^{(-\lambda_1(1-a)+\tau)k})(c_0+p_a)(D\varphi).
\end{equation}
If $a=0$, then, by \eqref{eq:intJ},
\[\int_{(T^{(l)})^iJ_l}|S(l)D\varphi(x)|dx\leq \tfrac{\norm{S(l)D\varphi}_{L^1(I^{(l)})}|I^{(k)}_\alpha|}{|I^{(l)}|}+4p_a(S(l)D\varphi)|I^{(k)}_\alpha|(1+\log \tfrac{|I^{(l)}|}{|I^{(k)}_\alpha|}).\]
In view of \eqref{eq:invIk}, $\log |I^{(l)}|/|I^{(k)}_\alpha|\leq \log |I|/|I^{(k)}_\alpha|=\log O(e^{(\lambda_1+\tau)k})=O(e^{\tau k})$, and $\log\|Q(l)\|=O(e^{\tau k})$ for $l\leq k$. Moreover,  by \eqref{eqn;renormpaos3} we also get \eqref{eq:intJl} when $a=0$.
By \eqref{eq:sumint}, this gives
\begin{align*}
\int_{I_\alpha^{(k)}}\Big|\sum_{i=0}^{N-1}D\varphi(T^ix)\Big|\,dx&=k\|Z(k+1)\|O(e^{(-\lambda_1(1-a)+\tau)k})(c_0+p_a)(D\varphi)\\
&=O(e^{(-\lambda_1(1-a)+3\tau)k})(c_0+p_a)(D\varphi).
\end{align*}
In view of \eqref{eq:oscint}, this gives \eqref{eq:oscmain}.
\end{proof}

By combining previous lemmas, , and assuming a decay condition on  {$\|S(k)\varphi\|_{\sup}$} along with a bound on the growth of  $S(k)D\varphi$ {in the $L^1$-norm}, a H\"older continuous solution to the cohomological equation is obtained.
\begin{theorem}\label{thm;holsol}
Suppose that $\varphi \in C^{1+\pag}(\sqcup_{\alpha \in \mathcal{A}} I_\alpha)$ and {there exist  $c_0(D\varphi),c_1(\varphi)\geq0$}  such that   for any $\tau>0$, we have
\[\|S(k)\varphi\|_{\sup}=O(e^{(-\lambda_1(1-a)+\tau)k})c_1(\varphi) \quad\text{and}\quad \frac{\|S(k)D\varphi\|_{L^1(I^{(k)})}}{|I^{(k)}|}=O(e^{(\lambda_1a+\tau)k})c_0(D\varphi).\]
Then, there exists a continuous solution $v:I\to\R$ to the cohomological equation $\varphi=v\circ T-v$ such that $v(0)=0$,
and for any $0<\tau<1-a$, we have $v \in C^{(1-a)-\tau}(I)$. Moreover, there exists $C_\tau>0$ such that $\|v\|_{C^{(1-a)-\tau}}\leq C_\tau (c_1(\varphi)+c_0(D\varphi)+p_a(D\varphi))$.
\end{theorem}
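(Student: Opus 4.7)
The plan is to combine the continuous solution produced by the Gottschalk–Hedlund type argument (the unnamed lemma preceding Corollary~\ref{cor:oscIk}) with the uniform oscillation bounds of Corollary~\ref{cor:oscIk} and Lemma~\ref{lem;iterated}, and then to transfer these bounds to H\"older regularity via the space decomposition \eqref{def;spd}. The hypothesis on $\|S(k)\varphi\|_{\sup}$ gives summability of $\norm{Z(l+1)}\norm{S(l)\varphi}_{\sup}$ (using \eqref{def;sdc2}), so the cited lemma provides a continuous $v:I\to\R$ with $v(0)=0$ and $\varphi = v\circ T-v$. It remains to establish the H\"older estimate.

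The first step is to record a uniform tower bound. By Corollary~\ref{cor:oscIk} combined with Lemma~\ref{lem;iterated} and the trivial inequality $\mathrm{osc}(v,I^{(k)}_\alpha)\leq \mathrm{osc}(v,I^{(k)})$, for every $k\geq 0$, $\alpha\in\mathcal{A}$, $0\leq N< Q_\alpha(k)$ and every $\tau>0$,
\begin{equation}\label{eq:oscT}
\mathrm{osc}\bigl(v, T^N(I^{(k)}_\alpha)\bigr) \leq C_\tau\, e^{(-\lambda_1(1-a)+\tau)k}\bigl(c_1(\varphi)+c_0(D\varphi)+p_a(D\varphi)\bigr).
\end{equation}
Fix $x_-<x_+$ in $I$ and let $k\geq 0$ be the smallest integer such that $(x_-,x_+)$ contains an interval of the $k$-th Rokhlin partition. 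The space decomposition \eqref{def;spd} expresses $(x_-,x_+)$ as a disjoint union of at most $q(k)\leq \|Z(k)\|$ intervals of the $k$-th partition together with, for each $l>k$ and $\epsilon=\pm$, at most $q_\epsilon(l)\leq \|Z(l)\|$ intervals of the $l$-th partition. Since each such interval has the form $T^N(I^{(l)}_\alpha)$ with $0\leq N<Q_\alpha(l)$, applying \eqref{eq:oscT} and the \ref{FDC} bound $\|Z(l)\|=O(e^{\tau l})$ gives
\[
|v(x_+)-v(x_-)| \leq C'_\tau\sum_{l\geq k} e^{\tau l}\, e^{(-\lambda_1(1-a)+\tau)l}\bigl(c_1(\varphi)+c_0(D\varphi)+p_a(D\varphi)\bigr),
\]
and the geometric series is controlled by its first term, yielding
\begin{equation}\label{eq:oscX}
|v(x_+)-v(x_-)|\leq C''_\tau\, e^{(-\lambda_1(1-a)+2\tau)k}\bigl(c_1(\varphi)+c_0(D\varphi)+p_a(D\varphi)\bigr).
\end{equation}

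The final step converts the exponential decay in $k$ into a H\"older estimate in $|x_+-x_-|$. By the minimality of $k$, the interval $(x_-,x_+)$ contains some $T^i(I^{(k)}_\alpha)$, whence $|x_+-x_-|\geq \min_\alpha |I^{(k)}_\alpha|\geq |I^{(k)}|/\kappa$ by \eqref{def;sdc4}; invoking \eqref{eq:invIk} we get $|x_+-x_-|\geq c_\tau e^{-\lambda_1(1+\tau)k}$ for some $c_\tau>0$. Consequently
\[
e^{(-\lambda_1(1-a)+2\tau)k}\leq C'''_\tau\, |x_+-x_-|^{\frac{\lambda_1(1-a)-2\tau}{\lambda_1(1+\tau)}},
\]
and the exponent converges to $1-a$ as $\tau\to 0$. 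Given any $0<\tau_0<1-a$, choosing $\tau>0$ sufficiently small ensures the exponent exceeds $(1-a)-\tau_0$, and plugging into \eqref{eq:oscX} yields
\[
|v(x_+)-v(x_-)|\leq C_{\tau_0}\, |x_+-x_-|^{(1-a)-\tau_0}\bigl(c_1(\varphi)+c_0(D\varphi)+p_a(D\varphi)\bigr),
\]
which together with $\|v\|_{\sup}\leq |v(0)|+\mathrm{osc}(v,I)$ (bounded by the same quantity) gives the desired estimate in $C^{(1-a)-\tau_0}(I)$.

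The main technical obstacle is the bookkeeping of the small loss $\tau$: one must carefully absorb the contributions of $\|Z(l)\|$, the geometric summation over $l\geq k$, and the passage from $e^{-\lambda_1(1-a)k}$ to $|x_+-x_-|^{1-a}$ without losing more than an arbitrarily small amount in the H\"older exponent. This is precisely where the \ref{FDC} assumptions \eqref{def;sdc2}, \eqref{def;sdc3}, \eqref{def;sdc4} are essential, each used in a uniform-in-$k$ fashion.
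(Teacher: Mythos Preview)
Your proposal is correct and follows essentially the same approach as the paper: you combine Corollary~\ref{cor:oscIk} and Lemma~\ref{lem;iterated} to bound the oscillation of $v$ on every tower level, feed this into the space decomposition \eqref{def;spd}, sum the resulting geometric series using \eqref{def;sdc2}, and convert the decay in $k$ to a H\"older exponent via the lower bound on $|x_+-x_-|$ coming from \eqref{def;sdc4} and \eqref{eq:invIk}. The only cosmetic difference is that the paper writes the lower bound as $|y-x|\geq c_\tau e^{-(\lambda_1+\tau)k}$ (from $|I^{(k)}|^{-1}\leq Ce^{(\lambda_1+\tau)k}$ in \eqref{eq:invIk}) rather than your $e^{-\lambda_1(1+\tau)k}$ (from \eqref{def;sdc3}), but both yield an exponent tending to $1-a$ as $\tau\to 0$.
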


\begin{proof}%[Proof of Theorem \ref{thm;holsol}] %H\"older continuity
For any pair $x < y$ of points in $I$, we use the space decomposition of the interval $(x,y)$ introduced in the beginning of the section.
Then,
\[
|v(y) - v(x)| \leq \sum_{q=1}^{q(k)} \mathrm{osc}(v,J^{(k)}(q)) + \sum_{l > k} \sum_{\epsilon = \pm}\sum_{q=1}^{q_\epsilon(l)}  \mathrm{osc}(v,J_\epsilon^{(l)}(q)),
\]
with $q(k)\leq \|Z(k)\|$ and $q_\pm(l)\leq \|Z(l)\|$.
As each interval $J^{(k)}(q)$ is of the form $T^nI^{(k)}_\alpha$ for some $0\leq n<Q_\alpha(k)$, and each interval $J_{\pm}^{(l)}(q)$ is of the form $T^nI^{(l)}_\alpha$ for some $0\leq n<Q_\alpha(l)$, in view of Corollary~\ref{cor:oscIk} and Lemma~\ref{lem;iterated}, for any  $\tau>0$, we have
\begin{gather*}
\mathrm{osc}(v,J^{(k)}(q))= O(e^{(-\lambda_1(1-a)+\tau)k})(c_1(\varphi)+c_0(D\varphi)+p_a(D\varphi)),\\
\mathrm{osc}(v,J_\pm^{(l)}(q))= O(e^{(-\lambda_1(1-a)+\tau)l})(c_1(\varphi)+c_0(D\varphi)+p_a(D\varphi)).
\end{gather*}
It follows that
\begin{align*}
|v(y) - v(x)| = O\Big(\sum_{l\geq k} \norm{Z(l)}e^{(-\lambda_1(1-a)+\tau)l}\Big)(c_1(\varphi)+c_0(D\varphi)+p_a(D\varphi)) .
\end{align*}
As $\|Z(l)\|=O(e^{\tau l})$, we obtain
\[|v(y) - v(x)| = O(e^{(-\lambda_1(1-a)+2\tau)k})(c_1(\varphi)+c_0(D\varphi)+p_a(D\varphi)) .\]
By  the choice of $k$,
$
|y-x| \geq \min_{\alpha \in \mathcal{A}}|I^{(k)}_\alpha| \geq c_\tau e^{-(\lambda_1+\tau)k}
$
for some $c_\tau>0$.
It follows that
\begin{align}\label{eqn;vxy}
|v(y) - v(x)| = O(1) (c_1(\varphi)+c_0(D\varphi)+p_a(D\varphi)) |y-x|^{\frac{\lambda_1(1-a)-2\tau}{\lambda_1+\tau}}.
\end{align}
{Since $v(0)=0$ and \eqref{eqn;vxy} holds for arbitrary $\tau>0$,} this completes the proof.
\end{proof}

\subsection{Higher regularity}\label{sec:highreg}
Higher regularity of solutions  is obtained by applying Theorem~\ref{thm;holsol} as the initial step of induction.
\begin{theorem}\label{thm:cohsolmain}
Let $n\geq 1$ and $0\leq a <1$. Assume that $T$ satisfies the \ref{FDC}.
Let $\varphi \in C^{n+\pag}(\sqcup_{\alpha \in \mathcal{A}} I_\alpha)$ be a map such that for any $\tau>0$, we have
\begin{equation}\label{asscoh1}
\|S(k)D^l\varphi\|_{\sup}=O(e^{(-\lambda_1(n-l-a)+\tau)k})\|D^l\varphi\|_{C^{n-l+\pa}},\text{ for }0\leq l<n,
\end{equation}
and
\begin{equation}\label{asscoh2}
\frac{1}{|I^{(k)}|}\|S(k)D^{n}\varphi\|_{L^1(I^{(k)})}=O(e^{(\lambda_1a+\tau)k})\|D^{n}\varphi\|_{C^{0+\pa}}.
\end{equation}
Then, there exists a $C^{n-1}$-solution $v:I\to\R$ to the cohomological equation $\varphi=v\circ T-v$ such that $v(0)=0$, and for any $0<\tau<1-a$,
we have $v \in C^{n-a-\tau}(I)$. Moreover,  there exists $C_{\tau,n}>0$ such that $\|v\|_{C^{n-a-\tau}}\leq C_{\tau,n}\|\varphi\|_{C^{n+\pa}}$.
\end{theorem}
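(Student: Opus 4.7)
I would prove the theorem by induction on $n$, using Theorem~\ref{thm;holsol} as the base case and reducing the inductive step to integration of a solution for $D\varphi$, followed by a one-dimensional correction inside the stable Oseledets direction $E_{-1}(\pi,\lambda)$. For $n=1$, the hypotheses \eqref{asscoh1}--\eqref{asscoh2} are exactly those of Theorem~\ref{thm;holsol}, which gives $v\in C^{(1-a)-\tau}(I)$ with $v(0)=0$ and the required norm bound.

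\textbf{Inductive step ($n\geq 2$).} Assuming the statement for $n-1$, a direct re-indexing shows that $D\varphi \in C^{(n-1)+\pag}(\sqcup_\alpha I_\alpha)$ satisfies the hypotheses with $n-1$ in place of $n$: for $0\leq l'\leq n-2$ the bound on $\|S(k)D^{l'}(D\varphi)\|_{\sup}$ is the $l=l'+1$ case of \eqref{asscoh1} for $\varphi$, and the $L^1$-bound \eqref{asscoh2} for $D\varphi$ at order $n-1$ coincides with \eqref{asscoh2} for $\varphi$. The inductive hypothesis yields a continuous $w:I\to\R$ with $w(0)=0$, $w\circ T-w=D\varphi$, $w\in C^{(n-1)-a-\tau}(I)$ and $\|w\|_{C^{(n-1)-a-\tau}}\leq C\|\varphi\|_{C^{n+\pa}}$. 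Set $\tilde v(x):=\int_0^x w(t)\,dt$; then $\tilde v(0)=0$, $D\tilde v=w$, and $\tilde v\in C^{n-a-\tau}(I)\cap C^1(I)$. Since $T$ is a translation on each $\Int I_\alpha$, differentiating on the interior gives $D(\tilde v\circ T-\tilde v)=w\circ T-w=D\varphi$, so
\[
\chi:=\tilde v\circ T-\tilde v-\varphi
\]
is constant on each $\Int I_\alpha$, and hence identifies with an element of $\Gamma=\R^{\mathcal A}$.

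\textbf{Localising $\chi$ and concluding.} The crucial step is to show $\chi\in E_{-1}(\pi,\lambda)$. From $S(k)\chi=(\tilde v\circ T^{(k)}-\tilde v)-S(k)\varphi$ on $I^{(k)}$, together with $|T^{(k)}x-x|\leq |I^{(k)}|$ and $\tilde v\in C^1$,
\[
\|S(k)\chi\|_{\sup}\leq \|w\|_{\sup}\,|I^{(k)}|+\|S(k)\varphi\|_{\sup}=O\!\left(e^{(-\lambda_1+\tau)k}\right),
\]
using \eqref{eq:invIk} and the $l=0$ case of \eqref{asscoh1} (which decays even faster, since $n-a>1$). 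Because $\|S(k)\chi\|_{\sup}$ equals the $\ell^\infty$-norm of $Q(k)\chi$, the Oseledets description \eqref{eq:Oscond} together with $\dim E_{-1}(\pi,\lambda)=1$ forces $\chi\in E_{-1}(\pi,\lambda)=\R h_{-1}$; by Remark~\ref{rmk:h-1}, $\chi=c'(\bar\xi\circ T-\bar\xi)$ for some $c'\in\R$, where $\bar\xi(x)=x$. Setting $v:=\tilde v-c'\bar\xi$ yields $v\circ T-v=\varphi$, $v(0)=0$, and $v\in C^{n-a-\tau}(I)$ (as $\bar\xi$ is smooth), with the norm bound following from $|c'|\|\bar\xi\circ T-\bar\xi\|_{\sup}\leq\|\chi\|_{\sup}\leq 2\|\tilde v\|_{\sup}+\|\varphi\|_{\sup}\leq C\|\varphi\|_{C^{n+\pa}}$. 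The main obstacle is precisely this localisation of $\chi$: getting the $O(e^{-\lambda_1 k})$ sup-norm bound requires $\tilde v$ to be Lipschitz (hence the need for $n\geq 2$ in the inductive step), and the conclusion exploits the one-dimensionality of $E_{-1}$ under \ref{FDC} in order to produce an explicit smooth correction $c'\bar\xi$ that preserves the $C^{n-a-\tau}$ regularity of $\tilde v$.
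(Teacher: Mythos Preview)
Your proof is correct and follows essentially the same approach as the paper's own proof: induction on $n$ with Theorem~\ref{thm;holsol} as the base case, applying the inductive hypothesis to $D\varphi$, integrating the resulting coboundary, and then identifying the piecewise-constant defect $\chi$ as an element of the one-dimensional space $E_{-1}(\pi,\lambda)$ via Remark~\ref{rmk:h-1}, so that it can be absorbed by a multiple of $\bar\xi$. The only cosmetic difference is that the paper bounds the constant $c$ slightly more delicately via $|c|\leq\|v_0\|_{\sup}$ using a limiting comparison with $\|S(k)(\bar\xi\circ T-\bar\xi)\|_{\sup}$, whereas you bound $|c'|$ directly from $\|\chi\|_{\sup}$; both yield the required estimate $\|v\|_{C^{n-a-\tau}}\leq C_{\tau,n}\|\varphi\|_{C^{n+\pa}}$.
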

\begin{proof} The proof is by induction on $n$. For $n=1$, our claim follows from Theorem~\ref{thm;holsol} applied to $c_1(\varphi)=\|\varphi\|_{C^{1+\pa}}$ and $c_0(D\varphi)=\|D\varphi\|_{C^{0+\pa}}$.

Suppose that for some $n\geq 1$, if $\varphi \in C^{n+\pag}(\sqcup_{\alpha \in \mathcal{A}} I_\alpha)$ satisfies \eqref{asscoh1} and \eqref{asscoh2}, then there exists a $C^{n-1}$-solution $v$ to the cohomological equation such that, for any $\tau>0$, we have $v \in C^{n-a-\tau}(I)$ and $\|v\|_{C^{n-a-\tau}}\leq C_{\tau,n}\|\varphi\|_{C^{n+\pa}}$.

Let  $\varphi \in C^{n+1+\pag}(\sqcup_{\alpha \in \mathcal{A}} I_\alpha)$ be such that
\begin{gather*}
\|S(k)D^l\varphi\|_{\sup}=O(e^{(-\lambda_1(n+1-l-a)+\tau)k})\|D^l\varphi\|_{C^{n+1-l+\pa}},\text{ for }0\leq l\leq n,\text{ and }\\
\frac{1}{|I^{(k)}|}\|S(k)D^{n+1}\varphi\|_{L^1(I^{(k)})}=O(e^{(\lambda_1a+\tau)k})\|D^{n+1}\varphi\|_{C^{0+\pa}}.
\end{gather*}
It follows that $D\varphi \in C^{n+\pag}(\sqcup_{\alpha \in \mathcal{A}} I_\alpha)$ satisfies \eqref{asscoh1} and \eqref{asscoh2}. By the induction hypothesis, there exists $v_0\in C^{n-1}(I)$ such that $D\varphi=v_0\circ T-v_0$, with $v_0(0)=0$, and for any $\tau>0$, we have $v_0\in C^{n-a-\tau}(I)$ with $\|v_0\|_{C^{n-a-\tau}}\leq C_{\tau,n}\|D\varphi\|_{C^{n+\pa}}$. By integrating, there exists $\chi \in \Gamma$ that satisfies
$\varphi =  \widetilde{v}_0\circ T -\widetilde{v}_0  + \chi$ (recall that $\widetilde{v}_0(x)=\int_0^xv_0(s)ds$).
Note that for any $k \geq 1$,
\[
S(k)\varphi = S(k)(\widetilde{v}_0\circ T -\widetilde{v}_0) + Q(k)\chi.
\]
By assumption,
\begin{align}\label{neq:<-la}
\begin{split}
\|S(k)\varphi\|_{\sup}&= O(e^{(-\lambda_1(n+1-a)+\tau)k})\|\varphi\|_{C^{n+1+\pa}}\\
&= O(e^{-\lambda_1k} e^{(-\lambda_1(1-a)+\tau)k})\|\varphi\|_{C^{n+1+\pa}}=O(e^{-\lambda_1k})\|\varphi\|_{C^{n+1+\pa}}.
\end{split}
\end{align}
On the other hand, for any $x\in I^{(k)}_\alpha$,
\begin{equation}\label{eq:Sku0}
|S(k)(\widetilde{v}_0\circ T-\widetilde v_0)(x)| = |\widetilde{v}_0(T^{Q_\alpha(k)} x)-\widetilde v_0(x)|\leq \|v_0\|_{\sup}|x - T^{Q_\alpha(k)} x|.
\end{equation}
It follows that
\[
\norm{S(k)(\widetilde{v}_0\circ T-\widetilde v_0)}_{\sup} \leq \|v_0\|_{\sup}|I^{(k)}| = O(e^{-\lambda_1k})\|D\varphi\|_{C^{n+\pa}}.
\]
Therefore, $\norm{Q(k)\chi} = O(e^{-\lambda_1 k})\|\varphi\|_{C^{n+1+\pa}}$. In view of \eqref{eq:Oscond}, $\chi \in E_{-1}(\pi,\lambda)$.
As $E_{-1}(\pi,\lambda)$ is one-dimensional, by Remark~\ref{rmk:h-1}, $\chi = c(\bar{\xi}-\bar{\xi}\circ T)$ for some $c=c(\varphi) \in \R$ (recall that $\bar{\xi}(x)=x$).
Note that  $|c(\varphi)|\leq \|v_0\|_{\sup}$. Indeed, by \eqref{eq:Sku0},
\[\norm{S(k)(\widetilde{v}_0\circ T-\widetilde v_0)}_{\sup} \leq \|v_0\|_{\sup}
\norm{S(k)(\bar{\xi} - \bar{\xi}\circ T)}_{\sup}.\]
As $\frac{1}{k}\log \norm{S(k)(\bar{\xi} - \bar{\xi}\circ T)}_{\sup} \to-\lambda_1$, in view of \eqref{neq:<-la}, we obtain
 $\|S(k)\varphi\|_{\sup}=o(\norm{S(k)(\bar{\xi} - \bar{\xi}\circ T)}_{\sup})$. It follows that
\begin{align*}
|c(\varphi)|\norm{S(k)(\bar{\xi} - \bar{\xi}\circ T)}_{\sup}&=\|S(k)\chi\|_{\sup}\leq \norm{S(k)\varphi}_{\sup}+
\norm{S(k)(\widetilde{v}_0\circ T-\widetilde{v}_0)}_{\sup}\\
&\leq
(\|v_0\|_{\sup}+o(1))\norm{S(k)(\bar{\xi} - \bar{\xi}\circ T)}_{\sup}.
\end{align*}
Hence $|c(\varphi)|\leq \|v_0\|_{\sup}$.

Let $v:I\to\R$, $v=\widetilde{v}_0-c(\varphi)\bar{\xi}$. Then, $\varphi=v\circ T-v$ and $v\in C^{n+1-a-\tau}(I)$ with
\begin{align*}
\|Dv\|_{C^{n-a-\tau}}&=\|v_0\|_{C^{n-a-\tau}}+|c(\varphi)|\leq \|v_0\|_{C^{n-a-\tau}}+\|v_0\|_{\sup}\leq 2C_{\tau,n}\|D\varphi\|_{C^{n+\pa}}.
\end{align*}
As $v(0)=0$, this gives
\begin{align*}
\|v\|_{C^{n+1-a-\tau}}&=\|v\|_{\sup}+\|Dv\|_{C^{n-a-\tau}}\leq |I|\|Dv\|_{\sup}+\|Dv\|_{C^{n-a-\tau}}\\
&\leq (|I|+1)\|Dv\|_{C^{n-a-\tau}}\leq
2(|I|+1)C_{\tau,n}\|D\varphi\|_{C^{n+\pa}}.
\end{align*}
This completes the proof.
\end{proof}

\begin{corollary}\label{cor:cobh}
For every $n\geq 0$, there exists a polynomial $v_n\in \R_{n+1}[x]$ such that $h_{-1,n}=v_n\circ T-v_n$ and $v_n(0)=0$.

For every $\bar t\in\mathscr{TF}$, if $\mathfrak{o}(\bar t)>r>0$, then there exists $v_{\bar t}\in C^r(I)$ such that $h_{\bar t}(0)=0$ and $h_{\bar t}=v_{\bar t}\circ T-v_{\bar t}$.
\end{corollary}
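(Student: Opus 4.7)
I would split the corollary into two parts. The first assertion (polynomial solutions $v_n$) is proved by induction on $n$, producing explicit polynomial coboundaries; the second (existence of $C^r$-solutions $v_{\bar t}$ for $\bar t\in\mathscr{TF}$) follows by applying Theorem~\ref{thm:cohsolmain} after verifying its hypotheses via the spectral data in Proposition~\ref{lem;Qkh}.

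For the base case $n=0$ of the first part, Remark~\ref{rmk:h-1} yields $h_{-1,0}=h_{-1}=c_0(\bar{\xi}\circ T-\bar{\xi})$ for some $c_0\neq 0$, so $v_0(x):=c_0 x\in\R_1[x]$ works with $v_0(0)=0$. For the inductive step, assume $h_{-1,n}=v_n\circ T-v_n$ with $v_n\in\R_{n+1}[x]$ and $v_n(0)=0$, and set $V_n(x):=\int_0^x v_n(s)\,ds\in\R_{n+2}[x]$. By Definition~\ref{def;hnl} and \eqref{eq:prophch1}, $Dh_{-1,n+1}=h_{-1,n}$ on each continuity interval $I_\alpha$ of $T$, while $(V_n\circ T-V_n)'(x)=v_n(Tx)-v_n(x)=h_{-1,n}(x)$ on $I_\alpha$ (since $T$ is a translation on $I_\alpha$). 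Thus $c:=h_{-1,n+1}-(V_n\circ T-V_n)\in\Gamma$. The key step is to identify $c$ as lying in $E_{-1}(\pi,\lambda)$: the telescoping estimate $\|S(k)(V_n\circ T-V_n)\|_{\sup}\leq\|v_n\|_{\sup}|I^{(k)}|=O(e^{-\lambda_1 k})$, together with $\|S(k)h_{-1,n+1}\|_{\sup}=O(e^{(-(n+2)\lambda_1+\tau)k})$ from Proposition~\ref{lem;Qkh}, yields $\|Q(k)c\|=O(e^{-\lambda_1 k})$, forcing $c$ into the one-dimensional stable subspace $E_{-1}(\pi,\lambda)=\R h_{-1}$ via \eqref{eq:Oscond}. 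Writing $c=\mu h_{-1}=(\mu c_0\bar{\xi})\circ T-\mu c_0\bar{\xi}$ and setting $v_{n+1}:=V_n+\mu c_0\bar{\xi}\in\R_{n+2}[x]$, we obtain $h_{-1,n+1}=v_{n+1}\circ T-v_{n+1}$ with $v_{n+1}(0)=0$, completing the induction.

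For the second assertion, given $\bar t\in\mathscr{TF}$ and $0<r<\mathfrak{o}(\bar t)$, I would choose $n\in\N$ and $a\in[0,1)$ with $r<n-a\leq\mathfrak{o}(\bar t)$. Since $h_{\bar t}$ is piecewise polynomial on $\sqcup_{\alpha\in\mathcal{A}}I_\alpha$, it automatically lies in $C^{n+\pag}(\sqcup_{\alpha\in\mathcal{A}}I_\alpha)$, all geometric-type singularity coefficients vanishing. Proposition~\ref{lem;Qkh} then gives, for any $\tau>0$, the decay $\|S(k)D^{l'}h_{\bar t}\|_{\sup}=O(e^{(-\lambda_1(\mathfrak{o}(\bar t)-l')+\tau)k})$ for $0\leq l'<n$, and combined with $\mathfrak{o}(\bar t)\geq n-a$ this is exactly hypothesis \eqref{asscoh1}; hypothesis \eqref{asscoh2} at $l'=n$ holds similarly (trivially when $D^n h_{\bar t}\equiv 0$, or by the $L^1$-form in Proposition~\ref{lem;Qkh} otherwise). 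Theorem~\ref{thm:cohsolmain} then delivers $v_{\bar t}\in C^{n-a-\tau}(I)\subset C^r(I)$ for $\tau$ sufficiently small, with $v_{\bar t}(0)=0$ solving $h_{\bar t}=v_{\bar t}\circ T-v_{\bar t}$.

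The main obstacle I anticipate is the identification $c\in E_{-1}(\pi,\lambda)$ in the inductive step of part one: this relies on the sharp decay rate of $\|S(k)h_{-1,n+1}\|_{\sup}$ from Proposition~\ref{lem;Qkh} being strictly faster than the universal $O(e^{-\lambda_1 k})$ bound coming from the trivial coboundary $V_n\circ T-V_n$, so that Oseledets genericity actually pins $c$ down to the stable line $\R h_{-1}$. Part two is essentially mechanical, reducing to the verification of the decay assumptions of Theorem~\ref{thm:cohsolmain}.
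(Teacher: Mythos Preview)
Your proposal is correct. For the second assertion your argument is essentially identical to the paper's: choose $n,a$ with $r<n-a\leq\mathfrak{o}(\bar t)$, read off the decay rates of $\|S(k)D^{l'}h_{\bar t}\|$ from Proposition~\ref{lem;Qkh}, and invoke Theorem~\ref{thm:cohsolmain}.

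For the first assertion you take a genuinely different route. The paper applies Theorem~\ref{thm:cohsolmain} directly to $h_{-1,n}\in C^{(n+1)+\mathrm{P}_0\mathrm{G}}$ with $a=0$ to obtain a $C^n$ solution $v_n$, and then identifies $v_n$ as a polynomial \emph{a posteriori} by observing that $D^n v_n\circ T-D^n v_n=h_{-1}$ forces $D^n v_n(x)=c_0x+\mathrm{const}$ via Remark~\ref{rmk:h-1} and ergodicity. You instead build $v_n$ inductively from the base case $v_0=c_0\bar\xi$, at each step integrating, isolating the piecewise-constant defect $c\in\Gamma$, and using the decay $\|Q(k)c\|=O(e^{-\lambda_1 k})$ together with the Oseledets filtration to place $c$ in the one-dimensional space $E_{-1}$. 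Both arguments are valid; yours is more explicit and self-contained, but it is in fact the same mechanism that drives the inductive step inside the proof of Theorem~\ref{thm:cohsolmain} itself, so you are effectively reproving a special case of that theorem rather than quoting it. The paper's version is shorter precisely because it treats Theorem~\ref{thm:cohsolmain} as a black box and only afterwards upgrades the regularity of $v_n$ from $C^n$ to polynomial.
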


\begin{proof}
In view of \eqref{eq:prophch1} and \eqref{eq:grhi}, for every $0\leq l\leq n$, we have $D^l h_{-1,n}= h_{-1,n-l}$, and
\[\lim_{k\to\infty}\frac{1}{k}\log\|S(k)D^l h_{-1,n}\|_{\sup}=-\lambda_1(n-l+1)\text{ with }D^{n+1} h_{-1,n}=0.\]
Therefore, $h_{-1,n} \in C^{n+1+\pag}(\sqcup_{\alpha \in \mathcal{A}} I_\alpha)$ satisfies \eqref{asscoh1} and \eqref{asscoh2} for $a=0$.
Then, by Theorem~\ref{thm:cohsolmain}, there exists $v_n\in C^{n}(I)$ such that $h_{-1,n}=v_n\circ T-v_n$ and $v_n(0)=0$.
{Thus $h_{-1}=D^nh_{-1,n}=D^nv_n\circ T-D^nv_n$. On the other hand, by Remark~\ref{rmk:h-1}, $h_{-1}=c(\bar{\xi}\circ T-\bar{\xi})$ for some $c\neq 0$. Therefore, $D^n v_n - c\bar{\xi}$ is invariant under the map $T$. Then, by the ergodicity of $T$,
we have $D^n v_n(x)=c\bar{\xi}(x)+d=cx+d$.}
It follows that $v_n\in \R_{n+1}[x]$.

\medskip

Suppose that $\bar t\in\mathscr{TF}$ and $\mathfrak{o}(\bar t)>r>0$. Let $n:=\lceil\mathfrak{o}(\bar t)\rceil-1$ and $a:=\mathfrak{o}(\bar t)-n$,  and choose $\tau>0$ so that
$r<n-a-\tau<n-a=\mathfrak{o}(\bar t)$. In view of \eqref{eq:prophch1} and \eqref{eq:grhi}, for every $0\leq l\leq n$,
\[\lim_{k\to\infty}\frac{1}{k}\log\|S(k)D^l h_{\bar t}\|_{\sup}=-\lambda_1(\mathfrak{o}(\bar t)-l)=-\lambda_1(n-l-a).\]
As $h_{\bar t} \in C^{n+\pag}(\sqcup_{\alpha \in \mathcal{A}} I_\alpha)$, by Theorem~\ref{thm:cohsolmain}, there exists $v_{\bar t}\in C^{n-a-\tau}(I)$  such that $h_{\bar t}(0)=0$ and $h_{\bar t}=v_{\bar t}\circ T-v_{\bar t}$. As $r<n-a-\tau$, this gives our claim.
\end{proof}

We finish the section by summarizing the complete conditions under which smooth solutions to the cohomological equation exist for almost every IETs.
\begin{theorem}\label{thm:cohsolmain1}
Let $n\geq 1$, $0\leq a <1$, and $0<r<n-a$ such that $r\notin \{\mathfrak{o}(\bar t):\bar t\in\mathscr{T}_{a,n}\}$. Assume that $T$ satisfies the \ref{FDC}.
Let $\varphi \in C^{n+\pag}(\sqcup_{\alpha \in \mathcal{A}} I_\alpha)$ be a map such that $\mathfrak{f}_{\bar t}(\varphi)=0$ for all $\bar t \in \mathscr{T}_{a,n}$ with $\mathfrak{o}(\bar t)<r$.
Then there exists a solution $v\in C^r(I)$ to the cohomological equation $\varphi=v\circ T-v$ such that $v(0)=0$. The operator
\begin{equation}\label{eq:opu}
\bigcap_{\bar t \in \mathscr{T}_{a,n},\ \mathfrak{o}(\bar t)<r} \ker(\mathfrak{f}_{\bar t})\ni \varphi\mapsto v\in  C^{r}(I)
\end{equation}
is linear and bounded.

Moreover, there exist  bounded operators $\Gamma_n: C^{n+\pa}(\sqcup_{\alpha\in \mathcal{A}}I_{\alpha}) \rightarrow \Gamma_n(\sqcup_{\alpha \in \mathcal{A}} I_\alpha)$  and $V_n :C^{n+\pag}(\sqcup_{\alpha \in \mathcal{A}} I_\alpha) \rightarrow C^{n-1}(I)$
such that
\[
\varphi = V_n(\varphi) \circ T -V_n(\varphi)  + \Gamma_n(\varphi).
\]
More precisely,  for every $0<\tau<1-a$, the operator $V_n$ takes {values} in  $C^{n-a-\tau}(I)$ and $V_n :C^{n+\pag}(\sqcup_{\alpha \in \mathcal{A}} I_\alpha) \rightarrow C^{n-a-\tau}(I)$ is also bounded.
\end{theorem}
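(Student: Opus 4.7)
The plan is to combine three ingredients already at hand: the spectral decomposition of Theorem~\ref{thm;spdecomp}, the smooth-solution result of Theorem~\ref{thm:cohsolmain}, and the coboundary structure of the basis elements $h_{\bar t}$ recorded in Corollary~\ref{cor:cobh}. First I would expand $\varphi$ as
\[\varphi=\sum_{\bar t\in\mathscr{T}^*_{a,n}}\mathfrak{f}_{\bar t}(\varphi)h_{\bar t}+\mathfrak{r}_{a,n}(\varphi),\]
and note that the estimates \eqref{eq:skdl}, \eqref{eq:skdn} for $\mathfrak{r}_{a,n}(\varphi)$ are exactly the hypotheses \eqref{asscoh1}, \eqref{asscoh2} of Theorem~\ref{thm:cohsolmain}. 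Hence, for every $0<\tau<1-a$, there is $v_0=v_0(\varphi)\in C^{n-a-\tau}(I)$ with $\mathfrak{r}_{a,n}(\varphi)=v_0\circ T-v_0$, $v_0(0)=0$, and $\|v_0\|_{C^{n-a-\tau}}\le C\|\varphi\|_{C^{n+\pa}}$.

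The key bookkeeping observation is that $\mathscr{T}^*_{a,n}\setminus\mathscr{T}_{a,n}=\{(l,-,1):0\le l\le n-2\}$, because the restrictions imposed on $\mathscr{T}^*_{a,n}$ at the top layers $l=n-1,n$ (where $j_a\ge 1$ forbids $j=1$) exclude $(n-1,-,1)$ and $(n,-,1)$. For each $(l,-,1)$ with $0\le l\le n-2$, Corollary~\ref{cor:cobh} supplies a polynomial primitive $v_l\in\R_{l+1}[x]$ with $h_{-1,l}=v_l\circ T-v_l$. For the first claim, the vanishing hypothesis $\mathfrak{f}_{\bar t}(\varphi)=0$ for $\bar t\in\mathscr{T}_{a,n}$ with $\mathfrak{o}(\bar t)<r$, together with $r\notin\{\mathfrak{o}(\bar t):\bar t\in\mathscr{T}_{a,n}\}$, upgrades the inequality on every surviving term to the strict form $\mathfrak{o}(\bar t)>r$, and the second part of Corollary~\ref{cor:cobh} then yields $v_{\bar t}\in C^r(I)$ with $h_{\bar t}=v_{\bar t}\circ T-v_{\bar t}$ and $v_{\bar t}(0)=0$. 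Choosing $\tau>0$ small enough that $r<n-a-\tau$, the function
\[v:=v_0+\sum_{l=0}^{n-2}\mathfrak{f}_{(l,-,1)}(\varphi)\,v_l+\sum_{\substack{\bar t\in\mathscr{T}_{a,n}\\ \mathfrak{o}(\bar t)>r}}\mathfrak{f}_{\bar t}(\varphi)\,v_{\bar t}\]
lies in $C^r(I)$, vanishes at $0$, and solves $\varphi=v\circ T-v$. Linearity of \eqref{eq:opu} is immediate, and boundedness follows from boundedness of $\varphi\mapsto v_0$, boundedness of each functional $\mathfrak{f}_{\bar t}$, and finiteness of the index set.

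For the universal decomposition, set
\[V_n(\varphi):=v_0(\varphi)+\sum_{l=0}^{n-2}\mathfrak{f}_{(l,-,1)}(\varphi)\,v_l,\qquad\Gamma_n(\varphi):=\sum_{\bar t\in\mathscr{T}_{a,n}}\mathfrak{f}_{\bar t}(\varphi)\,h_{\bar t}.\]
Substituting the spectral expansion then gives $\varphi=V_n(\varphi)\circ T-V_n(\varphi)+\Gamma_n(\varphi)$. Since every $h_{\bar t}$ with $\bar t=(l,\cdot,\cdot)$ lies in $\Gamma_l(\sqcup_{\alpha\in\mathcal{A}}I_\alpha)\subset\Gamma_n(\sqcup_{\alpha\in\mathcal{A}}I_\alpha)$, we have $\Gamma_n(\varphi)\in\Gamma_n(\sqcup_{\alpha\in\mathcal{A}}I_\alpha)$, and boundedness of $\Gamma_n$ reduces to boundedness of the finitely many $\mathfrak{f}_{\bar t}$. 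Boundedness of $V_n:C^{n+\pag}\to C^{n-a-\tau}(I)$ (and hence into $C^{n-1}(I)$ whenever $\tau<1-a$) follows from Theorem~\ref{thm:cohsolmain} for $v_0(\cdot)$ together with the fact that each $v_l$ is a fixed polynomial.

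The main analytic content is already packaged in Theorems~\ref{thm;spdecomp} and \ref{thm:cohsolmain} and Corollary~\ref{cor:cobh}, so the proof is essentially a careful synthesis. The only real point to watch is the set-theoretic identification of $\mathscr{T}^*_{a,n}\setminus\mathscr{T}_{a,n}$ and the use of the genericity hypothesis on $r$ to rule out the borderline case $\mathfrak{o}(\bar t)=r$, which is exactly what is needed for Corollary~\ref{cor:cobh} to produce a $C^r$ primitive for every surviving basis element.
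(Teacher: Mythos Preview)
Your proof is correct and follows essentially the same route as the paper: decompose $\varphi$ via Theorem~\ref{thm;spdecomp}, solve for the remainder $\mathfrak{r}_{a,n}(\varphi)$ using Theorem~\ref{thm:cohsolmain}, and handle the surviving basis terms via Corollary~\ref{cor:cobh}. Your explicit identification $\mathscr{T}^*_{a,n}\setminus\mathscr{T}_{a,n}=\{(l,-,1):0\le l\le n-2\}$ is more detailed than what the paper writes out, but it is exactly what underlies the paper's invocation of the first part of Corollary~\ref{cor:cobh}.

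The only (inessential) difference is in the second part: the paper takes $\Gamma_n(\varphi)=\sum_{\bar t\in\mathscr{T}^*_{a,n}}\mathfrak{f}_{\bar t}(\varphi)h_{\bar t}$ and $V_n(\varphi)=v_0(\varphi)$, whereas you move the $h_{-1,l}$ pieces from $\Gamma_n$ into $V_n$ by replacing them with their polynomial primitives. Since each $h_{-1,l}$ is both a piecewise polynomial and a coboundary with polynomial transfer function, either splitting is legitimate and yields bounded operators with the stated ranges; your choice simply uses the freedom in the (non-unique) decomposition $\varphi=V_n(\varphi)\circ T-V_n(\varphi)+\Gamma_n(\varphi)$.
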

\begin{proof}
Assume that $\mathfrak{f}_{\bar t}(\varphi) = 0$ for every $\bar t \in \mathscr{T}_{a,n}$ with $\mathfrak{o}(\bar t)<r$. Then
\[\varphi = \mathfrak{r}_{a,n}(\varphi)+\sum_{\bar t\in  \mathscr{T}_{a,n},\ \mathfrak{o}(\bar t)> r}\mathfrak{f}_{\bar t}(\varphi)h_{\bar t}+\sum_{\bar t\in \mathscr{T}^*_{a,n}\setminus \mathscr{T}_{a,n}}\mathfrak{f}_{\bar t}(\varphi)h_{\bar t}.\]
Choose $\tau>0$ such that $r<n-a-\tau$. In view of Theorem~\ref{thm;spdecomp}~and~\ref{thm:cohsolmain},
there exists $\bar{v}\in C^{n-a-\tau}(I)$ such that $\mathfrak{r}_{a,n}(\varphi)=\bar{v}\circ T-\bar{v}$ and $\bar{v}(0)=0$. There also exists a constant  $C_{\tau,n}>0$ such that  $\|\bar{v}\|_{C^{n-a-\tau}}\leq C_{\tau,n}\|\mathfrak{r}_{a,n}(\varphi)\|_{C^{n+\pa}}$.
%Every functional $\mathscr{D}^*_{a,n}\setminus \mathscr{D}_{a,n}$ is of the form $\mathfrak{d}_l=d^{-}_{-1,n-l}\circ D^{l}$ for $0\leq l\leq n-2$ with $h_{\mathfrak{d}_l}=h_{-1,l}$.
By Corollary~\ref{cor:cobh}, for every $\bar t\in\mathscr{T}^*_{a,n}\setminus \mathscr{T}_{a,n}$, there exists a polynomial $v_{\bar t}$ such that $h_{\bar t}= v_{\bar t}\circ T-v_{\bar t}$ and $v_{\bar t}(0)=0$.
Moreover, if $\bar t\in\mathscr{T}_{a,n}$ and $\mathfrak{o}(\bar t)>r>0$, then, again by Corollary~\ref{cor:cobh}, there exists $v_{\bar t}\in C^r(I)$ such that $h_{\bar t}=v_{\bar t}\circ T-v_{\bar t}$ and $v_{\bar t}(0)=0$.
It follows that
\[\varphi = \bar{v}\circ T-\bar{v}+\sum_{\bar t\in  \mathscr{T}_{a,n},\ \mathfrak{o}(\bar t)> r}\mathfrak{f}_{\bar t}(\varphi)
(v_{\bar t}\circ T-v_{\bar t})+\sum_{\bar t\in \mathscr{T}^*_{a,n}\setminus \mathscr{T}_{a,n}}\mathfrak{f}_{\bar t}(\varphi)
(v_{\bar t}\circ T-v_{\bar t}),\]
and
\[v=\bar{v}+\sum_{\bar t\in  \mathscr{T}_{a,n},\ \mathfrak{o}(\bar t)> r}\mathfrak{f}_{\bar t}(\varphi)v_{\bar t}+\sum_{\bar t\in \mathscr{T}^*_{a,n}\setminus \mathscr{T}_{a,n}}\mathfrak{f}_{\bar t}(\varphi)v_{\bar t}\in C^{r}(I)\]
satisfies $\varphi=v\circ T-v$ and $v(0)=0$. Moreover,
\begin{align*}
\|v\|_{C^{r}}&\leq  C_{\tau,n}\|\mathfrak{r}_{a,n}(\varphi)\|_{C^{n+\pa}}+\sum_{\bar t\in  \mathscr{T}^*_{a,n}}|\mathfrak{f}_{\bar t}(\varphi)|\|v_{\bar t}\|_{C^{r}}\\
&\leq C_{\tau,n}\|\varphi\|_{C^{n+\pa}}+\sum_{\bar t\in  \mathscr{T}^*_{a,n}}(C_{\tau,n}\|h_{\bar t}\|_{C^{n+\pa}}+\|v_{\bar t}\|_{C^{r}})|\mathfrak{f}_{\bar t}(\varphi)|.
\end{align*}
As all functionals $\mathfrak{f}_{\bar t} :C^{n+\pa}(\sqcup_{\alpha\in \mathcal{A}}I_{\alpha})\to \C$ are bounded, the operator \eqref{eq:opu} is bounded as well.

\medskip

The second part of the theorem follows directly from  Theorems~\ref{thm;spdecomp}~and~\ref{thm:cohsolmain}, with $\Gamma_n(\varphi)=\sum_{\bar t\in \mathscr{T}^*_{a,n}}\mathfrak{f}_{\bar t}(\varphi)h_{\bar t}$
and $V_n(\varphi)$ being the solution to the cohomological equation  $\mathfrak{r}_{a,n}(\varphi)=V_n(\varphi)\circ T-V_n(\varphi)$.
\end{proof}

\begin{remark}
In view of the second part of Theorem~\ref{thm;spdecomp}, the regularity of the solution $v$ to the equation $\varphi=v\circ T-v$, with $\varphi\in C^{n+\pa}(\sqcup_{\alpha\in \mathcal{A}}I_{\alpha})$, proved in Theorem~\ref{thm:cohsolmain1}, is optimal. Indeed, let $r_0=\mathfrak{o}(\bar{t}_0)>0$
for some $\bar{t}_0\in \mathscr{T}_{a,n}$. Let $\varphi\in C^{n+\pa}(\sqcup_{\alpha\in \mathcal{A}}I_{\alpha})$ be such that $\mathfrak{f}_{\bar{t}_0}(\varphi)\neq 0$ and $\mathfrak{f}_{\bar t}(\varphi)=0$ for all $\bar t\in \mathscr{T}_{a,n}$ with $\mathfrak{o}(\bar t)<r_0$. By Theorem~\ref{thm:cohsolmain1}, the solution $v$ to the cohomological equation belongs to $C^r(I)$ for any $r<r_0$. On the other hand, by Theorem~\ref{thm;spdecomp}, $v\notin C^r(I)$ for any $r>r_0$.
Hence, the exponent $r_0=\mathfrak{o}(\bar{t}_0)$ is a threshold for the regularity of the solution.

Similarly, if $r_0=n-a$, $C_{\alpha,n}^{a,\pm}(\varphi)\neq 0$ for some $\alpha\in\mathcal{A}$, and $\mathfrak{f}_{\bar t}(\varphi)=0$ for all $\bar t\in \mathscr{T}_{a,n}$ with $\mathfrak{o}(\bar t)<r_0$ (in fact, by \eqref{eq:maxex}, for any $\bar t\in \mathscr{T}_{a,n}$), then $v\in C^r(I)$ for every $r<r_0$, and $v\notin C^r(I)$ for every $r>r_0$.
\end{remark}

\section{Proofs of the main  theorems}\label{sec;last}
In this final section, we construct generalized Forni invariant distributions $\mathfrak{F}_{\bar t}$, on function spaces on a compact surface $M$.
Roughly speaking, $\mathfrak{F}_{\bar t}$  is achieved by composing the operator $f\mapsto \varphi_f$ with the functional $\mathfrak{f}_{\bar t}$.
Since the invariant distributions $\mathfrak{f}_{\bar t}$ are on $C^{n+\pa}$, we need to perform in Section~\ref{sec:Forgendis} an additional correction of $\varphi_f$ so that the resulting function belongs to $C^{n+\pa}$.

Finally, in Section~\ref{sec:proofmain}, we apply the tools developed in \cite{Fr-Ki2} to  transition from cohomological equations over IETs to equations for locally Hamiltonian flows on any minimal component $M'\subset M$.
 Then, by combining these with the cohomological results over IETs from Section \ref{sec;coh-iet}, we obtain  optimal regularity of solutions to the cohomological equations $Xu=f$. The regularity is determined by the order (or the hat-order) of three different types of invariant distributions: $\mathfrak{C}^{k}_{\sigma,l}$,  $\mathfrak{d}^{k}_{\sigma,j}$, and $\mathfrak{F}_{\bar t}$.

\subsection{Counterparts of Forni's invariant distributions}\label{sec:Forgendis}
Let $M$ be a {compact, connected, orientable} $C^\infty$-surface. Let $\psi_\R$ be a locally Hamiltonian $C^\infty$-flow on $M$ with isolated fixed points, such that all its saddles are perfect and all saddle connections are loops. Let $M'\subset M$ be a minimal component of the flow, and let $I\subset M'$ be a transversal curve. The corresponding IET $T:I\to I$ exchanges the intervals $\{I_\alpha:\alpha \in \mathcal{A}\}$. Let $\tau:I\to\R_{>0}$ be the first return time map. We define the operator $f\mapsto \varphi_f$  for every integrable map $f:M\to\R$ as follows:
\[\varphi_f(x)=\int_0^{\tau(x)}f(\psi_tx)dt\text{ for every }x\in I.\]
If $f$ is a smooth function on $M$, then $\varphi_f$ is also smooth on every $\Int I_\alpha$, $\alpha\in\mathcal{A}$. The function $\varphi_f$ may be discontinuous at the endpoints of the intervals or may have singularities.
 A detailed description of its behavior near the endpoints of the exchanged
intervals is given in  \cite{Fr-Ki2}.

Suppose the equation $\varphi_f=v\circ T-v$ has a smooth solution $v:I\to\R$. This is a necessary condition for the existence of a smooth solution to the equation $Xu=f$. In a sense, this is also a sufficient condition for the existence of a smooth solution to the equation $Xu=f$. We can define $u_{v,f}:M'\setminus (\mathrm{Sd}(\psi_\R)\cup \SL)\to\R$  as follows:
if $\psi_tx\in I$ for some $t\in\R$, then
\[u_{v,f}(x):=v(\psi_tx)-\int_{0}^tf(\psi_sx)\,ds.\]
The map $u_{v,f}$ is a smooth solution of $Xu=f$, but only on $M'\setminus (\mathrm{Sd}(\psi_\R)\cup \SL)$ which is an open subset of $M'$.
Usually, $u_{v,f}$ cannot be smoothly extended to $M'$ or even to the end compactification  $M'_e$ defined in \cite{Fr-Ki2}.
As proven in \cite[Theorem 1.2]{Fr-Ki2}, the vanishing of some invariant distributions $\mathfrak{d}^k_{\sigma,j}(f)$ and $\mathfrak{C}^k_{\sigma,l}(f)$ is the necessary and sufficient condition for the existence of a smooth solution (an extension of $u_{v,f}$) to $Xu=f$ on $M'_e$.

\medskip

After \cite{Fr-Ki2}, for any $[(\sigma,k,l)]\in \mathscr{TC}/\sim$, we define a map $\widehat{\xi}_{[(\sigma,k,l)]}:I\to\R$. For any closed interval $J\subset I_\alpha$, we denote by $J^\tau\subset M$ the closure of the set of orbit segments starting from $\Int J$ and running until the first return to $I$. For any $[(\sigma,k,l)]\in \mathscr{TC}/\sim$,  there exist $\alpha\in\mathcal{A}$ and an interval $J$ of the form $[l_\alpha,l_\alpha+\vep]$ or $[r_\alpha-\vep,r_\alpha]$ such that $l_\alpha$ or $r_\alpha$ is
the first backward meeting point of a separatrix incoming to $\sigma\in\mathrm{Sd}(\psi_\R)$, and $J^\tau$ contains all angular sectors $U_{\sigma,l'}$ for which $(\sigma,k,l')\sim(\sigma,k,l)$. Let $\widehat{\xi}_{[(\sigma,k,l)]}:I\to\R$ be a map such that:
\begin{itemize}
\item $\widehat{\xi}_{[(\sigma,k,l)]}$ is zero on any interval $I_\beta$ with $\beta\neq \alpha$;
\item if $J=[l_\alpha,l_\alpha+\vep]$, then for any $s\in I_\alpha$,
\begin{align*}
\widehat{\xi}_{[(\sigma,k,l)]}(s)&=\frac{(s-l_\alpha)^{\frac{k-(m_\sigma-2)}{m_\sigma}}}{m_\sigma^2 k!}\text{ if }k\neq m_\sigma-2\ \operatorname{mod}m_\sigma,\\
\widehat{\xi}_{[(\sigma,k,l)]}(s)&=-\frac{(s-l_\alpha)^{\frac{k-(m_\sigma-2)}{m_\sigma}}\log(s-l_\alpha)}{m_\sigma^2 k!} \text{ if }k= m_\sigma-2\ \operatorname{mod}m_\sigma;
\end{align*}
\item if $J=[r_\alpha-\vep,r_\alpha]$, then for any $s\in I_\alpha$,
\begin{align*}
\widehat{\xi}_{[(\sigma,k,l)]}(s)&=\frac{(r_\alpha-s)^{\frac{k-(m_\sigma-2)}{m_\sigma}}}{m_\sigma^2 k!}\text{ if }k\neq m_\sigma-2\ \operatorname{mod}m_\sigma,\\
\widehat{\xi}_{[(\sigma,k,l)]}(s)&=-\frac{(r_\alpha-s)^{\frac{k-(m_\sigma-2)}{m_\sigma}}\log(r_\alpha-s)}{m_\sigma^2 k!} \text{ if }k= m_\sigma-2\ \operatorname{mod}m_\sigma.
\end{align*}
\end{itemize}
As $\frac{k-(m_\sigma-2)}{m_\sigma}=\mathfrak{o}(\sigma,k)$,  we have $\widehat{\xi}_{[(\sigma,k,l)]}\in C^{n_{\sigma,k}+\mathrm{P}_{a_{\sigma,k}}\mathrm{G}}(\sqcup_{\alpha \in \mathcal{A}}I_\alpha)$ with $n_{\sigma,k}:=\lceil\mathfrak{o}(\sigma,k)\rceil$ and $a_{\sigma,k}:=n-\mathfrak{o}(\sigma,k)$, and exactly one of  $C_\alpha^{+}(D^{n_{\sigma,k}}\widehat{\xi}_{[(\sigma,k,l)]})$, $C_\alpha^{-}(D^{n_{\sigma,k}}\widehat{\xi}_{[(\sigma,k,l)]})$ is non-zero.

Let us consider ${\xi}_{[(\sigma,k,l)]}\in C^{n_{\sigma,k}+\mathrm{P}_{a_{\sigma,k}}\mathrm{G}}(\sqcup_{\alpha \in \mathcal{A}}I_\alpha)$ given by
\begin{align}\label{def:corrsing}
\begin{aligned}
{\xi}_{[(\sigma,k,l)]}&:=\mathfrak{r}_{a_{\sigma,k},n_{\sigma,k}}(\widehat{\xi}_{[(\sigma,k,l)]}) =\widehat{\xi}_{[(\sigma,k,l)]}-\sum_{\bar{t}\in \mathscr{T}^*_{a_{\sigma,k},n_{\sigma,k}}}\mathfrak{f}_{\bar{t}}(\widehat{\xi}_{[(\sigma,k,l)]})h_{\bar t}\\
&=\widehat{\xi}_{[(\sigma,k,l)]}-\sum_{\bar{t}\in \mathscr{TF}^*,\mathfrak{o}(\bar t)<\mathfrak{o}(\sigma,k)}\mathfrak{f}_{\bar{t}}(\widehat{\xi}_{[(\sigma,k,l)]})h_{\bar t}.
\end{aligned}
\end{align}
In view of Lemma~\ref{lem:unidecom},
\begin{equation}\label{eq:zerod}
\mathfrak{f}_{\bar t}(\xi_{[(\sigma,k,l)]})=0 \quad\text{if}\quad\mathfrak{o}(\bar t)<\mathfrak{o}(\sigma,k).
\end{equation}
Since $C_\alpha^{\pm}(D^{n_{\sigma,k}}{\xi}_{[(\sigma,k,l)]})=C_\alpha^{\pm}(D^{n_{\sigma,k}}\widehat{\xi}_{[(\sigma,k,l)]})\neq 0$, by Theorem~\ref{thm;spdecomp},
\begin{align}\label{eq:exxi}
\begin{split}
&\lim_{j\to\infty}\frac{1}{j}\log\big(\|S(j)({\xi}_{[(\sigma,k,l)]})\|_{L^1(I^{(j)})}/|I^{(j)}|\big)=-\lambda_1(n_{\sigma,k}-a_{\sigma,k})=-\lambda_1\mathfrak{o}(\sigma,k),\\
&\lim_{j\to\infty}\frac{1}{j}\log\|S(j)({\xi}_{[(\sigma,k,l)]})\|_{\sup}=-\lambda_1\mathfrak{o}(\sigma,k)\quad\text{if}
\quad\mathfrak{o}(\sigma,k)>0.
\end{split}
\end{align}

{
For any $r\geq-\frac{m-2}{m}$, let $n=\lceil r\rceil$ and $a=n-r$. For any $f\in C^{k_r}(M)$, let
%Define the operator $\mathfrak{s}_{r}:C^{k_r}(M)\to C^{n+\pa}(\sqcup_{\alpha \in \mathcal{A}}I_\alpha)$ given by
\begin{align}
\label{def:rsk}
\mathfrak{s}_{r}(f)&=\varphi_{f}-\sum_{\substack{[(\sigma,k,l)]\in \mathscr{TC}/\sim\\ \mathfrak{o}(\sigma,k)<r}}
\mathfrak{C}_{[(\sigma,k,l)]}(f)\xi_{[(\sigma,k,l)]}.
\end{align}

\begin{lemma}\label{lem:sr}
For any $f\in C^{k_r}(M)$, we have
$\mathfrak{s}_{r}(f)\in C^{n+\pa}(\sqcup_{\alpha \in \mathcal{A}}I_\alpha)$,
and the operator $\mathfrak{s}_{r}:C^{k_r}(M)\to C^{n+\pa}(\sqcup_{\alpha \in \mathcal{A}}I_\alpha)$ is bounded.
\end{lemma}
}

\begin{proof}
By Theorem~5.6 in \cite{Fr-Ki2},
\[
\widehat{\mathfrak{s}}_{r}(f):=\varphi_{f}-\sum_{\substack{[(\sigma,k,l)]\in \mathscr{TC}/\sim\\ \mathfrak{o}(\sigma,k)<r}}
\mathfrak{C}_{[(\sigma,k,l)]}(f)\widehat{\xi}_{[(\sigma,k,l)]}\in C^{n+\pa}(\sqcup_{\alpha \in \mathcal{A}}I_\alpha),
\]
and the operator $\widehat{\mathfrak{s}}_{r}:C^{k_r}(M)\to C^{n+\pa}(\sqcup_{\alpha \in \mathcal{A}}I_\alpha)$ is bounded. Moreover.
\[\mathfrak{s}_{r}(f)=\widehat{\mathfrak{s}}_{r}(f)+\sum_{\substack{[(\sigma,k,l)]\in \mathscr{TC}/\sim\\ \mathfrak{o}(\sigma,k)<r}}
\mathfrak{C}_{[(\sigma,k,l)]}(f)\big(\widehat{\xi}_{[(\sigma,k,l)]}-{\xi}_{[(\sigma,k,l)]}\big).\]
Since $\widehat{\xi}_{[(\sigma,k,l)]}-{\xi}_{[(\sigma,k,l)]}$ is a polynomial over any exchanged interval, this gives our claim.
\end{proof}

\begin{definition}\label{def:Ft}
Let $r\geq-\frac{m-2}{m}$. For any $\bar t\in\mathscr{TF}^*$ with $\mathfrak{o}(\bar{t})<r$, we denote by $\mathfrak{F}_{\bar t}:C^{k_r}(M)\to\C$ the operator given by $\mathfrak{F}_{\bar t}:=\mathfrak{f}_{\bar t}\circ \mathfrak{s}_{r}$. As $\mathfrak{s}_{r}:C^{k_r}(M)\to C^{n+\pa}(\sqcup_{\alpha \in \mathcal{A}}I_\alpha)$ with $n=\lceil r\rceil$, $a=\lceil r\rceil-r$, and $\bar t\in \mathscr{T}^*_{a,n}$ by \eqref{eq:maxex}, the operator {$\mathfrak{F}_{\bar t}$}
is well-defined and bounded.
\end{definition}
\begin{remark}
Note that the definition of $\mathfrak{F}_{\bar t}$ does not depend on the choice of $r$. Indeed, suppose that $\mathfrak{o}(\bar{t})<r_1<r_2$. Then for every $f\in C^{k_{r_2}}(M)$,
\[\mathfrak{s}_{r_1}(f)-\mathfrak{s}_{r_2}(f)=\sum_{\substack{[(\sigma,k,l)]\in \mathscr{TC}/\sim\\ r_1\leq\mathfrak{o}(\sigma,k)<r_2}}
\mathfrak{C}_{[(\sigma,k,l)]}(f){\xi_{[(\sigma,k,l)]}}.\]
%By definition, the family of functionals $\mathfrak{d}_{\bar{t}}$ for $\bar{t}\in\mathcal{T}^*$  with $\mathfrak{o}(\bar{t})< \mathfrak{o}(\sigma_0,k_0)$ coincides with the family $\mathfrak{d}\circ \mathfrak{r}_{\sigma_0,k_0}$ for $\mathfrak{d}\in \mathscr{D}^*_{a,n}$.
%In view of Remark~\ref{rmk:uniquedef}, the value of $\mathfrak{d}_{\bar{t}}$ does not depend of the choice of a pair $(\sigma_0,k_0)$ with $\mathfrak{o}(\sigma_0,k_0)>\mathfrak{o}(\bar{t})$. Indeed, choose another pair $(\sigma_1,k_1)$ with $\mathfrak{o}(\sigma_1,k_1)\geq \mathfrak{o}(\sigma_0,k_0)>\mathfrak{o}(\bar{t})$.
%
%Let $\tilde{k}=k(\sigma_1,k_1)$, $\tilde{n}=\lceil\mathfrak{o}(\sigma_1,k_1)\rceil$ and $\tilde{a}=\tilde{n}-\mathfrak{o}(\sigma_1,k_1)$. Then $\tilde{n}-\tilde{a}\geq n-a$. If $\tilde{n}-\tilde{a}= n-a$ then $\tilde{n}=n$, $\tilde{a}=a$ and $\tilde{k}=\bar{k}$, and both families of functionals (defined by $(\sigma_0,k_0)$ and $(\sigma_1,k_1)$) are exactly the same.
%
%Suppose that $\tilde{n}-\tilde{a}=\mathfrak{o}(\sigma_1,k_1)> \mathfrak{o}(\sigma_0,k_0)= n-a$. Then $\tilde{n}\geq n$ and $\tilde{k}\geq \bar{k}$.
%Suppose that $f\in C^{\bar{k}+1}(M)$. Then
%\[\mathfrak{r}_{\sigma_0,k_0}(f)-\mathfrak{r}_{\sigma_1,k_1}(f)=\sum_{\substack{(\sigma,k,j)\in\mathscr{TD}\\ \mathfrak{o}(\sigma_0,k_0)\leq\mathfrak{o}(\sigma,k)<\mathfrak{o}(\sigma_1,k_1)}}
%\mathfrak{d}^k_{\sigma,j}(f)\xi^k_{\sigma,j}.\]
In view of \eqref{eq:zerod}, it follows that $\mathfrak{f}_{\bar t}(\mathfrak{s}_{r_1}(f))=\mathfrak{f}_{\bar t}(\mathfrak{s}_{r_2}(f))$, which yields our claim.
% for any $\mathfrak{d}\in\mathscr{D}^*_{a,n}$, which shows that $\mathfrak{d}_{\bar{t}}(f)$ does not depend on the choice of $(\sigma_0,k_0)$.
\end{remark}
\begin{remark}
For any $\bar{t}\in\mathscr{TF}^*$, take any $r$ such that $\mathfrak{o}(\bar{t})<r<\mathfrak{o}(\bar{t})+\frac{1}{m}$.
By definition, $k_{r}\leq k_{\mathfrak{o}(\bar{t})}+1$. It follows that
 the functional $\mathfrak{F}_{\bar{t}}$ is defined on $C^{k_{\mathfrak{o}(\bar{t})+1}}(M)$. If $\mathfrak{o}(\bar{t})\notin \Z/m$, then the domain of $\mathfrak{F}_{\bar{t}}$ is enlarged to $C^{k_{\mathfrak{o}(\bar{t})}}(M)$.
\end{remark}

\subsection{Proofs of the main results}\label{sec:proofmain}
\begin{proof}[Proof of Theorem~\ref{thm:main1}]
Choose $r_0\in \R_{>0}$ which is the smallest element of $\{\mathfrak{o}(\sigma,k):k\geq 0,\sigma\in \mathrm{Sd}(\psi_\R)\cap M'\}
\cup\{\mathfrak{o}(\bar{t}):\bar{t}\in \mathscr{TF}\}$ larger than $r$. By assumption, $T$ satisfies the \ref{FDC}, $f\in C^{k_{r}}(M) =C^{k_{r_0}}(M)$, and
\begin{itemize}
\item $\mathfrak{d}^k_{\sigma,j}(f)=0$ for all $(\sigma,k,j)\in\mathscr{TD}$ with $\widehat{\mathfrak{o}}(\mathfrak{d}^k_{\sigma,j})<r_0$;
\item $\mathfrak{C}^k_{\sigma,l}(f)=0$ for all $(\sigma,k,l)\in\mathscr{TC}$ with ${\mathfrak{o}}(\mathfrak{C}^k_{\sigma,l})<r_0$;
\item $\mathfrak{F}_{\bar{t}}(f)=0$ for all $\bar{t}\in\mathscr{TF}$ with ${\mathfrak{o}}(\mathfrak{F}_{\bar{t}})<r_0$.
\end{itemize}
By Theorem~1.1 in \cite{Fr-Ki2},  $\varphi_f\in C^{n+\pa}(\sqcup_{\alpha \in \mathcal{A}}I_\alpha)$ with $n=\lceil r_0\rceil$ and $a=\lceil r_0\rceil-r_0$. Moreover, there exists $C_r>0$ such that $$\|\varphi_f\|_{C^{n+\pa}(\sqcup_{\alpha \in \mathcal{A}}I_\alpha)}\leq C_r\|f\|_{C^{k_r}(M)}$$ for all $f\in C^{k_r}(M)\cap \ker(\mathfrak{C}^k_{\sigma,l})$ for $(\sigma,k,l)\in\mathscr{TC}$ with  $\mathfrak{o}(\mathfrak{C}^k_{\sigma,l})<r$.

By assumption, in view of \eqref{def:rsk}, $\varphi_f=\mathfrak{s}_{r_0}(f)$. It follows that $\mathfrak{f}_{\bar t}(\varphi_f)=\mathfrak{f}_{\bar t}(\mathfrak{s}_{r_0}(f))=\mathfrak{F}_{\bar t}(f)=0$ for all $\bar t\in \mathscr{TF}$ with $\mathfrak{o}(\bar t)<r$. As $r<r_0=n-a$, in view of Theorem~\ref{thm:cohsolmain1}, there exists a solution $v\in C^r(I)$ to the cohomological equation $\varphi=v\circ T-v$ such that $v(0)=0$. Moreover, there exists $C'_r>0$ such that $\|v\|_{C^r(I)}\leq C'_r\|\varphi_f\|_{C^{n+\pa}(\sqcup_{\alpha \in \mathcal{A}}I_\alpha)}$. By Theorem~1.2 in \cite{Fr-Ki2}, there exists $u_{v,f}\in C^r(M'_e)$ satisfying  $Xu_{v,f}=f$ on $M'_e$. Moreover, there exists a constant $C''_r>0$ such that
$$\|u_{v,f}\|_{C^r(M'_e)}\leq C''_r(\|v\|_{C^{r}(I)}+\|f\|_{C^{{k}_{r}}(M)}).$$
It follows that
\[\|u_{v,f}\|_{C^r(M'_e)}\leq C''_r(1+C_rC'_r)\|f\|_{C^{{k}_{r}}(M)},\]
which completes the proof.
\end{proof}

\begin{proof}[Proof of Theorem~\ref{thm:main2}]
If $f\in C^{k_r}(M)$ and there exists $u\in C^r(M'_e)$ such that $Xu=f$ on $M'_e$, then by Theorem 1.3 in \cite{Fr-Ki2},
 $\mathfrak{d}^k_{\sigma,j}(f)=0$ for all $(\sigma,k,j)\in\mathscr{TD}$ with $\widehat{\mathfrak{o}}(\mathfrak{d}^k_{\sigma,j})<r$,
and $\mathfrak{C}^k_{\sigma,l}(f)=0$ for all $(\sigma,k,l)\in\mathscr{TC}$ with ${\mathfrak{o}}(\mathfrak{C}^k_{\sigma,l})<r$.
In view of Theorem~1.1 in \cite{Fr-Ki2},  $\varphi_f\in C^{n+\pa}(\sqcup_{\alpha \in \mathcal{A}}I_\alpha)$ with $n=\lceil r\rceil$ and $a=\lceil r\rceil-r$.
By \eqref{def:rsk}, it follows that  $\varphi_f=\mathfrak{s}_{r}(f)$. Hence $\mathfrak{F}_{\bar t}(f)=\mathfrak{f}_{\bar t}(\mathfrak{s}_{r}(f))=\mathfrak{f}_{\bar t}(\varphi_f)$ for all $\bar t\in \mathscr{TF}$ with $\mathfrak{o}(\bar t)<r$.

On the other hand, $\varphi_f=v\circ T-v$, where $v\in C^r(I)$ is the restriction of $u$ to $I$. By Theorem~\ref{thm;spdecomp}, this gives $\mathfrak{f}_{\bar t}(\varphi_f)=0$ for all $\bar t\in \mathscr{TF}$ with $\mathfrak{o}(\bar t)<r$. Therefore, $\mathfrak{F}_{\bar t}(f)=0$ for all $\bar t\in \mathscr{TF}$ with $\mathfrak{o}(\bar t)<r$.
\end{proof}

\begin{proof}[Proof of Theorem~\ref{thm:spect}]
In view of Lemma~\ref{lem:sr} and Theorem~\ref{thm;spdecomp}, for any $f\in C^{k_r}(M)$,
\begin{align*}
\varphi_{f}&=\mathfrak{s}_{r}(f)+\sum_{\substack{[(\sigma,k,l)]\in \mathscr{TC}/\sim\\ \mathfrak{o}(\sigma,k)<r}}
\mathfrak{C}_{[(\sigma,k,l)]}(f)\xi_{[(\sigma,k,l)]}\\
&=
\sum_{\substack{\bar{t}\in \mathscr{TF}^*\\ \mathfrak{o}(\bar{t})<r}}\mathfrak{f}_{\bar{t}}(\mathfrak{s}_{r}(f))h_{\bar t}+\mathfrak{r}_{a,n}(\mathfrak{s}_{r}(f))+\sum_{\substack{[(\sigma,k,l)]\in \mathscr{TC}/\sim\\ \mathfrak{o}(\sigma,k)<r}}
\mathfrak{C}_{[(\sigma,k,l)]}(f)\xi_{[(\sigma,k,l)]}\\
&=\sum_{\substack{\bar{t}\in \mathscr{TF}^*\\\mathfrak{o}(\bar{t})<r}}\mathfrak{F}_{\bar t}(f)h_{\bar{t}}+\sum_{\substack{[(\sigma,k,l)]\in\mathscr{TC}/\sim\\ \mathfrak{o}(\sigma,k)<r}}\mathfrak{C}_{[(\sigma,k,l)]}(f)\xi_{[(\sigma,k,l)]}+\mathfrak{r}_r(f)
\end{align*}
with $\mathfrak{r}_r:=\mathfrak{r}_{a,n}\circ\mathfrak{s}_{r}$, where $n=\lceil r\rceil$ and $a=\lceil r\rceil-r$. Note that \eqref{eq:expt}, \eqref{eq:expsi1}, and \eqref{eq:expsi2} follow directly from  \eqref{eq:grhi} and \eqref{eq:exxi}. Moreover, \eqref{eq:exrem} follows from \eqref{eq:skdl} and \eqref{eq:skdn}.
\end{proof}

\section*{Acknowledgements}
The authors would like to thank Giovanni Forni for his helpful comments and for assisting in completing the list of references. They are also grateful to the referees for their valuable comments and suggestions that helped improve the presentation of this work.

The authors acknowledge the Center of Excellence ``Dynamics, mathematical analysis and artificial intelligence'' at the Nicolaus Copernicus University in Toru\'n and  Centro di Ricerca Matematica Ennio De Giorgi - Scuola Normale Superiore, Pisa, and KTH Royal Institute of Technology for their hospitality during the visits.

The research was partially supported by  the Narodowe Centrum Nauki Grant 2022/45/B/ST1/00179.
M.K.\ was partially supported by UniCredit Bank R\&D group through the `Dynamics and Information Theory Institute' at the Scuola Normale Superiore, {foundations for the Royal Swedish Academy of Sciences and Carl Trygger's Foundation for Scientific Research.}

\end{document}